\documentclass[12pt]{amsart}
\usepackage[dvipsnames]{xcolor}
\usepackage{fullpage,graphicx,psfrag,amsmath,amsfonts, amssymb,enumitem, hyperref, mathrsfs}

\usepackage{forest}
\usetikzlibrary{decorations.pathreplacing, positioning, arrows}

\numberwithin{equation}{section}


\DeclareMathOperator{\R}{\mathbb{R}}
\DeclareMathOperator{\C}{\mathbb{C}}
\DeclareMathOperator{\Z}{\mathbb{Z}}
\DeclareMathOperator{\N}{\mathbb{N}}

\DeclareMathOperator{\symm}{{\mbox{\bf S}}}  
\DeclareMathOperator{\diam}{{\mbox{diam}}}

\DeclareMathOperator{\Tr}{\mathop{\bf Tr}}

\DeclareMathOperator{\dist}{\mathop{\bf dist{}}}



\newcommand{\eg}{{\it e.g.}}
\newcommand{\ie}{{\it i.e.}}


\newtheorem{theorem}{Theorem}[section]
\newtheorem{prop}[theorem]{Proposition}

\newtheorem{lemma}[theorem]{Lemma}

\newtheorem{remark}[theorem]{Remark}
\newtheorem{definition}[theorem]{Definition}
\newtheorem{problem}[theorem]{Problem}
\newtheorem{conj}[theorem]{Conjecture}

\newcommand{\cl}{\mathbf{cl}}
\DeclareMathOperator{\inte}{\mathbf{int}}

\renewcommand*{\P}{\mathbb P}
\DeclareMathOperator{\E}{\mathop{\mathbb E{}}}

\renewcommand*{\S}{\mathbb S}
\DeclareMathOperator*{\osc}{osc}


\newcommand{\indc}{{\boldsymbol{1}}}
\newcommand*{\Zd}{\ensuremath{\mathbb{Z}^d}}
\newcommand*{\Rd}{\ensuremath{\mathbb{R}^d}}
\newcommand*{\supp}{\ensuremath{\mathrm{supp\,}}}
\newcommand{\cu}{\square}
\newcommand{\size}{\mathrm{size}}


\newcommand*\xxbar[1]{%
	\hbox{%
		\vbox{%
			\hrule height 0.7pt 
			\kern0.25ex
			\hbox{%
				\kern+0em
				\ensuremath{#1}%
				\kern-0.1em
			}%
		}%
	}%
}


\hypersetup{
	colorlinks=true,
	linkcolor=NavyBlue,
	urlcolor = RoyalBlue,
	citecolor = PineGreen
}

\bibliographystyle{alpha}

\begin{document}
	\title{Rigidity of harmonic functions on the supercritical percolation cluster}
	\author{Ahmed Bou-Rabee}
	\author{William Cooperman}
	\author{Paul Dario}
	
	\begin{abstract}
		We use ideas from quantitative homogenization to show that nonconstant harmonic functions on the percolation cluster cannot satisfy certain structural constraints, for example, a Lipschitz bound. These unique-continuation-type results are false on the full lattice and hence the disorder is utilized in an essential way. 
	\end{abstract}
	
	\maketitle
	\begin{figure}
		\begin{center}
			\includegraphics[width = 0.25\textheight]{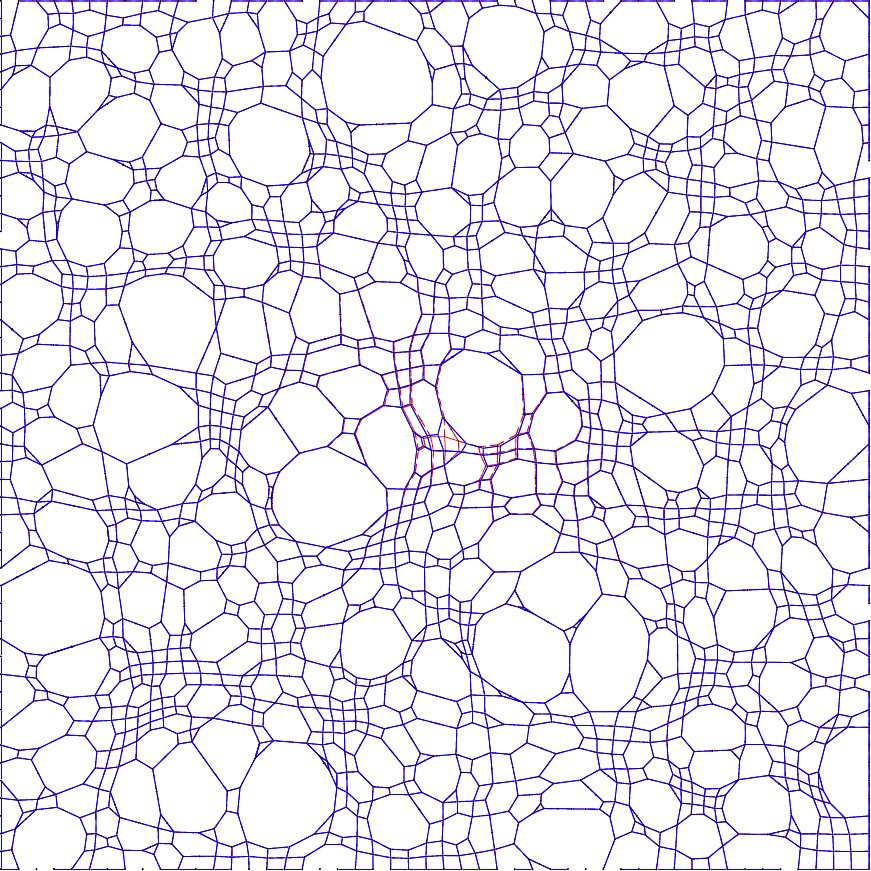} \qquad
			\includegraphics[height = 0.25\textheight]{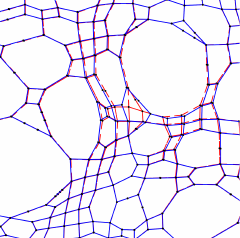}
		\end{center} \caption{The effect of changing the value of an edge. On the left is the harmonic (barycentric) embedding of the cluster for $\mathfrak{p} = 0.8$ on a cube of side length $N=50$ superimposed with the embedding after one edge is re-sampled. The original embedding is in blue and the re-sampled embedding is shown in green. On the right is a zoomed in piece of the re-sampled embedding.}\label{fig:flip-edge}
	\end{figure} 
	
	\section{Introduction}
	\subsection{Overview}
	We consider harmonic functions on the supercritical bond percolation cluster of the integer lattice.  Much of the study of these functions has been devoted to showing that they share large scale features, \eg, a Harnack inequality \cite{barlow-random-walk-percolation} and polynomial approximation \cite{armstrong-dario-2018}, with their counterparts on $\Z^d$ and $\R^d$. Such results are often viewed probabilistically; the convergence of random walk on the cluster to Brownian motion, as established in  \cite{berger-rw-percolation, mathieu-rw-percolation} is closely related to the fact that harmonic functions on the cluster approximate continuum harmonic functions.
	
	In this article, we focus on the differences between the percolation cluster and $\Z^d$ or $\R^d$ by proving three unique-continuation-type theorems on the cluster which are false on both the full lattice and the continuum. We are guided by the principle that harmonic functions on the percolation cluster should appear fairly generic at small scales. In Figure \ref{fig:flip-edge}, we illustrate the rippling effect of closing one bond on the family of linear harmonic functions. Closing or opening a bond will typically have a small but nonzero effect, preventing many fine properties (for example, linear relations between values of a harmonic function at nearby sites) from holding. 
	
	Our work, although self-contained, is motivated by the Abelian sandpile \cite{levine-what-is, levine-peres-survey}  and we discuss applications to the model briefly in Section \ref{subsec:sandpile-intro} and in detail in Section \ref{sec:abelian-sandpile} below. 
	
	\subsection{Main results}
	Let $E\left( \Z^d \right)$ denote the set of {\it edges}, \ie, unordered pairs $\{x,y\} \in \Z^d$ satisfying $|x-y| =1$ and associate to each edge
	an i.i.d\ Bernoulli random variable 
	\[
	\mathbf{a}: E\left( \Z^d \right) \to \{0, 1\}, \quad \mbox{with } \P[\mathbf{a}(e) = 1] = \mathfrak{p}
	\]
	where $\mathfrak{p} \in (\mathfrak{p}_c(d), 1)$, and $\mathfrak{p}_c(d) \in (0,1)$ is the bond percolation threshold for the lattice $\Z^d$. 
	Consequently, there is a unique infinite connected component denoted by $\mathscr{C}_{\infty}$.

	A function on the cluster $u: \mathscr{C}_{\infty} \to \R$ is {\it harmonic} if 
	\begin{equation}
		\Delta_{{\mathscr{C}_{\infty}}} u = 0 \quad \mbox{in $\mathscr{C}_{\infty}$} 
	\end{equation}
	where the {\it graph Laplacian} $\Delta_{\mathscr{C}_{\infty}}$ is defined as
	\begin{equation}
		\Delta_{{\mathscr{C}_{\infty}}} u(x) := \sum_{y \sim x} (u(y) - u(x)), 
	\end{equation}
	and the sum $y \sim x$ is over the edges $(x,y) \in E \left( \mathscr{C}_\infty \right)$.

	For our first theorem, we say that a function $u: \mathscr{C}_{\infty} \to \R$ is {\it Lipschitz} if there exists a constant $C$ such that
	\begin{equation} \label{eq:lipschitz-def}
		|u(x) - u(y)| \leq C \dist_{\mathscr{C}_{\infty}}(x,y), \quad \forall x,y \in \mathscr{C}_{\infty},
	\end{equation}
	where $\dist_{\mathscr{C}_{\infty}}$ refers to the graph distance on the cluster. 
	On the full lattice, $\Z^d$, and the continuum, $\R^d$, many harmonic functions are Lipschitz: take, for instance, $x \to (x_1 - x_2)$. However, harmonic functions on $\Z^d$ are generally not harmonic
	on the cluster; in fact, it is not obvious how to construct any nontrivial harmonic function on the cluster.  
	
	It was shown by Benjamini, Duminil-Copin, Kozma, and Yadin in \cite{disorder-entropy-2015} that the space of linearly growing harmonic functions on the cluster has the same dimension as that of $\R^d$. Their construction was not explicit and they asked \cite{disorder-entropy-2015-question} whether there are nontrivial {\it Lipschitz} harmonic functions on the cluster.  We show that the only Lipschitz harmonic functions on the cluster are constant. 
	
	Our proof relies on the large-scale regularity of harmonic functions on the percolation cluster established in~\cite{armstrong-dario-2018}. This regularity is used to measure how changing a bond's value (as in Figure~\ref{fig:flip-edge}) affects harmonic functions of linear growth.
	\begin{theorem} \label{theorem:lipschitz}
		Almost surely, if $u: \mathscr{C}_{\infty} \to \R$ is Lipschitz and harmonic then 
		$u \equiv c$ for some $c \in \R$. 
	\end{theorem}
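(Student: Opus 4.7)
The plan is to reduce the theorem to the statement that the linear harmonic functions on the cluster cannot have uniformly bounded edge-gradients, and to obtain arbitrarily large gradients by a local modification argument powered by the quantitative homogenization of \cite{armstrong-dario-2018}.

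\textbf{Reduction to linear harmonic functions.} Because the chemical distance on $\mathscr{C}_\infty$ is almost surely comparable to the Euclidean distance, a Lipschitz harmonic function $u$ has linear growth. The dimension formula of \cite{disorder-entropy-2015} then gives that the space of linearly growing harmonic functions on $\mathscr{C}_\infty$ is a.s.\ $(d+1)$-dimensional, spanned by constants and (a.s.\ unique up to constants) harmonic functions $\ell_{e_1},\dots,\ell_{e_d}$ of canonical slopes. Writing $u-c = \ell_\xi := \sum_i \xi_i \ell_{e_i}$, it suffices to show $\ell_\xi$ is not Lipschitz when $\xi \neq 0$. The Lipschitz constant
\[
    L(\xi) := \sup_{\{x,y\}\in E(\mathscr{C}_\infty)} |\ell_\xi(x) - \ell_\xi(y)|
\]
is translation-invariant and measurable, hence a.s.\ equal to a deterministic value by ergodicity; if $L(\xi) = C < \infty$, then stationarity forces the law of the single-edge gradient $\nabla\ell_\xi(e_0)$ to be supported in a bounded interval.

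\textbf{Local modifications producing large gradients.} The contradiction would come from exhibiting, for every $M > 0$, an event of positive probability on which $|\nabla\ell_\xi(e_0)| \geq M$. I would design a bond configuration in a finite ball $B_R(e_0)$ (with $R$ depending on $M$) creating a quasi-bottleneck: a local geometry in which essentially all flow between the two halves of the cluster inside $B_R$ is funneled through $e_0$. The two-scale expansion and large-scale $C^{0,1}$ regularity of \cite{armstrong-dario-2018} approximate $\ell_\xi$ on the complement of $B_R$ by the homogenized profile $\xi \cdot x$ plus a corrector, furnishing approximate Dirichlet data on $\partial B_R$ of amplitude $\asymp |\xi|\cdot R$ in the direction of $\xi$. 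Solving the internal discrete Dirichlet problem with these boundary data then yields a lower bound $|\nabla\ell_\xi(e_0)| \gtrsim |\xi|\cdot R$, which exceeds $M$ for $R$ large enough. Since the local modification involves only finitely many bonds, the resulting event has positive probability under the i.i.d.\ law of $\mathbf{a}$.

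\textbf{Main obstacle.} The technical heart is verifying that the local bottleneck really forces current through $e_0$ rather than allowing it to bypass $B_R$ through the bulk of $\mathscr{C}_\infty$, and that the gradient lower bound is robust against fluctuations of the ambient environment. This requires the full quantitative strength of \cite{armstrong-dario-2018}---in particular the large-scale Lipschitz estimate and the decay of the discrete Green's function on the cluster---to decouple the local Dirichlet problem from the homogenized behavior at infinity. It is precisely this quantitative decoupling that fails on the full lattice, where the linear harmonic function is deterministic, edge-resampling has no effect, and there is no rigidity obstruction.
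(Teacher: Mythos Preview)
Your overall strategy---ergodicity reduction to a deterministic slope, then a finite local modification forcing a large gradient---matches the paper's. But the execution of the second step has a real gap.

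You frame the post-modification analysis as ``solving the internal discrete Dirichlet problem'' in $B_R$ with boundary data inherited from the homogenized profile. This is not how the corrected plane works: $\ell_\xi$ in the modified environment is a \emph{global} object fixed by its linear behavior at infinity, not by its trace on $\partial B_R$. Modifying bonds inside $B_R$ changes $\ell_\xi$ everywhere, and there is no a priori reason its boundary values on $\partial B_R$ stay close to $\xi\cdot x$. You acknowledge this in your ``main obstacle'' paragraph, but the tool you name (large-scale $C^{0,1}$ regularity of the corrector) does not by itself control how the corrected plane responds to a local resampling.

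The paper's device for this is the one you are missing. By finite energy (Lemma~\ref{lemma:finite-energy}), if $\ell_\xi$ is almost surely $L$-Lipschitz then it is $L$-Lipschitz in \emph{every} finitely modified environment as well. This hypothesis on the \emph{new} corrected plane is then fed into a Green's-function comparison formula (Lemma~\ref{lemma:compare-different-planes}): the difference $\nabla\ell_\xi - \nabla\ell_\xi'$ is a sum over deleted edges of mixed Green's derivatives times $\nabla\ell_\xi'$ across those edges, and the $L$-Lipschitz bound on $\ell_\xi'$ (combined with the chemical-distance control built into the modification) makes each summand small. This is what lets the flux estimates on the boundary of a large box survive the modification. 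The contradiction then comes from a divergence/flux argument: the paper removes a screen of horizontal edges leaving only $O(N^{d-3/2})$ passages, so the surviving flux of order $N^{d-1}$ forces some gradient to exceed $L$. Your single-edge bottleneck idea is in the same spirit, but without the finite-energy bootstrap you have no handle on $\ell_\xi'$ near $\partial B_R$, and the argument does not close.
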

	
	The Lipschitz function $x \to (x_1 - x_2)$ is also integer-valued. The delicate dependence of harmonic functions on the structure of open and closed bonds suggests that there are also no nontrivial integer-valued harmonic functions 
	on the cluster. We prove this under the additional assumption of linear growth, a technical condition which we conjecture can be replaced by any deterministic growth, see Problem \ref{problem:all-growth} below.
	\begin{theorem} \label{theorem:integer-valued-linear-growth}
		Almost surely, if $u: \mathscr{C}_{\infty} \to \R$ is harmonic, integer-valued, and grows at most linearly, \ie,
		$u(x) = O(|x|)$,  then $u \equiv a$ for some $a \in \Z$. 
	\end{theorem}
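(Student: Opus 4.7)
The plan is to reduce Theorem~\ref{theorem:integer-valued-linear-growth} to Theorem~\ref{theorem:lipschitz} by showing that any $u$ satisfying its hypotheses must be Lipschitz in the sense of~\eqref{eq:lipschitz-def}; Theorem~\ref{theorem:lipschitz} then forces $u \equiv c$, and integer-valuedness gives $c \in \Z$.

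The first step is the first-order Liouville theorem for the supercritical cluster established in~\cite{armstrong-dario-2018}: the space of harmonic functions on $\mathscr{C}_\infty$ of at-most-linear growth is $(d+1)$-dimensional, so there exist $p \in \R^d$, $c \in \R$, and a sublinear corrector $\phi_p$ with
\begin{equation*}
u(x) = p \cdot x + \phi_p(x) + c, \qquad x \in \mathscr{C}_\infty.
\end{equation*}
When $p = 0$ the corresponding zeroth-order Liouville statement gives $\phi_0 \equiv 0$, so $u \equiv c$ and $c \in \Z$. We therefore assume $p \neq 0$ and aim to show $u$ is Lipschitz.

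The heart of the argument is a pointwise upgrade of the large-scale $C^{0,1}$ regularity of~\cite{armstrong-dario-2018}, which supplies a random minimal scale $\mathcal{X}(\omega)$ with stretched-exponential tails such that on every ball $B_R(x_0) \cap \mathscr{C}_\infty$ with $R \geq \mathcal{X}(\omega)$ the function $u$ is $L^2$-averaged close to the affine function $x \mapsto p \cdot x$. Iterating this regularity down to the unit scale via a Campanato-type argument should yield
\begin{equation*}
\bigl| u(x) - u(y) - p \cdot (x - y) \bigr| \leq C(\mathfrak{p}, d, |p|)
\end{equation*}
for every edge $\{x, y\} \subset \mathscr{C}_\infty$ with $|x|, |y| \geq \mathcal{X}(\omega)$. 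The integer-valuedness of $u$ then confines $u(x) - u(y)$ to a finite subset of $\Z$, giving a uniform bound on nearest-neighbor differences outside $B_{\mathcal{X}}(0)$. Since $B_{\mathcal{X}}(0)$ contains only finitely many edges of $\mathscr{C}_\infty$, the nearest-neighbor differences of $u$ are bounded throughout, and $u$ is Lipschitz with an almost surely finite random constant.

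The main obstacle is the pointwise upgrade itself: the Armstrong--Dario regularity is $L^2$-averaged, and averaged smallness of the affine error does not by itself rule out anomalously large gradients on isolated edges. I would need to combine the harmonic equation with integer-valuedness to propagate a hypothetical large gradient on a single edge into oscillations on neighboring scales that contradict the Campanato decay. The most delicate case should be an edge sitting on a thin bottleneck of the cluster, where intrinsic and Euclidean geometry diverge; there, however, harmonicity forces $u$ to be affine along the bottleneck, and the linear-growth assumption $u(x) = O(|x|)$ then pins the per-edge gradient to $O(|p|)$, recovering the Lipschitz bound.
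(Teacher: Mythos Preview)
Your reduction to Theorem~\ref{theorem:lipschitz} hinges on the pointwise bound
\[
\bigl| u(x) - u(y) - p \cdot (x - y) \bigr| \leq C(\mathfrak{p}, d, |p|)
\]
for edges $\{x,y\}$, which is the statement that the corrector gradient $\nabla\chi_p$ lies in $L^\infty$ outside a random finite set. But this is exactly what the paper proves is \emph{false}: Proposition~\ref{prop:not-lipschitz} shows that for every $L$ the event $\nabla\ell_{e_1}(e) > L$ has positive probability at the origin edge, and by ergodicity the set of edges where this happens has positive density in $\mathscr{C}_\infty$. So the Campanato iteration you propose cannot yield the pointwise bound from large-scale regularity alone; the corrector simply does have arbitrarily large gradients on a positive-density set of edges. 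You acknowledge this obstacle, but your suggested fix (``propagate a hypothetical large gradient \ldots\ into oscillations that contradict the Campanato decay'') is not a mechanism: a single large edge gradient is perfectly consistent with averaged $C^{0,1}$ regularity, and integer-valuedness by itself says nothing about how that gradient spreads.

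The paper's proof is structurally different and never passes through Theorem~\ref{theorem:lipschitz}. It builds explicit finite ``gadget'' subgraphs $T_n$ of $\Z^d$ whose effective resistance between two boundary vertices is a convergent of $1+\sqrt{3}$; a short continued-fraction argument (Proposition~\ref{prop:exponential-growth-gadget}) then shows that any nonconstant \emph{integer-valued} harmonic function on $T_n$ must oscillate by at least $c\,3^n$. An ergodicity argument (Lemma~\ref{lemma:ergodicity-lipschitz}) reduces to a single deterministic slope $p$, and a finite-energy modification of the environment (Lemma~\ref{lemma:non-constant-on-gadget}) forces the corrected plane $\ell_p$ to be nonconstant on such a gadget with probability at least $\exp(-CR^d)$. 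The resulting exponential oscillation in a box of side $R$ contradicts the stretched-exponential tail of the corrector's sublinearity bound (Theorem~\ref{theorem:first-order-corrector}). In short, the integer-valuedness is exploited through an exact arithmetic/resistance computation on a specific gadget, not through any Lipschitz or pointwise-regularity estimate.
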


	To motivate our next theorem, recall that any bounded harmonic function on $\Z^d$, $\R^d$, or $\mathscr{C}_{\infty}$ is constant. This is, for instance, a consequence 
	of the elliptic Harnack inequality. However, this Liouville theorem is false on $\Z^d$ and $\R^d$ if one relaxes the harmonic constraint 
	to harmonic outside of a finite set. For example, the discrete derivative of the elliptic Green's function on $\Z^d$ decays to zero at infinity. 
	In fact, by iteratively taking finite differences of the elliptic Green's function,  \cite{schmidt-verbitskiy-2009,sandpiles-square-lattice}, 
	one can construct a function on $\Z^d$ that is harmonic outside a finite set and decays faster than any given inverse polynomial,
	but is not compactly supported.

	Our next theorem states that this construction fails on the cluster. Specifically, we show on the two-dimensional supercritical percolation cluster that any function with an integer-valued Laplacian that decays faster than $|x|^{-1}$ is zero outside a finite set. 
	\begin{theorem} \label{theorem:fast-decay}
		The following holds for $d = 2$. Almost surely and for all functions with finite support, $f: \Z^d \to \Z$, if $u: \mathscr{C}_{\infty} \to \R$ satisfies 
		\[
		\begin{aligned}
			-\Delta_{{\mathscr{C}_{\infty}}} u &= f \\ 
			\limsup_{R \to \infty} \, R &\left (\sup_{B_R \setminus B_{R/2}} |u|  \right) = 0,
		\end{aligned}
		\]
		then the support of $u$ is finite.  
	\end{theorem}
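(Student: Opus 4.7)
The plan is to combine the homogenization-based asymptotics of the potential kernel on $\mathscr{C}_{\infty}$ with a genericity argument for the first-order corrector that crucially uses the integer-valuedness of $f$.

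Since the random walk on the two-dimensional supercritical cluster is recurrent, one can define a potential kernel $a:\mathscr{C}_{\infty}^2 \to \R$ satisfying $-\Delta_{\mathscr{C}_{\infty}} a(\cdot, y) = \delta_y$. I would first show that $u$ agrees with the canonical potential generated by $f$: the finite sum $v(x) := \sum_y f(y)\, a(x, y)$ solves $-\Delta_{\mathscr{C}_{\infty}} v = f$ and, since $a(x, y) \sim c_\mathrm{hom} \log|x|$ as $x \to \infty$, has logarithmic growth with leading coefficient $c_\mathrm{hom} \sum_y f(y)$. The difference $u - v$ is harmonic on $\mathscr{C}_{\infty}$ with at most logarithmic growth, hence sublinear, and is therefore constant by the Liouville theorem for sublinearly growing harmonic functions on the cluster (which follows from Barlow's elliptic Harnack inequality \cite{barlow-random-walk-percolation}). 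Since $u - v$ is constant and $v$ actually grows logarithmically unless $\sum_y f(y) = 0$, the monopole vanishes, $\sum_y f(y) = 0$, and the decay of $u$ then pins the additive constant to zero, giving $u = v$.

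Next I would invoke the large-scale regularity of \cite{armstrong-dario-2018} to write the potential kernel in homogenized form,
\[
a(x, y) = c_\mathrm{hom}\, \log\bigl|\phi_1(x) - \phi_1(y)\bigr| + C + O\!\left(|x - y|^{-\alpha}\right),
\]
for some $\alpha > 0$, where $\phi_1(z) = z + \chi(z)$ is the cluster harmonic coordinate and $\chi$ is the first-order sublinear corrector. Taylor-expanding in $\phi_1(y)/\phi_1(x)$ and summing against $f$ (using $\sum_y f(y) = 0$) gives
\[
u(x) = -c_\mathrm{hom}\, \frac{\phi_1(x)}{|\phi_1(x)|^2} \cdot \sum_{y} f(y)\,\phi_1(y) + O\!\left(|x|^{-1-\alpha}\right).
\]
Since $|\phi_1(x)| \sim |x|$, the hypothesis $R \sup_{B_R \setminus B_{R/2}}|u| \to 0$ forces the dipole to vanish, $\sum_y f(y)\,\phi_1(y) = 0 \in \R^2$. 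Rewriting $\phi_1(y) = y + \chi(y)$, the dipole equation becomes
\[
\sum_y f(y)\,\chi(y) \;=\; -\sum_y f(y)\, y \;\in\; \Z^2.
\]
The almost-sure claim I would need is: for a.e.\ realization of the percolation and for every nonzero integer-valued $f$ of finite support, $\sum_y f(y)\,\chi(y) \notin \Z^2$. This is a countable union of null events; granted it, $f \equiv 0$, so $u$ is a decaying harmonic function on $\mathscr{C}_{\infty}$ and is identically zero by Liouville on the cluster. In particular, the support of $u$ is empty and a fortiori finite.

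The hard part will be the genericity step above: for each fixed integer-valued $f \not\equiv 0$, one must show that the real-valued random variable $X_f := \sum_y f(y)\,\chi(y) \in \R^2$ places no mass on the lattice $\Z^2$. This demands a careful analysis of the dependence of the corrector on the bond configuration, likely via the edge-resampling mechanism illustrated in Figure~\ref{fig:flip-edge}: one wants to show that after conditioning on all but a suitable finite collection of bonds, the residual randomness shifts $X_f$ by a nondegenerate amount, precluding an atom at any point of $\Z^2$. Upgrading such a soft ergodic statement to the sharp almost-sure absolute-continuity claim required — uniformly over all integer-valued $f$ — is where the essential difficulty lies.
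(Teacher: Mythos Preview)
Your reduction to the dipole condition $\sum_y f(y)\,\phi_1(y) = 0$ via the two-scale expansion of the potential kernel is correct and matches the paper's Proposition~\ref{prop:two-scale-expansion-mean-zero}. The fatal gap is in the genericity step: the claim ``for every nonzero integer-valued $f$ of finite support, $\sum_y f(y)\chi(y) \notin \Z^2$'' is \emph{false}, and with it your conclusion $f\equiv 0$. The theorem does not assert $f=0$; it only asserts that $u$ is finitely supported, and on the cluster there \emph{are} nonzero integer-valued $f$ admitting a finitely supported solution $u$ (see Figure~\ref{fig:slow-mixing-gadget} and Section~\ref{subsec:slow-mixing-sandpile}). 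For any such $u$ one computes, integrating by parts against the harmonic function $\phi_1 = \ellp{e_i}$,
\[
\sum_y f(y)\,\phi_1(y) \;=\; \sum_y (-\Delta_{\mathscr{C}_\infty} u)(y)\,\phi_1(y) \;=\; -\sum_y u(y)\,\Delta_{\mathscr{C}_\infty}\phi_1(y) \;=\; 0,
\]
so $\sum_y f(y)\chi(y) = -\sum_y f(y)\,y \in \Z^2$ although $f\not\equiv 0$. Your proposed edge-resampling heuristic cannot rescue this: for these $f$, resampling edges outside $\supp u$ leaves $u$ (and hence the dipole) unchanged, so $X_f$ genuinely has an atom on $\Z^2$. (Note also that this counterexample has nothing to do with integer-valuedness of $f$; you never actually use that $f\in\Z$ beyond ensuring countability.)

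What the paper actually proves is the dichotomy of Proposition~\ref{prop:gradient-trichotomy}: for each fixed $p$, almost surely either $\sum_y f(y)\ellp{p}(y) \neq 0$ \emph{or} $|\supp u_f| < \infty$. In other words, vanishing of the dipole is not generically impossible; rather, it \emph{forces} $u_f$ to be finitely supported. Establishing this implication requires a martingale sensitivity argument (reducing to Proposition~\ref{prop:level-set-of-harmonic-function}: on the event $|\supp u_f|=\infty$, a positive density of edges have both $\nabla u_f\neq 0$ and $\nabla\ellp{p}\neq 0$) followed by a substantial topological analysis of the level sets of $u_f$ in the plane --- bounding the number of infinite rays in $L_0$, bounding biconnected components via Kesten channels, and an exploration of the block-cut tree against the growth of $\ellp{p}$. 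This is where planarity is genuinely used and why $d>2$ remains open (Problem~\ref{problem:all-dimensions}); your proposed route does not engage with any of this structure.
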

	In fact, we characterize the possible decay rates of functions with integer-valued Laplacians on the cluster; see Theorem \ref{theorem:fast-decay-stronger} below. We conjecture that Theorem \ref{theorem:integer-valued-linear-growth} holds in all dimensions and discuss a possible route to proving this under Problem \ref{problem:all-dimensions} below.

	\subsection{Abelian sandpile} \label{subsec:sandpile-intro}
	As mentioned earlier, this investigation is partly motivated by the desire to understand the Abelian sandpile model \cite{levine-what-is, levine-peres-survey}
	on the supercritical percolation cluster. Significant differences between the model on the full lattice and on the cluster can be observed in numerical experiments --- this has been documented in the physics literature by Sadhu and Dhar \cite{sadhu2011effect}.  Until this article, nothing to this effect had been rigorously demonstrated. 
	
	We defer a definition of the model to Section \ref{sec:abelian-sandpile} below. For now, we note that Theorem \ref{theorem:fast-decay-stronger} gives a complete description of the so-called toppling invariants \cite{dhar-algebraic-aspects} of the model on the cluster
	which is considerably different from that of $\Z^2$. Moreover, integer-valued harmonic functions on periodic graphs lay the foundations for the fractal patterns appearing in the Abelian sandpile \cite{pegden-smart-sandpile-2013, bourabee-sandpile-2021, apollonian-matrix, apollonian-sandpile, bou2021integer}. The absence of such functions, Theorem \ref{theorem:integer-valued-linear-growth}, provides a partial explanation for why such patterns do not appear on the cluster.
	
	In Section \ref{sec:abelian-sandpile} we also prove a new `slow mixing' theorem for the sandpile Markov chain on the cluster.
	\subsection{Open questions}
	As mentioned previously, we expect the assumption of linear growth in Theorem \ref{theorem:integer-valued-linear-growth} 
	can be replaced by that of any deterministic growth. 
	\begin{problem} \label{problem:all-growth}
		Show that there are no non-constant integer-valued harmonic functions on the cluster (of any growth).
	\end{problem}
	In fact, we prove a weaker statement for integer-valued functions of quadratic growth; see Section \ref{subsec:polynomial-growth} below. 
	
	Similarly, we expect that Theorem \ref{theorem:fast-decay} is true in all dimensions $d \geq 2$
	\begin{problem} \label{problem:all-dimensions}
		Prove Theorem \ref{theorem:fast-decay} in dimensions $d > 2$.
	\end{problem}
	A large part of the proof of Theorem \ref{theorem:fast-decay} carries over verbatim when the dimension $d$ is arbitrary, except for one step which contains arguments that depend crucially on the planarity of the lattice, Proposition~\ref{prop:level-set-of-harmonic-function}.  We present a more general version of Proposition~ \ref{prop:level-set-of-harmonic-function} explicitly as a problem, which we believe to be independently interesting. For the statement of this problem and the next, 
	we say that a set $A \subset \mathscr{C}_{\infty}$ on the cluster has {\it positive density} if 
	\begin{equation} \label{eq:has-density}
		\liminf_{N \to \infty} |B_N|^{-1}  |A \cap B_N| > 0,
	\end{equation}
	where $B_N$ is the ball of radius $N$ centered at the origin. 
	\begin{problem} \label{problem:level-set}
		Consider the cluster in dimensions $d > 2$ and let $u: \mathscr{C}_{\infty} \to \R$ be a function which is harmonic outside of a finite set and which decays to zero at infinity, $\lim_{|x| \to \infty} u(x) = 0$.  Show that if the set of edges $e$ such that $\{ \nabla u(e) = 0\}$ has positive density, then $u$ must vanish outside  of a finite set. 
	\end{problem}
	This distinction between two and higher dimensions has appeared previously in the study of singular sets and unique continuation of elliptic equations \cite{logunov-malinnikova-survey, han-singular-sets}. For instance, the solution of a divergence-form elliptic equation with H\"older continuous coefficients which vanishes on an open set is identically zero 
	in two dimensions. However, this property may fail in dimensions higher than two \cite{miller-unique-continuation}. 
	
	The following question seems to be closely related. 
	\begin{problem} \label{problem:unique-continuation}
		Let $h$ be a harmonic function on $\mathscr{C}_{\infty}$. Show that if the set of sites in the cluster where $h$ is bounded has positive density, then $h$ is a constant. 
	\end{problem}
	On $\Z^2$, it was shown in a spectacular work by Buhovsky, Logunov, Malinnikova, and Sodin that if the set of sites where $h$ is bounded has a sufficiently 
	high density, then $h$ is constant \cite{blms-harmonic}. This was then applied to Anderson localization in the breakthrough papers 
	\cite{ding-smart-localization, li-zhang-localization}. 
	
	Buhovsky, Logunov, Malinnikova, and Sodin demonstrate via counterexamples that their result fails for a low density on $\Z^2$ and for any density on $\Z^d$ for $d \geq 3$.  Their arguments utilize the exact symmetries of $\Z^d$. In particular, the counterexamples in \cite{blms-harmonic} show that a solution to Problem \ref{problem:unique-continuation} must use the randomness of the cluster.

	\subsection{Method and paper outline}
	Each of our results may be thought of as a consequence of the following (intentionally ambiguous) claim: almost surely, the supercritical percolation cluster has no symmetries. Our proof strategy is thus, roughly, to proceed by contradiction. We suppose that a function with some structure on the percolation cluster exists and then identify a specific asymmetry which this contradicts. 
	
	An early example of this technique appears in a paper of Chayes, Chayes, Franz, Sethna, and Trugman on the quantum percolation problem \cite{chayes-quantum-percolation}.
	There they used the fact that every finite subgraph of $\Z^d$ appears in the percolation cluster with positive density to construct finitely supported eigenfunctions of the Laplacian. 
	Other works which establish qualitative differences between the cluster and the lattice include  \cite{fastmixing-ising} and \cite{effective-resistance-percolation-cluster}.

	Our main technical innovation in this paper is in the use of quantitative stochastic homogenization. We use these ideas together with ergodicity arguments to identify specific events which preclude harmonic functions with structure from existing on the cluster. The proofs of each of our theorems, while sharing the common thread of quantitative homogenization, can be read independently.
	
	We collect several common preliminary results in Section \ref{sec:preliminaries} on percolation, homogenization, and topology. 
	In Section \ref{sec:lipschitz} we prove Theorem \ref{theorem:lipschitz}, in Section \ref{sec:integer-harmonic}, Theorem \ref{theorem:integer-valued-linear-growth},
	and in Section \ref{sec:toppling-invariants}, Theorem \ref{theorem:fast-decay}. The beginning of each section contains an outline of each proof.  
	This arrangement is roughly in the order of technicality; in particular, the proof of Theorem \ref{theorem:fast-decay} is the most involved and occupies the bulk of this article. 
	
	We conclude in Section \ref{sec:abelian-sandpile} with a discussion of how our results relate to the Abelian sandpile. This section is mostly expository and contains several open questions which stem from this work. The main contribution of this section is a `slow mixing' result for the sandpile Markov chain stated in Theorem \ref{theorem:slow-mixing-percolation} below. 
	
	\subsection*{Acknowledgments}
	We thank an anonymous referee for helpful comments on an earlier version of this article.
	We also thank Scott Armstrong, Ewain Gwynne, Lionel Levine, Charles Smart, Philippe Sosoe, and Ariel Yadin for useful discussions.
	A.B. was partially supported by NSF grant DMS-2202940 and a Stevanovich fellowship.

	\section{Preliminaries} \label{sec:preliminaries}
	This section, which consists of preliminary results, may be skipped on a first read. The reader may read the subsequent sections 
	and refer back to the relevant results here when necessary.

	\subsection{Basic notation and assumptions} \label{subsec:notation}
	\begin{itemize}
		\item General notation.
		\begin{itemize} 
			\item Inequalities/equalities between functions/scalars are interpreted pointwise.
			\item Constants $C, c$ are finite and positive and may change from line to line. Dependence on other constants is indicated by, \eg, $C(d)$
			and when constants need to be distinguished we write, \eg, $c_1$ and $C_2$. 
			\item For $x \in \R^d$, $x_i$ denotes the $i$-th coordinate of $x$ and $e_1, \ldots, e_d$ the standard basis of $\R^d$.
			\item $|x|$, $|x|_2$, and $|x|_{\infty}$ respectively denote the $\ell^1$ norm, $\ell^2$ norm and $\ell^{\infty}$ norm respectively: $\sum |x_i|$, $\sqrt{\sum (x_i)^2}$, and $\max |x_i|$.
			\item A \emph{domain} or \emph{region} of $\Rd$ is a nonempty connected open subset of $\Rd$.
			\item Given a subset $A \subseteq \Rd$ (resp. $A \subseteq \Zd$), we denote by $\left| A \right|$ the volume (resp. the cardinality) of $A$.
			\item For a set $D \subset \C$, $\partial D$ denotes its topological boundary, $\cl(D) = D \cup \partial D$ its closure, 
			and $\inte(D)$ its interior. 
			\item $B_r(x)$ denotes the open ball of Euclidean radius $r > 0$ centered at $x \in \R^d$, when $x$ is omitted, the ball is centered at 0. $Q_r$ similarly 
			denotes cube (also referred to as box) of radius $r$. 
			\item We write $\size(Q_r) := r$ for the side length of the cube.
			\item For $L \in \N$ and in dimension $d = 2$, we define the triangle $T_L := \{ x = (x_1 , x_2) \in \Z^2 \, : \, |x| \leq L, x_1 \geq |x_2|\}$.
			\item Given two set $A,B \subseteq \R^d$, we denote by $A + B := \{ z = x + y \, : \, x \in A , \, y \in B \}.$
			\item The set of $d \times d$ symmetric matrices is denoted by $\symm^d$.
		\end{itemize}
		
		\item Percolation.
		\begin{itemize}
			\item The set of edges on $\Z^d$ is denoted by $E\left( \Z^d \right)$ , \ie, the set of unordered pairs $\{x,y\} \in \Z^d$ satisfying $|x-y| =1$.
			\item An edge $e \in E\left( \Z^d \right)$ is open if $\mathbf{a}(e) = 1$ and if $\mathbf{a}(e) = 0$ it is closed.
			\item Each edge is open according to an independent Bernoulli random variable with success probability 
			$\mathfrak{p} \in (\mathfrak{p}_c(d), 1)$ where $\mathfrak{p}_c(d) \in (0,1)$ is the bond percolation threshold for the lattice $\Z^d$. 
			\item The set of open edges of the infinite cluster is denoted by $E \left( \mathscr{C}_\infty\right)$.
			\item We write $x \sim y$ if $x$ and $y$ are connected by an edge in $E \left( \mathscr{C}_\infty\right)$.
			\item The infinite cluster is $\mathscr{C}_{\infty}$, the set of its unordered edges is $E(\mathscr{C}_{\infty})$
			and the cluster is equipped with the graph Laplacian defined as 
			\[
			\Delta_{{\mathscr{C}_{\infty}}} u(x) := \sum_{y \sim x} (u(y) - u(x)). 
			\]
			\item The degree of a vertex in the cluster is $\deg_{\mathscr{C}_{\infty}}$.
			\item For a set $A \subset \mathscr{C}_{\infty}$, by $\partial A$ the set of vertices in $\mathscr{C}_{\infty} \setminus A$ which are joined by an edge to
			a vertex in $A$ and by $\cl(A) = A \cup \partial A$ is its closure.  We write $\partial^-$ for the inner boundary, \ie,  the set of vertices in $A$ which are connected by an edge to a vertex not in $A$. 
			The set $\inte(A)$ are the vertices in $A$ which do not share an edge with a vertex in $A^c$. 
			We write $\partial_e A$ for the set of edges in $\mathscr{C}_{\infty}$ with one end in $A$ and one end not in $A$. 
		\end{itemize}
		\item Paths on $\Z^d$ and $\R^d$.
		\begin{itemize}
			\item A {\it path} on $\Z^d$ is an injective function $\gamma: [1, \ldots, \mathrm{end}] \to  \Z^2$ with $|\gamma(i+1) - \gamma(i)| = 1$ (with $\mathrm{end} \in \N \cup \{ \infty\}$). 
			\item The path is infinite if $\mathrm{end} = \infty$ and otherwise is finite. We denote the length of the path $\gamma $ by $\left| \gamma \right| \in \N \cup \{ \infty \}.$
			\item A {\it bi-infinite path} is an injective function $\gamma: \Z \to \Z^2$  with $|\gamma(i+1) - \gamma(i)| = 1$.
			\item A {\it loop} is a finite path such that $|\gamma(1) - \gamma(\mathrm{end})| = 1$ and we define $\gamma(\mathrm{end}+1) = \gamma(1)$. We overload notation and consider paths and loops both as functions and as subsets of $\Z^d$.
			\item Given two vertices $x , y \in \Zd$ (resp. $x , y \in \mathscr{C}_\infty$), we define the distance $\dist(x , y)$ (resp. $\dist_{\mathscr{C}_\infty}(x , y)$) to be the length of the shortest path connecting $x$ and $y$ in $\Zd$ (resp. in $\mathscr{C}_\infty$). We similarly define the distance between two edges of $E(\Zd)$ (resp. $E(\mathscr{C}_\infty)$) and the distance between a vertex and an edge.
			\item In Section~\ref{sec:toppling-invariants}, we specifically work in two dimensions, and define each of the prior objects on the plane $\R^2$ and the Euclidean sphere $\mathbb{S}^2 \subseteq \R^3$ in the same way, by simply defining a path on~$\R^2$ to be the linear interpolation of a path on~$\Z^2$ (and mapping $\R^2$ to $\mathbb{S}^2$ through the stereographic projection).
			\item A Jordan curve in $\R^2$ or $\mathbb{S}^2$ is a set which is homeomorphic to the circle $\mathbb{S}^1 \subseteq \R^2$.
		\end{itemize}
		\item Stochastic integrability.
		\begin{itemize}
			\item  Given a random variable $X$, we write $X \leq \mathcal{O}_s(\theta)$ to mean \[ \E[ \exp( (\theta^{-1} X \vee 0)^s)] \leq 2. \]
			\item By Markov's inequality, if $X \leq \mathcal{O}_s(\theta)$, then $\P[X \geq \theta t] \leq 2 \exp(-t^s)$.
		\end{itemize}
		\item Function spaces.
		\begin{itemize}
			\item In most instances we use the same notation for the lattice as the continuum. 
			\item If $A$ is a finite subset of $\Z^d$, $|A|$ denotes its cardinality. For a subset of the plane,  
			$V \subset \R^d$ we also use $|V|$ to denote Lebesgue measure. 
			\item The sum over a finite subset of $\Z^d$ will sometimes be written as an integral.
			\item For $p \in [1, \infty)$ we denote the $L^p$ and normalized $L^p$ norms of $w$ by 
			\[
			\|w\|_{L^p(U)} :=  \left( \sum_{x \in U} |w(x)|^p dx \right)^{1/p} \quad \mbox{and} \quad 
			\|w\|_{\underline{L}^p(U)} :=  \left( \frac{1}{|U|} \sum_{x \in U} |w(x)|^p dx \right)^{1/p}
			\]
			and $\|w\|_{L^{\infty}(U)} = \sup_{x \in U} |w(x)|$.
			\item The oscillation of a function $w$ over a finite subset $U$ is denoted by 
			\[
			\osc_{U} w := \sup_{x \in U} w(x) - \inf_{x \in U} w(x)
			\]
			and its average is  
			\[
			(w)_{U} := \frac{1}{|U|} \sum_{x \in U} w(x) dx.  
			\]
		\end{itemize}
		\item Vector fields.
		\begin{itemize}
			\item Let $E_d := \{ (x,y) \in \Z^d : |x-y| = 1 \}$ denote the set of oriented nearest-neighbor edges on $\Z^d$ 
			\item For a subset $U$ of $\Z^d$, we write $E_d(U)$ as the set of oriented edges with both ends in $U$. 
			\item A vector field $v$ on $U$ is a function $v: E_d(U) \to \R$ which is antisymmetric, $u(x,y) = -u(y,x)$ 
			for every $(x,y) \in E_d(U)$.
			\item For a function $u: U \to \R$, $\nabla u$ is the vector field defined by, for any edge $e = (x , y) \in E_d(U)$,
			\begin{equation} \label{id:vectorfiedledges}
				\nabla u(e) := u(x) - u(y),
			\end{equation}
			In Proposition~\ref{prop:homogenizationellipticgreen} and Proposition~\ref{prop:green-mixed-derivative}, and mimicking the notation of the continuum, we will write, for $x \in \mathscr{C}_\infty$, 
			\begin{align*}
				&\nabla u(x)  \\
				&=(\nabla u((x, x + e_1)) \indc_{\{ (x, x+e_1) \in E(\mathscr{C}_\infty) \}}, \ldots, \nabla u((x, x + e_d)) \indc_{\{ (x, x+e_d) \in E(\mathscr{C}_\infty) \}} ) \\
				&\in \R^d.
			\end{align*}
			\item For a function $u$ defined on a subset of the cluster, we similarly define $\nabla u(e)$
			where in this case a vector field has a domain given by the set of oriented edges in $\mathscr{C}_{\infty}$.
			\item For a function $v$ on $\R^d$, $\nabla v$ denotes the usual gradient, $\nabla^2$ the Hessian, 
			and $\partial_{i}$ the partial derivative with respect to the $i$-th coordinate.
			\item For a function of two arguments, \eg, the Green's function $G(x,y)$, we write $\nabla_x$ for the gradient with respect
			to the first variable and $\nabla_y$ for the second variable.
		\end{itemize}    
	\end{itemize}

	\subsection{An elementary property of sets of full measure}
	
	We collect the following elementary result which will be used repeatedly in the proofs below. Let us denote by $\left( \Omega , \mathcal{F} , \P\right)$ the probability space where $\Omega := \left\{ 0, 1 \right\}^{E ( \mathbb{Z}^d )}$ is the set of all the percolation configurations and $\mathcal{F}$ is the product $\sigma$-algebra. Given a measurable set~$E \in \mathcal{F}$ and a finitely supported function~$f: E(\Zd) \to \{0, 1\}$ we denote by~$E_f$ the event which is~$E$ except the values at edges~$e$ with~$f(e)=1$
	are flipped:
	\[
	E_f := \left\{ \mathbf{a} \in \Omega \, : \, {\mathbf{a}}' \in E ~ \mbox{with} ~  {\mathbf{a}}'(e) = (1 - f(e)) \mathbf{a}(e) + f(e)(1 - \mathbf{a}(e)) \, \, \forall \, e \in E(\Zd) \right\}.
	\]
	We denote by~$E_{\infty}$ the intersection of~$E_f$ over all such finitely supported~$f$. 	The following (trivial) implication holds, $E_\infty \subseteq E$. We observe that if $E$ is an event of full probability,  \ie, satisfying $\P(E) = 1$, then $E_\infty$ is also an event of probability $1$.
	
	\begin{lemma} \label{lemma:finite-energy}
		For any measurable event $E \in \mathcal{F}$ of full probability, one has the identity
		$\P (E_\infty) = 1.$
	\end{lemma}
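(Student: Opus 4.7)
The plan is to exploit the finite energy property of independent Bernoulli bond percolation: any two configurations that differ on only finitely many edges are equally likely under the appropriate conditioning, so a set of probability zero cannot be reached from a positive-probability set of configurations by a finite modification.

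Concretely, let $F := \Omega \setminus E$, so $\P(F) = 0$. For any finite $S \subset E(\Z^d)$ and any $\sigma \in \{0,1\}^S$, define the modification map $T_{S,\sigma} : \Omega \to \Omega$ by
\begin{equation*}
(T_{S,\sigma} \mathbf{a})(e) := \sigma(e) \text{ if } e \in S, \qquad (T_{S,\sigma}\mathbf{a})(e) := \mathbf{a}(e) \text{ if } e \notin S.
\end{equation*}
Since $\mathbf{a}' \in \Omega$ agrees with $\mathbf{a}$ outside a finite set if and only if $\mathbf{a}' = T_{S,\sigma}\mathbf{a}$ for some finite $S$ and some $\sigma \in \{0,1\}^S$, one can rewrite
\begin{equation*}
\Omega \setminus E_\infty = \bigcup_{S \text{ finite}} \bigcup_{\sigma \in \{0,1\}^S} T_{S,\sigma}^{-1}(F).
\end{equation*}
Because $E(\Z^d)$ is countable, the set of finite subsets $S$ is countable, and for each $S$ there are only finitely many $\sigma$, so the above is a countable union.

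It therefore suffices to show $\P\bigl(T_{S,\sigma}^{-1}(F)\bigr) = 0$ for each fixed pair $(S,\sigma)$. Decompose $\P = \mu_S \otimes \mu_{S^c}$ according to the product structure on $\{0,1\}^S \times \{0,1\}^{S^c}$. Since $T_{S,\sigma}\mathbf{a}$ is deterministic on $S$ and equals $\mathbf{a}$ on $S^c$,
\begin{equation*}
\P\bigl(T_{S,\sigma}^{-1}(F)\bigr) = \mu_{S^c}\bigl(\{\mathbf{b} \in \{0,1\}^{S^c} : (\sigma,\mathbf{b}) \in F\}\bigr).
\end{equation*}
On the other hand, $0 = \P(F) = \sum_{\tau \in \{0,1\}^S} \mu_S(\tau)\, \mu_{S^c}\bigl(\{\mathbf{b} : (\tau,\mathbf{b}) \in F\}\bigr)$, and because $\mathfrak{p} \in (0,1)$ every weight $\mu_S(\tau)$ is strictly positive. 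Hence the conditional slice probability vanishes for every $\tau$, in particular for $\tau = \sigma$, giving $\P\bigl(T_{S,\sigma}^{-1}(F)\bigr) = 0$.

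Summing the countable union yields $\P(\Omega \setminus E_\infty) = 0$, so $\P(E_\infty) = 1$. No real obstacle is expected; the entire argument is a bookkeeping of the finite-energy / product-measure structure together with countability, and the only subtlety is making sure the union is genuinely countable, which it is because $E(\Z^d)$ is countable.
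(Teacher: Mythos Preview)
Your proof is correct and takes essentially the same approach as the paper: both arguments decompose $\P$ as a product over a finite set of edges and its complement, and use that every Bernoulli weight is strictly positive (since $\mathfrak{p} \in (0,1)$) to conclude that each slice has the right measure. You phrase it via the complement $F$ and modification maps $T_{S,\sigma}$, while the paper works directly with the events $E_R$ and takes an increasing union; these are the same argument up to taking complements.
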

	
	\begin{proof}
		This is immediate from the fact that for every finitely supported~$f: E(\Zd)  \to \{0,1\}$, we have that $\P[E_f] = 1$ and the set of such~$f$ is countable. 
	\end{proof}
	
	\begin{remark}
		The prior result is a consequence of the finite energy of Bernoulli percolation, see, \eg, \cite[Section 7.2]{lyons-peres-book}.
	\end{remark}

	\subsection{Connectivity properties of the cluster} \label{subsec:percolation-properties}
	Recall the notation for the cluster from Section \ref{subsec:notation}. Given a cube $Q_R$, we write $\mathscr{C}_*(Q_R)$ for the largest connected component of open edges contained in the cube, 
	breaking ties in a deterministic fashion. We say that a cube $Q_R$ is {\it well-connected} if the following properties hold:
	\begin{itemize}
		\item $R$ is large enough so that $R^{1/(10 d)^2} \leq  R/100$.
		\item $Q_R$ is {\it crossing}: each of the $d$ pairs of opposite $(d-1)$ dimensional faces of $Q_R$ is joined by a path in $\mathscr{C}_*(Q_R)$.
		\item Every cube $Q_M$ of side length $M$ for $M \in [R^{1/(10 d)^2}/2, R/10]$ which intersects $Q_R$ is crossing. Moreover, for every such cube, every open path of length at least $\frac{M}{100}$ in $Q_M$ is connected, within $Q_M$, to $\mathscr{C}_*(Q_M)$.
	\end{itemize}
	We remark that the exponent $1/(10 d)^2$ could be made smaller, at the cost of changing the constants and exponents below.
	Moreover, for $R$ sufficiently large, $\mathscr{C}_*(Q_R)$ is comparable to $\mathscr{C}_{\infty} \cap Q_R$.
	\begin{prop} \label{prop:well-connected}
		There exist constants $c(d, \mathfrak{p}) > 0,C(d, \mathfrak{p}) < \infty$ and an exponent $s(d) > 0$ such that the probability that $Q_N$ is well-connected
		is at least $1 - C \exp(-c N^s)$. 
	\end{prop}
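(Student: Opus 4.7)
The plan is to reduce this to two classical supercritical-percolation estimates and then union-bound over scales and positions. Specifically, I would invoke the following two well-known inputs (both proved by renormalization arguments of Grimmett-Marstrand, Pisztora, and Penrose-Pisztora, available for any $\mathfrak{p} \in (\mathfrak{p}_c(d),1)$): first, that for some $c(d,\mathfrak{p})>0$ and $s(d)>0$,
\[
\P[Q_M \text{ is not crossing}] \leq C \exp(-c M^s);
\]
second, that with the same kind of stretched exponential bound,
\[
\P\bigl[\exists\text{ open path in } Q_M \text{ of length} \geq M/100 \text{ not connected, within } Q_M,\text{ to } \mathscr{C}_*(Q_M)\bigr] \leq C \exp(-c M^s).
\]
The second statement is the uniqueness-of-the-crossing-cluster estimate, essentially the Antal-Pisztora inequality at a finite scale.

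Given these two ingredients, the proof is a union bound. I would first take $N$ large enough so that $N^{1/(10d)^2} \leq N/100$, which is automatic once $N$ is larger than a dimensional constant. For each integer scale $M \in [N^{1/(10d)^2}, N/10]$ and each translate $Q_M$ intersecting $Q_N$, the failure of either of the two properties above occurs with probability at most $C\exp(-c M^s) \leq C\exp(-c N^{s/(10d)^2})$. The number of such pairs $(M, Q_M)$ is polynomial in $N$, at most $C N^{d+1}$, so a union bound yields
\[
\P[Q_N \text{ is not well-connected}] \leq C N^{d+1} \exp\bigl(-c N^{s/(10d)^2}\bigr) + C \exp(-c N^s),
\]
where the last term accounts for the failure of $Q_N$ itself to be crossing. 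Absorbing the polynomial prefactor into the stretched exponential (at the cost of shrinking $c$ and replacing $s$ by a slightly smaller exponent, which we simply relabel $s(d)$) gives the desired bound $1 - C\exp(-cN^s)$.

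The only nontrivial input is the uniqueness-of-the-crossing-cluster estimate: one must know that a long open path inside a mesoscopic cube is very unlikely to be isolated from the bulk crossing cluster of that cube. Since this is standard supercritical percolation technology and can be cited from, e.g., Pisztora's renormalization or the Antal-Pisztora paper, there is no real obstacle here; the proof is purely bookkeeping on scales and positions once these inputs are in hand.
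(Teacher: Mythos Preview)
Your proposal is correct and matches the paper's approach: the paper's proof is a one-line citation of Pisztora's renormalization estimate and the Penrose--Pisztora uniqueness result (as recalled in Antal--Pisztora), which are exactly the two inputs you identify, and the union bound over scales and positions is the implicit mechanism for combining them. Your write-up is simply a more explicit version of what the paper leaves to the references.
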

	\begin{proof}
		This follows from \cite[Theorem 3.2]{pisztora-percolation} and \cite[Theorem 5]{penrose-pisztora-1996} as recalled in \cite[Equation (2.24)]{antal-pisztora-chemical}.
	\end{proof}

	\subsection{Homogenization of harmonic polynomials}
	In this section we record the preliminary theorems we use from \cite{dario-corrector, dario-gu-green,armstrong-dario-2018}, introduce the corrector, corrected planes, and spaces of harmonic functions. 
	Start by denoting by $\mathcal{A}(\mathscr{C}_{\infty})$ the space of harmonic functions on the cluster. For each $k \in \N$, we let $\mathcal{A}_k(\mathscr{C}_\infty)$ denote the subspace 
	of $\mathcal{A}(\mathscr{C}_{\infty})$ consisting of functions growing more slowly at infinity than a polynomial of degree $(k+1)$: 
	\[
	\mathcal{A}_k(\mathscr{C}_{\infty}) := \left\{ u \in \mathcal{A}(\mathscr{C}_{\infty}) : \limsup_{R \to \infty} R^{-(k+1)} \|u\|_{\underline{L}^2(\mathscr{C}_{\infty} \cap B_R)} = 0 \right \},
	\]
	and the space of harmonic polynomials of degree at most $k$ by 
	\[
	\overline{\mathcal{A}}_k := \left\{ u : \R^d \to \R : \Delta u = 0 ~\mbox{in} ~\Rd ~\mbox{and}~ \limsup_{R \to \infty} R^{-(k+1)} \|u\|_{\underline{L}^2(B_R)} = 0 \right \}.
	\]
	Recall the classical fact that harmonic functions of polynomial growth are polynomials, \ie, the space ~$\overline{\mathcal{A}}_k$ consists of harmonic polynomials of degree at most~$k$. Below we record the analogue of this fact, proved in \cite{armstrong-dario-2018}, for the percolation cluster.

	\subsubsection{Large-scale regularity}
	
	We recall below the large-scale regularity for harmonic functions on the percolation cluster. The result was originally established in~\cite[Theorem 1.2]{armstrong-dario-2018}.

	\begin{theorem}[Theorem 1.2 in \cite{armstrong-dario-2018}] \label{theorem:large scale-regularity}
		There exist exponents $s(d, \mathfrak{p}) > 0$ and $\delta(d, \mathfrak{p}) > 0$, a constant $C := C(d , \mathfrak{p}) < \infty$ such that for any $x \in \Zd$, there exists a nonnegative random variable  $\mathcal{M}_{\mathrm{reg}}(x)$  satisfying the stochastic integrability estimate, 
		\begin{equation}
			\mathcal{M}_{\mathrm{reg}}(x) \leq \mathcal{O}_s(C)
		\end{equation}
		such that the following hold:
		\begin{enumerate}[label=(\roman*)]
			\item For each $k \in \N$, there exists a constant $C_k(k, d, \mathfrak{p}) < \infty$ such that, for every $u \in \mathcal{A}_k(\mathscr{C}_{\infty})$, 
			there exists $p \in \overline{\mathcal{A}}_k$ such that, for every $r \geq   \mathcal{M}_{\mathrm{reg}} (x)$, 
			\begin{equation} \label{eq:harmonic-approximation}
				\| u - p\|_{\underline{L}^2(\mathscr{C}_{\infty} \cap B_r(x))} \leq C_k r^{-\delta} \|p\|_{\underline{L}^2(B_r(x))}.
			\end{equation}
			\item For every $k \in \N$ and $p \in \overline{\mathcal{A}}_k$, there exists $u \in \mathcal{A}_k$ such that, for every $r \geq   \mathcal{M}_{\mathrm{reg}}(x)$, 
			the inequality \eqref{eq:harmonic-approximation} holds. 
		\end{enumerate}
	\end{theorem}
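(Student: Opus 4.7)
Since this is a restatement of a known result from \cite{armstrong-dario-2018}, the actual proof is a citation; what follows is a plan mirroring the strategy used there, which fits in the broader framework of quantitative stochastic homogenization.

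The plan is to construct, inductively in $k$, a family of \emph{correctors} on the cluster that promote each harmonic polynomial $p \in \overline{\mathcal{A}_k}$ to a function $u \in \mathcal{A}_k(\mathscr{C}_\infty)$ with $u - p$ sublinear relative to the growth of $p$. The base case $k=0$ is trivial (constants are harmonic). For $k=1$, for each direction $e \in \R^d$ one builds a sublinear corrector $\phi_e : \mathscr{C}_\infty \to \R$ such that $x \mapsto e\cdot x + \phi_e(x)$ is harmonic on the cluster; existence and uniqueness of $\phi_e$ modulo constants follow from the ergodic/sublinear theory on $\mathscr{C}_\infty$. For higher $k$, one argues recursively: given a harmonic polynomial $p$ of homogeneous degree $k$, one seeks $\psi$ of order $k-\delta$ on the cluster so that $p+\psi$ is cluster-harmonic, by solving a hierarchy of Poisson-type problems whose right-hand sides involve lower-order correctors.

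The crucial quantitative ingredient is a sublinearity estimate of the form
\[
\|\phi_e\|_{\underline{L}^2(\mathscr{C}_\infty \cap B_r)} \le \mathcal{O}_s(C\, r^{1-\delta}),
\]
with analogous bounds for the higher-order correctors. In \cite{armstrong-dario-2018}, this is obtained by a renormalization scheme for the subadditive energy and flux quantities $J, \nu, \nu^*$ adapted to the supercritical percolation geometry: one proves an algebraic rate of convergence of these quantities to their homogenized limit using convex duality, the multiscale Poincaré inequality, and concentration over independent sub-cubes, exploiting the well-connectedness properties of Proposition~\ref{prop:well-connected} to reduce cluster-analysis to lattice-analysis on good cubes.

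Given quantitative sublinearity of correctors, statement (i) --- the $C^{k,\delta}$-type approximation --- follows from a standard excess-decay iteration: at each dyadic scale one compares a cluster-harmonic function $u$ to the best $\overline{\mathcal{A}_k}$ polynomial approximation using a Caccioppoli estimate together with the corrector bounds, deduces a geometric improvement of the excess going to smaller scales, and sums the geometric series down to the minimal scale $\mathcal{M}_{\mathrm{reg}}(x)$ above which the corrector bounds become effective. Statement (ii), the Liouville/surjectivity direction, is then essentially the inductive construction of the corrected polynomial noted above, with the same minimal scale. The whole scheme produces a single $\mathcal{M}_{\mathrm{reg}}(x)$ with stochastic integrability $\mathcal{O}_s(C)$ by combining the probabilistic bounds on all the dyadic-scale events used in the renormalization. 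The main obstacle --- and the technical heart of \cite{armstrong-dario-2018} --- is the quantitative sublinearity of the first-order corrector with $\mathcal{O}_s$-integrability; everything else is a relatively robust consequence of that input combined with the percolation connectivity estimates recalled above.
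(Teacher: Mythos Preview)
Your proposal is correct and matches the paper's treatment: the paper does not prove this theorem but simply cites it as \cite[Theorem 1.2]{armstrong-dario-2018}, and you rightly identify this and supply a reasonable sketch of the strategy from that reference. The sketch itself is accurate in its broad strokes (subadditive renormalization, quantitative corrector sublinearity, excess-decay iteration for large-scale regularity).
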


	\subsubsection{First order corrector}
	In the case $k = 1$,  functions which lie in $\mathcal{A}_1(\mathscr{C}_{\infty})$ satisfy stronger properties than that stated in Theorem \ref{theorem:large scale-regularity}. 
	
	\begin{theorem}[(1.22) in \cite{armstrong-dario-2018} and Proposition 2.12 in \cite{dario-gu-green}] \label{theorem:first-order-corrector}
		Every function $u \in \mathcal{A}_1(\mathscr{C}_{\infty})$ can be uniquely written as 
		\[
		u(x) = a + p \cdot x + \chi_p(x)
		\]
		where $a \in \R$, $p \in \R^d$, and $\chi_p$ is called the corrector and is defined up to additive constant. 
		The corrector satisfies the following properties:
		\begin{itemize}
			\item Linearity with respect to the variable $p$. The map $p \to \chi_p$ is linear.
			\item Quantitative sublinearity and Lipschitz bound. For any exponent $\alpha > 0$, there exist an exponent $s(d, \mathfrak{p}, \alpha) > 0$ and a constant $C(d, \mathfrak{p}, \alpha) < \infty$
			such that for any vertex $x \in \Z^d$, there exists a non-negative random variable $ \mathcal{M}_{\mathrm{corr}, \alpha}$
			satisfying the stochastic integrability estimate, 
			\begin{equation}
				\mathcal{M}_{\mathrm{corr}, \alpha}(x) \leq \mathcal{O}_s(C)
			\end{equation}
			such that for every radius $r \geq \mathcal{M}_{\mathrm{corr}, \alpha}(x)$ and every $p \in \R^d$,
			\begin{equation} \label{eq:corrector-sublinearity-and-lipschitz}
				\osc_{\mathscr{C}_{\infty} \cap B_r(x)} \chi_p + \| \nabla \chi_p \|_{L^{\infty}(\mathscr{C}_{\infty} \cap B_r(x))} \leq C |p| r^{\alpha}. 
			\end{equation}
		\end{itemize}
	\end{theorem}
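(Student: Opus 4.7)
The plan is to extract the existence of the decomposition directly from Theorem~\ref{theorem:large scale-regularity}(ii), deduce uniqueness and linearity by matching growth rates, and then upgrade the $L^2$-level sublinearity implicit in large-scale regularity to the pointwise oscillation and Lipschitz estimates.

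\emph{Existence and uniqueness.} For each $p \in \R^d$, the affine map $x \mapsto p \cdot x$ lies in $\overline{\mathcal{A}_1}$, so Theorem~\ref{theorem:large scale-regularity}(ii) produces $u_p \in \mathcal{A}_1(\mathscr{C}_\infty)$ with
\begin{equation*}
\|u_p - p \cdot (\cdot)\|_{\underline{L}^2(\mathscr{C}_\infty \cap B_r(x))} \leq C |p| \, r^{1-\delta} \qquad \text{for } r \geq \mathcal{M}_{\mathrm{reg}}(x).
\end{equation*}
Setting $\chi_p := u_p - p \cdot x$ gives a function on $\mathscr{C}_\infty$ that is $\underline{L}^2$-sublinear in $r$. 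For a general $u \in \mathcal{A}_1(\mathscr{C}_\infty)$, part~(i) produces an affine $a + p \cdot x \in \overline{\mathcal{A}_1}$ approximating $u$ on every sufficiently large ball, and the difference $u - a - p \cdot x$ is then sublinear. If $a_1 + p_1 \cdot x + \chi^{(1)} = a_2 + p_2 \cdot x + \chi^{(2)}$ for two sublinear remainders $\chi^{(i)}$, the affine function $(a_1 - a_2) + (p_1 - p_2) \cdot x$ must itself be sublinear on $\mathscr{C}_\infty$, which forces $p_1 = p_2$ and then $a_1 = a_2$. Linearity of $p \mapsto \chi_p$ is then immediate from the linearity of $x \mapsto p \cdot x$ in $p$ together with uniqueness.

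\emph{Oscillation and the main obstacle.} The $\underline{L}^2$-sublinearity above gives the oscillation estimate after combining with a large-scale Hölder estimate for harmonic functions on the cluster: on any mesoscopic ball where large-scale regularity applies, $\osc_{\mathscr{C}_\infty \cap B_r(x)} \chi_p$ is controlled by $(r/R)^\beta$ times $\|\chi_p\|_{\underline{L}^2(\mathscr{C}_\infty \cap B_R(x))}$, and choosing $R = r^{1+\varepsilon}$ with $\varepsilon$ small yields the desired $|p| r^\alpha$ bound at the cost of slightly inflating the random scale. The pointwise Lipschitz bound on $\nabla \chi_p$ is the main obstacle: unlike on $\Z^d$, the discrete graph Laplacian on $\mathscr{C}_\infty$ does not yield pointwise gradient estimates, since the contribution of any single edge depends on the random geometry at that edge. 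The strategy I would follow is to apply large-scale regularity one scale up: on a mesoscopic ball around each edge $e$, approximate $u_p$ by an affine function $q \cdot x + b$ via part~(i) of Theorem~\ref{theorem:large scale-regularity}, then match this against $p \cdot x + \chi_p$ using the $L^2$ bound to force $|q - p| \leq C |p| r^{\alpha - 1}$, and finally bound $\nabla \chi_p(e)$ by the sum of the mismatch of affine pieces and the oscillation of $\chi_p$ over the ball, both of which are $O(|p| r^\alpha)$.

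\emph{Assembling the stochastic estimate.} The random variable $\mathcal{M}_{\mathrm{corr},\alpha}(x)$ is built as the maximum of $\mathcal{M}_{\mathrm{reg}}(x)$ and additional random scales controlling the $\underline{L}^2$-to-$L^\infty$ upgrade and the edge-by-edge affine approximation above. Each ingredient is a stationary quantity with stretched-exponential tails, so their maximum retains $\mathcal{O}_s(C)$ integrability (with $s$ possibly smaller than that of the ingredients) by a union bound and Lemma~\ref{lemma:finite-energy}-style considerations for the finitely many edges adjacent to $x$. This packages the pointwise corrector bound \eqref{eq:corrector-sublinearity-and-lipschitz} in the form stated.
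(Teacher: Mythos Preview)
The paper does not prove this statement: it is recorded in Section~\ref{sec:preliminaries} as a preliminary result quoted verbatim from \cite{armstrong-dario-2018} (display (1.22)) and \cite[Proposition~2.12]{dario-gu-green}, with no proof given. So there is no ``paper's own proof'' to compare your attempt against.

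That said, your sketch has the right architecture but contains a genuine gap in the oscillation step. You write that ``$\osc_{\mathscr{C}_\infty \cap B_r(x)} \chi_p$ is controlled by $(r/R)^\beta$ times $\|\chi_p\|_{\underline{L}^2(\mathscr{C}_\infty \cap B_R(x))}$'' via a large-scale H\"older estimate for harmonic functions. But $\chi_p$ is \emph{not} harmonic on the cluster; only $u_p = p\cdot x + \chi_p$ is. A H\"older or De~Giorgi-type oscillation decay applies to $u_p$, not to $\chi_p$, and subtracting the linear part $p\cdot x$ destroys the estimate (the oscillation of $p\cdot x$ on $B_r$ is of order $|p|r$, which is the main term, not a remainder). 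The correct route is the one you outline only for the gradient: on each ball, approximate the harmonic function $u_p$ by an affine map $q\cdot x + b$ via Theorem~\ref{theorem:large scale-regularity}(i), show $|q-p|$ is small by comparing the two $\underline{L}^2$ approximations of $u_p$, and then bound $\osc \chi_p = \osc(u_p - p\cdot x)$ by $\osc(u_p - q\cdot x - b) + |q-p|\,r$. Your existence/uniqueness and linearity paragraph is fine (modulo the minor point that $a$ is not uniquely determined, only $p$ is, consistent with $\chi_p$ being defined up to additive constant), and your description of how to assemble $\mathcal{M}_{\mathrm{corr},\alpha}$ from maxima of minimal scales is the standard mechanism.
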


	\newcommand{\ellp}[1]{\hyperref[eq:corrected-plane]{\ell_{#1}}}
	
	In the rest of this article, we write 
	\begin{equation} \label{eq:corrected-plane}
		\ellp{p} = \mbox{unique, modulo additive constant harmonic function which grows like $p \cdot x$ at infinity} \\
	\end{equation}
	and refer to this function as the {\it corrected plane}. 
	
	\subsection{Homogenization of the Green's function}
	Given the results of \cite{armstrong-dario-2018} and \cite{dario-gu-green}, the following results are a straightforward adaptation of the proofs of \cite[Theorem 8.14 and Theorem 8.20]{AKM19}. 
	For the statement, denote by $G(\cdot, \cdot)$ the elliptic Green's function on the percolation cluster and $\bar{G}(\cdot)$ its continuum homogenized version. The function $\bar{G}(\cdot)$ is a multiple of the standard elliptic Green's function on $\R^d$, and the multiplicative coefficient is explicit in terms of the diffusivity of the random walk on the percolation cluster and of the density of the infinite cluster (see e.g., \cite[(1.9)]{BH09} or \cite[(1.9)]{dario-gu-green})
	\[
	\bar{G}(x) := 
	\begin{cases}
		- \frac{1}{2\pi \bar{\mathbf{a}}(\mathfrak p)} \ln |x| \quad \mbox{if~$d = 2$}  \\
		\frac{\Gamma(d/2 -1)}{2 \pi^{d/2} \bar{\mathbf{a}}(\mathfrak p)} \frac{1}{|x|^{d-2}} \quad \mbox{if~$d \geq 3$}, 
	\end{cases}
	\]
	where~$\bar{\mathbf{a}}( \mathfrak{p})$ is a deterministic function of the percolation probability~$ \mathfrak{p}$ (the homogenized coefficient associated with the infinite cluster).
	
	In dimension $d \geq 3$, given a vertex $y \in \mathscr{C}_{\infty}$,
	we define $G(\cdot, y)$ as the solution to
	\begin{equation} \label{eq:green-function-3D}
		\begin{cases}
			&\Delta_{\mathscr{C}_{\infty}} G(\cdot, y) = -\delta_y \quad \mbox{in $\mathscr{C}_{\infty}$} \\
			&\lim_{|x| \to \infty} G(x, y) =  0
		\end{cases}
	\end{equation}
	and in dimension $d = 2$, 
	\begin{equation} \label{eq:green-function-2D}
		\begin{cases}
			&\Delta_{\mathscr{C}_{\infty}} G(\cdot, y) = -\delta_y \quad \mbox{in $\mathscr{C}_{\infty}$} \\
			&\lim_{|x| \to \infty} \frac{1}{|x|} G(x, y) =  0 \\
			& G(y,y) = 0.
		\end{cases}
	\end{equation}
	
	The first result we present is a homogenization theorem for the gradient of the Green's function. This result can be fairly easily deduced from~\cite[Theorem 2]{dario-gu-green} and the large-scale regularity on the infinite cluster stated in Theorem~\ref{theorem:large scale-regularity}. The proof is written below for completeness. 
	
	\begin{prop}[Homogenization of the gradient of the Green's function] \label{prop:homogenizationellipticgreen}
		For any $\delta > 0$, there exist constants $s(d , \mathfrak{p}, \delta)> 0$, $C (d , \mathfrak{p}, \delta) < \infty$, and, for any $y \in \mathbb{Z}^d$, there exists a minimal scale $\mathcal{M}_{\nabla-\mathrm{Homog}, \delta}(y)$
		satisfying 
		\begin{equation*}
			\mathcal{M}_{\nabla-\mathrm{Homog}, \delta}(y) \leq \mathcal{O}_s(C)
		\end{equation*}
		such that, for every $x, y \in \mathscr{C}_\infty$ with $|x - y| \geq \mathcal{M}_{\nabla-\mathrm{Homog}}(y)$,
		\begin{equation} \label{eq:homogenizationellipticgreen}
			\left| \nabla_x G(x , y) - \nabla \bar G(x - y) - \sum_{i = 1}^d \nabla \chi_{e_i}(x) \partial_i \bar G(x - y) \right| \leq C |x-y|^{-\frac{1}{2}+\delta} |x - y|^{1-d}.
		\end{equation}	
	\end{prop}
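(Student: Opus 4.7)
The strategy is to approximate $G(\cdot,y)$ on a mesoscopic ball around $x$ by a \emph{corrected affine function} whose discrete gradient at $x$ is explicit, and then convert this $L^\infty$ approximation into an edge-gradient bound via the large-scale $C^{0,1}$ regularity of Theorem~\ref{theorem:large scale-regularity}.  Fix $y$, set $R := |x-y|$ and $r := R/4$ (so $y \notin B_{2r}(x)$), and let $p := \nabla \bar G(x-y) \in \R^d$.  By linearity of the corrector map from Theorem~\ref{theorem:first-order-corrector}, the function
\[ \tilde \ell(z) := \bar G(x-y) + p \cdot (z-x) + \sum_{i=1}^d p_i \chi_{e_i}(z) \]
is harmonic on $\mathscr{C}_\infty$, and its discrete gradient at $x$ equals $\nabla \bar G(x-y) + \sum_i \partial_i \bar G(x-y)\, \nabla \chi_{e_i}(x)$, which is exactly the quantity subtracted from $\nabla_x G(x,y)$ in~\eqref{eq:homogenizationellipticgreen}.

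The first input is the value-level two-scale expansion of $G$ supplied by \cite[Theorem~2]{dario-gu-green}: once $R$ exceeds a minimal scale with stretched-exponential tails,
\[ \left| G(z,y) - \bar G(z-y) - \sum_i \chi_{e_i}(z)\, \partial_i \bar G(z-y)\right| \leq C R^{-1/2+\delta/2} R^{2-d} \quad \forall z \in B_{2r}(x)\cap \mathscr{C}_\infty. \]
Taylor expanding $\bar G(\cdot-y)$ and its first partials around $x$, using $|\nabla^k \bar G| \lesssim R^{2-d-k}$ together with the corrector sublinearity $\osc_{B_{2r}(x)} \chi_{e_i} \leq C R^\alpha$ (any small $\alpha > 0$, from Theorem~\ref{theorem:first-order-corrector}), replaces the slowly-varying slope $\nabla \bar G(\cdot-y)$ by the frozen value $p$ at cost $O(R^{2-d})$.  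Combining,
\[ \| G(\cdot,y) - \tilde \ell\|_{L^\infty(B_{2r}(x)\cap \mathscr{C}_\infty)} \leq C R^{-1/2+\delta/2} R^{2-d}. \]

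The difference $w := G(\cdot,y) - \tilde \ell$ is harmonic on $\mathscr{C}_\infty \cap B_{2r}(x)$.  Iterating Theorem~\ref{theorem:large scale-regularity} on dyadic scales from $r$ down to $\mathcal{M}_{\mathrm{reg}}(x)$ (a Campanato-type scheme, cf.\ \cite{AKM19}) yields a slope $q \in \R^d$ with $|q| \leq C\|w\|_{\underline{L}^2(B_r(x))}/r \leq C R^{-1/2+\delta/2} R^{1-d}$ such that $w$ is approximated on $B_r(x)$ by a constant plus $\ellp{q}$ with an improved Campanato remainder of order $r^{-\delta_0}|q|r$.  The same iteration delivers the pointwise edge bound $|\nabla w(x) - \nabla \ellp{q}(x)| \leq r^{-\delta_0}|q|$ at each edge incident to $x$, while the Lipschitz estimate~\eqref{eq:corrector-sublinearity-and-lipschitz} controls $|\nabla \ellp{q}(x)| \leq |q|(1 + C R^\alpha)$.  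Choosing $\alpha$ and $\delta_0$ small relative to $\delta/2$ absorbs these losses into the exponent, giving $|\nabla w(x)| \leq C R^{-1/2+\delta} R^{1-d}$; since $\nabla \tilde \ell(x)$ is exactly the right-hand side of~\eqref{eq:homogenizationellipticgreen}, this is the claimed estimate.  A union bound over $x \in B_R(y)$ over the $x$-dependent minimal scales $\mathcal{M}_{\mathrm{reg}}(x)$ and $\mathcal{M}_{\mathrm{corr},\alpha}(x)$ produces a single minimal scale $\mathcal{M}_{\nabla-\mathrm{Homog},\delta}(y)$ with the required $\mathcal{O}_s(C)$ integrability (with adjusted constants).

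The main obstacle is the passage from the $L^\infty$ control of $w$ to a pointwise edge-gradient bound at $x$: on the cluster, the discrete gradient of a harmonic function is \emph{not} dominated by its $L^\infty$ norm alone, because narrow bottlenecks can produce large nearest-neighbor differences even for small harmonic functions (cf.\ Figure~\ref{fig:flip-edge}).  The large-scale regularity of~\cite{armstrong-dario-2018} circumvents this by comparing $w$ to a corrected plane whose edge-gradient decomposes as slope plus $\nabla \chi_q$, with the cost that the sublinear (rather than bounded) Lipschitz estimate on $\nabla \chi_q$ introduces a factor $R^\alpha$ which must be carefully absorbed into the exponent $\delta$.
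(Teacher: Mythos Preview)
Your argument has a genuine gap at the freezing step. With $r = R/4$, the Taylor remainder
\[
\bar G(z-y) - \bar G(x-y) - p\cdot(z-x) = O\bigl(|z-x|^2\,\|\nabla^2\bar G\|_{L^\infty(B_r(x))}\bigr) = O\bigl(r^2 R^{-d}\bigr) = O(R^{2-d})
\]
is the same size as $\bar G$ itself and dominates the two-scale error $R^{-1/2+\delta/2}R^{2-d}$ you quoted. So the conclusion $\|G(\cdot,y)-\tilde\ell\|_{L^\infty(B_{2r}(x))}\le CR^{-1/2+\delta/2}R^{2-d}$ is false as stated: you only get $O(R^{2-d})$, and then the Campanato step produces a slope $|q|\le C\|w\|/r \le CR^{1-d}$, which misses exactly the $R^{-1/2}$ you are after. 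Shrinking $r$ helps the Taylor term but hurts the $\|w\|/r$ bound; with your stated two-scale input the optimal balance gives only $|q|\le CR^{-1/4+\delta}R^{1-d}$, not $R^{-1/2+\delta}R^{1-d}$. (Separately, the paper invokes \cite[Theorem~2]{dario-gu-green} only for the zeroth-order estimate $|G-\bar G|\le CR^{1-d+\delta}$, not for a value-level two-scale expansion at the rate you assume.)

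The paper avoids this by never subtracting $\tilde\ell$ from $G$. Instead it applies the large-scale $C^{1,1}$ regularity of Theorem~\ref{theorem:large scale-regularity} directly to $G(\cdot,y)$ on $B_R(x)$ to produce a slope $\xi$ with $\inf_a\|G-\ell_\xi-a\|_{\underline L^2(B_r(x))}\le C(r/R)^2\|G\|_{\underline L^2(B_R(x))}$, and then \emph{compares $\xi$ to $\nabla\bar G(x-y)$} using the zeroth-order homogenization $|G-\bar G|\le CR^{1-d+\delta}$, a Taylor expansion of $\bar G$, and the corrector sublinearity. This yields
\[
|\xi-\nabla\bar G(x-y)| \;\le\; CrR^{-d} + Cr^{\delta-1}|\nabla\bar G(x-y)| + Cr^{-1}R^{1-d+\delta},
\]
and the choice $r=R^{1/2}$ balances the three terms at $R^{1/2-d+\delta}$. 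Caccioppoli then controls $\|\nabla G-\nabla\ell_\xi\|_{\underline L^2(B_r)}$, and the pointwise passage is handled by taking $r$ down to the minimal scale $\mathcal X(x,y)$ and absorbing the resulting $\mathcal X^{d/2}$ prefactor into a redefined minimal scale. The mesoscopic scale $r=R^{1/2}$---not $r\asymp R$---is what produces the $R^{-1/2}$ gain.
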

	
	\begin{remark}
		The exponent $1/2$ in the right-hand side of~\eqref{eq:homogenizationellipticgreen} is not optimal but is sufficient for our purposes and can be obtained with a relatively short argument.
	\end{remark}

	The following proposition provides decay estimates on the mixed derivative of the Green's function. Once again, the proof can be fairly easily obtained by combining the results of \cite{dario-gu-green}
	and \cite{armstrong-dario-2018}.

	\begin{prop}[Decay estimate for the mixed derivative of the Green's function] \label{prop:green-mixed-derivative}
		For any $\delta > 0$, there exist constants $s(d , \mathfrak{p}, \delta)> 0$, $C (d , \mathfrak{p}, \delta) < \infty$, and, for any $y \in \mathbb{Z}^d$, there exists a minimal scale $\mathcal{M}_{\nabla\nabla-\mathrm{Decay}}(y)$
		satisfying 
		\begin{equation} \label{eq:stoch.estMdecay}
			\mathcal{M}_{\nabla\nabla-\mathrm{Decay}, \delta}(y) \leq \mathcal{O}_s(C)
		\end{equation}
		such that, for every $x, y \in \mathscr{C}_\infty$ with $|x - y| \geq \mathcal{M}_{\nabla\nabla-\mathrm{Decay}, \delta}(y)$, one has the estimate
		\begin{equation*}
			|\nabla_x \nabla_y G(x, y)| \leq \frac{C}{|x - y|^{d - \delta}}.
		\end{equation*}
	\end{prop}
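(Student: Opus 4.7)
The plan is to fix $x \in \mathscr{C}_\infty$ and a coordinate direction $e_k$, view
\[
y' \longmapsto F(y') := G(x+e_k, y') - G(x, y')
\]
as a harmonic function of $y'$ on $\mathscr{C}_\infty \setminus \{x, x+e_k\}$, and combine an $L^\infty$ bound on $F$ coming from Proposition~\ref{prop:homogenizationellipticgreen} with a large-scale Lipschitz-type estimate on balls centered at $y$. By the symmetry $G(x,y) = G(y,x)$, controlling $|\nabla_x \nabla_y G(x,y)|$ reduces to bounding $|\nabla_y F(e)|$ at edges $e$ incident to $y$. Set $r := |x-y|$ and work throughout on the event that $r$ is larger than suitable random minimal scales.

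First, Proposition~\ref{prop:homogenizationellipticgreen} applied to $\nabla_x G(\cdot , y')$ for $y' \in B_{r/4}(y) \cap \mathscr{C}_\infty$, together with the corrector Lipschitz bound~\eqref{eq:corrector-sublinearity-and-lipschitz} at the point $x$, gives
\[
\|F\|_{L^\infty(B_{r/4}(y) \cap \mathscr{C}_\infty)} \leq C r^{1-d+\alpha}
\]
for any $\alpha > 0$. Second, since $F$ is harmonic on $B_{r/2}(y) \cap \mathscr{C}_\infty$, a Campanato-type iteration of the $k = 1$ case of Theorem~\ref{theorem:large scale-regularity} produces an affine approximation $p(y') = F(y) + b \cdot (y' - y)$ with
\[
|b| \leq C r^{-1} \|F\|_{\underline{L}^2(B_{r/4}(y) \cap \mathscr{C}_\infty)} \leq C r^{-d+\alpha},
\]
together with an improved $L^2$-approximation $F \approx F(y) + \ellp{b}(\cdot - y)$ on smaller subballs, decaying by a power $(s/r)^{1+\delta_0}$ for some $\delta_0 > 0$.

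Finally, at an edge $e = (y, y+e_j) \in E(\mathscr{C}_\infty)$, decompose
\[
\nabla_y F(e) = \bigl( b \cdot e_j + \nabla \chi_b(e) \bigr) + \nabla_y \bigl( F - F(y) - \ellp{b}(\cdot - y) \bigr)(e).
\]
The first term is controlled by $|b|(1 + C r^\alpha) \leq C r^{-d + 2\alpha}$ via Theorem~\ref{theorem:first-order-corrector}. The residual $R := F - F(y) - \ellp{b}(\cdot - y)$ is harmonic on $B_{r/2}(y) \cap \mathscr{C}_\infty$, so the $L^2$-control of Step~2 can be promoted to a pointwise bound at the endpoints of $e$ using the mean-value/Moser inequality coming from the elliptic Harnack inequality of Barlow~\cite{barlow-random-walk-percolation}, at an auxiliary scale $s = \mathcal{M}_{\mathrm{reg}}(y) \vee 1$. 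This contributes an additional $O(r^{-d + \alpha'})$ with $\alpha'$ arbitrarily small. Choosing $\alpha$ and $\alpha'$ so that $2\alpha + \alpha' < \delta$ yields the claim; the minimal scale $\mathcal{M}_{\nabla\nabla\text{-Decay},\delta}(y)$ is defined as the maximum of the auxiliary minimal scales $\mathcal{M}_{\mathrm{reg}}$, $\mathcal{M}_{\mathrm{corr},\alpha}$, and $\mathcal{M}_{\nabla-\mathrm{Homog},\alpha}$ at $x$, $y$, and their neighbors, whose $\mathcal{O}_s$-integrability combines into~\eqref{eq:stoch.estMdecay} by a union bound.

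The principal obstacle is Step~2: Theorem~\ref{theorem:large scale-regularity} is stated for globally defined functions in $\mathcal{A}_k(\mathscr{C}_\infty)$, whereas $F$ is only locally harmonic, so one must derive a Campanato-iterated $C^{0,1}$ regularity for functions harmonic on a ball. This should follow by the same scheme as in \cite{armstrong-dario-2018} (compare \cite[Chapter~3]{AKM19}), but needs to be spelled out. The $r^\delta$ loss in the final estimate enters through the step of converting the $L^2$-approximation into a pointwise single-edge gradient, which forces us to invoke the non-uniform $r^\alpha$ Lipschitz bound on the corrector from Theorem~\ref{theorem:first-order-corrector} rather than a uniform bound.
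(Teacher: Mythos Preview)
Your approach is correct in outline, but it is considerably more involved than the paper's argument, and you overstate the ``principal obstacle.'' The paper's proof is a direct two-step argument: (1) it cites the first-gradient decay
\[
|\nabla_y G(x,y)| \leq \frac{C\,\mathcal{M}_{\nabla\text{-Decay}}(y)^{d/2}}{|x-y|^{d-1}}
\]
from \cite[Theorem~3/Remark~1.3]{dario-gu-green} directly; and (2) it applies the large-scale \emph{Lipschitz} estimate for locally harmonic functions (the $k=0$ case of Theorem~\ref{theorem:large scale-regularity} combined with Caccioppoli, recorded in the paper as \cite[Proposition~2.14]{dario-corrector}) to the function $x' \mapsto \nabla_y G(x',y)$, which is harmonic on $B_{|x-y|/2}(x) \cap \mathscr{C}_\infty$. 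This yields
\[
|\nabla_x \nabla_y G(x,y)| \leq \frac{C\,\mathcal{M}_{\mathrm{reg}}(x)^{d/2}\,\mathcal{M}_{\nabla\text{-Decay}}(y)^{d/2}}{|x-y|^d}
\]
immediately; no affine subtraction, no Campanato iteration, no Moser inequality.

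By contrast, you reconstruct step (1) via Proposition~\ref{prop:homogenizationellipticgreen} plus the corrector Lipschitz bound, and you replace the one-line Lipschitz step (2) by a $C^{1,\alpha}$ iteration together with a separate $L^2 \to L^\infty$ promotion for the residual. The obstacle you flag --- that Theorem~\ref{theorem:large scale-regularity} is stated globally --- is not actually an obstacle: the local $C^{0,1}$ version is precisely what is used inside the proof of Proposition~\ref{prop:homogenizationellipticgreen} (equation~\eqref{eq:large scale-regularity} there), and it already suffices to go from $\|F\|_{\underline L^2}$ on a large ball to $|\nabla F|$ at the center. Subtracting the affine part is unnecessary.

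There is also a minor gap in your packaging of the minimal scale: as written, it depends on $x$ (through $\mathcal{M}_{\mathrm{reg}}(x)$, $\mathcal{M}_{\mathrm{corr},\alpha}(x)$, and, via Proposition~\ref{prop:homogenizationellipticgreen}, on $\mathcal{M}_{\nabla\text{-Homog}}(y')$ for all $y'$ in a ball around $y$), whereas the statement requires dependence only on $y$. The paper absorbs the $x$-dependent prefactor by defining
\[
\mathcal{M}_{\nabla\nabla\text{-Decay}}(y) := \sup\bigl\{ |x-y| : x \in \Z^d,\ \mathcal{M}_{\mathrm{reg}}(x)^{d/2}\,\mathcal{M}_{\nabla\text{-Decay}}(y)^{d/2} \geq |x-y|^\delta \bigr\}
\]
and checking $\mathcal{O}_s$-integrability by a union bound over $x$; you should make this reduction explicit.
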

	
	\begin{proof}[Proof of Proposition~\ref{prop:homogenizationellipticgreen}]
		We present the proof in the case $d \geq 3$, the only change for $d = 2$ is to replace the bound in \eqref{eq:green-homogenziation} below
		with the $d=2$ bound of \cite{dario-gu-green}. Fix an $x, y \in \Z^d$ and restrict to the event that $y \in \mathscr{C}_{\infty}$. 
		
		{\it Step 1: Preliminary results.} \\ 
		We start by recalling two results. The first one is a main result of \cite{dario-gu-green} and establishes a quantitative homogenization theorem for the Green's function on the percolation cluster. 
		Fix a small exponent $\delta > 0$. 
		By \cite[Theorem 2]{dario-gu-green}, there exists a nonnegative random variable $\mathcal{M}_{\mathrm{Homog}, \delta}(y) \leq \mathcal{O}_s(C)$ such that 
		\begin{equation} \label{eq:green-homogenziation}
			|G(z,y) - \bar{G}(z-y)|  \leq \frac{1}{|z-y|^{1-\delta}}  \frac{C}{|z-y|^{d-2}}, \quad \forall z \in \mathscr{C}_\infty, \, |z-y| \geq \mathcal{M}_{\mathrm{Homog}, \delta}(y).
		\end{equation}
		The second one is a density estimate for the infinite cluster due to~\cite[Theorem 1]{penrose-pisztora-1996}: 
		there exists a constant $\mathfrak{c}(\mathfrak{p}) > 0$ and a minimal scale $\mathcal{M}_{\mathrm{dense}}(x) \leq \mathcal{O}_s(C)$ 
		such that for all  $r \geq \mathcal{M}_{\mathrm{dense}}(x)$ any ball of radius $\frac{r}{100}$
		containing the vertex $x$ contains at least $\mathfrak{c} r^d$ vertices in the cluster. 
		
		We next note that Theorem~\ref{theorem:large scale-regularity} and Theorem~\ref{theorem:first-order-corrector} can be combined so as to establish the following statement. For every $R \geq 2 \mathcal{M}_{\mathrm{reg}}(x)$ and every function $u \in \mathcal{A}(\mathscr{C}_{\infty} \cap B_{2 R}(x))$, 
		there exists a slope $\xi \in \R^d$ such that 
		\begin{equation} \label{eq:large scale-regularity}
			\inf_{a \in \R} \| u - \ell_\xi  - a \|_{\underline{L}^2(\mathscr{C}_{\infty} \cap B_r(x))} \leq C \left( \frac{r}{R} \right)^2 \|u\|_{\underline{L}^2( \mathscr{C}_{\infty} \cap B_R(x))}, \quad \forall 
			r \in \left[\mathcal{M}_{\mathrm{reg}}(x), \frac{1}{2} R\right].
		\end{equation}
		
		Finally, we require the following (standard) deterministic estimates on the homogenized Green's function,
		which hold for all $r \geq 1$:
		\begin{equation} \label{eq:decay-estimate}
			\| \bar{G}\|_{L^\infty(B_{2r} \setminus B_r)} \leq C r^{2-d},
		\end{equation}
		\begin{equation} \label{eq:green-gradient-estimate}
			\| \nabla \bar{G}\|_{L^\infty(B_{2r} \setminus B_r)} \leq C r^{1-d},
		\end{equation}
		\begin{equation} \label{eq:green-hessian-estimate}
			\| \nabla^2 \bar{G}\|_{L^\infty(B_{2r} \setminus B_r)} \leq C r^{-d}.
		\end{equation}

		\medskip
		{\it Step 2: Reduction to a weaker statement.} \\
		Let 
		\begin{equation} \label{eq:choice-of-minimal-scale}
			\mathcal{X}(x,y) = \mathcal{M}_{\mathrm{Homog}, \delta}(y) \vee \mathcal{M}_{\mathrm{reg}}(x) \vee \mathcal{M}_{\mathrm{corr}, \delta}(x) \vee \mathcal{M}_{\mathrm{dense}}(x) \vee R_0
		\end{equation}
		where $R_0$ is a deterministic constant depending only on $d$, $\mathfrak{p}$, and $\delta$ to be determined in Step~3 below
		and write for notational simplicity, 
		\[
		g(\cdot) := G(\cdot, y) \quad \bar{g}(\cdot) := G(\cdot-y) \quad \mbox{and} \quad R = \frac{|x-y|}{2}. 
		\]
		
		We explain in this step that it suffices to prove the following weaker statement:  if $R^{\frac{1}{2}} \geq \mathcal{X}(x,y)$, then
		\begin{equation} \label{eq:two-scale-gradient-bound-l2}
			\| \nabla g - (\nabla \bar{g} + \nabla \chi_{\nabla \bar{g}(\cdot) }) \|_{\underline{L}^2(\mathscr{C}_{\infty} \cap B_r(x))}
			\leq C R^{ \frac{1}{2}- d + \frac{3\delta}{2}}, \quad \forall r \in [\mathcal{X}(x,y), R^{\frac{1}{2}}].
		\end{equation}
		where, for any $z \in \mathscr{C}_\infty$,
		\[
		\nabla \chi_{\nabla \bar{g}(\cdot) }(z) :=  \sum_{i = 1}^d \nabla \chi_{e_i}(z) \partial_i \bar g(z) .
		\]
		First, assume that $x \in \mathscr{C}_\infty$ and observe that \eqref{eq:two-scale-gradient-bound-l2} implies the following (trivial) bound: 
		\[
		\| \nabla g - (\nabla \bar{g} + \nabla \chi_{\nabla \bar{g}(\cdot) }) \|_{{L}^\infty(\mathscr{C}_{\infty} \cap B_r(x))}\leq C r^{\frac{d}{2}} R^{ \frac{1}{2}- d + \frac{3\delta}{2}}, \quad \forall r \in [\mathcal{X}(x,y), R^{\frac{1}{2}}]
		\]
		and therefore we may take $r = \mathcal{X}(x,y)$ in the above to get
		\begin{equation} \label{eq:two-scale-trivial-bound}
			| \nabla g(x) - (\nabla \bar{g}(x) + \nabla \chi_{\nabla \bar{g}(\cdot) }(x))| \leq C \mathcal{X}^{\frac{d}{2}}(x,y) R^{- d + \frac{\delta}{2}}.
		\end{equation}
		We then define the random variable
		\begin{equation*}
			\mathcal{M}(x , y) := 2 \mathcal{X}^{\frac{d}{\delta}}(x , y).
		\end{equation*}
		By shrinking the stochastic integrability exponent $s$ and increasing the value of the constant $C$, we have $\mathcal{M}(x , y) \leq \mathcal{O}_s(C)$. Additionally note that $d/\delta \geq 2$. Thus, if $|x - y| \geq \mathcal{M}(x , y)$, then $R^{1/2}\geq \mathcal{X}(x , y)$ and $\mathcal{X}^{d/2}(x , y) R^{-\delta/2} \leq 1.$ Combining the previous display with~\eqref{eq:two-scale-trivial-bound}, we obtain that: if $|x - y| \geq \mathcal{M}(x , y)$, then
		\begin{equation*}
			| \nabla g(x) - (\nabla \bar{g}(x) + \nabla \chi_{\nabla \bar{g}(\cdot) }(x))| \leq C R^{\frac{1}{2}- d + \delta}.
		\end{equation*}
		We then complete the proof by defining 
		\begin{equation*}
			\mathcal{M}_{\nabla\nabla-\mathrm{Decay}, \delta}(y) := \sup \left\{ |x - y| \, : \, x \in \Zd, \, \mathcal{M}(x , y) \geq |x - y| \right\}.
		\end{equation*}
		The integrability estimate $\mathcal{M}(x , y) \leq \mathcal{O}_s(C)$ and a union bound implies that $\mathcal{M}_{\nabla\nabla-\mathrm{Decay}, \delta}(y)$ satisfies the same stochastic integrability estimate.
		
		\medskip 
		{\it Step 3: Proof of weaker statement.} \\
		It remains to prove \eqref{eq:two-scale-gradient-bound-l2}.  Note that $g$ is harmonic in $\mathscr{C}_{\infty} \setminus \{y\}$
		and so
		\[
		\mbox{$g$ is harmonic in $B_R(x) \cap \mathscr{C}_{\infty}$}.
		\]
		Assume that $R \geq \mathcal{X}(x,y)$ (from \eqref{eq:choice-of-minimal-scale}) and fix an $r \in [\mathcal{X}(x,y), R/2]$. By the large scale regularity \eqref{eq:large scale-regularity} and then~\eqref{eq:green-homogenziation} and~\eqref{eq:decay-estimate}, we have that there exists a slope $\xi \in \R^d$ such that
		\begin{equation} \label{eq:green-large scale-regularity}
			\inf_{a \in \R} \| g - \ell_\xi - a \|_{\underline{L}^2(\mathscr{C}_{\infty} \cap B_r(x))} \leq C \left( \frac{r}{R} \right)^{2} \| g\|_{\underline{L}^2(\mathscr{C}_{\infty} \cap B_R(x))}
			\leq C r^2 R^{-d}.
		\end{equation}
		So, by Caccioppoli's inequality (see, \eg, \cite[Lemma 3.5]{armstrong-dario-2018} or \cite{delmotte-elliptic-estimates}), 
		\begin{equation} \label{eq:cacciopoli-apply}
			\| \nabla g - \nabla \ell_\xi  \|_{\underline{L}^2(\mathscr{C}_{\infty} \cap  B_r(x))} \leq C r R^{-d}.
		\end{equation}
		Also, by~\eqref{eq:green-homogenziation},
		\[
		|g(z) - \bar{g}(z)|  \leq \frac{1}{|z-y|^{d-2}} \frac{C}{|z-y|^{1-\delta}}, \quad \forall z \in B_R(x)
		\]
		and so 
		\begin{equation} \label{eq:homogenization-green-close}
			\| g - \bar{g} \|_{\underline{L}^2(\mathscr{C}_{\infty} \cap B_r(x))} \leq C R^{1 - d + \delta}.
		\end{equation}
		Since $\bar{g}$ is harmonic and hence smooth in $B_R(x)$, we have by Taylor's theorem and then \eqref{eq:green-hessian-estimate} that
		\begin{equation} \label{eq:green-taylor-expansion}
			\sup_{z \in B_r(x)}  
			| \bar{g}(z) - ( \bar{g}(x) + (z-x) \cdot \nabla \bar{g}(x)) |
			\leq C r^2 \| \nabla^2 \bar{g}(x) \|_{L^{\infty}(B_r(x))}
			\leq C r^2 R^{-d}.
		\end{equation}
		For the following computation, we introduce two notations for the affine functions 
		\[
		l_g(z) =  ( \bar{g}(x) + (z-x) \cdot \nabla \bar{g}(x)) \hspace{5mm}\mbox{and} \hspace{5mm} l_\xi(z) := \xi \cdot z
		\]
		and observe that $\nabla \ell_\xi = \xi + \nabla \chi_\xi = \nabla l_\xi + \nabla \chi_\xi.$
		We then write
		\begin{align*}
			r | \nabla \bar{g}(x) - \xi | &= r | \nabla (l_g  - l_\xi) |  \\
			&\leq C \inf_{a \in \R} \| l_g - (l_\xi   +  a   ) \|_{\underline{L}^2(\mathscr{C}_{\infty} \cap B_r(x))}\\
			& \quad \mbox{({for $r \geq \mathcal{M}_{\mathrm{dense}}(x)$ the $L^2$ norm }} \\
			&\quad \mbox{of an affine function in $B_{r}(x)$ majorizes its gradient at $x$}) \\ 
			&\leq C \inf_{a \in \R} \| \bar{g} - (l_\xi +  a ) \|_{\underline{L}^2(\mathscr{C}_{\infty} \cap B_r(x))} + C r^2 R^{-d} \quad \mbox{(by \eqref{eq:green-taylor-expansion})} \\
			&\leq C \inf_{a \in \R}\| g - (\ell_\xi +  a ) \|_{\underline{L}^2(\mathscr{C}_{\infty} \cap B_r(x))} + C \osc_{\mathscr{C}_{\infty} \cap B_r(x)} \chi_{\xi} 
			\\& \qquad + Cr^2 R^{-d} + CR^{1 - d + \delta} \\
			&\quad \mbox{(by the triangle inequality and \eqref{eq:homogenization-green-close})} \\ 
			&\leq C r^2 R^{-d} + C r^{\delta} |\xi| + C R^{1 - d + \delta} \\
			&\quad \mbox{(by \eqref{eq:green-large scale-regularity} and sublinearity of the corrector \eqref{eq:corrector-sublinearity-and-lipschitz})}.
		\end{align*}
		Now, take the constant $R_0$ in \eqref{eq:choice-of-minimal-scale} sufficiently large so that $R_0^{1-\delta/2}$ is larger than $2C$ to see from the triangle inequality that 
		\begin{equation*}
			C r^{\frac{\delta}{2}} |\xi| \leq C r^{\frac{\delta}{2}} |\nabla \bar g(x) - \xi| + C r^{\frac{\delta}{2}} |\nabla \bar g(x)| \leq \frac{r}{2} |\nabla \bar g(x) - \xi| + C r^{\frac{\delta}{2}} |\nabla \bar g(x)|
		\end{equation*}
		and so 
		\begin{align*}
			&\frac{1}{2} | \nabla \bar{g}(x) - \xi |  \\
			&\leq C r^{\delta-1} |\nabla \bar g(x)| + C r R^{-d}  + C r^{-1} R^{1 - d + \delta}   \quad \mbox{(by the above two displays)} \\
			&\leq C  r^{\delta-1} R^{1-d} + C r R^{-d} + C r^{-1} R^{1 - d + \delta}\quad \mbox{(by \eqref{eq:green-gradient-estimate})} \\
			&= C R^{\frac{1}{2} - d + \frac{\delta}{2} } + C R^{\frac{1}{2} - d} + C R^{\frac{1}{2} - d + \delta} \quad\mbox{(set $r = R^{1/2}$)} \\
			&\leq R^{\frac{1}{2}-d + \delta}.
		\end{align*}
		From this and \eqref{eq:cacciopoli-apply}, we obtain that 
		\begin{align*}
			&\| \nabla g - (\nabla \bar{g}(x) + \nabla \chi_{\nabla \bar{g}(x)}) \|_{\underline{L}^2(B_r(x))} \\
			&\leq  \| \nabla g - (\xi + \nabla \chi_{\xi}) \|_{\underline{L}^2(B_r(x))} 
			+ \left( | \nabla \bar{g}(x) - \xi| ( 1 + \sup_{e \in B_1} \| \nabla \chi_e \|_{L^\infty(B_r(x))} )\right) \\
			&\leq C r R^{-d} + r^\delta R^{\frac{1}{2}-d + \delta} \\
			&\leq R^{\frac{1}{2} - d + \frac{3\delta}{2}}  \quad \mbox{(choose $r \leq R^{\frac{1}{2}}$)}.
		\end{align*}
		However, we also have that, 
		\begin{align*}
			&\| (\nabla \bar{g} + \nabla \chi_{\nabla \bar{g}(\cdot)}) - (\nabla \bar{g}(x) + \nabla \chi_{\nabla \bar{g}(x)}) \|_{\underline{L}^2(B_r(x))} \\
			&\leq  \left( \osc_{B_r(x)} |\nabla \bar{g}| \right)  \left( 1 + \sup_{e \in B_1} \| \nabla \chi_e \|_{L^\infty(B_r(x))} \right) \\
			&\leq  r R^{-d}(1 + r^\delta)\quad \mbox{(by \eqref{eq:green-hessian-estimate} and gradient of corrector bound~\eqref{eq:corrector-sublinearity-and-lipschitz})} \\
			&\leq R^{\frac{1}{2} - d + \frac{\delta}{2}}  \quad \mbox{(choose $r \leq  R^{\frac{1}{2}}$)}. 
		\end{align*}
		This implies  \eqref{eq:two-scale-gradient-bound-l2}. 
	\end{proof}

	\begin{proof}[Proof of Proposition~\ref{prop:green-mixed-derivative}]
		Before proving Proposition~\ref{prop:green-mixed-derivative}, we collect two preliminary results. The first one provides a Lipschitz regularity estimate for harmonic functions on the infinite percolation cluster and can be obtained by applying Theorem~\ref{theorem:large scale-regularity} with the value $k=0$ and combining it with the Caccioppoli inequality (the exact statement below can be found in~\cite[Proposition 2.14]{dario-corrector}). The statement reads as follows. For any $r , R \geq \mathcal{M}_{\mathrm{reg}}(x)$ with $r \leq R$, and any harmonic function $u : \mathscr{C}_\infty \cap B_R(x) \to \R$,
		\begin{equation*}
			\left\| \nabla u \right\|_{\underline{L}^2(\mathscr{C}_\infty \cap B_r(x))} \leq \frac{C}{R} \left\|  u - (u)_{\mathscr{C}_\infty \cap B_R(x)} \right\|_{\underline{L}^2(\mathscr{C}_\infty \cap B_R(x))} \leq \frac{C}{R} \left\|  u \right\|_{\underline{L}^2(\mathscr{C}_\infty \cap B_R(x))}.
		\end{equation*}
		In particular, choosing $r = \mathcal{M}_{\mathrm{reg}}(x)$ yields
		\begin{equation} \label{eq:gradient-bound-regularity}
			| \nabla u(x) | \leq C \mathcal{M}_{\mathrm{reg}}(x)^{\frac{d}{2}} \left\| \nabla u \right\|_{\underline{L}^2(\mathscr{C}_\infty \cap B_r(x))}  \leq  \frac{C \mathcal{M}_{\mathrm{reg}}(x)^{\frac{d}{2}}}{R} \left\| u \right\|_{\underline{L}^2(B_R(x) \cap \mathscr{C}_{\infty})}.
		\end{equation}
		The second result is a direct consequence of~\cite[Theorem 3]{dario-gu-green} (or specifically~\cite[Remark 1.3]{dario-gu-green}) and an integration of the heat kernel over time, and reads as follows: for any $y \in \mathbb{Zd}$, there exists a random variable $\mathcal{M}_{\nabla\mathrm{-Decay}}(y)$ such that, if $y \in \mathscr{C}_\infty$ then for any $x \in \mathscr{C}_\infty$ with $|x - y| \geq \mathcal{M}_{\nabla\mathrm{-Decay}}(y)$,
		\begin{equation} \label{eq:nablayG}
			\left| \nabla_y G(x , y) \right| \leq \frac{C \mathcal{M}_{\nabla\mathrm{-Decay}}(y)^{\frac{d}{2}}}{|x - y|^{d-1}}.
		\end{equation}
		We next combine~\eqref{eq:gradient-bound-regularity} and~\eqref{eq:nablayG} and obtain that, for any $x , y \in \mathscr{C}_\infty$ with $|x - y| \geq 2 \left( \mathcal{M}_{\nabla\mathrm{-Decay}}(y) \vee \mathcal{M}_{\mathrm{reg}}(x) \right)$,
		\begin{align*}
			\left| \nabla_x \nabla_y G(x , y) \right| & \leq \frac{C \mathcal{M}_{\mathrm{reg}}(x)^{\frac{d}{2}} 
			}{|x-y|} \left\| \nabla_y G(\cdot , y) \right\|_{\underline{L}^2 \left(B_{|x - y|/2} \cap \mathscr{C}_{\infty} \right)} \\
			& \leq \frac{C \mathcal{M}_{\mathrm{reg}}(x)^{\frac{d}{2}} \mathcal{M}_{\nabla\mathrm{-Decay}}(y)^{\frac{d}{2}}}{|x-y|^d}.
		\end{align*}
		The previous inequality implies Proposition~\ref{prop:green-mixed-derivative} with the definition
		\begin{equation*}
			\mathcal{M}_{\nabla\nabla\mathrm{-Decay}}(y) := \sup \left\{ |x - y| \, : x \in \mathbb{Z}^d, \, \mathcal{M}_{\mathrm{reg}}(x)^{\frac{d}{2}} \mathcal{M}_{\nabla\mathrm{-Decay}}(y)^{\frac{d}{2}} \geq |y - x|^\delta \right\}.
		\end{equation*}
		The stochastic integrability estimates $\mathcal{M}_{\mathrm{reg}}(y) \leq \mathcal{O}_s(C)$ and $\mathcal{M}_{\nabla\mathrm{-Decay}}(x)\leq \mathcal{O}_s(C)$, and a union bound imply that the minimal scale $\mathcal{M}_{\nabla\nabla\mathrm{-Decay}}(y) $ satisfies the stochastic integrability estimate stated in~\eqref{eq:stoch.estMdecay}.
	\end{proof}
	
	\subsection{Topology in two dimensions and planarity}
	A key input into our proof of Theorem~\ref{theorem:fast-decay} is planarity.  In this subsection we collect several standard results on the topology of the plane, lattice, and cluster. First, we first introduce the stereographic projection.
	
	\begin{definition}[Stereographic projection]
		Let $\S^2 \subseteq \R^3$ be the Euclidean ball and let $N := (0,0,1)$ and $S := (0,0,-1)$ be the north and south poles of the sphere respectively.
		We denote by $SP : \S^2 \setminus \left\{ N \right\}  \to \R^2$ the stereographic projection. 
	\end{definition}
	
	The stereographic projection is a homeomorphism $\R^2 \to \S^2 \setminus \left\{ N \right\}$. Any path $\gamma$ is mapped to a continuous path on the sphere through the stereographic projection (with the important remark that any infinite path $\gamma$ is mapped to a continuous map $\tilde \gamma :[0 , 1] \to \S^2$ such that $\tilde \gamma(1) = N$). Any bi-infinite path $\gamma$ is mapped to a continuous loop $\tilde \gamma :[0 , 1] \to \S^2$ with $\tilde \gamma(0) = \tilde \gamma(1) = N$ (where we used the definition introduced in Section~\ref{subsec:notation}, which implies in particular that any bi-infinite path has to come from and go to infinity).
	
	We then record the notions of simple connectivity and simple boundary for a simply connected set from~\cite[Definition 14.16]{rudinanalysis}.
	
	\begin{definition}[Simply connected domain]
		A nonempty open set $S \subseteq \R^2$ is called simply connected if and only if $S$ and its complement in the sphere $\mathbb{S}^2$ (using the stereographic projection) are connected.
	\end{definition}
	
	\begin{definition}[Simple boundary of simply connected sets]
		A boundary point $x$ of a simply connected plane region $\mathcal{S}$
		will be called a simple boundary point of $\mathcal{S}$ if it has the following property: To
		every sequence $(x_n)_{n \in \N}$ in $\mathcal{S}$ such that $x_n \to x$ as $n \to \infty$ there corresponds a
		continuous map $\gamma : [0, 1) \to \mathcal{S}$ with $\lim_{t \to 1} \gamma (t) = x$ and a sequence $(t_n)_{n \in \N}$ with $0 < t_1 < t_2 <
		\ldots <  t_{n - 1}$, such that $\gamma(t_n) = x_n$ for all~$n \in \N$.
		In other words, there is a curve in $\mathcal{S}$ which passes through the points $x_n$
		and which ends at $x$. We say that the boundary $\mathcal{S}$ is simple if all its points are simple.
	\end{definition}
	
	\begin{remark}
		The boundary of any simply connected Lipschitz and bounded domain $\Omega \subseteq \R^2$ is simple. All the sets we consider in this section can be written as a finite union of boxes of the form $z + (- 1/2 , 1/2)^2$ with $z \in \Z^2$. They are thus all Lipschitz.
	\end{remark}
	
	The following lemma shows that if all the points of the boundary of a simply connected open set are simple, then it is a Jordan curve.

	\begin{lemma}[Simply connected sets with simple boundaries, Remark 14.20 of \cite{rudinanalysis}]
		Let $\Omega \subseteq \Z^2$ be a simply connected bounded domain whose boundary $\partial \Omega$ is simple, then $\partial \Omega$ is homeomorphic to a circle, \ie, it is a Jordan curve. 
	\end{lemma}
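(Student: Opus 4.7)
My plan is to reduce the statement to Carath\'eodory's classical boundary correspondence theorem from complex analysis. Since $\Omega \subseteq \R^2$ is a bounded simply connected domain, the Riemann Mapping Theorem supplies a conformal bijection $\phi : \mathbb{D} \to \Omega$, where $\mathbb{D}$ denotes the open unit disk. I would then invoke Carath\'eodory's extension theorem, which asserts that $\phi$ extends to a homeomorphism $\bar{\phi} : \overline{\mathbb{D}} \to \overline{\Omega}$ precisely when every boundary point of $\Omega$ is simple in the sense defined above. Under this hypothesis, the restriction $\bar{\phi}|_{\partial \mathbb{D}} : \partial \mathbb{D} \to \partial \Omega$ is a continuous bijection from a compact space onto a Hausdorff space, hence a homeomorphism; since $\partial \mathbb{D}$ is a circle, this identifies $\partial \Omega$ as a Jordan curve.

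An alternative, more elementary route is available in the combinatorial setting considered here. As noted in the preceding remark, the sets of interest are finite unions of closed unit squares $z + [-1/2, 1/2]^2$ with $z \in \Z^2$, so $\partial \Omega$ is already a finite graph made out of horizontal and vertical unit edges. One could trace $\partial \Omega$ as a concatenation of such edges, using simple connectivity to rule out pinch points (a vertex of degree four on $\partial \Omega$ would, together with the ambient Jordan curve theorem, disconnect $\Omega$) and using the simple-boundary hypothesis to certify that at each corner there is a unique way to continue the trace. Induction on the number of constituent squares then forces the trace to close into a simple closed polygonal curve, yielding the claim without invoking any complex-analytic machinery.

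The main obstacle in the analytic approach is Carath\'eodory's theorem itself, which, while classical, depends on substantial prime-end theory. In the combinatorial approach the subtlety instead lies in confirming that the simple-boundary hypothesis really does forbid the two local configurations that obstruct the trace from closing into a simple loop, namely a corner where two squares meet only at a vertex and a corner where a boundary edge is pinched between two complementary squares. In the present paper, however, both difficulties are side-stepped: the authors simply cite Remark 14.20 of Rudin as a black box, and the substantive task elsewhere in the section is verifying that the particular domains $\Omega$ arising in the proof of Theorem~\ref{theorem:fast-decay} are simply connected with simple boundary.
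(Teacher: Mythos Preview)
Your proposal is correct and matches the paper's treatment: the lemma is stated without proof and attributed to Remark~14.20 of Rudin, whose argument is precisely the Riemann mapping plus Carath\'eodory extension route you outline in your first paragraph. Your combinatorial alternative is a legitimate direct argument in the polygonal setting, but the paper does not pursue it; as you correctly observe, the authors invoke the result as a black box and reserve their effort for verifying the hypotheses in the specific domains that arise later.
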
 
	
	We next state the Jordan curve theorem on the sphere $\mathbb{S}^2$.
	
	\begin{lemma}[Jordan curve theorem on $\S^2$] \label{lemma:jordan-curve-theorem-on-s2}
		Let $\gamma$ be a continuous loop on $\S^2$, then the set $\S^2 \setminus \gamma$ is the union of two finite open simply connected components $C_1$ and $C_2$ satisfying the following properties
		\begin{itemize}
			\item The sets $C_1$ and $C_2$ have the same boundary which is equal to the loop $\gamma$.
			\item The sets $\cl(C_1) = C_1 \cup \gamma$ and $\cl(C_2) = C_2 \cup \gamma$ are each homeomorphic to the closed unit disc $\cl(\mathbb{D}_1) := \left\{ z \in \R^2 \, : \, |z| \leq 1 \right\}$.
			\item There exists a homeomorphism $f : \cl(C_1) \to \cl(\mathbb{D}_1)$ whose restriction to $C_1$ is a biholomorphism from $C_1$ to $\mathbb{D}_1 := \left\{ z \in \R^2 \, : \, |z| < 1 \right\}$.
		\end{itemize}
	\end{lemma}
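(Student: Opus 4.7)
The plan is to combine three classical results: the planar Jordan curve theorem, the Schoenflies theorem, and the Riemann mapping theorem together with Carath\'eodory's boundary-correspondence theorem. The strategy is to push the problem down to $\R^2$ via stereographic projection, apply the classical results there, and then pull back to the sphere.

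First, after rotating $\mathbb{S}^2$ if necessary (which affects none of the conclusions), I may assume that $\gamma$ avoids the north pole $N$. Then $\tilde\gamma := SP \circ \gamma$ is a continuous loop in $\R^2$ that is injective on $[0,1)$, i.e., a Jordan curve. The classical Jordan curve theorem yields that $\R^2 \setminus \tilde\gamma$ has exactly two open connected components, a bounded interior $I$ and an unbounded exterior $E$, each with topological boundary equal to $\tilde\gamma$. Since $SP$ is a homeomorphism $\mathbb{S}^2 \setminus \{N\} \to \R^2$, and since $N$ is a limit point of $SP^{-1}(E)$ in $\mathbb{S}^2$ but not of $SP^{-1}(I)$, the set $\mathbb{S}^2 \setminus \gamma$ decomposes into the two open components $C_1 := SP^{-1}(I)$ and $C_2 := SP^{-1}(E) \cup \{N\}$, each with boundary $\gamma$. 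This establishes the first bullet.

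For the topological claim in the second bullet, I would invoke the Schoenflies theorem, which states that any Jordan curve on $\mathbb{S}^2$ is the image of the equator under some self-homeomorphism $\Phi : \mathbb{S}^2 \to \mathbb{S}^2$. Restricting $\Phi^{-1}$ to either closed hemisphere (each obviously homeomorphic to $\cl(\mathbb{D}_1)$) yields the required homeomorphism $\cl(C_i) \to \cl(\mathbb{D}_1)$. Simple connectedness of each $C_i$ in the sense of the paper's definition is an immediate consequence, since each is homeomorphic to a disc and its complement in $\mathbb{S}^2$ is the closure of the other disc.

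For the biholomorphism assertion in the third bullet, I identify $\mathbb{S}^2$ with the Riemann sphere $\C \cup \{\infty\}$. After possibly pre-composing with a M\"obius transformation sending a chosen point of $C_1^c$ to $\infty$, the component $C_1$ becomes a simply connected proper open subset of $\C$; the Riemann mapping theorem then furnishes a biholomorphism $f : C_1 \to \mathbb{D}_1$. Since $\partial C_1 = \gamma$ is a Jordan curve, Carath\'eodory's theorem on boundary correspondence extends $f$ to a homeomorphism $\cl(C_1) \to \cl(\mathbb{D}_1)$, which can be chosen to agree with the homeomorphism produced in the previous step. The only real obstacle is conceptual rather than technical: each of the three inputs (Jordan, Schoenflies, Carath\'eodory) is a deep but standard classical result that can be cited from references such as Rudin's \emph{Real and Complex Analysis}; the only bookkeeping is to check which planar component contains the point at infinity, which is immediate since the bounded component cannot accumulate to $N$.
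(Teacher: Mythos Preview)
Your proposal is correct and follows essentially the same route as the paper, which simply states that the result is obtained by combining the Jordan curve theorem with the Riemann mapping theorem and its boundary extension (Carath\'eodory), citing Rudin's Chapter~14. Your invocation of the Schoenflies theorem for the second bullet is a slight redundancy: once you have Carath\'eodory's extension for each component, the homeomorphism of $\cl(C_i)$ with the closed disc follows immediately, so the second bullet is already a consequence of the argument you give for the third.
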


	\begin{proof}
		The result is obtained by combining Jordan curve theorem with the Riemann mapping theorem (with extension to the boundary in the case of simple boundaries) from~\cite[Chapter 14]{rudinanalysis}.
	\end{proof}
	
	\begin{remark}
		Combining Jordan curve theorem on $\mathbb{S}^2$ and the stereographic projection, we see that for any bi-infinite path $\gamma$ on $\R^2$ the set $\R^2 \setminus \gamma$ can be written as the union of two distinct connected components. 
	\end{remark}
	
	We record below a technical lemma which will be used in the proof of Proposition~\ref{prop:infinite-disjoint-paths-limit}. 
	\begin{figure}
		\centering
		\fbox{\includegraphics[width=0.45\textwidth]{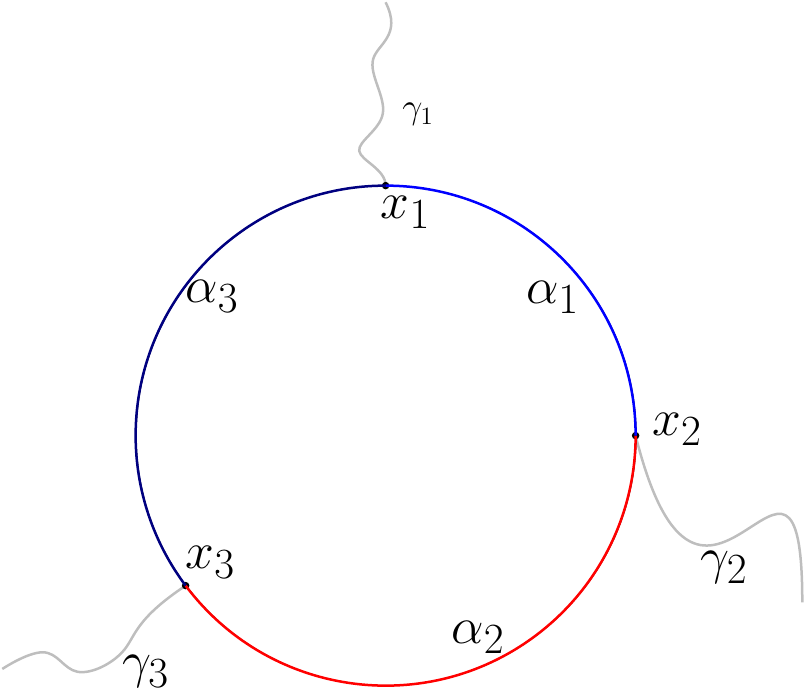}}
		\fbox{\includegraphics[width=0.45\textwidth]{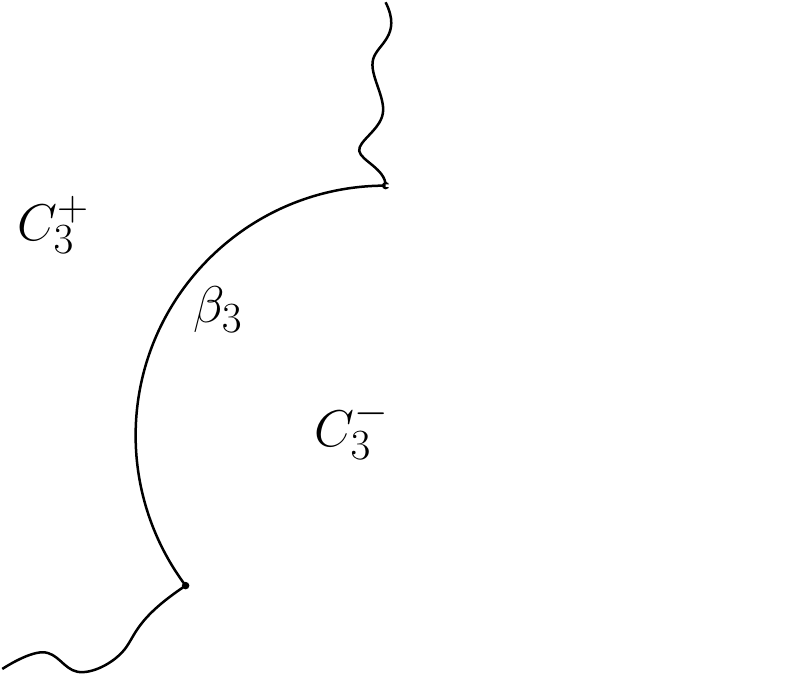}}
		\caption{An example of the decomposition in the proof of Lemma \ref{lemma:jordan-curves-theorem}. The curves $\alpha_1, \alpha_2, \alpha_3$ are denoted by blue, red, and navy arcs respectively. The gray curves
			are the infinite paths $\gamma_i$. The concatenated curve $\beta_3$ defined in \eqref{eq:concatenated-curve} is shown in black on the right together with 
			the complementary connected components $C_{3}^{\pm}$.}
		\label{fig:jordan-curves}
	\end{figure}

	\begin{lemma}[Jordan curves theorem on $\S^2$] \label{lemma:jordan-curves-theorem}
		Fix an integer $K \geq 2$ and let $x_1, \ldots, x_K$ be $K$ distinct points of $\S^2$ and let $\gamma_1 , \ldots, \gamma_K$ be $K$ distinct paths such that $\gamma_i(0) = x_i$ and $\gamma_i(1) = N$, and there exists a loop $\alpha$ such that, for any $i \in \{1 , \ldots, K\}$, $\alpha \cap \gamma_i = \{ x_i \}.$
		Then the set $\S^2 \setminus \left( \alpha \cup \bigcup \gamma_i\right)$ has at least $K$ connected components whose closure contain the north pole $N$.
	\end{lemma}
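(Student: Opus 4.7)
My plan is to exploit the $K$-fold cyclic structure of the graph $\alpha \cup \bigcup_i \gamma_i$ by concatenating each consecutive pair of paths with the corresponding arc of $\alpha$ into a Jordan curve based at $N$, and then to use the Jordan curve theorem on $\S^2$ (Lemma~\ref{lemma:jordan-curve-theorem-on-s2}) to extract $K$ distinct connected components of the complement whose closures contain $N$.

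\emph{Reduction to a disc.} First note that $N \notin \alpha$: since $\alpha \cap \gamma_i = \{x_i\}$ and $N = \gamma_i(1) \in \gamma_i$ for every $i$, if $N$ were on $\alpha$ then $N = x_i$ for every $i$, contradicting the distinctness of the $x_i$'s together with $K \geq 2$. Applying Lemma~\ref{lemma:jordan-curve-theorem-on-s2} to $\alpha$ yields $\S^2 \setminus \alpha = A \sqcup B$ with $A, B$ open topological discs and $N \in A$. The condition $\gamma_i \cap \alpha = \{x_i\}$ combined with $\gamma_i(1) = N \in A$ forces $\gamma_i \subseteq \cl(A)$. Relabel so that $x_1, \ldots, x_K$ appear in cyclic order on $\alpha$ and let $\alpha_i$ be the arc of $\alpha$ from $x_i$ to $x_{i+1}$ (indices mod $K$).

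\emph{Concatenated Jordan curves.} For each $i$, define the loop at $N$
\begin{equation*}
\beta_i \;:=\; \gamma_i^{-1} * \alpha_i * \gamma_{i+1}.
\end{equation*}
In the intended application (Proposition~\ref{prop:infinite-disjoint-paths-limit}), the paths $\gamma_j$ are pairwise disjoint away from $N$, so each $\beta_i$ is a Jordan curve. Lemma~\ref{lemma:jordan-curve-theorem-on-s2} then gives $\S^2 \setminus \beta_i = C_i^+ \sqcup C_i^-$ with each component an open topological disc. The set $(\alpha \setminus \alpha_i) \cup \bigcup_{j \neq i, i+1} \gamma_j$ is connected and disjoint from $\beta_i$, so it lies entirely in one of the two discs, which I label $C_i^-$. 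Let $V_i$ be the connected component of $\S^2 \setminus G$, where $G := \alpha \cup \bigcup_j \gamma_j$, that is contained in $C_i^+$ and has $N$ in its closure.

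To conclude, I argue that $V_1, \ldots, V_K$ are $K$ pairwise distinct components of $\S^2 \setminus G$ with $N$ in their closure. For $j \notin \{i-1, i, i+1\}$, the component $V_j$ is bounded near $N$ by $\gamma_j$ and $\gamma_{j+1}$, both of which lie in $C_i^-$, so $V_j \subseteq C_i^- \neq C_i^+ \ni V_i$. For $j = i \pm 1$, $V_j$ is adjacent to $\gamma_i$ or $\gamma_{i+1}$ (a subarc of $\beta_i$) on the side of $\beta_i$ opposite to $V_i$, so again $V_j \subseteq C_i^-$.

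\textbf{The main difficulty} lies in the local analysis at $N$ needed to justify: (i) each $V_i$ actually exists, i.e., $C_i^+ \cap (\S^2 \setminus G)$ contains points arbitrarily close to $N$; and (ii) the wedges determining $V_{i-1}, V_i, V_{i+1}$ occupy the expected sides of $\beta_i$. Both reduce to the standard planarity fact that $K$ arcs in the closed disc $\cl(A)$ emanating from a common interior point $N$ to $K$ distinct boundary points, pairwise disjoint except at $N$, meet $N$ in a cyclic order matching that of their boundary endpoints on $\partial A$; this cyclic matching yields exactly $K$ wedges at $N$, which are then separated pairwise by the $\beta_i$'s.
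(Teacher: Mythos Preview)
Your approach matches the paper's: form the Jordan curves $\beta_i$ from consecutive $\gamma$'s and the intervening arc of $\alpha$, apply the Jordan curve theorem on $\S^2$, and observe that the remainder of $G$ lies on one side. However, you miss the one-line simplification that finishes the argument, and as a result you manufacture difficulties that are not there.

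Once you know that $G \setminus \beta_i \subseteq C_i^-$ (this is the correct set to use; your set $(\alpha \setminus \alpha_i) \cup \bigcup_{j\neq i,i+1}\gamma_j$ is not literally disjoint from $\beta_i$ because it contains $N$), you immediately get $G \cap C_i^+ = \emptyset$, since $C_i^+$ is also disjoint from its own boundary $\beta_i$. Hence $C_i^+$ is a connected subset of $\S^2 \setminus G$ whose boundary $\beta_i$ lies entirely in $G$, so $C_i^+$ is \emph{itself} a connected component of $\S^2 \setminus G$. This is exactly the paper's observation, and it makes the introduction of a further $V_i \subseteq C_i^+$ unnecessary.

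With $V_i = C_i^+$, both of your acknowledged ``main difficulties'' evaporate: (i) $N \in \beta_i = \partial C_i^+$, so $N \in \cl(C_i^+)$ automatically; and (ii) the $C_i^+$ are pairwise distinct simply because their boundaries $\beta_i$ are distinct Jordan curves (they contain different arcs $\alpha_i$ of $\alpha$). No local wedge analysis at $N$, no cyclic matching argument, and no preliminary reduction to the disc $A$ bounded by $\alpha$ is needed.
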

	
	\begin{proof}
		See Figure \ref{fig:jordan-curves} for a visual description of the argument below. 
		
		Let us denote by $\Gamma := \alpha \cup \bigcup \gamma_i  $. Without loss of generality, we may see the loop $\alpha$ as a function defined on the interval $[0 , 1]$, valued in $\S^2$ such that $\alpha(0) = \alpha(1)$, and assume that there exist $ 0 \leq t_1 < \ldots < t_K < 1$ such that $\alpha(t_i) = x_i$. For any $i \in \{ 1 , \ldots, K \},$ we denote by $\alpha_i$ the path obtained by restricting $\alpha$ to the interval $[t_i , t_{i+1}].$ For each $i \in \{ 1 , \ldots, K\},$ we then let $\beta_i$ be the loop defined by
		\begin{equation} \label{eq:concatenated-curve}
			\beta_i := \gamma_{i-1} \cup \alpha_i \cup  \gamma_{i}.
		\end{equation}
		Applying Jordan curve theorem, we know that the set $\S^2 \setminus \beta_i$ can be written as the union of two connected components which we denote by $C_{i}^+$ and $C_{i}^-$. We next observe that the set $\Gamma \setminus \beta_i$ is connected and disjoint from $\beta_i$, this implies that it is included in either $C_{i}^+$ or $C_{i}^-$. We may assume without loss of generality that it is included in $C_{i}^-$. This implies that the set $C_{i}^+$ is included in $\S^2 \setminus \Gamma$, and thus $C_{i}^+$ is a connected component of $\S^2 \setminus \Gamma$. Consequently, the family $(C_{i}^+)_{1 \leq i \leq K}$ is a collection of connected components of $\S^2 \setminus \Gamma$, and since they do not have the same boundary, they must be disjoint. Finally, for any $i \in \{ 1 , \ldots, K\}$, we have $\partial C_{i}^+ = \beta_i$ (by Jordan curve theorem) and thus $N \in \partial C_{i}^+$. The collection $(C_{i}^+)_{1 \leq i \leq K}$ satisfies the conclusions of the lemma.
	\end{proof}

	\section{Lipschitz harmonic functions} \label{sec:lipschitz}
	In this section we prove Theorem \ref{theorem:lipschitz}. The proof utilizes the uniqueness of the first-order corrector 
	together with the finite energy of Bernoulli percolation. Roughly, we construct a modification of the environment involving a finite number of edges
	upon which the first-order corrector must have an arbitrarily large Lipschitz constant. 
	
	We first show in Lemma \ref{lemma:reduction-to-slope-e1}, using an ergodicity argument, that if there are any Lipschitz harmonic functions on the cluster, then the unique modulo additive constant harmonic function which grows like $e_1 \cdot x$ at infinity, $\ellp{e_1}$, is $\overline{L}$-Lipschitz for a deterministic constant $\overline{L} > 0$,
	with probability one. 
	\begin{lemma} \label{lemma:reduction-to-slope-e1}
		If, with positive probability, there exists a non-constant harmonic function which is Lipschitz on $\mathscr{C}_{\infty}$, then 
		there exists a deterministic constant $L > 0$ such that, with probability one, 
		$\ellp{e_1}$ is $L$-Lipschitz.
	\end{lemma}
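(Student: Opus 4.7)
The plan is to identify the set of slopes $p \in \R^d$ for which the corrected plane $\ellp{p}$ is Lipschitz as an almost surely deterministic linear subspace of $\R^d$, and then to use lattice symmetry to show this subspace must be either $\{0\}$ or all of $\R^d$.

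I would first observe that any Lipschitz harmonic function $u$ on $\mathscr{C}_\infty$ automatically lies in $\mathcal{A}_1(\mathscr{C}_\infty)$: the Antal--Pisztora linear growth of the chemical distance on the supercritical cluster gives $|u(x)| \leq C \dist_{\mathscr{C}_\infty}(0,x) + |u(0)| = O(|x|)$ almost surely. Theorem~\ref{theorem:first-order-corrector} then forces $u = a + \ellp{p}$ for some $a \in \R$ and $p \in \R^d$, and $p \neq 0$ whenever $u$ is non-constant (otherwise $u$ would be a bounded harmonic function, hence constant by the elliptic Harnack inequality). Thus the hypothesis is equivalent to $\P[\,\exists\, p \neq 0 \text{ with } \ellp{p} \text{ Lipschitz}\,] > 0$.

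I then define the random subspace
\[
V(\omega) := \bigl\{ p \in \R^d : \ellp{p} \text{ is Lipschitz}\bigr\},
\]
which is a linear subspace of $\R^d$ because $p \mapsto \chi_p$ is linear. Since the gradient of the corrector at an edge $e$ in the shifted configuration $\tau_z\omega$ coincides with its value at $e + z$ in $\omega$, the set $V$ is invariant under translations, and by ergodicity of Bernoulli percolation it is almost surely equal to a deterministic subspace $V^* \subseteq \R^d$. Next, for any lattice symmetry $R$ in the hyperoctahedral group preserving $\Z^d$, the uniqueness statement of Theorem~\ref{theorem:first-order-corrector} forces the corrector with slope $Rp$ in the rotated configuration $R \cdot \omega$ to equal $\chi_p \circ R^{-1}$ (up to an additive constant); combined with the $R$-invariance of the law of $\omega$, this yields $V^* = R V^*$ for every such $R$. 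For $d \geq 2$ the standard representation of the hyperoctahedral group on $\R^d$ is irreducible, so $V^* \in \{\{0\}, \R^d\}$. The hypothesis rules out $V^* = \{0\}$, so $V^* = \R^d$, and in particular $\ellp{e_1}$ is almost surely Lipschitz.

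A final ergodicity argument extracts the deterministic Lipschitz constant: $L(\omega) := \sup_{e \in E(\mathscr{C}_\infty)} |\nabla \ellp{e_1}(e)|$ is translation-invariant and almost surely finite, hence almost surely equal to a deterministic constant $L$, necessarily positive since $\ellp{e_1}$ is non-constant. The main technical point is verifying the equivariance of $\chi_p$ under translations and lattice symmetries (which follows cleanly from the uniqueness in Theorem~\ref{theorem:first-order-corrector}) together with the measurability of $V$ as a Grassmannian-valued random variable needed to apply the ergodic 0--1 law; both are standard but should be written out carefully.
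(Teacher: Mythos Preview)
Your argument is correct and takes a genuinely different route from the paper. The paper proceeds in two more concrete steps: first it fixes a deterministic Lipschitz bound $\overline{L}$ (ergodicity plus a union bound over integer $L$), observes that the set of slopes $p$ with $|p|\le\overline{L}$ and $\ellp{p}$ $\overline{L}$-Lipschitz is closed and compact, and extracts a single deterministic slope $\overline{p}$ by a measurable argsup-with-tie-breaking selection; second, it uses only the coordinate reflections $T_j$ and the explicit linear identity
\[
\ellp{\overline{p}} + \ellp{T_2\circ\cdots\circ T_d\circ\overline{p}} = \ellp{2\overline{p}_1 e_1}
\]
to reach $e_1$, obtaining along the way an explicit Lipschitz constant $\overline{L}/\overline{p}_1$ for $\ellp{e_1}$. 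Your approach replaces both steps by a single structural observation: the set $V$ of all Lipschitz slopes is a translation-invariant random subspace, hence deterministic, and then irreducibility of the hyperoctahedral action on $\R^d$ forces $V^*\in\{\{0\},\R^d\}$; a separate ergodicity argument on $\sup_e|\nabla\ellp{e_1}(e)|$ is then needed to extract the deterministic constant.

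Your route is conceptually cleaner and makes the role of lattice symmetry transparent. The price is exactly the point you flag: passing from ``$\{p\in V\}$ has probability $0$ or $1$ for each fixed $p$'' to ``$V$ is almost surely equal to a deterministic subspace'' requires knowing that $\omega\mapsto V(\omega)$ is a measurable map into the Grassmannian, which is not entirely trivial since $V$ could in principle be a random line with irrational direction. One clean way to close this is to work with the compact sets $K_L(\omega):=\{|p|\le 1:\ellp{p}\text{ is }L\text{-Lipschitz}\}$, each a decreasing intersection of polytopes determined by finitely many edge-gradients and hence a measurable random compact set, and then note $\dim V=\sup_{L\in\N}\dim\operatorname{span}K_L$. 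The paper sidesteps this entirely by fixing $\overline{L}$ first and selecting from a compact set, which makes the measurable-selection step completely standard.
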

	\begin{proof}
		We start with the observation that a function is $L$-Lipschitz if and only if its gradient is bounded (in absolute value) by $L$. In particular, even though the corrector is only defined up to additive constant, the property that the corrected plane $\ellp{e_1}$ is almost surely $L$-Lipschitz is well-defined.
		We next recall that, almost surely, every harmonic function on $\mathscr{C}_{\infty}$ with linear growth $p$ at infinity is equal (modulo additive constant) to the corrected plane $\ellp{p}$.

		We split the proof into two steps. We first construct a deterministic slope $\overline{p}$
		and Lipschitz constant $\overline{L} > 0$ such that $\ellp{\overline{p}}$ is almost surely $\overline{L}$-Lipschitz. We then use the lattice rotation invariance 
		of Bernoulli percolation together with linearity to show that $\ellp{e_1}$ is $L$-Lipschitz with probability one (for a deterministic constant $L$).

		{\it Step 1: Finding a deterministic slope.} \\ 
		For $L > 0$, let the event $E_L$ be the event ``there exists a nonconstant function $u \in \mathcal{A}(\mathscr{C}_\infty)$ which is Lipschitz with Lipschitz constant at most $L$''. Since $E_L$ is translation-invariant, by ergodicity of Bernoulli percolation, \eg, \cite[Proposition 7.3]{lyons-peres-book}, it has probability either $0$ or $1$. Thus, by the assumption $\P[\cup_{L > 0} E_L] > 0$ and a union bound over integers, 
		there exists a deterministic $\overline{L} > 0$ such that $\P[E_{\overline{L}}] = 1$. 
		
		Every harmonic function which is Lipschitz is contained in $\mathcal{A}_1(\mathscr{C}_{\infty})$. In particular, by Theorem \ref{theorem:large scale-regularity},
		we may rewrite the event as 
		\[
		E_{\overline{L}} = \{ \mbox{there exists a slope $p \in \R^d \setminus \{ 0 \}$ with $|p| \leq \overline{L}$ such that $\ellp{p}$ is $\overline{L}$-Lipschitz} \}.
		\]	
		We now show that the slope $p$ in the above event can be made deterministic. First, we note that the set of slopes $p \in \Rd$ such that $\ellp{p}$ is $\overline{L}$-Lipschitz is a closed subset of $\R^d$.  
		Indeed, fix a realization of the infinite cluster and consider a sequence of slopes $p_N \to \overline{p}$ such that each $\ellp{p_N}$ is $\overline{L}$-Lipschitz. As, for any pair of vertices $x , y \in \mathscr{C}_\infty$, the map $p \mapsto \ellp{p}(x) - \ellp{p}(y)$ is well-defined and linear, we see that $\ellp{p_N}(x) - \ellp{p_N}(y) \to \ellp{\overline{p}}(x) - \ellp{\overline{p}}(y)$ as $N \to \infty$. 
		As each $\ellp{p_N}$ is Lipschitz, we deduce that $\ellp{\overline{p}}$ is Lipschitz.
		
		Now,  consider the random variable
		\[
		\overline{p} = \arg \sup \{  |p| \leq \overline{L}  \mid \mbox{$\ellp{p}$ is $\overline{L}$-Lipschitz} \},
		\]
		where the $\arg \sup$ indicates a choice of slope $p$ such that $|p| \leq \overline{L}$ is maximized, 
		and ties are broken by choosing the lexicographically largest such $p$ (this quantity is well-defined since the set of slopes $p \in \Rd$ such that $\ellp{p}$ is $\bar L$-Lipschitz is closed).  The random variable $\overline{p}$ is translation invariant and thus deterministic and we have just seen that
		\begin{equation} \label{eq:det-p-lipschitz}
			\ellp{\overline{p}} \mbox{ is $\overline{L}$-Lipschitz},
		\end{equation}
		which completes this step. 
		\medskip 
		
		{\it Step 2: Performing a rotation.} \\ 
		For $L > 0$ and $p \in \R^d$, let $E_L^p$ denote the event  ``the function $\ellp{p}$ has Lipschitz constant $L$''.
		For $j \in \{1, \ldots, d\}$ and $p \in \R^d$, denote reflection around the $j$-th axis by 
		\[
		T_j \circ p := (p_1, \ldots, p_{j-1}, -p_j, p_{j+1}, \ldots p_d).
		\]
		By dihedral symmetry of $\Z^d$ (and of Bernoulli percolation), for each $p \in \R^d$ and $L > 0$, 
		\[
		\P[E_{T_j \circ p}^L] = \P[E_p^L], \quad \forall j \in \{1, \ldots, d\}.
		\]
		Further, with $\overline p$ as in Step 1, we have that 
		\[
		\P[E_{T_j \circ \overline p}^{\overline{L}}] = 1, \quad \forall j \in \{1, \ldots, d\}.
		\]
		Assuming without loss of generality that $\bar p_1 >0 $, we deduce from the previous identity that 
		\[
		\ellp{2\overline{p}_1 e_1} = \ellp{\overline{p}} + \ellp{T_2 \circ \ldots \circ T_d \circ \overline{p}}
		\]
		is $2 \overline{L}$-Lipschitz. By linearity, this implies $\P\left[E_{e_1}^{ \overline{L}/p_1}\right] = 1$, completing the proof.
	\end{proof}

	It remains to prove the following, which together with the previous lemma, implies Theorem \ref{theorem:lipschitz}.
	\begin{prop} \label{prop:not-lipschitz}
		For each $L \geq 1$, one has inequality
		\begin{equation*}
			\P \left( \left| \nabla \ellp{e_1}(0) \right|  > L \right) > 0.
		\end{equation*}
		As a consequence, the map $\ellp{e_1}$ is not $L$-Lipschitz with positive probability. 
	\end{prop}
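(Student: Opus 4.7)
The plan is to argue by contradiction, combining Lemma~\ref{lemma:finite-energy} with a flux identity on a locally bottlenecked modification of the cluster. Suppose for contradiction that $(\ell_{e_1}(e_1) - \ell_{e_1}(0)) \mathbf{1}_{(0,e_1) \in E(\mathscr{C}_\infty)} \leq L$ almost surely. Then by Lemma~\ref{lemma:finite-energy} applied to this full-probability event, the same bound holds almost surely for the corrected plane $\ell'_{e_1}$ of \emph{any} configuration $\mathbf{a}'$ obtained from $\mathbf{a}$ by modifying finitely many edges. The task is then to exhibit one such modification whose corrected plane violates the bound.

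For a parameter $R$ large (to be chosen), modify $\mathbf{a}$ inside $Q_R$ by closing every edge $\{x, x+e_1\}$ with $x_1 = 0$, $x \in Q_R$, $x \neq 0$, and setting $\{0, e_1\}$ open. This alters only finitely many bonds, and makes $(0, e_1)$ the unique edge of $\mathscr{C}'_\infty$ inside $Q_R$ crossing the hyperplane $\{x_1 = 1/2\}$. On the high-probability event from Proposition~\ref{prop:well-connected} that $\mathbf{a}$ renders $Q_R$ well-connected, the modified cluster $\mathscr{C}'_\infty$ remains a unique infinite component containing $0$ and $e_1$. Let $\Lambda$ be the connected component of $0$ in $\mathscr{C}'_\infty \cap Q_R \cap \{x_1 \leq 0\}$. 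Summing $\Delta_{\mathscr{C}'_\infty}\ell'_{e_1} = 0$ over $\Lambda$ and invoking the discrete divergence theorem yields
\[
\nabla \ell'_{e_1}(0, e_1) \;=\; -\sum_{\substack{x \in \Lambda,\, y \notin Q_R \\ (x,y) \in E(\mathscr{C}'_\infty)}} \bigl(\ell'_{e_1}(y) - \ell'_{e_1}(x)\bigr),
\]
since $(0, e_1)$ is the only edge of $E(\mathscr{C}'_\infty)$ joining $\Lambda$ to its complement within $Q_R$. Writing $\ell'_{e_1}(x) = e_1 \cdot x + \chi'_{e_1}(x)$, the principal term on the right is $-\#\{\text{left-face exit edges of }\Lambda\}$, which is $\gtrsim R^{d-1}$ by well-connectedness, while the corrector contribution should be subleading via Theorem~\ref{theorem:first-order-corrector}. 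Hence $\nabla \ell'_{e_1}(0, e_1) \gtrsim R^{d-1}$, contradicting the transferred bound for $R$ large enough.

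The main technical obstacle is the control of the corrector contribution to the boundary flux. A naive bound via the pointwise Lipschitz estimate $|\nabla \chi'_{e_1}| \leq CR^\alpha$ summed over the $\sim R^{d-1}$ boundary edges produces an error $O(R^{d-1+\alpha})$ which a priori dominates the $R^{d-1}$ principal term. The correct bound reinterprets the boundary sum $\sum (\chi'_{e_1}(y) - \chi'_{e_1}(x))$ via a second application of discrete Stokes as a bulk integral of $\Delta \chi'_{e_1}$, which is supported on the local defects of $\mathscr{C}'_\infty$ (vertices where the cluster deviates from $\mathbb{Z}^d$) and is controlled in terms of the \emph{oscillation} of $\chi'_{e_1}$ rather than its pointwise gradient; this yields a subleading error. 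A secondary subtlety is to transfer the sublinearity and regularity estimates for $\chi'_{e_1}$ from $\mathscr{C}_\infty$ to the modified cluster $\mathscr{C}'_\infty$, which follows by a further application of Lemma~\ref{lemma:finite-energy} to the full-probability event in Theorem~\ref{theorem:first-order-corrector}.
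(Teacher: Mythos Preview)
Your overall strategy---contradiction via Lemma~\ref{lemma:finite-energy}, a bottleneck modification, and a flux identity---matches the paper's. However, the execution has a genuine gap at precisely the point you flag as the main obstacle, and the fix you propose does not work.

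\textbf{The ``second Stokes'' trick is circular.} You want to show $\sum_{\partial\Lambda\setminus\{(0,e_1)\}}\mathbf{n}\cdot\nabla\chi'_{e_1}$ is small by rewriting it as a bulk sum of $\Delta_{\mathscr{C}'_\infty}\chi'_{e_1}$. But $\Delta_{\mathscr{C}'_\infty}\chi'_{e_1}=-\Delta_{\mathscr{C}'_\infty}(e_1\cdot x)$, and summing the latter over $\Lambda$ gives back exactly the boundary flux of the linear part. So the bulk rewriting recovers the identity you started from and yields no new control. Moreover, $\Delta_{\mathscr{C}'_\infty}(e_1\cdot x)$ is supported on a \emph{positive density} of sites (every vertex missing an $e_1$-neighbor), not on a lower-order set, so an oscillation bound summed over that support is of size $R^d\cdot R^\alpha$, which is useless.

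\textbf{Lemma~\ref{lemma:finite-energy} does not give quantitative corrector bounds on the modified cluster.} The lemma says that for a.e.\ $\mathbf{a}$, every finite modification lies in the almost-sure event of Theorem~\ref{theorem:first-order-corrector}; it says nothing about the size of the minimal scale $\mathcal{M}'_{\mathrm{corr},\alpha}$ for $\mathbf{a}'$. Since your modification alters $\sim R^{d-1}$ edges, this scale could be $\gg R$, and then the oscillation bound on $\chi'_{e_1}$ gives you nothing on $\partial Q_R$.

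\textbf{How the paper gets around this.} The paper never tries to control $\chi'_{e_1}$ directly. Instead it (i) lower-bounds the flux of the \emph{original} $\ell_{e_1}$ through the left face by the ergodic theorem; (ii) compares $\nabla\ell_{e_1}$ and $\nabla\ell'_{e_1}$ via the Green's function identity of Lemma~\ref{lemma:compare-different-planes}, which involves the mixed derivative of the \emph{original} Green's function (controlled by Proposition~\ref{prop:green-mixed-derivative}); and (iii) bounds the factors $|\ell'_{e_1}(y)-\ell'_{e_1}(x)|$ for removed edges using the assumed $L$-Lipschitz bound along detour paths. For step (iii) to give a subleading error, the detours must be short: the paper removes only \emph{most} of the central horizontal edges, keeping a grid of ``blue'' well-connected boxes so that every removed edge has a detour of length $\lesssim N^{1/2}$, and leaves $\sim N^{d-1-1/2}$ horizontal edges carrying the flux. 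Your construction, which keeps only the single edge $(0,e_1)$, forces detours of length $\sim R$; plugging that into the Green's function comparison gives an error $R^{d-1}\cdot R^{d-1}\cdot R^{-d+\delta}\cdot LR = LR^{d-1+\delta}$, which swamps the main term. The careful geometry of the modification is therefore not an inessential detail but the mechanism that makes the error subleading.
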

	\begin{proof}[Proof of Theorem \ref{theorem:lipschitz} assuming Proposition \ref{prop:not-lipschitz}]
		Combine Lemma~\ref{lemma:reduction-to-slope-e1} and Proposition~\ref{prop:not-lipschitz}. 
	\end{proof}

	In order to prove this, we need the following lemma, which we use to compare corrected planes in different environments. 
	\begin{lemma}  \label{lemma:compare-different-planes}
		For almost every realization of the infinite cluster $\mathscr{C}_\infty$ and every finite collection of edges $\mathcal{B} \subseteq E(\mathscr{C}_\infty)$ satisfying the property that removing all the edges of $\mathcal{B}$ does not disconnect $\mathscr{C}_\infty$, the following properties hold.
		First, the corrected plane is well-defined (modulo additive constant) on the cluster $\mathscr{C}_\infty$ and the cluster $\mathscr{C}_\infty'$ from which all the edges of $\mathcal{B}$ have been removed. Second, if we denote these corrected planes by $\ellp{p}$ and $\ellp{p}'$ respectively, then we have the identity
		\begin{equation}  \label{eq:compare-corrected-planes-with-green}
			\nabla \ellp{p} - \nabla \ellp{p}' = \sum_{\substack{e \in E \left(\mathscr{C}_\infty\right) \cap \mathcal{B} \\ e = (x , y)}} \nabla_x \nabla_y G(\cdot , e) (\ellp{p}'(y) - \ellp{p}'(x)),
		\end{equation}
		where $G$ denotes the Green's function on the cluster $\mathscr{C}_\infty$ (including the edges of $\mathcal{B}$).
	\end{lemma}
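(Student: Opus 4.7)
The plan is to set $w := \ell_p - \ell_p'$ and analyze it by computing the Laplacian of the difference (which becomes a finitely-supported source term on the original cluster $\mathscr{C}_\infty$), then invert this Laplacian using the Green's function, and finally differentiate to extract the stated identity.

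First I would observe that both corrected planes exist almost surely: the corrector on $\mathscr{C}_\infty$ is provided by Theorem~\ref{theorem:first-order-corrector}, and applying Lemma~\ref{lemma:finite-energy} to the full-probability event on which the corrected plane exists gives the existence of the corrected plane after any local modification of the environment, provided that modification still yields a unique infinite cluster. Hence one may set $\ell_p$ (resp.\ $\ell_p'$) to be the corrected plane of slope $p$ on $\mathscr{C}_\infty$ (resp.\ on $\mathscr{C}_\infty$ with the edges of $\mathcal{B}$ removed). Since each grows like $p \cdot x$ at infinity, their difference $w$ is sublinear at infinity.

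Next I would compare the two Laplacians. For any $x \in \mathscr{C}_\infty$,
\begin{equation*}
\Delta_{\mathscr{C}_\infty} \ell_p'(x) \;=\; \Delta_{\mathscr{C}_\infty \setminus \mathcal{B}} \ell_p'(x) + \sum_{y \, : \, (x,y) \in \mathcal{B}} (\ell_p'(y) - \ell_p'(x)) \;=\; \sum_{y \, : \, (x,y) \in \mathcal{B}} (\ell_p'(y) - \ell_p'(x)),
\end{equation*}
since $\ell_p'$ is harmonic on the modified cluster. As $\ell_p$ is harmonic on $\mathscr{C}_\infty$, this gives the dipole-type source
\begin{equation*}
-\Delta_{\mathscr{C}_\infty} w \;=\; \sum_{e = (x_e, y_e) \in \mathcal{B}} (\ell_p'(y_e) - \ell_p'(x_e))\bigl(\delta_{x_e} - \delta_{y_e}\bigr),
\end{equation*}
supported on the finite set of endpoints of $\mathcal{B}$.

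I would then introduce the candidate
\begin{equation*}
v(z) \;:=\; \sum_{e = (x_e, y_e) \in \mathcal{B}} (\ell_p'(y_e) - \ell_p'(x_e))\bigl(G(z, x_e) - G(z, y_e)\bigr),
\end{equation*}
which is a finite sum by hypothesis. Using the defining equations~\eqref{eq:green-function-3D}--\eqref{eq:green-function-2D} for $G$, one checks that $-\Delta_{\mathscr{C}_\infty} v$ equals the same source term as $-\Delta_{\mathscr{C}_\infty} w$. Moreover, because the source is a sum of dipoles $\delta_{x_e} - \delta_{y_e}$, the function $v$ is expressible as a finite linear combination of first differences of $G(z,\cdot)$, which decay at infinity (even in $d = 2$, where $G$ itself is unbounded, as the increments are controlled by $\nabla_y G$). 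Consequently $w - v$ is a harmonic function on $\mathscr{C}_\infty$ with sublinear growth at infinity. The uniqueness part of Theorem~\ref{theorem:first-order-corrector} (the corrected plane of slope zero is constant) forces $w - v$ to be a constant, so $\nabla w = \nabla v$, and rearranging this identity into the form $\nabla_x \nabla_y G(\cdot, e)(\ell_p'(y) - \ell_p'(x))$ yields~\eqref{eq:compare-corrected-planes-with-green}.

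The main subtlety I anticipate is the two-dimensional case: the Green's function grows logarithmically, so one cannot naively invoke a Green's representation for $w$ on its own. The clean way around this is to build $v$ directly as above and to work only with the difference $w - v$, noting that although each term $G(z,x_e)$ and $G(z,y_e)$ is unbounded, the difference is controlled by $\nabla_y G$ and hence vanishes at infinity; then the Liouville statement for sublinear harmonic functions on $\mathscr{C}_\infty$ closes the argument. A minor secondary point is verifying that $\ell_p'$ exists on the modified cluster on a full-measure set that is simultaneously valid for all finite $\mathcal{B}$, but this is exactly what Lemma~\ref{lemma:finite-energy} is designed to supply.
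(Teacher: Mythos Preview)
Your proposal is correct and follows essentially the same route as the paper: both arguments observe that $\ell_p - \ell_p' - \sum_{e \in \mathcal{B}} \nabla_y G(\cdot, e)(\ell_p'(y)-\ell_p'(x))$ is harmonic and sublinear on $\mathscr{C}_\infty$, hence constant, and then take a gradient. Your write-up is simply more explicit about the intermediate computation of $\Delta_{\mathscr{C}_\infty} w$ and about the two-dimensional decay issue for $v$, which the paper compresses into a single sentence.
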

	
	\begin{remark}
		In the identity~\eqref{eq:compare-corrected-planes-with-green}, we used the notation~\eqref{id:vectorfiedledges} for the gradient in the second variable of the Green's function taking a directed edge as input.
	\end{remark}

	\begin{proof}
		The first part of the statement is a consequence of Lemma~\ref{lemma:finite-energy} applied with the event of full measure where the following properties are satisfied:
		\begin{itemize} 
			\item Sublinear and harmonic function are constant on the infinite cluster,
			\item The corrected plane is well-defined (up to an additive constant) and grows sublinearly, 
			\item the Green's function (and its gradient) are well-defined and grow sublinearly. 
		\end{itemize}
		Note that removing the edges of $\mathcal{B}$ does not modify the set of vertices of the infinite cluster, and thus the two corrected planes are defined on the same set of vertices.
		
		The identity~\eqref{eq:compare-corrected-planes-with-green} is then proved by noting that the function 
		\[
		h(z):= \ellp{p}(z) - \ellp{p}'(z) - \sum_{\substack{e \in \mathcal{B} \\ e = (x , y)}}  \nabla_y G(z , e) (\ellp{p}'(y) - \ellp{p}'(x))
		\] 
		is harmonic and sublinear on the infinite cluster $\mathscr{C}_\infty$, and hence constant.
		Indeed, to see that~$h(x)$ is sublinear, we observe that~$\ellp{p} - \ellp{p}'$ is sublinear and, by Proposition~\ref{prop:homogenizationellipticgreen}, for every edge~$e \in \mathscr{C}_{\infty}$ the function
		$\nabla_y G(\cdot , e)$ is also sublinear. The fact that~$h$ is harmonic follows from the identity, 
		for all edges~$e \in E(\mathscr{C}_{\infty})$ with~$e = (x,y)$, 
		\[
		\Delta_{\mathscr{C}_{\infty}} \nabla_y G(\cdot, e)(\ellp{p}'(y) - \ellp{p}'(x))
		= (\delta_y -\delta_x )(\ellp{p}'(y) - \ellp{p}'(x))
		\]
		and, since~$\ellp{p}'$ is harmonic on the cluster from which all the edges of $\mathcal{B}$ have been removed, 
		\[
		\Delta_{\mathscr{C}_{\infty}} \ellp{p}'(x) = \sum_{\substack{e \in \mathcal{B} \\ e = (x , y)}}(\ellp{p}'(y) - \ellp{p}'(x))   \, . 
		\]
		This completes the proof.
	\end{proof}

	\begin{figure}
		\includegraphics[width=0.5\textwidth]{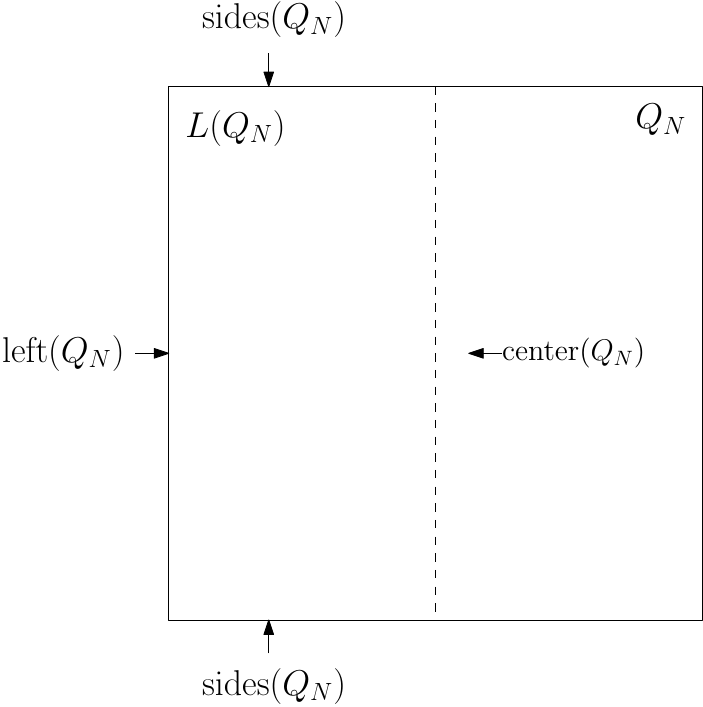}
		\caption{Subsets of the box of radius $N$, $Q_N$, as defined in \eqref{eq:box-decomp} and used in the proof of Proposition \ref{prop:not-lipschitz}.} \label{fig:box-decomp}
	\end{figure}
	
	\begin{figure}
		\includegraphics[width=0.5\textwidth]{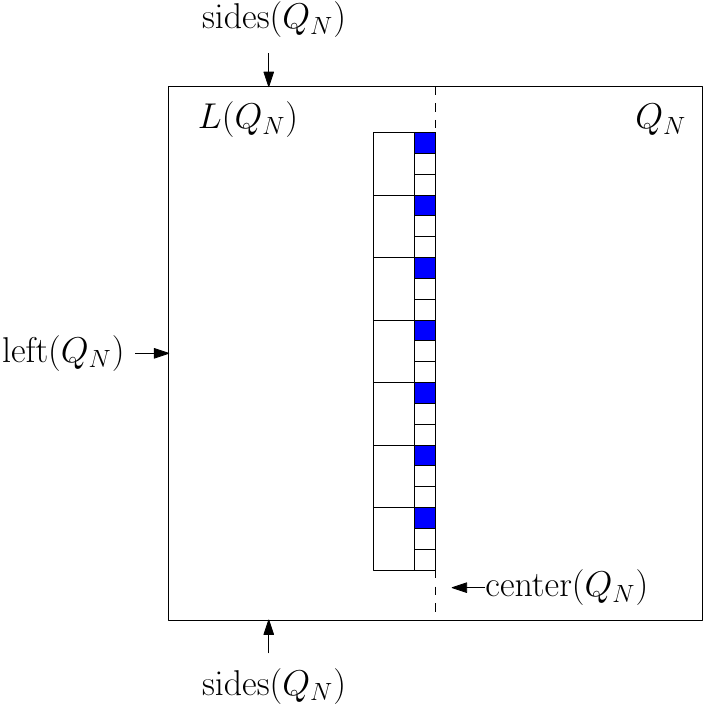}
		\caption{Partitions of $Q_N$ used in the proof of Proposition \ref{prop:not-lipschitz}. Some of the small cubes and medium cubes
			which are adjacent to $\mbox{center}(Q_N)$ are shown. All small boxes other than the blue ones have had associated horizontal edges removed according to the algorithm described at the beginning of Step 2.	
		} \label{fig:box-partition}
	\end{figure}

	\begin{figure}
		\includegraphics[width=0.3\textwidth]{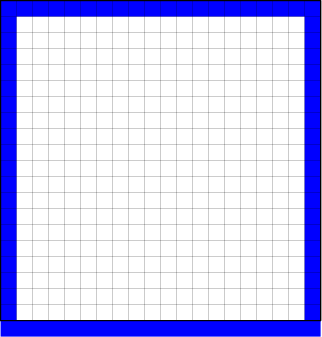}
		\caption{The projection on $\mbox{center}(Q_N)$ of a medium cube (whose size is $N^{1/(10d)}$) partitioned into small cubes (whose sizes are $N^{1/(10d)^2}$). A boundary layer of blue cubes is depicted in blue. The horizontal edges of the blue cubes are not altered.
		} \label{fig:box-partitionboundarylayer}
	\end{figure}

	\begin{proof}[Proof of Proposition~\ref{prop:not-lipschitz}]
		Suppose for the sake of contradiction that for some~$L \geq 1$, one has
		\begin{equation*}
			\P \left( \left| \nabla \ellp{e_1}(0) \right|  > L \right) = 0 .
		\end{equation*}
		This implies that, almost surely, the gradient of the corrected plane along any edge of the infinite cluster is smaller than $L$. In particular, by Lemma~\ref{lemma:finite-energy} the corrected plane in the environment where any finite collection of edges are made open/closed is also well-defined and satisfies the property that its discrete gradient is bounded by the constant $L$. We will use this fact below, but will first restrict to a high probability event.
		
		{\it Step 1: Set up and restrict to a high probability event.} \\
		For the statement of the event, let $N > 0$ and recall that $\mathscr{C}_*(Q_N)$ is the largest cluster of open edges contained in $Q_N$. 
		We consider the following subset of vertices of $\cl(Q_N)$, see Figure \ref{fig:box-decomp},
		\begin{equation} \label{eq:box-decomp}
			\begin{aligned}
				\mbox{L}(Q_N) &= \{ x \in Q_N : x_1 \leq 0 \} \\
				\mbox{left}(Q_N) &= \{ x \in \partial Q_N : x_1 = -N\}  \\
				\mbox{sides}(Q_N) &= \{ x \in \partial Q_N \cap L(Q_N)\} \\
				\mbox{center}(Q_N) &= \{ x \in Q_N : x_1 = 0 \}.
			\end{aligned}
		\end{equation}
		Let $E_N$ denote the event that the following occurs: 
		\begin{itemize}
			\item The cube $Q_N$ is well-connected as in Proposition \ref{prop:well-connected}.
			\item Green's function decay: the minimal scale for the mixed derivative of the Green's function, as defined in Proposition~\ref{prop:green-mixed-derivative}, is bounded 
			at every edge in $Q_N$: if we let $\delta = \frac{1}{4}$ in the statement of Proposition~\ref{prop:green-mixed-derivative}, then we assume that, for every $x \in Q_N$,
			\[
			\mathcal{M}_{\nabla \nabla \mathrm{-Decay}, \frac 14}(x) \leq N^{1/(10 d^2)}.
			\]
			\item Estimate for the flux: there exists a constant $\bar{\mathbf{a}} := \bar{\mathbf{a}}(d , \mathfrak{p}) \in (0 , 1)$ such that
			\begin{equation} \label{eq:left-side-large}
				\sum_{x \in \mathrm{left}(Q_N)} (\ellp{e_1}(x) - \ellp{e_1}(x+e_1)) \mathbf{1}_{\{ (x,x+e_1) \in \mathscr{C}_{\infty}\}} \geq \frac{\bar{\mathbf{a}}}{2} \times N^{d-1} 
			\end{equation}
			and, for all $j \in \{ 2 , \ldots, d\}$, 
			\begin{equation} \label{eq:other-sides-small}
				\left| \sum_{x \in \mathrm{sides}(Q_N)} \left( \ellp{e_1}(x) - \ellp{e_1}(x \pm e_j) \right) \mathbf{1}_{\{ (x,x \pm e_j) \in \mathscr{C}_{\infty}\}} \right| \leq \bar{\mathbf{a}} \times 10^{-100} N^{d-1}.
			\end{equation}
		\end{itemize}
		The first property of $E_N$ occurs with probability approaching one as $N \to \infty$ by Proposition \ref{prop:well-connected}. The second property also occurs with probability approaching one as $N \to \infty$ by the stochastic integrability property~\eqref{eq:stoch.estMdecay}. The third event occurs with probability approaching one as $N \to \infty$ by the ergodic theorem (the ergodicity comes from \cite[Theorem 3.2]{berger-rw-percolation}), together with the observation that $\E \left[ (\ellp{e_1}(x) - \ellp{e_1}(x+e_1)) \mathbf{1}_{\{ (x,x+e_1) \in \mathscr{C}_{\infty}\}} \right] = \bar{\mathbf{a}} > 0$ (the homogenized coefficient of the percolation cluster) and, by symmetry arguments for $j \neq 1$, $\E \left[ (\ellp{e_1}(x) - \ellp{e_1}(x+e_j)) \mathbf{1}_{\{ (x,x+e_j) \in \mathscr{C}_{\infty}\}} \right] = 0$.
		Thus, by a union bound, the event $E_N$ occurs with probability approaching one as $N \to \infty$. 
		\medskip 
		
		{\it Step 2: Alter environment.} \\
		Fix an $N$ large to be determined below and restrict to the event $E_N$. Consider the infinite cluster
		in the following environment. First, partition $Q_N$ into `medium' cubes of side length $N^{1/(10 d)}$ and further partition into `small' well-connected cubes of side length $N^{1/(10d)^2}$ in such a way that there is a column of medium and small cubes with faces containing $\mbox{center}(Q_N)$
		--- see Figure \ref{fig:box-partition}.

		We define the set of horizontal edges: 
		\[
		\mbox{horizontal edges} = \{   \{ (0, x_2, \ldots, x_d) , (1, x_2, \ldots, x_d) \} \mbox{ for $x \in Q_N$} \}.
		\]
		We will remove most of the horizontal edges. We first label some of the horizontal edges according to the following procedure: 
		\begin{itemize}
			\item Proceed through the list of medium cubes which are adjacent to the center, and for each such cube, enumerate all the horizontal edges within it, except for a boundary layer of small `blue' cubes which are adjacent to a the boundary of the medium cube (see Figure \ref{fig:box-partition} and Figure~\ref{fig:box-partitionboundarylayer}) and the cubes which are not in $Q_{N - N^{1/d}}$;
			\item Proceed through the list of edges which have been enumerated in the previous step, and for each such edge $e$,
			\begin{itemize}
				\item if removing $e$ disconnects the current cluster, do not remove the edge
				\item otherwise; remove the edge and update the cluster. 
			\end{itemize}
		\end{itemize}
		
		\begin{figure}
			\centering
			\includegraphics[width=0.45\textwidth]{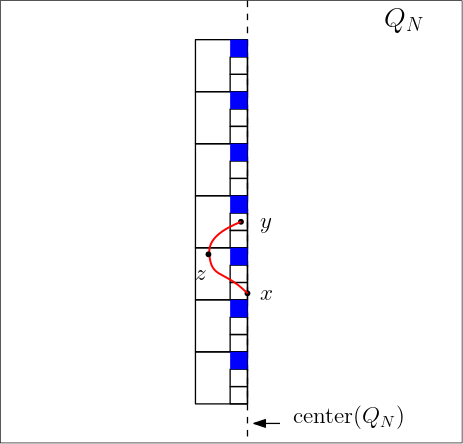}
			\hspace{3mm}
			\includegraphics[width=0.45\textwidth]{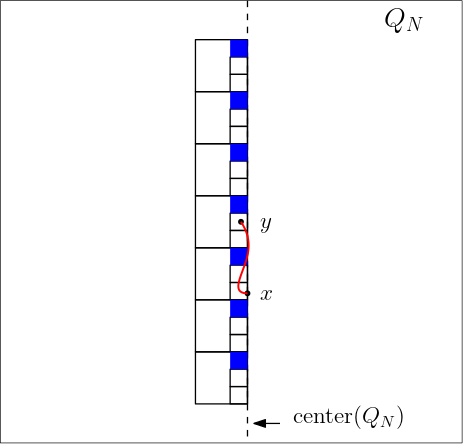}
			\caption{Proof of the inequality~\eqref{eq:short-paths}: any path starting from a vertex $x$ in $\mbox{center}(Q_N)$ which is too long has to either reach a point which is far from $\mbox{center}(Q_N)$ (picture on the left) or cross the blue region where the edges have not been altered (picture on the right).}
			\label{fig:longpaths}
		\end{figure}
		
		Denote the infinite cluster in this altered environment by $\mathscr{C}'_{\infty}$ and denote its edge set by $E \left(\mathscr{C}'_{\infty}\right)$.
		We observe that the following three properties are satisfied:
		\begin{itemize}
			\item Connected: the set of vertices in the altered environment does not change 
			\begin{equation} \label{eq:new-cluster-same-as-old}
				\mathscr{C}'_{\infty} = \mathscr{C}_{\infty} \hspace{5mm} \mbox{and} \hspace{5mm}  E \left(\mathscr{C}'_{\infty}\right) = E \left(\mathscr{C}_{\infty}\right) \setminus \mathcal{B}.
			\end{equation}
			This property follows from the definition the previous algorithm, since at each step, the vertex set of the cluster remains unchanged (as an edge is deleted if and only if removing it does not modify the vertex set of the cluster).
			\item Short paths: for every deleted edge $e = (x, x+e_1)$ we have that 
			\begin{equation} \label{eq:short-paths}
				\dist_{\mathscr{C}'_{\infty}}(x, x + e_1) \leq C N^{1/2},
			\end{equation}
			where $\dist_{\mathscr{C}'_{\infty}}$ denotes the graph distance in $(\mathscr{C}'_{\infty}, E \left( \mathscr{C}'_{\infty}\right))$. 

			This property is obtained by the following argument. First the vertex $x$ belongs to one of the medium cubes. We denote this cube by $Q_x$ and introduce the set
			\begin{equation*}
				\mbox{center}(Q_x) := \{ y \in Q_x : y_1 = 0 \}.
			\end{equation*}
			Since the vertices $x$ and $x + e_1$ are both in the cluster $\mathscr{C}'_{\infty}$, there exists a path which connects $x$ and $x + e_1$ and lies in $\mathscr{C}'_{\infty}$. We denote by $\gamma$ such a path and assume without loss of generality that its length is equal to the distance between $x$ and $x + e_1$ in the cluster $\mathscr{C}'_{\infty}$. We then distinguish two cases: whether $|\gamma| \leq 2N^{1/5}$ or $|\gamma| \geq 2N^{1/5}$.
			
			In the first case, the inequality~\eqref{eq:short-paths} holds since $N^{1/5} \leq N^{1/2}$. In the second case, we claim that there must exist a vertex $y \in \gamma$ at distance (in the cluster $\mathscr{C}'_{\infty}$) less than $4 N^{1/5}$ from $x$ such that the distance (in $\Zd$) of $y$ to the set $\mbox{center}(Q_x)$ is larger than $2 N^{1/(10d)^2}$. This follows from the computation
			\begin{align*}
				\left| \left\{ y \in \Zd \, : \, \dist(y , \mbox{center}(Q_x)) \leq 2N^{1/(10d)^2} \right\}  \right| & \leq \size(Q_x)^{d-1} \times (4 N^{1/(10d)^2}) \\
				& \leq 4 N^{(d-1)/(10d) + 1/(10d)^2} \\
				& \leq 4 N^{1/10 + 1/100} \\
				& < 4 N^{1/5}.
			\end{align*}
			The first $4 N^{1/5}$ vertices visited by the path $\gamma$ (which are all disjoint by definition of a path) must include a vertex which is not in this set.
			
			There are then two possibilities (see Figure~\ref{fig:longpaths}):
			\begin{itemize}
				\item Either the path $\gamma$ visits a vertex, denoted by $z$ in the rest of the proof, which is at distance (in the cluster $\mathscr{C}'_{\infty}$) less than $2N^{1/5}$ from $x$ and is at distance (in $\Zd$) larger than $2N^{1/(10d)^2}$ from $\mbox{center}(Q_N)$ (N.B. $\mbox{center}(Q_N)$ is a larger set than $\mbox{center}(Q_x)$),
				\item Or the path $\gamma$ crosses the boundary layer of blue cubes surrounding the box $Q_x$ or the reflection of the blue cubes across the plane $\mbox{center}(Q_N)$.
			\end{itemize}
			
			\smallskip
			
			In the first case, the box of center $z$ and of size $N^{1/(10d)^2}$ is well-connected for the cluster $\mathscr{C}'_{\infty}$ (as it was well-connected in $\mathscr{C}_{\infty}$ and none of the edges in the box of center $z$ and of size $2N^{1/(10d)^2}$ have been deleted). The path $\gamma$ is thus connected to the largest cluster of this box. Once the connection with a well-connected cube is established, we can use the fact the largest open clusters of two neighboring well-connected boxes of the same size are connected to follow a path of well-connected boxes in the cluster $\mathscr{C}'_{\infty}$ (staying at distance at least $2N^{1/(10d)^2}$ from $\mbox{center}(Q_N)$) to one of the blue cubes on the boundary of the box $Q_x$. These operations generate a path connecting $x$ to one of the blue cubes on the boundary of $Q_x$ whose length is smaller than $2 N^{1/5} + C N^{1/5} N^{d/(10d)^2} \leq C N^{1/2}$: the first term is the length of the path from $x$ to $z$, the second term is an overestimation of the length needed to go from $z$ to one of the blue cubes following well-connected boxes (the path can be constructed using at most $C N^{1/5}$ boxes and to go from one box to its neighbour, we can always use less than $CN^{d/(10d)^2}$ vertices, which is the total number of vertices in the union of the two boxes).
			
			In the second case, the path $\gamma$ must cross the boundary layer of blue cubes (or the reflection of the blue cubes across the plane $\mbox{center}(Q_N)$, the argument below is identical in both cases). Since none of the edges of the blue cubes have been deleted, and since the blue cubes are well-connected in the cluster $\mathscr{C}_{\infty}$, we obtain that the path $\gamma$ must be connected to the largest connected component of one of these cubes.
			
			In both cases, we have found a path connecting $x$ to one of the blue cubes on the boundary of $Q_x$ which lies inside $\mathscr{C}'_{\infty}$ and has length smaller than $C N^{1/2}$. We may use the same argument to find a path connecting $x + e_1$ to one of the blue cubes on the boundary of $Q_x$. Finally, using the well-connectedness of the blue cubes (and the fact that we did not remove edges in this region), we may connect any pair of blue cubes on the boundary of the box $Q_x$ using at most $C N^{1/(10d)} N^{d/(10d)^2} \leq C N^{1/2}$ vertices.
			\item Few horizontal edges contributing to the flux: at most $N^{d-1-1/2}$ edges which contribute to the flux are left behind, \ie,
			for any harmonic function $\ell'$ on $L(Q_N)$,
			\begin{equation} \label{eq:divergence-theorem-flux}
				\begin{aligned}
					0 &= \sum_{x \in \mathscr{C}_*(L_N)} \Delta_{\mathscr{C}_{\infty}'} \ell'(x) \\
					&= \sum_{e \in \mathrm{sides}(Q_N) \cap \partial_e \mathscr{C}_*(L_N)} \mathbf{n} \cdot \nabla \ell'(e)  \\
					& \quad + \sum_{e \in \mathrm{left}(Q_N)\cap \partial_e \mathscr{C}_*(L_N)} \mathbf{n} \cdot \nabla \ell'(e)    \\
					& \quad + \sum_{e \in \mathrm{center}(Q_N)\cap \partial_e \mathscr{C}_*(L_N)} \mathbf{n} \cdot \nabla \ell'(e),
				\end{aligned}
			\end{equation}
			and we note that
			\begin{equation} \label{eq:horizontal-flux-small}
				\sum_{e \in \mathrm{center}(Q_N) \cap E \left( \mathscr{C}_\infty' \right)} \mathbf{1}_{\{\mathbf{n} \cdot \nabla \ell'(e)   \neq 0 \}} \leq C N^{d - 1- 1/(10d)+ 1/(10d)^2}.
			\end{equation}
			For a set $A \subset \mathscr{C}_{\infty}$ the notation $\mathbf{n} \cdot \nabla f(e)$ for an edge on $\partial_e A$ denotes the difference  $f(x) - f(y)$
			where $x \in A$ and $y \not \in A$, \ie, it is the discrete normal derivative. The inequality~\eqref{eq:horizontal-flux-small} is obtained by noting that only the edges in the boundary layer $Q_N \setminus Q_{N - N^{1/d}}$ and the edges in the boundary of the blue cubes contribute to the sum. Indeed all the other horizontal edges have either been removed, or if they have not been removed, the gradient of the corrected plane through the edge has to be equal to $0$ (N.B. this follows from the observation that the gradient of any harmonic function on the infinite percolation cluster evaluated at an edge $e$ satisfying the property that removing $e$ disconnect the cluster must be equal to $0$).
		\end{itemize}
		\medskip
		
		{\it Step 3: Contradiction.} \\
		Denote the set of deleted edges by $\mathcal{B}$ and the corrected plane in $\mathscr{C}_{\infty}'$ by ${\ellp{e_1}}'$ so that by Lemma~\ref{lemma:compare-different-planes},
		\[
		\nabla \ellp{e_1} - \nabla {\ellp{e_1}'} = \sum_{\substack{e \in \mathcal{B} \\ e = (x , y)} } \nabla_x \nabla_y G(e, \cdot) \left( {\ellp{e_1}}'(y) - {\ellp{e_1}}'(x) \right).
		\]
		As ${\ellp{e_1}}'$ is $L$-Lipschitz, by \eqref{eq:short-paths}, we have for every deleted edge $e = (x, x + e_1)$, 
		\begin{equation} \label{eq:deleted-gradient-bound}
			|{\ellp{e_1}}'(x) - {\ellp{e_1}}'(x + e_1)| \leq L C N^{\frac12}.
		\end{equation}
		By combining the previous two displays with \eqref{eq:left-side-large}, Proposition~\ref{prop:green-mixed-derivative}, and the fact that $|\mathcal{B}| \leq C N^{d-1}$, we have that 
		\begin{equation} \label{eq:altered-grad-left-bound}
			\begin{aligned}
				\lefteqn{\sum_{x \in \mathrm{left}(Q_N)} ({\ellp{e_1}}'(x) - {\ellp{e_1}}'(x+e_1)) \mathbf{1}_{\{ (x,x+e_1) \in E \left(\mathscr{C}'_{\infty} \right)\}} } \qquad & \\
				&\geq \sum_{x \in \mathrm{left}(Q_N)} (\ellp{e_1}(x) - \ellp{e_1}(x+e_1)) \mathbf{1}_{\{ (x,x+e_1) \in E \left(\mathscr{C}_{\infty}\right)\}}  \\
				& \qquad 
				- \sum_{e' \in \mathrm{left}(Q_N)} \sum_{\substack{e \in \mathcal{B} \\ e = (x , y)}} \left|\nabla_x \nabla_y G(e, e') \right| \left| {\ellp{e_1}}'(y) - {\ellp{e_1}}'(x)\right| \\
				&\geq \frac{\bar{\mathbf{a}}}{2} \times N^{d-1}   -  C N^{d-1} \times N^{d-1} \times N^{- d + \frac{1}{4}}  \times (L N^{\frac12})   \\
				&\geq  \frac{\bar{\mathbf{a}}}{4} N^{d-1},
			\end{aligned}
		\end{equation}
		for $N$ sufficiently large. 
		Similarly,
		\begin{equation}  \label{eq:altered-grad-top-and-bottom-bound}
			\left | \sum_{x \in \mathrm{sides}(Q_N)} ({\ellp{e_1}}'(x) - {\ellp{e_1}}'(x \pm e_j)) \mathbf{1}_{\{ (x,x\pm e_j) \in \mathscr{C}'_{\infty}\}}  \right| \leq \bar{\mathbf{a}} \times 10^{-50} N^{d-1}.
		\end{equation}
		Thus, we have, by \eqref{eq:divergence-theorem-flux}
		\[
		\sum_{e \in \mathrm{center}(Q_N) \cap E\left( \mathscr{C}_\infty' \right)} \nabla {\ellp{e_1}}'(e) \geq \frac{\bar{\mathbf{a}}}{8} N^{d-1}.
		\]
		However, by \eqref{eq:horizontal-flux-small} and for $N$ sufficiently large, 
		this implies the existence of some edge with gradient of ${\ellp{e_1}}'$ larger than $L$, a contradiction. \end{proof}

	\section{Integer-valued harmonic functions} \label{sec:integer-harmonic}
	\newcommand{\sgadget}{\hyperref[eq:st-gadget]{s}}
	\newcommand{\tgadget}{\hyperref[eq:st-gadget]{t}}
	\newcommand{\agadget}{\hyperref[eq:st-gadget]{a}}
	\newcommand{\bgadget}{\hyperref[eq:st-gadget]{b}}
	\newcommand{\Tn}[1]{\hyperref[eq:gadget-def]{T_{#1}}}
	\newcommand{\Rn}[1]{\hyperref[eq:effective-resistance]{R_{#1}}}

	In this section we prove Theorem \ref{theorem:integer-valued-linear-growth}. The idea is to explicitly compute the solution to the Dirichlet problem 
	on a family of {\it gadgets}, finite connected subgraphs of $\Z^d$. On these gadgets, non-constant harmonic functions are rational-valued
	with greatest common denominator growing like an exponential in the size of the graph. This implies that integer-valued harmonic functions on the gadgets have to oscillate faster than exponential, violating the regularity of harmonic functions afforded by Theorem \ref{theorem:large scale-regularity}. 
	
	In Section~\ref{subsec:gadget-def}, we identify the aforementioned gadget and prove the growth property. 
	Next, in Section \ref{subsec:linear-growth-gadget} we use this together with the uniqueness of the first-order corrector to show that integer-valued harmonic functions of linear growth do not exist. An outline of this argument appears at the beginning of the subsection. We conclude with an extension to higher order growths in Section \ref{subsec:polynomial-growth}. 
	
	\subsection{Gadgets} \label{subsec:gadget-def}
	We define the graph given in Figure \ref{fig:elec1}.
	For each $n \geq 1$, consider the following subgraph of $\Z^d$, 
	\begin{equation} \label{eq:gadget-def}
		\Tn{n} =  [1,4] \times [0,n+1],
	\end{equation}
	set
	\begin{equation} \label{eq:st-gadget}
		\sgadget := (1,1) \quad \agadget := (2,1) \quad \bgadget := (3,1) \quad \tgadget := (4,1). 
	\end{equation}
	and designate the following sites as {\it open}
	\begin{equation}
		T^{1}_n =  \{[1,4] \times \{ 1 \} \}   \cup \{[2,3] \times [1,n]\}
	\end{equation}
	and the following as {\it closed}
	\begin{equation}
		T^{0}_n =  \partial T_n  \backslash \{  \sgadget \cup \tgadget \}.
	\end{equation}
	The graph Laplacian operates on functions  $f: \Tn{n} \to \R$ via
	\begin{equation}
		\Delta_{{T_{n}}} f(x) = \sum_{|y-x| =1} ( f(y) - f(x)) \mathbf{1}_{\{ y \in T^{1}_n\}}, \quad \forall x \in \inte(T_n).
	\end{equation}
	In this subsection we prove the following. 
	\begin{prop} \label{prop:exponential-growth-gadget}
		For all $n \geq 1$, if $u: \Tn{n} \to \Z$ is harmonic 
		in $\inte(\Tn{n})$ and non-constant, then  $|u(\tgadget) - u(\sgadget)| \geq C 3^n$. 
	\end{prop}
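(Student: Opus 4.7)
My plan is to identify $\phi := (u - u(s))/D$, where $D := u(t) - u(s)$, as the unique harmonic function on the open sites $T_n^1$ with boundary values $\phi(s) = 0$ and $\phi(t) = 1$, and then to extract a divisibility constraint on $D$ from an explicit effective-resistance computation. Since the harmonicity condition only involves values at open neighbors, I restrict attention to $u|_{T_n^1}$, and since $s$ and $t$ are the only vertices of $T_n^1$ not lying in $\mathrm{int}(T_n)$, they play the role of the Dirichlet boundary. By the maximum principle, a non-constant integer-valued $u$ must satisfy $u(s) \neq u(t)$, so $D \neq 0$ and $\phi$ is well-defined.

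Next I would compute the effective resistance $R_n$ between $s$ and $t$ in the network $T_n^1$ with unit conductances. The cluster consists of the external vertices $s$ and $t$ each joined by a single edge to the bottom rung of a $2 \times n$ ladder with vertices $L_j = (2,j)$ and $R_j = (3,j)$ (so $a = L_1$, $b = R_1$). Since $a$ is the only open neighbor of $s$, Kirchhoff's law at $s$ gives $\phi(a) = 1/R_n$. Separating the bottom rung as a parallel branch yields the recursion
\[
r_n^{-1} = 1 + (2 + r_{n-1})^{-1}, \qquad r_1 = 1, \qquad R_n = r_n + 2,
\]
where $r_n$ is the resistance between $a$ and $b$ inside the ladder of height $n$ alone, and the ``$+2$'' accounts for the external edges $s$--$a$ and $b$--$t$.

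The conclusion is number-theoretic. Writing $r_n = a_n/b_n$, the recursion lifts to $(a_n, b_n)^\top = M\,(a_{n-1}, b_{n-1})^\top$ with $M = \begin{pmatrix} 1 & 2 \\ 1 & 3 \end{pmatrix}$ and $(a_1, b_1) = (1,1)$. Since $\det M = 1$, the pair $(a_n, b_n)$ remains coprime, and hence so does $(P_n, Q_n) := (a_n + 2 b_n, b_n)$, giving $\phi(a) = Q_n/P_n$ in lowest terms. Integrality of $u(a) = u(s) + D \cdot Q_n/P_n$ then forces $P_n \mid D$. The characteristic polynomial of $M$ is $\lambda^2 - 4\lambda + 1$ with dominant root $2 + \sqrt{3} > 3$, so $P_n \geq c\,(2+\sqrt{3})^n \geq c \cdot 3^n$ for some universal $c > 0$, yielding $|D| \geq P_n \geq c \cdot 3^n$.

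The main bookkeeping step I expect to be delicate is justifying the boundary conditions behind the recursion: at the top level $n$ the vertices $L_n, R_n$ each have exactly two open neighbors (because row $n+1$ of the bounding rectangle is closed), which is precisely what terminates the series/parallel reduction rather than allowing it to continue, and at level $1$ the two external unit-conductance edges from $s$ and $t$ contribute the constant ``$+2$'' in $R_n = r_n + 2$. Once these are read directly from the definition of $T_n^1$, the rest of the proof is just the determinant identity and the eigenvalue estimate above.
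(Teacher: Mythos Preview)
Your proof is correct and follows essentially the same route as the paper: compute the effective resistance $R_n$ between $s$ and $t$ as a reduced fraction, observe that $\phi(a)=1/R_n$, and use coprimality of numerator and denominator to force a divisibility constraint on $D=u(t)-u(s)$. The paper recurses directly on $R_n$ (via $R_{n+1}=(3R_n+2)/(R_n+1)$) and then proves coprimality of its numerator--denominator sequences $A_{n+1},B_n$ by a separate gcd argument, whereas you recurse on the bare ladder resistance $r_n=R_n-2$ and read coprimality off the identity $\det M=1$ for $M=\begin{pmatrix}1&2\\1&3\end{pmatrix}$; this is a slightly slicker bookkeeping of the same computation, and the characteristic polynomial $\lambda^2-4\lambda+1$ you obtain matches the paper's second-order recursions $A_n=4A_{n-1}-A_{n-2}$ and $B_n=4B_{n-1}-B_{n-2}$.
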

	Our strategy for doing so is to recursively compute the {\it effective resistance} of the electric network induced by $\Tn{n}$ with unit 
	resistances along each edge. For further background on electric networks, we refer the reader to \cite{doyle-snell-electric-networks} and 
	\cite[Section 2.2]{lyons-peres-book}. In our setting, by, \eg,  \cite[Equation (2.5)]{lyons-peres-book} the effective resistance can be expressed via the solution to the following Dirichlet problem  
	\begin{equation} \label{eq:escape-probability}
		\begin{cases}
			\Delta_{{T_{n}}} h = 0 &\mbox{ on $\inte(\Tn{n})$}  \\
			h = 0 &\mbox{ on $\sgadget$} \\
			h = 1 &\mbox{ on $\tgadget$},
		\end{cases}
	\end{equation}
	as
	\begin{equation} \label{eq:effective-resistance}
		\Rn{n} := 1/h(\agadget).
	\end{equation}
	In particular, we will (implicitly) see that $\lim_{n \to \infty} \Rn{n} = (1 + \sqrt{3})$ and thus
	each  $\Rn{n}$ is a convergent of $(1 + \sqrt{3})$. We then use the fact that the terms in the continued fraction expansion of an irrational number grow exponentially. 
	
	\begin{lemma} \label{lemma:effective-resistance-recurrence}
		The effective resistance of the electric network induced by $\Tn{n}$ with unit 
		resistances along each edge, $\Rn{n}$, satisfies the following recurrence relation, 
		\begin{equation} \label{eq:effective-resistance-recurrence}
			\begin{cases}
				\Rn{1} = 3 \\
				\Rn{n+1} = (3 \Rn{n} + 2)/(\Rn{n} + 1).
			\end{cases}
		\end{equation}
	\end{lemma}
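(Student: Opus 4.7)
The plan is to establish the recurrence by a series-parallel reduction of the electric network on $T_n$. For the base case $n=1$, the open sites $T_1^1$ reduce to the four-vertex path $s$--$a$--$b$--$t$, and the effective resistance between $s$ and $t$ is the sum of three unit resistances in series, giving $R_1 = 3$.

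For the inductive step, the key observation is that $s$ has only $a$ as an open neighbor and $t$ has only $b$, so any unit current from $s$ to $t$ must traverse the edges $s$--$a$ and $b$--$t$. Writing $\rho_n$ for the effective resistance between $a$ and $b$ in the stem subgraph $[2,3] \times [1,n]$, the full network decomposes as three pieces in series, and hence $R_n = 2 + \rho_n$. It therefore suffices to prove the recurrence
\begin{equation*}
\rho_{n+1} = \frac{2 + \rho_n}{3 + \rho_n},
\end{equation*}
since substituting $\rho_n = R_n - 2$ yields $R_{n+1} = 2 + R_n/(R_n+1) = (3R_n + 2)/(R_n + 1)$, which is the desired formula.

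To derive this recurrence, I would decompose the $2 \times (n+1)$ stem between $a$ and $b$ as the direct rung $a$--$b$ of unit resistance in parallel with an upper route $a \to (2,2) \to \cdots \to (3,2) \to b$. The middle segment of this upper route is precisely a translate of the $2 \times n$ stem of $T_n$, shifted up by one, so its effective resistance between $(2,2)$ and $(3,2)$ is exactly $\rho_n$. The upper route has total resistance $1 + \rho_n + 1 = 2 + \rho_n$, and the parallel-combination rule gives
\begin{equation*}
\rho_{n+1} = \frac{1 \cdot (2 + \rho_n)}{1 + (2 + \rho_n)} = \frac{2 + \rho_n}{3 + \rho_n},
\end{equation*}
as required.

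I do not anticipate a substantive obstacle: the argument is a routine series-parallel reduction of a ladder network and the algebra is a one-line simplification. The only care needed is in justifying that severing the bottom rung and the two adjacent vertical edges from the $T_{n+1}$ stem really exposes a translate of the $T_n$ stem (so that $\rho_n$ can be invoked directly), but this follows immediately from the defining formula $T_n^1 = ([1,4] \times \{1\}) \cup ([2,3] \times [1,n])$ together with the observation that $s$ and $t$ play no role in the stem subnetwork.
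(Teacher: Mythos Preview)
Your proof is correct and follows essentially the same series--parallel reduction as the paper. The paper's argument (conveyed mostly through Figures~\ref{fig:elec1}--\ref{fig:elec4}) observes directly that in $T_{n+1}$ the direct $a$--$b$ edge sits in parallel with an upper subnetwork isomorphic to $T_n$ (with $s\leftrightarrow a$, $a\leftrightarrow(2,2)$, $b\leftrightarrow(3,2)$, $t\leftrightarrow b$), giving that subnetwork resistance exactly $R_n$; you arrive at the same parallel pair by introducing the intermediate quantity $\rho_n = R_n-2$ and writing the upper route as $1+\rho_n+1$. These are the same computation, just packaged differently.
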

	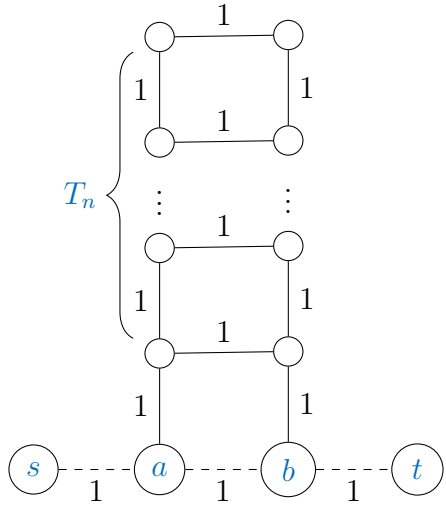
\begin{figure}
		\begin{center}
			\begin{tikzpicture} [main/.style = {draw, circle}] 
    \node[main] (s) {$\sgadget$};
    \node[main] [right=of s] (1) {$\agadget$};
    \node[main] [above=of 1] (2) {};
    \node[main] [above=of 2] (3) {};
    \node[main] [above=of 3] (4) {};
    \node[main] [above=of 4] (5) {};
    \node[main] [right=of 1] (a) {$\bgadget$};
    \node[main] [above=of a] (b) {};
    \node[main] [above=of b] (c) {};
    \node[main] [above=of c] (d) {};
    \node[main] [above=of d] (e) {};
    \node[main] [right=of a] (t) {$\tgadget$};

    \draw[dashed] (s) -- (1) node[midway, below] {$1$};
    \draw (1) -- (2) node[midway, left] {$1$};
    \draw (2) -- (3) node[midway, left] {$1$};
    \draw (4) -- (5) node[midway, left] {$1$};
    \draw[dashed] (1) -- (a) node[midway, below] {$1$};
    \draw (a) -- (b) node[midway, right] {$1$};
    \draw (b) -- (c) node[midway, right] {$1$};
    \draw (d) -- (e) node[midway, right] {$1$};
    \draw[dashed] (a) -- (t) node[midway, below] {$1$};
    \draw (2) -- (b) node[midway, above] {$1$};
    \draw (3) -- (c) node[midway, above] {$1$};
    \draw (4) -- (d) node[midway, above] {$1$};
    \draw (5) -- (e) node[midway, above] {$1$};
    \path (3) -- (4) node[midway] {$\vdots$};
    \path (c) -- (d) node[midway] {$\vdots$};

    \draw[decorate, decoration={brace, amplitude=10pt, raise=10pt}] (2) -- (5) node[midway, xshift=-30pt] {$\Tn{n}$};
\end{tikzpicture}  
		\end{center} \caption{Gadget with unit resistance along each edge. Notation given at the beginning of Section \ref{subsec:gadget-def}. }\label{fig:elec1}
	\end{figure}
	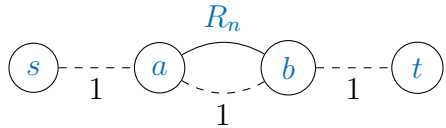
\begin{figure}
		\begin{center}
			\begin{tikzpicture} [main/.style = {draw, circle}] 
    \node[main] (s) {$\sgadget$};
    \node[main] [right=of s] (1) {$\agadget$};
    \node[main] [right=of 1] (a) {$\bgadget$};
    \node[main] [right=of a] (t) {$\tgadget$};

    \draw[dashed] (s) -- (1) node[midway, below] {$1$};
    \draw[dashed] (1) to[bend right] node[midway, below] {$1$} (a);
    \draw[dashed] (a) -- (t) node[midway, below] {$1$};
    \draw (1) to[bend left] node[midway, above] {$\Rn{n}$} (a);
\end{tikzpicture}  
		\end{center}
		\caption{Recursive decomposition of the effective resistance as in Lemma \ref{lemma:effective-resistance-recurrence}.} \label{fig:elec2}
	\end{figure}
	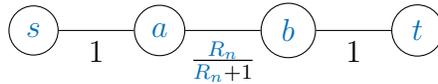
\begin{figure}
		\begin{center}
			\begin{tikzpicture} [main/.style = {draw, circle}] 
    \node[main] (s) {$\sgadget$};
    \node[main] [right=of s] (1) {$\agadget$};
    \node[main] [right=of 1] (a) {$\bgadget$};
    \node[main] [right=of a] (t) {$\tgadget$};

    \draw[] (s) -- (1) node[midway, below] {$1$};
    \draw[] (1) -- (a) node[midway, below] {$\frac{\Rn{n}}{\Rn{n}+1}$} ;
    \draw[] (a) -- (t) node[midway, below] {$1$};
\end{tikzpicture}  
		\end{center}
		\caption{Parallel law applied to the network in Figure \ref{fig:elec2}.}\label{fig:elec3}
	\end{figure}
	\begin{figure}
		\begin{center}
			\begin{tikzpicture} [main/.style = {draw, circle}] 
    \node[main] (s) {$\sgadget$};
    \node[main] [right=of a] (t) {$\tgadget$};

    \draw[] (s) -- (t) node[midway, below] {$\frac{3 \Rn{n} + 2}{\Rn{n} + 1}$};
\end{tikzpicture}  
		\end{center}
		\caption{Series law applied to the network in Figure \ref{fig:elec3}.}\label{fig:elec4}
	\end{figure}
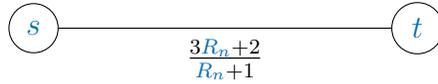

	\begin{proof}
		This is an explicit computation using the series and parallel laws for electric networks as in, \eg, \cite[Section 2.3]{lyons-peres-book}. See Figures \ref{fig:elec1}, \ref{fig:elec2}, \ref{fig:elec3}, and \ref{fig:elec4}.
	\end{proof}
	
	It is clear from \eqref{eq:effective-resistance-recurrence} that each $\Rn{n}$ is a rational number. We aim to show that the 
	numerator of the reduced rational form of $\Rn{n}$ grows exponentially. Fortunately, we may rewrite the recursion in terms of this reduced rational form.
	\begin{lemma} \label{lemma:reduced-rational-recursion}
		For each $n \geq 1$, 
		\begin{equation} \label{eq:rec-form}
			\Rn{n} = \frac{A_{n+1}}{B_n}
		\end{equation}
		where
		\begin{equation} \label{eq:A-recursion}
			\begin{cases}
				A_0 := 1 \\
				A_1 := 1 \\
				A_{n} := 4 A_{n-1} - A_{n-2}, \quad \forall n \geq 2,
			\end{cases}
		\end{equation}
		and
		\begin{equation} \label{eq:B-recursion}
			\begin{cases}
				B_0 := 0 \\
				B_1 := 1 \\
				B_{n} := 4 B_{n-1} - B_{n-2}, \quad \forall n \geq 2.
			\end{cases}
		\end{equation}
	\end{lemma}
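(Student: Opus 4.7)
The plan is to prove the identity $R_n = A_{n+1}/B_n$ by induction on $n$, driven by the one-step recursion \eqref{eq:effective-resistance-recurrence}. The base case $n=1$ is immediate: $R_1 = 3$, while $A_2 = 4A_1 - A_0 = 3$ and $B_1 = 1$, so $A_2/B_1 = 3$. For the inductive step, assuming $R_n = A_{n+1}/B_n$, substitution into \eqref{eq:effective-resistance-recurrence} gives
\begin{equation*}
R_{n+1} \;=\; \frac{3(A_{n+1}/B_n) + 2}{(A_{n+1}/B_n) + 1} \;=\; \frac{3A_{n+1} + 2 B_n}{A_{n+1} + B_n}.
\end{equation*}
Therefore it suffices to exhibit the two coupled identities
\begin{equation} \label{eq:plan-coupled}
B_{n+1} = A_{n+1} + B_n \qquad \text{and} \qquad A_{n+2} = 3A_{n+1} + 2 B_n,
\end{equation}
from which $R_{n+1} = A_{n+2}/B_{n+1}$ follows by direct substitution (no coprimality claim is needed, since we only assert equality as rationals).

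To establish \eqref{eq:plan-coupled}, I would use a second (simultaneous) induction on $n$ that leverages the homogeneous second-order recurrences \eqref{eq:A-recursion} and \eqref{eq:B-recursion}. The base case $n=0$ follows from the initial values $A_0 = B_0 = 0$... no wait, $A_0 = A_1 = 1$ and $B_0 = 0$, $B_1 = 1$, which give $B_1 = A_1 + B_0$ and $A_2 = 3 = 3A_1 + 2B_0$. For the step, assume $B_n = A_n + B_{n-1}$ and $A_{n+1} = 3A_n + 2B_{n-1}$. Then the $B$-recurrence yields
\begin{equation*}
B_{n+1} = 4B_n - B_{n-1} = 4(A_n + B_{n-1}) - B_{n-1} = 4A_n + 3B_{n-1},
\end{equation*}
which matches $A_{n+1} + B_n = (3A_n + 2B_{n-1}) + (A_n + B_{n-1}) = 4A_n + 3 B_{n-1}$; and the $A$-recurrence gives $A_{n+2} = 4A_{n+1} - A_n$, which one checks equals $3A_{n+1} + 2B_n = 3A_{n+1} + 2A_n + 2B_{n-1}$ precisely when $A_{n+1} = 3A_n + 2B_{n-1}$, the second half of the inductive hypothesis.

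There is no real obstacle here: the whole argument is a short algebraic verification once one guesses the correct bilinear recursions \eqref{eq:plan-coupled}. The mildly delicate point is organizational — one must carry both identities in \eqref{eq:plan-coupled} through the induction in tandem, since each feeds the other, and one must be careful with the index shift (the resistance $R_n$ is matched with $A_{n+1}$ rather than $A_n$, reflecting the fact that the network $T_n$ has $n+1$ vertical rungs). Once \eqref{eq:plan-coupled} is in hand, the main claim \eqref{eq:rec-form} is immediate, and one obtains as a bonus the structural fact that the second-order recurrence for $A_n$ and $B_n$ with characteristic roots $2 \pm \sqrt{3}$ underlies the exponential growth of the numerator/denominator — the input needed for Proposition~\ref{prop:exponential-growth-gadget}.
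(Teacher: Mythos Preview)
Your proof is correct and follows essentially the same approach as the paper: reduce the main induction to the two auxiliary identities $B_{n+1} = A_{n+1} + B_n$ and $A_{n+2} = 3A_{n+1} + 2B_n$, and prove those by induction. The only difference is organizational --- the paper proves each auxiliary identity separately via a two-step induction (using the hypothesis at both $n$ and $n-1$), whereas you couple them into a single simultaneous one-step induction; both work equally well.
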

	In order to prove Lemma \ref{lemma:reduced-rational-recursion}, we first check an identity. 
	\begin{lemma} \label{lemma:num-denom-recs}
		Let $A_n$ and $B_n$ be as in Lemma \ref{lemma:reduced-rational-recursion}. 
		We have that
		\begin{equation} \label{eq:rec-form-1}
			A_n + B_{n-1} = B_{n}, \quad \forall n \geq 1
		\end{equation}
		and
		\begin{equation} \label{eq:rec-form-2}
			3 A_n +2  B_{n-1} = A_{n+1}  \quad \forall n \geq 1.
		\end{equation}
		
	\end{lemma}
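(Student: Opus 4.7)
The plan is to prove both identities simultaneously by induction on $n$, since the recurrence-style step for each identity naturally calls on the previous case of that same identity (and, via the two-step recurrences for $A_n$ and $B_n$, also on the case before that). So I will verify the base cases $n = 1$ (and implicitly $n = 2$ through the argument structure) and then do a single inductive step that advances both \eqref{eq:rec-form-1} and \eqref{eq:rec-form-2}.

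For the base case $n = 1$: from the definitions in \eqref{eq:A-recursion} and \eqref{eq:B-recursion}, we compute $A_1 + B_0 = 1 + 0 = 1 = B_1$, giving \eqref{eq:rec-form-1}. For \eqref{eq:rec-form-2}, $3 A_1 + 2 B_0 = 3$, and on the other hand $A_2 = 4 A_1 - A_0 = 3$, so they agree.

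For the inductive step, suppose both \eqref{eq:rec-form-1} and \eqref{eq:rec-form-2} hold at indices $n-1$ and $n$ (for $n \geq 2$); I will deduce them at index $n+1$. For \eqref{eq:rec-form-1}, apply the recurrences for $A$ and $B$:
\[
A_{n+1} + B_n = (4 A_n - A_{n-1}) + (4 B_{n-1} - B_{n-2}) = 4(A_n + B_{n-1}) - (A_{n-1} + B_{n-2}) = 4 B_n - B_{n-1} = B_{n+1},
\]
using the inductive hypothesis for \eqref{eq:rec-form-1} at $n$ and $n-1$, and then the recurrence for $B_{n+1}$. For \eqref{eq:rec-form-2}, the same manipulation gives
\[
3 A_{n+1} + 2 B_n = 3(4 A_n - A_{n-1}) + 2(4 B_{n-1} - B_{n-2}) = 4(3 A_n + 2 B_{n-1}) - (3 A_{n-1} + 2 B_{n-2}) = 4 A_{n+1} - A_n = A_{n+2},
\]
using the inductive hypothesis for \eqref{eq:rec-form-2} at $n$ and $n-1$, and the recurrence for $A_{n+2}$.

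There is no real obstacle; the only subtlety is the bookkeeping of indices, since the step from $n$ to $n+1$ in each identity uses the previous two cases of the same identity, so one must be careful to include $n = 2$ in the basis of the induction (which is directly verifiable: $A_2 + B_1 = 3 + 1 = 4 = B_2$ and $3 A_2 + 2 B_1 = 9 + 2 = 11 = A_3$). The identities then follow for all $n \geq 1$.
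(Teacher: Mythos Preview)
Your proof is correct and follows essentially the same approach as the paper: induction on $n$, with the inductive step for each identity obtained by applying the two-term recurrences \eqref{eq:A-recursion} and \eqref{eq:B-recursion} and regrouping so as to invoke the identity at the two previous indices. Your explicit verification of the second base case $n=2$ is a welcome clarification, since the step indeed requires the hypothesis at both $n$ and $n-1$.
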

	
	\begin{proof}
		We proceed by induction on $n \geq 1$, the base cases can be verified directly. 
		
		\medskip
		{\it Inductive step for \eqref{eq:rec-form-1}.} \\
		We compute using the recursive definition and the inductive hypothesis, 
		\begin{align*}
			B_{n+1} &= 4 B_{n} - B_{n-1}  \qquad \mbox{(by \eqref{eq:B-recursion})} \\
			&=   4 ( A_{n} + B_{n-1}) - ( A_{n-1} + B_{n-2}) \qquad \mbox{(inductive hypothesis \eqref{eq:rec-form-1} for $n$ and $n-1$)} \\
			&=  (4 A_{n} - A_{n-1} ) + (4 B_{n-1} - B_{n-2})  \\ 
			&= A_{n+1} + B_{n} \qquad \mbox{(by \eqref{eq:A-recursion} and \eqref{eq:B-recursion})}.
		\end{align*}
		\medskip
		{\it Inductive step for \eqref{eq:rec-form-2}.} \\
		Similarly, 
		\begin{align*}
			&A_{n+2} \\
			&= 4 A_{n+1} - A_{n} \qquad \mbox{(by \eqref{eq:A-recursion})} \\
			&= 4 ( 3 A_n +2  B_{n-1} ) - (3 A_{n-1} +2  B_{n-2} ) \qquad \mbox{(inductive hypothesis \eqref{eq:rec-form-2} for $n$ and $n-1$)} \\
			&= 3 ( 4 A_n - A_{n-1}) + 2 ( 4 B_{n-1} - B_{n-2}) \\
			&= 3 A_{n+1} - 2 B_{n} \qquad \mbox{(by \eqref{eq:A-recursion} and \eqref{eq:B-recursion})}, 
		\end{align*}
		which completes the proof.
	\end{proof}
	\begin{proof}[Proof of Lemma \ref{lemma:reduced-rational-recursion}.]
		The base case is immediate and we check the inductive step,
		\begin{align*}
			&\Rn{n+1} \\
			&= \frac{3 \Rn{n} + 2}{\Rn{n} + 1}  \qquad \mbox{(by \eqref{eq:effective-resistance-recurrence})} \\
			&= \frac{ 3 \frac{A_{n+1}}{B_n} + 2}{ \frac{A_{n+1}}{B_n} + 1} \qquad \mbox{(inductive hypothesis \eqref{eq:rec-form} for $n$)} \\
			&= \frac{ 3 A_{n+1} + 2 B_n}{A_{n+1} + B_n} \\
			&= \frac{ A_{n+2}}{B_{n+1}} \qquad \mbox{(by Lemma \ref{lemma:num-denom-recs})}, 
		\end{align*}
		completing the proof.
	\end{proof}
	
	We then check that the expression for $\Rn{n}$ given by \eqref{eq:rec-form} is in reduced rational form. 
	\begin{lemma} \label{lemma:reduced-rational}
		$A_{n+1}$ and $B_{n}$ are coprime for all $n \geq 1$. 
	\end{lemma}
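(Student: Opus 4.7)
The plan is to prove coprimality by exhibiting a Bézout-type identity that follows directly from the linear recurrences. Specifically, I will show that
\begin{equation*}
A_{n+1} B_{n-1} - A_n B_n = -1 \quad \text{for all } n \geq 1,
\end{equation*}
which immediately implies that any common divisor of $A_{n+1}$ and $B_n$ must divide $-1$, hence equal $1$.

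The first step is to verify the identity in the base case $n = 1$: $A_2 B_0 - A_1 B_1 = 3 \cdot 0 - 1 \cdot 1 = -1$. For the inductive step, assume $A_{n+1} B_{n-1} - A_n B_n = -1$. Then using the recursions \eqref{eq:A-recursion} and \eqref{eq:B-recursion},
\begin{align*}
A_{n+2} B_n - A_{n+1} B_{n+1}
&= (4 A_{n+1} - A_n) B_n - A_{n+1}(4 B_n - B_{n-1}) \\
&= -A_n B_n + A_{n+1} B_{n-1} = -1,
\end{align*}
where all the $4 A_{n+1} B_n$ terms cancel. This closes the induction.

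The second step is to conclude: if a positive integer $d$ divides both $A_{n+1}$ and $B_n$, then $d$ divides $A_{n+1} B_{n-1} - A_n B_n = -1$, forcing $d = 1$. This gives $\gcd(A_{n+1}, B_n) = 1$ for every $n \geq 1$, as claimed. There is no real obstacle here; the only subtlety is guessing the correct invariant, which is suggested by treating $A_n/B_{n-1}$ as a sequence of convergents to $1+\sqrt{3}$ (so the determinant relation $A_{n+1} B_{n-1} - A_n B_n$ is the standard continued-fraction determinant identity). Once guessed, both the induction and the deduction of coprimality are immediate linear algebra on two-term recurrences.
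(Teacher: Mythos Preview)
Your proof is correct. The identity $A_{n+1}B_{n-1}-A_nB_n=-1$ holds by the induction you wrote, and the Bézout-type conclusion is immediate.

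The paper takes a different route: it first uses the previously established identity $A_{n+1}+B_n=B_{n+1}$ (Lemma~\ref{lemma:num-denom-recs}) to rewrite $\gcd(A_{n+1},B_n)=\gcd(B_{n+1},B_n)$, and then shows by a one-line induction on the recursion $B_{n+1}=4B_n-B_{n-1}$ that consecutive $B$'s are coprime. Your argument is more self-contained, since it does not appeal to Lemma~\ref{lemma:num-denom-recs} and instead exploits directly the standard ``determinant'' invariant for a pair of sequences satisfying the same second-order linear recurrence; the paper's argument, on the other hand, recycles an identity already needed for Lemma~\ref{lemma:reduced-rational-recursion}. Both are equally short and elementary.
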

	\begin{proof}
		We first observe using \eqref{eq:rec-form-1}
		\[
		\gcd(A_{n+1}, B_n) = \gcd(A_{n+1} +  B_n, B_n) = \gcd(B_{n+1}, B_n).
		\]
		We then check, by induction, that for all $n \geq 0$, $ \gcd(B_{n+1}, B_n) = 1$. The base case is automatic, so
		\[
		\gcd(B_{n+1}, B_n) = \gcd( 4 B_{n} - B_{n-1}, B_n)  = \gcd(-B_{n-1}, B_n) = 1,
		\]
		completing the proof.
	\end{proof}
	We next observe that numerator of $\Rn{n}$ grows exponentially.
	\begin{lemma} \label{lemma:exponential-growth}
		For $n \geq 1$, 
		\[
		A_{n+1} > 3^{n-1}.
		\]
	\end{lemma}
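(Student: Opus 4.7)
The plan is to proceed by a direct induction on the recurrence \eqref{eq:A-recursion}. Since the bound $3^{n-1}$ is far from tight (the characteristic roots of the recurrence are $2 \pm \sqrt{3}$, so the true growth rate is $2 + \sqrt{3} \approx 3.73$), there is ample room to argue. The cleanest route is to prove the slightly stronger multiplicative statement $A_{n+1} \geq 3 A_n$ for every $n \geq 1$, from which the desired inequality follows immediately by iteration and the initial condition $A_1 = 1$.

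I would first check the base case $n = 1$: since $A_1 = 1$ and $A_2 = 4 A_1 - A_0 = 3$, the inequality $A_2 \geq 3 A_1$ holds with equality. For the inductive step, assume $A_{n+1} \geq 3 A_n$. Then using \eqref{eq:A-recursion},
\begin{equation*}
A_{n+2} = 4 A_{n+1} - A_n \geq 4 A_{n+1} - \tfrac{1}{3} A_{n+1} = \tfrac{11}{3} A_{n+1} \geq 3 A_{n+1},
\end{equation*}
which closes the induction. Iterating the claim from $A_1 = 1$ gives $A_{n+1} \geq 3^n$ for all $n \geq 1$, which is strictly stronger than the stated bound $A_{n+1} > 3^{n-1}$.

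There is no genuine obstacle here; the only mild point to verify is that the induction does not lose too much on each step, which is exactly why the estimate $A_n \leq A_{n+1}/3$ coming from the inductive hypothesis is used in precisely the form needed to absorb the subtraction in $4 A_{n+1} - A_n$. If one preferred, an alternative (slightly less sharp) route would be to first show by a separate induction that the sequence $(A_n)_{n \geq 1}$ is nondecreasing, so that $A_{n+2} = 4 A_{n+1} - A_n \geq 3 A_{n+1}$ directly, and then iterate; this gives the same conclusion and avoids the fraction $11/3$.
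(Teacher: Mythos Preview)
Your proof is correct and essentially the same as the paper's. The paper writes the inductive step as $A_{n+1} = 4A_n - A_{n-1} = 3A_n + (A_n - A_{n-1}) > 3A_n$, which is exactly the ``alternative route'' you sketch at the end (using monotonicity of the sequence rather than the bound $A_n \leq A_{n+1}/3$); your main argument is a minor variant of this that yields the slightly stronger conclusion $A_{n+1} \geq 3^n$.
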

	\begin{proof}
		The result follows by an induction. The base case can be checked directly. The inductive step is, 
		\begin{align*}
			A_{n+1} &= 4 A_{n} - A_{n-1} \\
			&= 3 A_n + (A_n - A_{n-1}) \\
			&> 3 A_n,
		\end{align*}
		completing the proof. 
	\end{proof}
	
	We indicate how the above lemmas lead to the desired claim. 
	\begin{proof}[Proof of Proposition \ref{prop:exponential-growth-gadget}]
		If $u$ is non-constant, then by subtracting a constant, we may assume $u(\sgadget) = 0$ and $u(\tgadget) \neq 0$ and therefore, by the maximum principle, 
		\[
		h =  \frac{u}{u(\tgadget)},
		\]
		where $h$ is given by \eqref{eq:escape-probability}. Hence, by Lemma \ref{lemma:reduced-rational-recursion},
		and the definition of the effective resistance $\Rn{n}$
		\[
		u(\agadget) =  \frac{B_n}{A_{n+1}} u(\tgadget).
		\]
		In particular, by Lemma \ref{lemma:reduced-rational},  since $u(\agadget)$ is an integer, this means $u(\tgadget)$ must divide $A_{n+1}$ --- this implies the claim by Lemma \ref{lemma:exponential-growth}.
	\end{proof}

	\subsection{Ruling out linear growth} \label{subsec:linear-growth-gadget}
	\begin{figure}
		\includegraphics[width=0.5\textwidth]{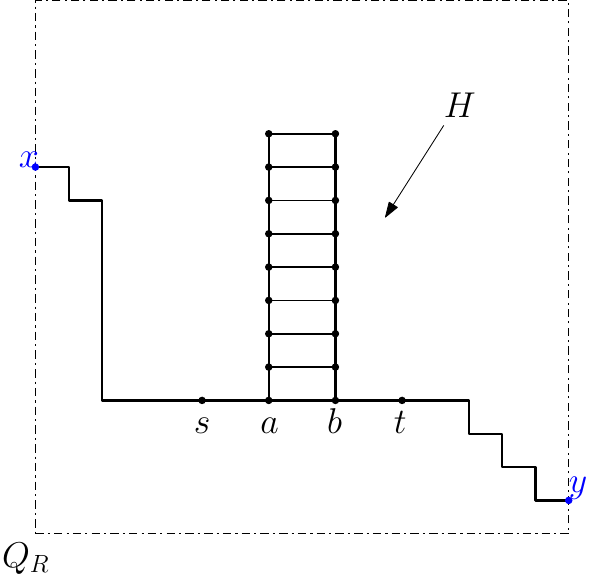}
		\caption{Attaching the gadget, $\Tn{R}$, as in the proof of Lemma \ref{lemma:non-constant-on-gadget}. The cube of radius $R$, $Q_R$,  is outlined by a dashed-dotted line and the subgraph $H$ containing the gadget is in black.} \label{fig:attach-gadget}
	\end{figure}
	As we see at the end of this subsection, by ergodicity, it will suffice to prove Theorem \ref{theorem:integer-valued-linear-growth} for a single slope. 
	\begin{lemma} \label{lemma:det-slope}
		Fix a deterministic, nonzero slope ${p \in \R^d}$.  Almost surely, there exists an edge $e \in \mathscr{C}_\infty$ such that
		\begin{equation*}
			\nabla \ellp{p}(e) \notin\Z.
		\end{equation*}
	\end{lemma}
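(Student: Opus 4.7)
The plan is a proof by contradiction, using the rigid gadget $T_n$ from Section~\ref{subsec:gadget-def} together with Lemma~\ref{lemma:finite-energy}. Suppose to the contrary that with positive probability every edge-gradient of $\ellp{p}$ lies in $\Z$. This event is translation-invariant, hence by ergodicity of Bernoulli percolation it has probability one; fixing the additive constant so that $\ellp{p}(0) \in \Z$ then makes $\ellp{p}$ integer-valued on $\mathscr{C}_\infty$ almost surely. By Lemma~\ref{lemma:finite-energy}, on an event of full probability the same integrality persists under every finite modification of the environment, so the corrected plane $\ellp{p}'$ in any modified configuration also has integer gradients.

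Since $p \neq 0$, after applying a lattice rotation (which preserves the law of the environment) we may assume $p_1 \neq 0$. Fix $n$ large. On the high-probability event that $Q_{Cn}$ is well-connected (Proposition~\ref{prop:well-connected}), we perform a finite modification of edges in a bounded neighborhood of a point $y$ with $|y| \leq C n$ so as to embed a translate $T_n + y$ of the gadget into the cluster in isolation: open all edges within $T_n^1 + y$, close all edges joining $\mathrm{int}(T_n)+y$ to $T_n^0 + y$, and retain open edges from $\sgadget + y$ and $\tgadget + y$ to the surrounding cluster. The resulting corrected plane $\ellp{p}'$ is integer-valued on the modified cluster by the previous step, and because each interior vertex of $T_n + y$ has all of its cluster-neighbors in $T_n^1 + y$, the restriction $u := \ellp{p}'|_{T_n^1 + y}$ satisfies the gadget Laplacian on $\mathrm{int}(T_n)+y$.

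If $u$ is non-constant on $T_n^1 + y$, then Proposition~\ref{prop:exponential-growth-gadget} yields $|u(\tgadget + y) - u(\sgadget + y)| \geq c\,3^n$, while summing gradients along the $3$-edge path $\sgadget \to \agadget \to \bgadget \to \tgadget$ inside the gadget and invoking the Lipschitz estimate \eqref{eq:corrector-sublinearity-and-lipschitz} for $\chi_{p}'$ at scale $n$ gives the competing upper bound $|u(\tgadget + y) - u(\sgadget + y)| \leq C n^{\alpha}$, a contradiction for $n$ large. Ruling out the constant alternative $u(\tgadget + y) = u(\sgadget + y)$ is the remaining point: writing
\[
u(\tgadget + y) - u(\sgadget + y) = 3 p_1 + \bigl(\chi_{p}'(\tgadget + y) - \chi_{p}'(\sgadget + y)\bigr)
\]
and noting that the left side is an integer, if the chosen modification happens to make it vanish we alter one additional edge $e^*$ elsewhere in the cluster. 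Lemma~\ref{lemma:compare-different-planes} then expresses the resulting change in $u(\tgadget + y) - u(\sgadget + y)$ in terms of the mixed derivatives of the Green's function evaluated at $(\sgadget+y, e^*)$ and $(\tgadget+y, e^*)$ and of $\nabla \ellp{p}'(e^*)$; a suitable choice of $e^*$ makes this change non-zero, breaking constancy and giving the contradiction. The main technical obstacle is to verify that such an $e^*$ always exists, which appears to rest on the non-degeneracy of $\nabla_x \nabla_y G$ together with the fact that $\nabla \ellp{p}'$ has non-trivial stationary mean governed by $p_1 \neq 0$.
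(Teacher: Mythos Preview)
Your argument has a genuine gap in the upper-bound step. You invoke the oscillation/Lipschitz estimate \eqref{eq:corrector-sublinearity-and-lipschitz} for the corrector $\chi_p'$ in the \emph{modified} environment at scale $n$, but that estimate only applies for $r \geq \mathcal{M}_{\mathrm{corr},\alpha}'(y)$, and the minimal scale depends on the environment. Lemma~\ref{lemma:finite-energy} does guarantee that such a minimal scale is finite after every finite modification, but it gives no quantitative control on its size. In fact, the modification you perform inserts precisely the gadget $T_n$, which is engineered so that any non-constant integer-valued harmonic function on it has $|u(\tgadget+y)-u(\sgadget+y)| \geq c\,3^n$ across a path of length $3$. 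In the non-constant case you are analyzing, this very lower bound forces $\mathcal{M}_{\mathrm{corr},\alpha}'(y)$ to be much larger than $n$, so the estimate you want to invoke at scale $n$ is unavailable. You cannot obtain both the $3^n$ lower bound and the $n^\alpha$ upper bound in the same modified environment; the argument is circular.

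The paper circumvents this by never bounding the oscillation in a modified environment. Instead it argues that a suitable gadget-containing configuration $H$ occurs \emph{naturally} with probability at least $\min(\mathfrak{p},1-\mathfrak{p})^{|Q_R|}\geq e^{-CR^d}$, and that conditionally on $\mathscr{C}_\infty\cap Q_R=H$ the corrected plane is non-constant on the embedded gadget with probability $\geq cR^{-2d+2}$ (Lemma~\ref{lemma:non-constant-on-gadget}). Combined with Proposition~\ref{prop:exponential-growth-gadget} this yields the unconditional bound $\P[\osc_{Q_R}\ellp{p}\geq c\,3^R]\geq e^{-CR^d}$. Taking $R=C'\log N$ gives $\P[\osc_{Q_N}\ellp{p}\geq 2|p|N^2]\geq e^{-(C'\log N)^d}$, which for large $N$ contradicts the stochastic integrability $\P[\osc_{Q_N}\ellp{p}\geq |p|N^2]\leq 2e^{-cN^s}$ from Theorem~\ref{theorem:first-order-corrector}. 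The point is that the regularity estimate is used only in the \emph{original} environment, where its minimal scale has good tails; the gadget contributes solely through a probability lower bound.

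Your handling of the constant case is, as you admit, incomplete; but note that even if you could find an edge $e^*$ whose flip makes $u(\tgadget+y)-u(\sgadget+y)\neq 0$, you would then be back in the non-constant case and face the same upper-bound obstruction. The paper handles non-constancy differently in Lemma~\ref{lemma:non-constant-on-gadget}: it first disconnects $\partial Q_R$, observes that the corrected plane in that environment cannot be constant on $Q_{R+1}$ (else its difference with the original $\ellp{p}$ would be a non-constant sublinear harmonic function, contradicting Theorem~\ref{theorem:large scale-regularity}), and then attaches the gadget so that its two boundary vertices $\sgadget,\tgadget$ connect to points where the corrected plane already takes distinct values.
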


	The idea of the proof is as follows. By Proposition \ref{prop:exponential-growth-gadget}, any integer-valued harmonic function $u$
	which is non-constant along a gadget $\Tn{R}$ must oscillate exponentially within $\Tn{R}$. Thus, roughly, if with relatively decent probability there is a gadget near the origin, and $u$ is not constant on it, we contradict the exponential oscillation bound guaranteed by Theorem \ref{theorem:large scale-regularity}. However, there is no way of ensuring, a priori, that an integer-valued harmonic function is not constant on every gadget.  Ruling this out is most of the proof of Lemma \ref{lemma:det-slope}. 
	\begin{lemma} \label{lemma:non-constant-on-gadget}
		Fix a deterministic, nonzero slope $p \in \R^d$. There exists $R_0(p)$ such that 
		for all  $R \geq R_0$, there is a deterministic subgraph $H \subset Q_R$ containing 
		a translation of the gadget $\Tn{R}$ (as an induced subgraph), such that,
		\begin{equation*}
			\P \left[ \ellp{p} ~\mbox{is not constant on} ~ \Tn{R} \, | \, \mathscr{C}_{\infty} \cap Q_R = H \right] \geq c R^{-2d+2}.
		\end{equation*}
	\end{lemma}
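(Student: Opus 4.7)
The plan is to construct a deterministic subgraph $H \subseteq Q_R$ containing a translation of the gadget $T_R$ that is attached to the remainder of $H$ only through its two distinguished vertices $s$ and $t$. Using the dihedral symmetry of Bernoulli percolation (cf.\ the rotation argument in Lemma~\ref{lemma:reduction-to-slope-e1}), I orient the gadget so that $t - s$ is a nonzero integer multiple of a coordinate direction $e_j$ with $p_j \neq 0$. Outside of $T_R$, the subgraph $H$ consists of a well-connected backbone that reaches $\partial Q_R$ together with a collar of edges that insulates the interior of $T_R$ from the rest of the cluster. Since $H$ is finite and compatible with the existence of $\mathscr{C}_\infty$, the finite-energy property (used as in Lemma~\ref{lemma:finite-energy}) gives $\P[\mathscr{C}_\infty \cap Q_R = H] > 0$.

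On the event $\{\mathscr{C}_\infty \cap Q_R = H\}$ the interior of $T_R$ is disconnected from $\mathscr{C}_\infty \setminus T_R$ except through $\{s,t\}$, so the restriction of $\ellp{p}$ to $T_R$ is the unique solution of the Dirichlet problem on $T_R$ with boundary data at $\{s,t\}$. By the maximum principle this restriction is constant if and only if $\ellp{p}(s) = \ellp{p}(t)$, so the lemma reduces to the anti-concentration estimate
\[
\P\bigl[\ellp{p}(s) \neq \ellp{p}(t) \,\big|\, \mathscr{C}_\infty \cap Q_R = H\bigr] \geq c R^{-2d+2}.
\]

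I would prove this bound via a Paley--Zygmund / second-moment argument on $X := \ellp{p}(t) - \ellp{p}(s)$. Using the corrector decomposition $\ellp{p}(x) = p\cdot x + \chi_p(x)$ from Theorem~\ref{theorem:first-order-corrector}, one has $X = p\cdot(t-s) + \chi_p(t) - \chi_p(s)$, with the first term a nonzero deterministic constant by the choice of orientation. The sublinearity and Lipschitz bound~\eqref{eq:corrector-sublinearity-and-lipschitz} applied along a cluster path inside $H$ from $s$ to $t$ yields the deterministic upper bound $|X| \leq C R^{d-1}$ (up to lower-order losses from the random minimal scale). Paired with a matching conditional lower bound $\E[X^2 \mid \mathscr{C}_\infty \cap Q_R = H] \geq c$, Paley--Zygmund then produces the required polynomial lower bound on $\P[X\neq 0 \mid H]$.

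The main obstacle is establishing the conditional second-moment lower bound while the environment inside $Q_R$ is frozen. I would attack this by resampling a single edge $e' = \{x', y'\}$ outside $Q_R$ and using the Green's function representation of Lemma~\ref{lemma:compare-different-planes}: such a resampling shifts $X$ by
\[
\bigl(\nabla_{x'}\nabla_{y'} G(s, e') - \nabla_{x'}\nabla_{y'} G(t, e')\bigr)\bigl(\ellp{p}'(y') - \ellp{p}'(x')\bigr),
\]
whose magnitude is controlled by Proposition~\ref{prop:green-mixed-derivative}. Exhibiting even one outside edge for which this shift is nonzero, combined with the finite-energy principle of Lemma~\ref{lemma:finite-energy}, would give the required conditional lower bound and hence the lemma.
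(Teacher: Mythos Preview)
Your proposal has a genuine gap at the conditional second-moment lower bound $\E[X^2 \mid \mathscr{C}_\infty \cap Q_R = H] \geq c$, and the edge-resampling attack you sketch does not close it. Resampling a single edge $e'$ outside $Q_R$ shifts $X$ by a quantity whose size, by Proposition~\ref{prop:green-mixed-derivative}, is at most of order $\dist(e',Q_R)^{-d+\delta}$. Exhibiting one edge for which this shift is nonzero shows only that $X$ is not almost surely constant given $H$, i.e.\ $\mathrm{Var}[X\mid H]>0$; it does not produce a lower bound uniform in $R$, and summing over many edges does not help without controlling cancellations. The alternative route via $(\E[X\mid H])^2$ is equally unjustified: conditioning on the atypical event $\{\mathscr{C}_\infty\cap Q_R=H\}$ could in principle bias $\E[\chi_p(t)-\chi_p(s)\mid H]$ arbitrarily close to $-p\cdot(t-s)$, and nothing in your argument rules this out. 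The deterministic upper bound $|X|\le CR^{d-1}$ is also shaky, since the corrector bounds of Theorem~\ref{theorem:first-order-corrector} hold only above a random minimal scale that need not be small on the conditioned event.

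The paper sidesteps all of this by making the choice of $H$ \emph{adaptive} in the outside environment and only afterwards extracting a deterministic $H$ by pigeonhole. One first closes every edge touching $\partial Q_R$, obtaining a cluster $\tilde{\mathscr{C}}_\infty$ (and corrected plane $\tilde\ell_p$) determined by the environment outside $Q_R$; on a probability-$\tfrac12$ event, large-scale regularity forces $\tilde\ell_p$ to be non-constant on $Q_{R+1}$. One then selects boundary sites $\tilde x,\tilde y$ with $\tilde\ell_p(\tilde x)\neq\tilde\ell_p(\tilde y)$ and lets $H$ be the gadget joined by two paths to their inner neighbors $x,y$. In $\mathscr{C}_\infty':=\tilde{\mathscr{C}}_\infty\cup H$, if $\ell_p'(x)=\ell_p'(y)$ then the maximum principle makes $\ell_p'$ constant on $H$, so $\ell_p'|_{\tilde{\mathscr{C}}_\infty}$ is harmonic and uniqueness of the corrector forces $\ell_p'-\tilde\ell_p$ to be constant, contradicting $\tilde\ell_p(\tilde x)\neq\tilde\ell_p(\tilde y)$. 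Thus non-constancy on the gadget holds \emph{deterministically} once the outside environment and $(\tilde x,\tilde y)$ are fixed; since there are only $O(R^{2(d-1)})$ possible pairs $(\tilde x,\tilde y)$, pigeonhole gives a single deterministic $H$ for which the conditional probability is at least $cR^{-2d+2}$.
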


	\begin{proof}[Proof of Lemma \ref{lemma:det-slope} assuming Lemma \ref{lemma:non-constant-on-gadget}]
		
		First observe that the event that there exists an integer-valued harmonic function which grows like $p$ at infinity is translation invariant. Therefore, by ergodicity of Bernoulli percolation, \eg, \cite[Proposition 7.3]{lyons-peres-book}, the event occurs with probability zero or one. Suppose for sake of contradiction that it occurs with probability $1$.

		By Lemma \ref{lemma:non-constant-on-gadget} and Proposition \ref{prop:exponential-growth-gadget}, we have the lower bound
		\begin{equation*}
			\P \left[ \osc_{\mathscr{C}_\infty \cap Q_R}  \ellp{p}  \geq c 3^{R} \, \big| \, \mathscr{C}_\infty \cap Q_R = H \right] \geq c R^{-2d+2}.
		\end{equation*}
		Using that the probability of the event $\mathscr{C}_\infty \cap Q_R = H$ is lower bounded by $\min( \mathfrak{p}, 1 - \mathfrak{p})^{\left| Q_R \right|}$, we deduce that
		\begin{equation} \label{eq:prob-lower-bound}
			\P \left[ \osc_{\mathscr{C}_\infty \cap Q_R} \ellp{p}  \geq c 3^{R}  \right] \geq c R^{-2d+2} \min( \mathfrak{p}, 1 - \mathfrak{p})^{\left|Q_R\right|}  \geq c \exp \left( - C R^d \right).
		\end{equation}
		We conclude by observing that \eqref{eq:prob-lower-bound} contradicts Theorem \ref{theorem:first-order-corrector} which states that 
		\[
		\P \left[ \osc_{Q_N} \ellp{p} \geq |p| N^{2} \right] \leq 2 \exp(-c N^s).
		\] 
		Indeed, choose a constant $C'(\mathfrak{p},d) < \infty$ so that, by \eqref{eq:prob-lower-bound} with $R = C' \log N$
		\[
		\P \left[ \osc_{Q_N} \ellp{p}   \geq 2 |p| N^{2} \right] \geq 	\P\left[\osc_{Q_R} \ellp{p}  \geq 2 |p| N^{2} \right] \geq  \exp( - (C' \log N)^d),
		\]	
		and note that $2 \exp(-c N^s) \leq \exp( - (C' \log N)^d)$ for $N$ sufficiently large.
	\end{proof}

	It remains to prove Lemma \ref{lemma:non-constant-on-gadget}.

	\begin{proof}[Proof of Lemma \ref{lemma:non-constant-on-gadget}]
		We first restrict to a high probability event. Let $E_R(\mathscr{C}_{\infty})$ denote the event that the following occurs: 
		\begin{itemize}
			\item The cube $Q_R$ is well-connected as in Proposition \ref{prop:well-connected};
			\item Every non-constant function in $\mathcal{A}_1(\mathscr{C}_{\infty})$ takes on more than one value in $Q_R \cap \mathscr{C}_{\infty}$.  
		\end{itemize}
		By Proposition~\ref{prop:well-connected} the first property of $E_R$ occurs with probability approaching one as $R \to \infty$. By Theorem \ref{theorem:large scale-regularity}, the second property also occurs with probability approaching one as $R \to \infty$. Thus, we may fix a deterministic $R \geq 10^6$ large enough so that $\P[E_R] \geq 1/2$. 
		
		Restrict to the event that $E_R$ occurs and let $\ellp{p}$ be the corrected plane in $\mathscr{C}_{\infty}$.  Consider the infinite cluster $\mathscr{\tilde C}_{\infty}$ in the environment where every edge which contains a vertex in $\partial Q_R$ is closed
		and let $\tilde{\ellp{p}}$ be the corrected plane in~$\mathscr{\tilde C}_{\infty}$. As the event $E_R$ occurs, $\mathscr{\tilde C}_{\infty} \subset \mathscr{C}_{\infty}$ and each face of $Q_{R+1}$ contains a site in $\mathscr{\tilde C}_{\infty}$. Further, since $\ellp{p}$ is not constant in $Q_R$, $\tilde{\ellp{p}} $ is not constant on $Q_{R+1}$. Indeed, if this were the case, 
		we could extend $\tilde{\ellp{p}}$ by a constant in $Q_{R} \cap \mathscr{C}_{\infty}$ and considering $(\ellp{p} -\tilde{\ellp{p}})$ would give rise to a non-constant, sublinear harmonic function on $\mathscr{C}_{\infty}$, contradicting 
		Theorem \ref{theorem:large scale-regularity}. 
		
		Let $\tilde x$ and $\tilde y$ be sites on opposite faces of $Q_{R+1} \cap \mathscr{\tilde C}_{\infty}$ for which $\tilde{\ellp{p}} (\tilde x) \neq \tilde{\ellp{p}}(\tilde y)$ and let $x$ and $y$ be sites  in $Q_R \cap \mathscr{C}_{\infty}$ which, respectively, share an edge with $\tilde x$ and $\tilde y$. Let $H$ be the subgraph of $Q_R$ containing $\Tn{R} \cup \{x, y\}$  given by Figure \ref{fig:attach-gadget}. 
		Specifically, let $a$ and $b$ be two adjacent sites on $\partial Q_{R/2}$. Let $P_1$ and $P_2$ be paths in $\Z^d$ from $x$ and $y$ to $a$ and $y$ to $b$ respectively which do not intersect $Q_{R/2}$. 
		Let $H = P_1 \cup P_2$ together with a subgraph of $Q_{R/2} \cup \{s, a, b, t\}$ which is isometric to $\Tn{R}$. 
		
		Let ${\ellp{p}}'$ be the corrected plane defined on $\mathscr{C}_{\infty}' := \mathscr{\tilde C}_{\infty} \cup H$. We claim that ${\ellp{p}}'(x) \neq  {\ellp{p}}'(y)$. Indeed, if not, then ${\ellp{p}}'$ restricted to $\mathscr{\tilde C}_{\infty}$ would be harmonic and the difference $({\ellp{p}}' - \tilde{\ellp{p}})$ would give rise to a non-constant sublinear harmonic function, again contradicting Theorem \ref{theorem:large scale-regularity}. 
		
		By taking a union bound over all possible choices of boundary placements $x$ and $y$, we have the desired claim. 
	\end{proof}

	We use Lemma \ref{lemma:det-slope} together with an ergodicity argument to prove Theorem~\ref{theorem:integer-valued-linear-growth}. 
	\begin{lemma} \label{lemma:ergodicity-lipschitz}
		If the event that there exists an integer-valued harmonic function of linear growth on the cluster 
		has positive probability, then there exists a deterministic slope $\underline{p} \in \R^d$
		such that, on an event of probability one, there is an integer-valued harmonic function
		of linear growth $\underline{p}$ on the cluster. 
	\end{lemma}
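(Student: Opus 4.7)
The approach follows Lemma \ref{lemma:reduction-to-slope-e1}: ergodicity first upgrades the probability to one and pins the slope in a bounded set, and then a translation-invariant canonical choice extracts a deterministic slope. The hard part will be verifying translation invariance of the integer-valuedness property, for which stationarity of the corrector's gradient is essential.

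Upgrading first: the event that $\mathscr{C}_\infty$ admits a non-constant integer-valued harmonic function of linear growth is translation-invariant, so by ergodicity of Bernoulli percolation \cite[Proposition 7.3]{lyons-peres-book} it has probability one. By Theorem \ref{theorem:first-order-corrector}, any such function differs by an additive constant from the corrected plane $\ellp{p}$ for some $p \in \R^d \setminus \{0\}$, and integer-valuedness is equivalent to $\nabla \ellp{p}(e) \in \Z$ on every edge $e \in E(\mathscr{C}_\infty)$. Sublinearity of $\chi_p$ together with positive density of the infinite cluster imply that $|p|$ is controlled by a constant multiple of the linear growth rate of $u$, so a union bound over integer $L$ produces a deterministic $\bar L > 0$ such that, with probability one, some nonzero $p \in B_{\bar L}$ satisfies $\nabla \ellp{p}(e) \in \Z$ for every $e \in E(\mathscr{C}_\infty)$.

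Define the random subset
\[
S(\mathbf{a}) := \{p \in B_{\bar L} : \nabla \ellp{p}(e) \in \Z \text{ for every } e \in E(\mathscr{C}_\infty)\}.
\]
Two properties of $S$ are needed. First, $S$ is translation-invariant as a function of the environment: since the corrector is stationary, one has $\nabla \ellp{p}(e; \tau_z \mathbf{a}) = \nabla \ellp{p}(e + z; \mathbf{a})$, so shifting $\mathbf{a}$ by $z$ merely reindexes the edges of the cluster and leaves the collection of integer gradient values (hence $S$) unchanged. Second, $S$ is closed in $\R^d$: by the linearity of $p \mapsto \chi_p$ asserted in Theorem \ref{theorem:first-order-corrector}, the map $p \mapsto \nabla \ellp{p}(e)$ is continuous for each fixed edge $e$, and a pointwise limit of integers is an integer.

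Finally, set
\[
\underline p := \arg \sup \bigl\{ |p|_2 : p \in S \bigr\},
\]
breaking ties lexicographically. Since $S$ is closed, bounded, and almost surely contains a nonzero element, $\underline p$ is well-defined and nonzero on an event of full probability. By translation invariance of $S$, the random variable $\underline p$ is translation-invariant, hence deterministic by ergodicity. On this event $\underline p \in S$, and as $\mathscr{C}_\infty$ is connected we may choose an additive constant so that $\ellp{\underline p}$ takes integer values at every vertex, producing a non-constant integer-valued harmonic function on the cluster with linear growth $\underline p \cdot x$, as required.
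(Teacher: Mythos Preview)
Your proof is correct and follows essentially the same approach as the paper's: upgrade to probability one by ergodicity, observe that the set of admissible slopes is closed (via linearity of $p\mapsto\nabla\ellp{p}(e)$), and then extract a deterministic slope as a translation-invariant canonical choice. The only cosmetic difference is that the paper takes $\underline{p}=\arg\inf\{|p|\geq 1:\dots\}$ (using that integer multiples of any admissible slope are admissible) whereas you take $\arg\sup$ over a bounded ball obtained from a preliminary union bound; both selections work, and your explicit verification of translation invariance via stationarity of $\nabla\chi_p$ is a point the paper leaves implicit.
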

	\begin{proof}[Proof of Theorem~\ref{theorem:integer-valued-linear-growth} assuming Lemma~\ref{lemma:ergodicity-lipschitz}]
		By Theorem \ref{theorem:first-order-corrector}, the conclusion of Lemma \ref{lemma:ergodicity-lipschitz} is incompatible with Lemma \ref{lemma:det-slope} applied with slope $\underline{p}$. 
	\end{proof}
	\begin{proof}[Proof of Lemma \ref{lemma:ergodicity-lipschitz}]
		The event that there exists some integer-valued harmonic function of linear growth on $\mathscr{C}_{\infty}$ is translation invariant and hence occurs with probability zero or one by ergodicity of Bernoulli percolation. Restrict to the event of probability one that it occurs. We first show, using compactness, that the set of integer-valued harmonic functions of linear growth is closed. 
		
		Fix a realization of the infinite cluster and consider a sequence of slopes $p_n \to \underline{p}$ and observe that, there exists an integer-valued harmonic function with asymptotic growth $p_n \in \R^d$ if and only if, for any edge $e \in E \left( \mathscr{C}_\infty \right)$, $\nabla \ellp{p_n}(e)$ is an integer. Since, for any edge $e \in E \left( \mathscr{C}_\infty \right)$, the map $p \mapsto \nabla \ellp{p}(e)$ is linear, we have that $\nabla \ellp{p_n}(e) \to \nabla \ellp{\underline{p}}(e)$ as $p_n \to \underline{p}$. A combination of the two previous observations shows that, for any edge $e \in E\left( \mathscr{C}_\infty \right)$, $\nabla \ellp{\underline{p}}(e)$ is an integer which implies that there exists an integer-valued harmonic function with asymptotic growth $\underline{p} \in \R^d$, and completes the proof of closure. 
		
		Now, consider the random variable, $\underline{p}$ defined as
		\[
		\begin{aligned}
			\underline{p} = \arg \inf &\{  |p| \geq 1 \mid \\
			& \mbox{ there exists an integer-valued
				harmonic function of growth $p$ at infinity} \},
		\end{aligned}
		\]
		where the $\arg \inf$ indicates a choice of slope $p$ such that $|p| \geq 1$ is minimized, 
		and ties are broken by choosing the lexicographically smallest such $p$ (the existence of this slope is guaranteed by the closure of the set of slopes $p \in \Rd$ such that there exists an integer-valued harmonic function with growth $p$). Note that if an integer-valued harmonic function exists, then by multiplying
		by a sufficiently large integer, we may produce an integer-valued harmonic function with asymptotic growth $|p| \geq 1$. 
		
		The random variable $\underline{p}$ is translation invariant and hence, by ergodicity again, is deterministic.
		Since the set of integer-valued harmonic functions of linear growth is closed, this implies the existence of an integer-valued harmonic function 
		on the cluster growing like $\underline{p}$ at infinity.
	\end{proof}

	\subsection{Polynomial growth}  \label{subsec:polynomial-growth}
	In this section we indicate some progress towards extending Theorem \ref{theorem:integer-valued-linear-growth} to all polynomial growths. The below is stated for quadratic polynomials, 
	but the statement and its proof can be extended to arbitrary polynomial growths.

	\begin{prop} \label{prop:rational-matrix}
		For each deterministic harmonic polynomial $\overline{p} \in \overline{\mathcal{A}}_2$, almost surely, there is no nonconstant integer-valued harmonic function $u$ such that 
		\[
		\lim_{|x| \to \infty} \frac{1}{|x|}|u(x) - \overline{p}(x)| = 0.
		\]
	\end{prop}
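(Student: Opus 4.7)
The plan is to extend the gadget argument of Section \ref{subsec:linear-growth-gadget} by linearizing $u$ around a point $x_0$ far from the origin. Fix a strictly quadratic $\overline{p} \in \overline{\mathcal{A}_2}$ (the linear case is already contained in Theorem \ref{theorem:integer-valued-linear-growth}) and suppose, for contradiction, that with positive probability there exists a nonconstant integer-valued $u \in \mathcal{A}_2(\mathscr{C}_\infty)$ with $u(x) = \overline{p}(x) + o(|x|)$. A Taylor expansion of $\overline{p}$ combined with the asymptotic hypothesis yields the deterministic oscillation bound
\[
\osc_{B_R(x_0) \cap \mathscr{C}_\infty} u \leq C |\nabla \overline{p}(x_0)| R + O(R^2) + o(|x_0|) \leq C |x_0| R + o(|x_0|)
\]
valid for any $|x_0|$ sufficiently large and any $R \leq |x_0|^{1/2}$.

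The core step is an analog of Lemma \ref{lemma:non-constant-on-gadget} carried out at $x_0$ rather than at the origin. Specifically, I would show that for each $R$ large enough, there is a deterministic subgraph $H = H_{x_0,R} \subset B_R(x_0)$ containing a translate of the gadget $\Tn{R}$ such that, with unconditional probability at least $\min(\mathfrak{p}, 1-\mathfrak{p})^{|B_R|} \cdot c R^{-2d+2}$, the cluster inside $B_R(x_0)$ coincides with $H$ \emph{and} $u$ (in this environment) is nonconstant on the embedded gadget. Combining this with the positive probability of the assumed existence of $u$ and appealing to the finite-energy property (Lemma \ref{lemma:finite-energy}) to implant the good configuration inside $B_R(x_0)$ without destroying the existence of some integer-valued harmonic function with the desired asymptotics, one obtains a positive-probability joint event. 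On it, Proposition \ref{prop:exponential-growth-gadget} forces $\osc_{\Tn{R}} u \geq c \cdot 3^R$. Choosing $R := C_* \log |x_0|$ with $C_* > 1/\log 3$ makes $3^R = |x_0|^{C_* \log 3}$ dominate the upper bound $C|x_0| R$ for $|x_0|$ large, producing the desired contradiction.

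The principal obstacle is establishing the adapted Lemma \ref{lemma:non-constant-on-gadget} for $u$. The original argument exploits the uniqueness of the corrected plane $\ellp{p}$, which is weaker here since a function of quadratic growth is only determined modulo addition of a function in $\mathcal{A}_1(\mathscr{C}_\infty)$. I plan to address this by decomposing $u = \Pi_{\overline{p}} + \ellp{q} + \mathrm{const}$, where $\Pi_{\overline{p}} \in \mathcal{A}_2(\mathscr{C}_\infty)$ is a corrected version of $\overline{p}$ (produced by part (ii) of Theorem \ref{theorem:large scale-regularity}) and $q \in \R^d$ is a random slope, and running the environment-modification argument (close all edges crossing $\partial B_R(x_0)$ and attach a gadget via two open paths) with an analog of Lemma \ref{lemma:compare-different-planes} tracking each summand. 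The starting non-triviality hypothesis of the Lemma \ref{lemma:non-constant-on-gadget}-type argument---that $u$ takes more than one value inside $B_R(x_0) \cap \mathscr{C}_\infty$---holds here automatically because $|\nabla \overline{p}(x_0)| \sim |x_0|$, so $\osc u \gtrsim |x_0| R \gg 1$ for $|x_0|$ large.
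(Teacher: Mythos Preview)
Your approach tries to re-run the gadget argument at a far-away point $x_0$, but the non-uniqueness you flag at the end is a genuine obstruction, not a technical nuisance. In the linear case the function $\ellp{p}$ is \emph{canonically} attached to the environment, so after modifying finitely many edges you still have a uniquely determined $\ellp{p}'$, and Lemma~\ref{lemma:finite-energy} (applied to the probability-one event obtained via Lemma~\ref{lemma:ergodicity-lipschitz}) guarantees it is still integer-valued. For quadratic growth there is no such canonical object: a function in $\mathcal{A}_2$ is determined only up to an element of $\mathcal{A}_1$, so after you close $\partial B_R(x_0)$ and attach a gadget there is no mechanism that produces an integer-valued harmonic function with the prescribed asymptotics in the new environment. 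Your proposed decomposition $u=\Pi_{\overline{p}}+\ellp{q}+\mathrm{const}$ does not help, because the slope $q$ is itself a random object with no a priori stability under finite edge resampling. Relatedly, the event $E=\{\exists\, u\in\mathcal{A}_2,\ u\in\Z,\ u=\overline{p}+o(|x|)\}$ is \emph{not} translation invariant when $M\neq 0$ (a shift by $y$ changes the linear part of $\overline{p}$ by $2My$), so you cannot upgrade positive probability to probability one and then invoke Lemma~\ref{lemma:finite-energy} as you suggest.

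The paper sidesteps all of this with a two-line reduction to the linear case. If $\P[E]>0$, the ergodic theorem gives a.s.\ a positive density of sites $y$ at which the \emph{translated} event holds; for each such $y$ one obtains an integer-valued harmonic $u_y$ on the original cluster with $u_y(x)=x^TMx+(2My+b)^Tx+o(|x|)$. Picking two such sites $y^+,y^-$ with $y^+-y^-\notin\ker M$ (possible by positive density), the difference $v:=u_{y^+}-u_{y^-}$ is an integer-valued harmonic function of genuinely linear growth $2M(y^+-y^-)\cdot x$, which is already forbidden by Theorem~\ref{theorem:integer-valued-linear-growth}. This differencing trick eliminates the quadratic part and with it the non-uniqueness, so no environment modification at all is needed at the quadratic level.
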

	\begin{proof}
		Denote by $E$ the event that there exists an integer-valued harmonic function $u$ with 
		$\lim_{|x| \to \infty} \frac{1}{|x|}|u(x) - \overline{p}(x)| = 0$
		and suppose
		\[
		\overline{p}(x) = x^T  M x + b^T x.
		\]
		Suppose for sake of contradiction that $\P[E] > 0$. 
		By the ergodic theorem, we have that almost surely, 
		\begin{equation} \label{eq:density-quadratics}
			\lim_{N \to \infty} \frac{1}{|B_N|} \sum_{x \in B_N} 1\{ \mbox{$E$ occurs in the environment translated by $x$} \} = \P[E] > 0.
		\end{equation}
		In particular, almost surely, there is some random $N$, two sites $y^{\pm} \in B_N$, and two integer-valued harmonic functions $u^{\pm}$
		such that, 
		\[
		p^{\pm}(x) =  x^T M x + (2 M y^{\pm} + b)^T x,
		\] 
		and
		\[
		\lim_{|x| \to \infty} \frac{1}{|x|}|u^{\pm}(x) - \overline{p}^{\pm}(x)| = 0.
		\]
		Note that, by \eqref{eq:density-quadratics}, we may assume that $(y^{+} - y^{-})$ is not in the kernel of $M$ (if $M =0$, the result is a direct consequence of Theorem \ref{theorem:integer-valued-linear-growth}). Thus, setting $v: = u^+ - u^-$ we have 
		constructed a non-trivial integer-valued harmonic function of linear growth, 
		\[
		\lim_{|x| \to \infty} \frac{1}{|x|} |v(x) - 2 M (y^{+} - y^-)^T x | = 0,
		\]
		contradicting Theorem \ref{theorem:integer-valued-linear-growth}.  
	\end{proof}

	\section{Integer-valued Laplacian} \label{sec:toppling-invariants}
	In this section, we fix the dimension to be $d = 2$ and prove the following theorem. 
	\begin{theorem} \label{theorem:fast-decay-stronger}
		Almost surely, any function $u: \mathscr{C}_{\infty} \to \R$ which decays to 0,  $\lim_{|x| \to \infty} u(x) = 0$,
		and has integer-valued Laplacian,  $\Delta_{\mathscr{C}_{\infty}} u \in \Z$, satisfies the following dichotomy:
		\begin{itemize}
			\item Either the function $u$ decays at most like $|x|^{-1}$ at infinity, \ie,
			\begin{equation} \label{eq:linear-decay}
				\limsup_{|x| \to \infty} \frac{|u(x)|}{|x|} > 0;
			\end{equation}
			\item Or $u$ is finitely supported, \ie, 
			\begin{equation} \label{eq:compactly-supported}
				u(x) \equiv 0, \quad \mbox{for all $x \in \mathscr{C}_{\infty} \setminus B_R$ for some $R > 0$}. 
			\end{equation}
		\end{itemize}
	\end{theorem}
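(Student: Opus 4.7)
The plan is to prove the nontrivial direction of the dichotomy: assuming $u \to 0$, $\Delta_{\mathscr{C}_\infty} u \in \Z$, and $\limsup_{|x| \to \infty} |x|\,|u(x)| = 0$ (decay strictly faster than $|x|^{-1}$), I would show that $u$ has finite support.

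First, I would reduce to the case that $u$ is harmonic outside a finite set and obtain a Green's function representation. Since $u(x) \to 0$, for $|x|$ large enough every neighbor value $|u(y)|$ is tiny, so $|\Delta_{\mathscr{C}_\infty} u(x)| < 1$; integrality forces $\Delta u(x) = 0$, and thus $f := -\Delta u$ is supported on a finite set $F$. Setting $v := u - \sum_{y \in F} f(y)\,G(\cdot, y)$ gives a function harmonic on $\mathscr{C}_\infty$; because $G(\cdot,y)$ grows only logarithmically in two dimensions, $v$ is sublinear. By Theorem~\ref{theorem:large scale-regularity} applied with $k = 0$ (together with the Harnack inequality on the cluster), any sublinear harmonic function on $\mathscr{C}_\infty$ is constant, so $v \equiv c$. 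A nonzero value of $\sum_y f(y)$ would make $\sum_y f(y)\,G(x,y)$ grow genuinely logarithmically, which cannot be reconciled with the boundedness of $u - c$; hence $\sum_y f(y) = 0$. Then $\sum_y f(y) G(x, y) \to 0$, and $u \to 0$ forces $c = 0$, yielding $u(x) = \sum_{y \in F} f(y)\,G(x, y)$.

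Second, I would use Proposition~\ref{prop:homogenizationellipticgreen} to extract the leading asymptotics of $u$. Taylor expanding $\bar{G}(x - y)$ in $y$ about $0$ and accounting for the corrector terms in the gradient homogenization identity yields, for $|x|$ large,
\[
u(x) = -\Bigl(\sum_{y \in F} f(y)\, y\Bigr) \cdot \nabla \bar{G}(x) + (\text{random corrector contributions involving } \chi_{e_i}(y)) + O(|x|^{-2}).
\]
The hypothesis $|x|\,|u(x)| \to 0$ forces the whole $O(|x|^{-1})$ coefficient above to vanish, producing both the deterministic dipole constraint $\sum_y f(y)\,y = 0$ and additional random constraints on the integer-valued $f$ involving the corrector values at the points of $F$.

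Finally, I would invoke Proposition~\ref{prop:level-set-of-harmonic-function}, the planarity-sensitive step (cf.\ Problem~\ref{problem:level-set}): if a function harmonic outside a finite set and tending to zero at infinity has $\{e : \nabla u(e) = 0\}$ of positive density in $E(\mathscr{C}_\infty)$, then $u$ is finitely supported. The remaining task, and the main obstacle, is to prove that $\{\nabla u = 0\}$ has positive density. My expectation is that this combines the integer-valued Laplacian hypothesis with a finite-energy modification argument (Lemma~\ref{lemma:finite-energy}): on a positive density of cubes one should be able to engineer an edge on which $u$ must take equal values at its two endpoints, by arguing that any other configuration would violate either integrality of the Laplacian or one of the random constraints derived in the previous step after a controlled local perturbation of the environment. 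This is exactly where the randomness of $\mathscr{C}_\infty$ genuinely distinguishes it from $\Z^2$, on which iterated finite differences of the Green's function provide explicit non-finitely-supported counterexamples.
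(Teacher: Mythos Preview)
Your first two steps are essentially on track and match the paper: reduce to $f:=-\Delta u$ finitely supported with $\sum f=0$, represent $u=u_f=\sum_y f(y)G(\cdot,y)$, and use homogenization to get
\[
u_f(x)=\sum_{i=1}^2\Bigl(\sum_{y}f(y)\,\ellp{e_i}(y)\Bigr)\partial_i\bar G(x)+O(|x|^{-5/4}),
\]
so that faster-than-$|x|^{-1}$ decay forces $\sum_y f(y)\,\ellp{p}(y)=0$ for every $p$. Two minor corrections: the paper first reduces to a \emph{deterministic} $f$ by countability so that the ``almost surely for all $u$'' statement holds, and the constraint you obtain is only on the combined quantity $\sum_y f(y)\ellp{e_i}(y)$, not separately on $\sum_y f(y)y$ and on the corrector contributions.

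The genuine gap is Step~3. You have inverted Proposition~\ref{prop:level-set-of-harmonic-function}: it does \emph{not} say that positive density of $\{\nabla u=0\}$ implies finite support (that is Problem~\ref{problem:level-set}, open in $d>2$). It says the opposite direction with an extra condition: on the event $|\supp u_f|=\infty$, the set of edges with $\nabla u_f(e)\neq 0$ \emph{and} $\nabla\ellp{p}(e)\neq 0$ has positive density. Your proposed task, ``show $\{\nabla u=0\}$ has positive density,'' is therefore both unsupported by anything you have derived and pointing the wrong way; there is no reason the level set should be thick when $u_f$ has infinite support.

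What the paper actually does is a martingale/Efron--Stein sensitivity argument. Define the bad event $E=\{\sum f=0\}\cap\{\sum_y f(y)\ellp{p}(y)=0\}\cap\{|\supp u_f|=\infty\}$. One computes that flipping an edge $e$ changes $\sum_y f(y)\ellp{p}(y)$ by $\pm\nabla u_f(e)\,\nabla\ellp{p}^{e}(e)$, so any edge with both gradients nonzero kicks the configuration out of $E$; hence $\P(E\cap\{\nabla u_f(e)\neq 0\}\cap\{\nabla\ellp{p}(e)\neq 0\})\to 0$ along the edge enumeration. But Proposition~\ref{prop:level-set-of-harmonic-function} says that on $E$ such edges have positive density, forcing $\P(E)=0$. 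Your finite-energy/local-engineering idea does not substitute for this: the randomness enters through the sensitivity of the global quantity $\sum_y f(y)\ellp{p}(y)$ to single edges, coupled with the (hard, planar) density statement about where \emph{both} $\nabla u_f$ and $\nabla\ellp{p}$ are nonzero.
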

	
	\begin{remark}
		This result fails on $\Z^2$ as the function $u = G^{\mathbb{Z}^2}(\cdot - e_1) + G^{\mathbb{Z}^2}(\cdot + e_1) - 2 G^{\mathbb{Z}^2}$, where $G^{\mathbb{Z}^2}$ is the discrete Green's function on the lattice, is not finitely supported and decays like $|x|^{-2}$ -- see \cite{sandpiles-square-lattice} or \cite{schmidt-verbitskiy-2009}.
	\end{remark}

	The proof of Theorem~\ref{theorem:fast-decay-stronger} proceeds by successive reduction.
	
	\subsection{Reduction to mean zero deterministic pole functions}
	
	We first reduce the proof of Theorem~\ref{theorem:fast-decay-stronger} to deterministic pole functions $f$ on the event $\sum_{x \in \mathscr{C}_\infty} f(x) = 0$ using the two following propositions.
	\begin{prop} \label{prop:fast-decay-stronger}
		Fix a deterministic and compactly supported function $f : \Z^2 \to \Z$ and let 
		\begin{equation} \label{eq:green-potential}
			u_f(x) := \sum_{y \in \mathscr{C}_\infty} f(y) G(x , y),
		\end{equation}
		where $G$ is the elliptic Green's function on $\mathscr{C}_{\infty}$. 
		Then, almost surely, either $\sum_{y \in \mathscr{C}_\infty} f(y) \neq 0$, or it satisfies~\eqref{eq:linear-decay}, or it satisfies~\eqref{eq:compactly-supported}. 
	\end{prop}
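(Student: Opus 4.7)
The plan is to condition on $\{\sum_{y \in \mathscr{C}_\infty} f(y) = 0\}$ (the complementary event gives property~(a) directly) and derive an asymptotic expansion for $u_f$ from the homogenization theorems of the previous section, which will force a dichotomy between $|x|^{-1}$ decay and finite support.

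First, I would apply Proposition~\ref{prop:homogenizationellipticgreen} to each summand in
\[
\nabla u_f(x) = \sum_{y \in \mathscr{C}_\infty} f(y) \nabla_x G(x, y)
\]
and Taylor expand $\nabla \bar G(x - y)$ and $\partial_i \bar G(x - y)$ about $x$, which is valid because $f$ is compactly supported and $|x|$ is taken much larger than the diameter of $\mathrm{supp}(f)$. On the event $\sum_y f(y) = 0$, the leading-order terms from the expansion of $\nabla \bar G(x-y)$ cancel against each other, and using the linearity of the corrector (Theorem~\ref{theorem:first-order-corrector}) to collect the remaining contributions from the corrector term in the statement of Proposition~\ref{prop:homogenizationellipticgreen}, one obtains
\[
\nabla u_f(x) = -\nabla^2 \bar G(x) \cdot \vec{p}_{\mathrm{eff}} + \mathcal{E}(x),
\]
where $\vec{p}_{\mathrm{eff}} \in \R^2$ is an effective dipole moment of $f$ incorporating corrections from the first-order corrector evaluated on $\mathrm{supp}(f)$, and $\mathcal{E}$ collects smaller errors. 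Since $|\nabla^2 \bar G(x)| \asymp |x|^{-2}$, the corresponding expansion for $u_f$ itself has leading term of the form $\vec{p}_{\mathrm{eff}} \cdot \nabla \bar G(x)$, of magnitude $|x|^{-1}$.

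From this expansion, if $\vec{p}_{\mathrm{eff}} \neq 0$ the dipole term dominates at infinity and $u_f$ satisfies~\eqref{eq:linear-decay}, which is conclusion~(b). Otherwise $\vec{p}_{\mathrm{eff}} = 0$, and in this case I would argue that $u_f$ must be finitely supported. To establish this, one iterates the expansion to higher order, using the higher-order correctors from~\cite{dario-corrector} to capture the successive multipole moments: each additional order of polynomial decay beyond $|x|^{-1}$ forces the vanishing of the next effective moment of $f$. These constraints are then combined with a finite-energy bond-flipping argument, using Lemma~\ref{lemma:finite-energy} and the decay of the mixed derivative in Proposition~\ref{prop:green-mixed-derivative}: perturbing the environment on a finite collection of bonds changes $u_f$ in a controlled way, and persistence of all the vanishing moment conditions under such perturbations forces $u_f$ to vanish outside some large ball.

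The main obstacle is this last step, namely converting the infinite hierarchy of vanishing moment conditions into the genuine statement of finite support. This is where the two-dimensional features of the problem must enter essentially, in analogy with the planarity-dependent step in Proposition~\ref{prop:level-set-of-harmonic-function} and the dimensional obstruction articulated in Problem~\ref{problem:level-set}. A subsidiary difficulty is controlling the homogenization error $\mathcal{E}(x)$, which in $d=2$ has size $O(|x|^{-3/2+\delta})$ from Proposition~\ref{prop:homogenizationellipticgreen} and must be handled via a finer quantitative expansion before the $|x|^{-2}$ dipole information can be cleanly extracted.
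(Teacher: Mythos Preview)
Your two-scale expansion for $u_f$ under the mean-zero condition is essentially the paper's Proposition~\ref{prop:two-scale-expansion-mean-zero}: the effective dipole moment is $\vec{p}_{\mathrm{eff}} = (\sum_y f(y)\ellp{e_i}(y))_{i=1,2}$, and when this is nonzero one reads off the $|x|^{-1}$ decay along an appropriate ray. (The paper obtains this expansion for $u_f$ directly rather than for $\nabla u_f$, by first writing $f = \nabla\cdot F$ via Lemma~\ref{lemma:mean-zero-representation} and then invoking Proposition~\ref{prop:homogenizationellipticgreen}; your route would also work.)

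The genuine gap is in the case $\vec{p}_{\mathrm{eff}}=0$. The paper does \emph{not} iterate to higher-order correctors. Instead it proves the sharper Proposition~\ref{prop:gradient-trichotomy}: for any fixed slope $p$, almost surely on $\{\sum f = 0\}$ one has either $\sum_y f(y)\ellp{p}(y)\neq 0$ or $|\supp u_f|<\infty$. There is no intermediate regime where the first-order moment vanishes but the support is infinite; the dichotomy already closes at first order. Your proposed hierarchy of vanishing multipole moments would only yield faster-than-polynomial decay, not finite support, and there is no mechanism by which ``persistence under bond-flipping'' of infinitely many moment conditions forces compact support.

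The actual mechanism is a martingale sensitivity argument (proof of Proposition~\ref{prop:gradient-trichotomy} from Proposition~\ref{prop:level-set-of-harmonic-function}). One shows that on the bad event $E=\{\sum f=0,\ \sum f\ellp{p}=0,\ |\supp u_f|=\infty\}$, flipping any edge $e$ with $\nabla u_f(e)\neq 0$ and $\nabla\ellp{p}(e)\neq 0$ destroys the identity $\sum f\ellp{p}=0$: an integration by parts gives
\[
\sum_y f(y)\bigl(\ellp{p}(y)-\ellp{p}^{e}(y)\bigr) = \pm\,\nabla u_f(e)\,\nabla\ellp{p}^{e}(e)\neq 0.
\]
Hence $E$ is sensitive to a positive density of edges (this density is Proposition~\ref{prop:level-set-of-harmonic-function}). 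But an Efron--Stein-type martingale argument shows any measurable event is asymptotically insensitive to distant single-edge flips, forcing $\P(E)=0$. The heavy lifting is then Proposition~\ref{prop:level-set-of-harmonic-function} itself, whose proof occupies Sections~\ref{subsec:obstruction} and onward and uses the planar topological machinery (Jordan curve arguments, block-cut trees of level sets, Kesten channels) that you correctly flag as essential. Your outline gestures at ``finite-energy bond-flipping'' but misses the specific sensitivity computation above and the level-set density statement that makes it bite.
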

	
	\begin{remark}
		On the event of positive probability where $\supp f \cap \mathscr{C}_\infty = \emptyset$, we have that $u_f \equiv 0$.
	\end{remark}
	
	\begin{prop} \label{prop:not-mean-zero-log-growth}
		Almost surely, if $\sum_{x \in \mathscr{C}_{\infty}} f(x) \neq 0$, then $u_f = \Theta(\log|x|)$.
	\end{prop}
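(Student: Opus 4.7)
The plan is to show that in dimension two the Green's function satisfies $G(x, y) = c_0 \log|x - y| + \tilde c(y) + o(1)$ as $|x| \to \infty$ (with $y$ fixed) for some nonzero deterministic constant $c_0$ and a bounded random additive correction $\tilde c(y)$, and then to sum this asymptotic against $f$. Concretely, I would invoke the two-dimensional analogue of the Green's function homogenization bound \eqref{eq:green-homogenziation} --- this is the $d = 2$ specialization of \cite[Theorem 2]{dario-gu-green} alluded to in the remark following Proposition~\ref{prop:homogenizationellipticgreen}. It provides, almost surely, a constant $c_0 \ne 0$ (determined by the diffusivity of random walk on the cluster and the density of $\mathscr{C}_\infty$), an exponent $\delta > 0$, a constant $C < \infty$, and, for each $y \in \mathscr{C}_\infty$, a random constant $\tilde c(y)$ with good stochastic integrability such that
\[
|G(x, y) - c_0 \log|x - y| - \tilde c(y)| \le C |x - y|^{-\delta}
\]
for all $x \in \mathscr{C}_\infty$ with $|x - y|$ larger than a suitable minimal scale $\mathcal{M}(y) \le \mathcal{O}_s(C)$. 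The additive $\tilde c(y)$ is forced by the discrete normalization $G(y, y) = 0$, which is incompatible with the continuum logarithmic growth.

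With the asymptotic in hand, the conclusion follows by direct computation. Let $S \subset \Z^2$ denote the finite support of $f$. For $|x|$ much larger than $\max_{y \in S}|y|$ the Taylor expansion $\log|x - y| = \log|x| + O(|y|/|x|)$ holds uniformly in $y \in S$, so substituting into \eqref{eq:green-potential} gives
\begin{equation*}
u_f(x) = \sum_{y \in S \cap \mathscr{C}_\infty} f(y)\bigl(c_0 \log|x| + \tilde c(y) + o(1)\bigr) = c_0 M \log|x| + C_f + o(1),
\end{equation*}
where $M = \sum_{y \in \mathscr{C}_\infty} f(y) \ne 0$ by hypothesis and $C_f := \sum_{y \in S \cap \mathscr{C}_\infty} f(y)\tilde c(y)$ is a finite random constant. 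Since $c_0 M \ne 0$, one concludes $u_f(x)/\log|x| \to c_0 M$, yielding $u_f = \Theta(\log|x|)$.

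The main obstacle is the first step: the excerpt proves Proposition~\ref{prop:homogenizationellipticgreen} only for $d \ge 3$ and defers the 2D adaptation to \cite{dario-gu-green}. Dimension two is genuinely more delicate because one must track the normalization $G(y, y) = 0$ --- whereas in $d \ge 3$ the natural normalization $G(x, y) \to 0$ at infinity kills the analogous constant. This mismatch is what produces the random $\tilde c(y)$ above rather than a universal additive constant. If one preferred to avoid invoking the 2D specialization of \cite{dario-gu-green} directly, an alternative route is to integrate the gradient estimate of Proposition~\ref{prop:homogenizationellipticgreen} along a quasi-geodesic path in $\mathscr{C}_\infty$ from a fixed reference vertex $x_0$ to $x$, using the sublinearity of the correctors $\chi_{e_i}$ from Theorem~\ref{theorem:first-order-corrector}, which yields the weaker but still sufficient statement $G(x, y) - G(x_0, y) = c_0 \log(|x|/|x_0|) + o(1)$ as $|x| \to \infty$. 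Once this step is in hand, the rest of the proof is just the triangle inequality together with the finite support of $f$.
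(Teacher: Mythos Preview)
Your proof is correct and rests on the same external input as the paper's, namely the two-dimensional Green's function homogenization from \cite[Theorem~2]{dario-gu-green}, but the organization differs. The paper does not expand each $G(x,y)$ separately. Instead it picks a single vertex $z \in \mathscr{C}_\infty$, sets $K := \sum_x f(x)$ and $g := f - K\delta_z$, and writes $u_f = u_g + K\,G(\cdot,z)$. Since $g$ has mean zero, Lemma~\ref{lemma:mean-zero-representation} applies and expresses $u_g$ as a finite linear combination of gradients $\nabla_y G(x,e)$; the decay \eqref{eq:nablayG} then gives $u_g = O(|x|^{-1})$. The remaining single-pole term $K\,G(\cdot,z)$ is $\Theta(\log|x|)$ directly by \cite[Theorem~2]{dario-gu-green}.

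The trade-off: the paper's route avoids tracking the random additive constants $\tilde c(y)$ that appear in your pointwise expansion, and it only needs the order-of-magnitude statement $G(\cdot,z) = \Theta(\log|x|)$ at a single pole rather than a full asymptotic formula with identified error $o(1)$ at every pole. Your route is more direct and requires no auxiliary lemma about divergence-form representation of mean-zero sources, but it leans on a slightly sharper form of the 2D homogenization statement (the explicit $c_0\log|x-y| + \tilde c(y) + o(1)$). Both are valid; the paper's decomposition is arguably cleaner because it recycles Lemma~\ref{lemma:mean-zero-representation}, which is needed elsewhere anyway.
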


	\begin{proof}[Proof of Theorem~\ref{theorem:fast-decay-stronger} assuming Proposition~\ref{prop:fast-decay-stronger} and Proposition~\ref{prop:not-mean-zero-log-growth}]
		We start by observing that the set of integer-valued and finitely supported functions $\mathscr{F}$ is countable, 
		thus we may restrict to the event of probability one where Proposition~\ref{prop:fast-decay-stronger} holds for all $f \in \mathscr{F}$. 
		
		As $\lim_{|x| \to \infty} u_f(x) = 0$ and $-\Delta_{\mathscr{C}_{\infty}} u_f = f \in \Z$, we must have that $f$ is compactly supported. Consequently, 
		by extending $f$ to be zero outside of $\mathscr{C}_{\infty}$, we have that $f \in \mathscr{F}$. This completes the proof by the assumption that $u_f$ decays to zero at infinity. 
	\end{proof}

	In the remainder of this subsection, we prove Proposition \ref{prop:not-mean-zero-log-growth}. 
	In subsequent subsections we further reduce the proof of Proposition~\ref{prop:fast-decay-stronger}
	and then prove its reduction. 
	
	For the remainder of the proof, we fix a function $f: \Z^2 \to \Z$ as in the statement of Proposition~\ref{prop:fast-decay-stronger} and a function $u_f$ as in \eqref{eq:green-potential}. Using the terminology of potential theory, we refer to the function $u_f$ as the {\it potential}. We additionally allow all constants and exponents to depend on the function $f$ (and the probability $\mathfrak{p}$).
	
	We first observe that under the assumption $\sum_{x \in \mathscr{C}_\infty} f(x) = 0$, the function $f$ can be written as the divergence of a compactly supported vector field on the infinite cluster. This then allows us to represent $u_f$ as a linear combination of gradients of the Green's function. 
	
	\begin{lemma} \label{lemma:mean-zero-representation}
		For almost every realization of the infinite percolation cluster satisfying the condition $\sum_{x \in \mathscr{C}_{\infty}} f(x) = 0$, there exists a random compactly supported vector field $F : E_d \left( \mathscr{C}_\infty \right) \to \R$
		such that
		\begin{equation} \label{eq:gradient-representation}
			u_f (x) = \sum_{y \in \mathscr{C}_\infty} f(y) G(x , y)  = \sum_{e \subseteq \mathscr{C}_\infty} F(e) \nabla G(x, e).
		\end{equation}
		The vector field $F$ can be chosen so that there exist $C_f(\mathfrak{p} , f) < \infty$ and $s(\mathfrak{p} ) > 0 $ such that
		\begin{equation} \label{eq:estimates-on-F}
			\left| \supp F\right| \leq \mathcal{O}_s (C_f) ~~\mbox{and} ~~ \left\| F \right\|_{L^2(\mathscr{C}_\infty)}  \leq \mathcal{O}_s (C_f).
		\end{equation}
	\end{lemma}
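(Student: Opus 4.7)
\textbf{Proof plan for Lemma \ref{lemma:mean-zero-representation}.} The strategy is to realize $f$ as the graph divergence of a compactly supported vector field $F$ on $\mathscr{C}_\infty$ and then obtain the representation \eqref{eq:gradient-representation} by discrete summation by parts. Concretely, for any compactly supported antisymmetric $F: E_d(\mathscr{C}_\infty) \to \R$, one has the identity
\begin{equation*}
\sum_{y \in \mathscr{C}_\infty} (\divergence_{\mathscr{C}_\infty} F)(y) \, G(x, y) = \sum_{e \subseteq \mathscr{C}_\infty} F(e) \, \nabla G(x, e),
\end{equation*}
where $\divergence_{\mathscr{C}_\infty} F(y) := \sum_{z \sim y} F(y, z)$. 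Thus the lemma reduces to producing a compactly supported $F$ solving the linear equation $\divergence_{\mathscr{C}_\infty} F = f$ on $\mathscr{C}_\infty$, and the only obstruction (the sum of the right-hand side) vanishes by hypothesis.

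For the explicit construction I would use a unit-flow path matching. Write $f|_{\mathscr{C}_\infty} = f^+ - f^-$ with $f^\pm \geq 0$ integer-valued, so by hypothesis $\sum_{\mathscr{C}_\infty} f^+ = \sum_{\mathscr{C}_\infty} f^- =: M$. Enumerate the atoms of $f^+$ with multiplicity as $(x_1, \ldots, x_M) \subseteq \mathscr{C}_\infty$ and those of $f^-$ as $(y_1, \ldots, y_M) \subseteq \mathscr{C}_\infty$, using any deterministic ordering rule (so that measurability is immediate). For each $i$, let $\gamma_i$ be a shortest path in $\mathscr{C}_\infty$ from $x_i$ to $y_i$, chosen again by a deterministic tiebreaker, and let $F_{\gamma_i}$ be the unit flow along $\gamma_i$, i.e.\ $F_{\gamma_i}(v, w) = +1$ if $(v,w)$ is traversed by $\gamma_i$, $-1$ for the reverse orientation, and $0$ otherwise. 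A direct check shows $\divergence_{\mathscr{C}_\infty} F_{\gamma_i} = \delta_{x_i} - \delta_{y_i}$, so that $F := \sum_{i=1}^M F_{\gamma_i}$ satisfies $\divergence_{\mathscr{C}_\infty} F = f$ on $\mathscr{C}_\infty$ and yields \eqref{eq:gradient-representation}.

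It remains to establish the stochastic integrability estimates \eqref{eq:estimates-on-F}. Since $|F(e)| \leq M$ pointwise (a deterministic bound depending only on $f$), it suffices to control $|\supp F|$; we have the deterministic bound
\begin{equation*}
|\supp F| \leq \sum_{i=1}^{M} |\gamma_i| = \sum_{i=1}^{M} \dist_{\mathscr{C}_\infty}(x_i, y_i),
\end{equation*}
and each pair $(x_i, y_i)$ lies in the fixed finite set $\supp f$. By the Antal--Pisztora chemical distance estimate \cite{antal-pisztora-chemical} (in the form used to establish Proposition \ref{prop:well-connected}), for any fixed $x, y \in \Z^2$, the random variable $\dist_{\mathscr{C}_\infty}(x,y)\, \mathbf{1}_{\{x, y \in \mathscr{C}_\infty\}}$ has exponentially decaying tails with constants depending only on $|x - y|$ and $\mathfrak{p}$. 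Summing over the bounded number of pairs $(x_i, y_i)$ located within $\supp f$ yields an $\mathcal{O}_s$ bound on $|\supp F|$, and hence on $\|F\|_{L^2(\mathscr{C}_\infty)}$, completing the proof.

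There is no deep obstacle here: the construction is a standard flow decomposition, and the integrability bound is a direct application of known chemical distance estimates. The only point that merits care is ensuring that the pairing and path selection are jointly measurable in the environment, which is automatic once deterministic ordering and tiebreaker rules are fixed. A minor simplification occurs on the (positive-probability) event $\supp f \cap \mathscr{C}_\infty = \emptyset$, where one simply sets $F \equiv 0$ and both sides of \eqref{eq:gradient-representation} vanish.
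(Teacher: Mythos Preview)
Your proof is correct and takes a genuinely different route from the paper. The paper constructs $F$ by solving a Neumann problem $-\Delta_{\mathscr{C}_\infty} v = f$ with zero flux boundary conditions on the largest cluster $\mathscr{C}_*(\cu_f)$ of a suitable box $\cu_f$, and then sets $F = \nabla v$; the stochastic integrability of $|\supp F|$ comes from the size of the box needed (which has stretched-exponential tails), and the $L^2$ bound from the Poincar\'e inequality on $\mathscr{C}_*(\cu_f)$. Your approach is more combinatorial: you realize $F$ as a superposition of unit flows along geodesics matching the positive and negative atoms of $f$, and control the support via the Antal--Pisztora chemical distance estimate. Your argument is arguably more elementary, since it avoids invoking the Poincar\'e inequality on the cluster (itself a nontrivial input) and the well-posedness of the Neumann problem, trading these for a single well-known percolation estimate. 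The paper's construction has the minor advantage that $F$ is a gradient, but this is not used downstream. Both approaches yield the same stochastic integrability (indeed Antal--Pisztora gives $s=1$, which is at least as good as what the Neumann-box argument delivers).
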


	\begin{proof}
		Fix a realization of the percolation cluster and let $\cu_f$ be the smallest box centered at $0$ such that $\mathscr{C}_*(\cu_f)$ contains $\supp f \cap \mathscr{C}_\infty$, $\mathscr{C}_*(\cu_f) \subset \mathscr{C}_{\infty}$ and such that the Poincar\'e inequality applies in $\mathscr{C}_*(\cu_f)$ in the following form: there exists a constant $C_{\mathrm{Poinc}}( \mathfrak{p}) < \infty$ such that, for any function $v : \mathscr{C}_*(\cu_f) \to \R$ satisfying $\sum_{x \in \mathscr{C}_*(\cu_f)} v(x) = 0$,
		\begin{equation*}
			\left\| v \right\|_{L^2 ( \mathscr{C}_*(\cu_f) )} \leq C_{\mathrm{Poinc}} \size(\cu_f) \left\| \nabla v \right\|_{L^2 ( \mathscr{C}_*(\cu_f))}
		\end{equation*}
		Such a box exists almost surely and its size satisfies the stochastic integrability estimate
		\begin{equation} \label{eq:bound-on-size}
			\size(\cu_f)  \leq \mathcal{O}_s(C_f). 
		\end{equation}
		The Neumann problem (with the normalizing condition $\sum_{x \in \mathscr{C}_*(\cu_f)} v(x) = 0$)
		\begin{equation} \label{eq:Neumannprob}
			\left\{ 
			\begin{aligned}
				- \Delta_{\mathscr{C}_\infty} v = f &~\mbox{in}~ \mathscr{C}_* (\cu_f), \\
				\mathbf{n} \cdot \nabla v = 0 &~\mbox{on} ~ \partial_e \mathscr{C}_*(\cu_f),
			\end{aligned}
			\right.
		\end{equation}
		is well-posed because the set $\mathscr{C}_*(\cu_f)$ is connected and $\sum_{x \in \mathscr{C}_*(\cu_f)} f(x) = 0$. We then define $F(e) := \nabla v(e)$ if $e \in E \left(\mathscr{C}_*(\cu_f)\right)$ and $F(e) = 0$ if $e \in E \left( \mathscr{C}_\infty \right)  \setminus E \left(\mathscr{C}_*(\cu_f)\right)$. It is a consequence of the definition of $F$ that $\nabla \cdot F = f$ in $\mathscr{C}_\infty$
		and the representation \eqref{eq:gradient-representation} follows by the discrete divergence theorem. 
		
		We finally prove the estimates~\eqref{eq:estimates-on-F}. The first one is a consequence of~\eqref{eq:bound-on-size} and the observation $\supp F \subseteq \cu_f$. The second one follows from testing the function $v$ in the Neumann problem~\eqref{eq:Neumannprob} and applying the Cauchy-Schwarz and Poincar\'e inequalities. We obtain
		\begin{align*}
			\left\| \nabla v \right\|_{L^2 ( \mathscr{C}_*(\cu_f))}^2  = \sum_{x \in \mathscr{C}_*(\cu_f)} f(x) v(x) 
			& \leq \left\| f \right\|_{L^2(\mathscr{C}_\infty)} \left\| v \right\|_{L^2(\mathscr{C}_\infty)} \\
			& \leq  C_{\mathrm{Poinc}} \size(\cu_f) \left\| f \right\|_{L^2(\mathscr{C}_\infty)} \left\| \nabla v \right\|_{L^2 ( \mathscr{C}_*(\cu_f))},
		\end{align*}
		which implies, using the identity $F = \nabla v$ inside the box $\cu_f$ and that $F$ is equal to $0$ outside this box,
		\begin{equation*}
			\left\| F \right\|_{L^2 ( \mathscr{C}_\infty)} \leq C_{\mathrm{Poinc}} \left\| f \right\|_{L^2(\mathscr{C}_\infty)}\size(\cu_f).
		\end{equation*}
		The second estimate of~\eqref{eq:estimates-on-F} is then a consequence of the bound~\eqref{eq:bound-on-size}.
	\end{proof}
	
	We are now ready to complete the proof of Proposition~\ref{prop:not-mean-zero-log-growth} using Lemma~\ref{lemma:mean-zero-representation}.
	
	\begin{proof}[Proof of Proposition~\ref{prop:not-mean-zero-log-growth}]
		As recalled above in \eqref{eq:nablayG}, there exists a random variable 
		\[
		\mathcal{M}_{\nabla\mathrm{-Decay}}(y)
		\]
		such that, if $y \in \mathscr{C}_\infty$ then for any $x \in \mathscr{C}_\infty$ with 
		\[
		|x - y| \geq \mathcal{M}_{\nabla\mathrm{-Decay}}(y),
		\]
		we have 
		\begin{equation} \label{eq:green-decay-estimate}
			\left| \nabla_y G(x , y) \right| \leq \frac{C \mathcal{M}_{\nabla\mathrm{-Decay}}(y)^{\frac{d}{2}}}{|x - y|}.
		\end{equation}
		Pick a site $z \in \mathscr{C}_{\infty}$, let $K := \sum_{x \in \mathscr{C}_{\infty}} f(x)$ so that $g := f -  K \delta_z$ has mean zero. Thus, letting $u_g$ be defined by \eqref{eq:green-potential}, we have by  \eqref{eq:gradient-representation} and \eqref{eq:green-decay-estimate} that $u_g = O(|x|^{-1})$. By definition, we have that $u_f = u_g + u'$, where 
		\[
		u' :=  K  G(\cdot , z),
		\]
		and by \cite[Theorem 2]{dario-gu-green}, $u' = \Theta(\log |x|)$, completing the proof. 
	\end{proof}
	
	\subsection{Reduction to a statement on the corrected plane}
	In light of the previous proposition, we may restrict our attention to the event that $\sum_{x \in \mathscr{C}_{\infty}} f(x) = 0$.
	We then show that Proposition~\ref{prop:fast-decay-stronger} can be obtained as a consequence of the two following propositions.
	
	The first one is a result related to the behavior of the corrected plane, its proof is the core of the argument and occupies the rest of Section~\ref{sec:toppling-invariants}. 
	
	\begin{prop} \label{prop:gradient-trichotomy}
		For any fixed slope $p \in \R^2$, almost surely on the event $\sum_{x \in \mathscr{C}_\infty} f(x) = 0$, 
		
		\begin{equation} \label{eq:gradient-growth-bound}
			\mbox{either}~ \sum_{y \in \mathscr{C}_\infty} f(y) \ellp{p}(y) \neq 0 ~\mbox{or}~ |\supp u_f| < \infty.
		\end{equation}
	\end{prop}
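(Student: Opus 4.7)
The plan is to argue by contradiction: suppose
\[
\P\left[\,\sum_y f(y) = 0,\ X = 0,\ |\supp u_f| = \infty\,\right] > 0,
\]
where $X := \sum_y f(y)\,\ellp{p}(y)$, and combine a single-edge sensitivity formula with the $d = 2$ level-set rigidity of harmonic functions to force finite support. First I would compute how $X$ changes when a single edge $e = (a,b) \in E(\mathscr{C}_\infty)$ is flipped. Denote by $\ellp{p}'$ the corrected plane in the perturbed environment. Applying Lemma~\ref{lemma:compare-different-planes} with $\mathcal{B} = \{e\}$ and integrating yields
\[
\ellp{p}(x) - \ellp{p}'(x) = \nabla \ellp{p}'(e)\,\nabla_y G(x,e) + \text{const},
\]
and since $\sum_y f(y) = 0$ the additive constant drops out when we multiply by $f$ and sum. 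Using the symmetry of $G$ and the definition of $u_f$, this simplifies to
\[
X - X' = \nabla \ellp{p}'(e)\cdot \nabla u_f(e).
\]

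Next I would turn this sensitivity into a pointwise identity. A finite-energy style argument, based on conditioning on the environment outside $e$ and exploiting positivity of the Bernoulli marginals, should produce a configuration of positive probability on which the perturbed configuration remains in $\{X = 0\}$ for every edge $e$; combined with the formula above, this forces
\[
\nabla \ellp{p}'(e)\cdot \nabla u_f(e) = 0 \qquad \text{for every edge } e.
\]
By the ergodic theorem and the homogenized flux of $\ellp{p}$ (as in the proof of Proposition~\ref{prop:not-lipschitz}, specifically in \eqref{eq:left-side-large}), the set $\{e : \nabla \ellp{p}'(e) \neq 0\}$ has positive density in $E(\mathscr{C}_\infty)$, so $\nabla u_f(e) = 0$ on a set of edges of positive density. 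In dimension $d = 2$, Proposition~\ref{prop:level-set-of-harmonic-function} then forces $u_f$ to be supported on a finite set, contradicting $|\supp u_f| = \infty$.

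The main obstacle is the finite-energy upgrade in the middle step: the event $\{X = 0\}$ is \emph{not} translation-invariant (the pole $f$ is deterministic), so Lemma~\ref{lemma:finite-energy} does not apply directly. I would instead view $X$ as a random variable with a possible atom at $0$ and argue that this atom has mass zero on the event $\{|\supp u_f| = \infty\}$. Conditional on the environment outside a single edge $e$, the sensitivity formula shows that $X$ takes two distinct values as soon as $\nabla \ellp{p}'(e)\cdot \nabla u_f(e) \neq 0$; iterating over an infinite family of such edges, whose existence one must extract from $|\supp u_f| = \infty$ together with the positive density of $\{\nabla \ellp{p} \neq 0\}$, should drive the atomic mass at $0$ to zero via a Borel--Cantelli-type product estimate. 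The delicate planar geometry — the same planarity input that underpins Proposition~\ref{prop:level-set-of-harmonic-function} — seems unavoidable in quantifying how $\nabla u_f$ distributes among edges where it is nonzero, and will likely be the heaviest part of the argument.
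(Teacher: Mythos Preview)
Your overall architecture matches the paper's: the sensitivity identity $X-X'=\pm\nabla\ellp{p}'(e)\,\nabla u_f(e)$ is correct (it is exactly Step~2 of the paper's argument), and Proposition~\ref{prop:level-set-of-harmonic-function} is indeed the decisive planar input. The gap is precisely the ``middle step'' you flag. Your first route---a finite-energy upgrade producing a positive-probability configuration on which \emph{every} edge-flip remains in $\{X=0\}$---cannot succeed: Lemma~\ref{lemma:finite-energy} requires a full-measure event, and in fact the whole content of Proposition~\ref{prop:level-set-of-harmonic-function} is that on $\{|\supp u_f|=\infty\}$ a \emph{positive density} of edges is sensitive, so no such edge-insensitive sub-event can exist. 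Your second route (atom at $0$, Borel--Cantelli product over sensitive edges) runs into the heavy correlation of the sensitivities $\nabla\ellp{p}'(e)\,\nabla u_f(e)$ across edges; there is no product structure to exploit and no clear way to iterate. (A minor side remark: from ``$\nabla\ellp{p}'(e)\,\nabla u_f(e)=0$ for every $e$'' you would get $\{\nabla u_f\neq 0\}\cap\{\nabla\ellp{p}\neq 0\}=\emptyset$, not merely ``$\nabla u_f=0$ on a positive-density set''; it is the former that contradicts Proposition~\ref{prop:level-set-of-harmonic-function}.)

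The paper replaces this step with an Efron--Stein/martingale argument that needs no independence and no pointwise edge-insensitivity. Order the edges, set $M_n:=\E[\indc_E\mid\mathcal{F}_n]$ with $E:=\{\sum f=0,\ X=0,\ |\supp u_f|=\infty\}$, and use orthogonality of increments plus $L^2$-martingale convergence to get $\E\bigl[(\indc_E(\mathbf{a})-\indc_E(\mathbf{a}^{e_n}))^2\bigr]\to 0$ as $n\to\infty$. On the other hand, your sensitivity identity gives the pointwise bound
\[
\indc_E(\mathbf{a})\,\indc_{\{\nabla u_f(e_n)\neq 0\}}\,\indc_{\{\nabla\ellp{p}(e_n)\neq 0\}}\ \le\ \bigl(\indc_E(\mathbf{a})-\indc_E(\mathbf{a}^{e_n})\bigr)^2.
\]
Averaging over the edges of $B_{2^k}$ then forces
\(
\E\bigl[\indc_E\cdot|B_{2^k}|^{-1}\,|\{e\in E(B_{2^k}):\nabla u_f(e)\neq 0,\ \nabla\ellp{p}(e)\neq 0\}|\bigr]\to 0,
\)
which directly contradicts Proposition~\ref{prop:level-set-of-harmonic-function} unless $\P(E)=0$. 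All the planar geometry is confined to the proof of Proposition~\ref{prop:level-set-of-harmonic-function}; the reduction itself needs only this soft variance bound, not the quantitative product estimate you anticipate.
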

	
	The second proposition is a stochastic homogenization result which identifies the first order-term of the asymptotic behavior of the function $u_f$ in terms of the corrected plane. Its proof builds upon the results collected and proved in the prior subsection, and is presented in the rest of this subsection.

	\begin{prop} \label{prop:two-scale-expansion-mean-zero}
		There exist a constant $C := C ( f, \mathfrak{p}) < \infty$, an exponent $s := s(f, \mathfrak{p}) < \infty$ and a minimal scale $\mathcal{M}_f$ satisfying the stochastic integrability estimate
		\begin{equation} \label{eq:stochastic-integrability-bound-two-scale-expansion}
			\mathcal{M}_f \leq \mathcal{O}_s (C) 
		\end{equation}
		such that, if we consider a realization of the infinite cluster $\mathscr{C}_\infty$ such that $\sum_{x \in \mathscr{C}_{\infty}} f(x) = 0 $, then, for any $x \in \mathscr{C}_\infty$ satisfying $|x| \geq \mathcal{M}_f$, 
		\begin{equation} \label{eq:two-scale-expansion-mean-zero}
			\left| u_f (x) -  \sum_{i = 1}^d \left(\sum_{y \in \mathscr{C}_\infty} f(y) \ellp{e_i}(y) \right) \partial_i \bar G(x) \right| \leq C |x|^{-\frac{5}{4}},
		\end{equation}
		where $\bar G$ is a multiple of the elliptic Green's function on $\R^2$. 
	\end{prop}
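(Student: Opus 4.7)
The plan is to combine the mean-zero representation of $u_f$ from Lemma~\ref{lemma:mean-zero-representation} with the gradient homogenization of the Green's function (Proposition~\ref{prop:homogenizationellipticgreen}), and then Taylor expand $\bar G$ around the origin. The main technical input is Proposition~\ref{prop:homogenizationellipticgreen}: its error is of order $|x|^{-3/2+\delta}$ in dimension $d=2$, which is more than enough to give the desired $|x|^{-5/4}$ bound.

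First, apply Lemma~\ref{lemma:mean-zero-representation} to represent
$$u_f(x) = \sum_{e \subseteq \mathscr{C}_\infty} F(e)\, \nabla G(x,e),$$
with $F$ a compactly supported vector field satisfying $\nabla \cdot F = f$ and the bounds~\eqref{eq:estimates-on-F}. Since the Green's function is symmetric, $\nabla G(x,e)$ (gradient in the second argument at edge $e$) can be analyzed by applying Proposition~\ref{prop:homogenizationellipticgreen} with the roles of the two arguments swapped. This yields, for every edge $e = (y,y+e_{j(e)})$ with $|x-y| \geq \mathcal{M}_{\nabla\text{-Homog},\delta}(y)$,
$$\nabla G(x,e) = \nabla_y\bar G(y-x)\big|_e + \sum_{i=1}^d \nabla \chi_{e_i}(e)\, \partial_i \bar G(y-x) + O\!\left(|x-y|^{-3/2+\delta}\right).$$

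Next, Taylor expand $\bar G$ around $-x$, using that the support of $F$ is contained in a box whose side length is of order $\mathcal{O}_s(C_f)$ (Lemma~\ref{lemma:mean-zero-representation}). Since in two dimensions $|\nabla^2 \bar G(-x)| \lesssim |x|^{-2}$, the finite-difference $\nabla_y \bar G(y-x)|_e = \partial_{j(e)}\bar G(-x) + O(\mathrm{size}(\supp F)\cdot |x|^{-2})$, and similarly $\partial_i \bar G(y-x) = \partial_i \bar G(-x) + O(\mathrm{size}(\supp F)\cdot |x|^{-2})$ uniformly over $y \in \supp F$. Combining these with the previous display gives
$$\nabla G(x,e) = \partial_{j(e)}\bar G(-x) + \sum_{i=1}^d \nabla \chi_{e_i}(e)\, \partial_i \bar G(-x) + O\!\left(|x|^{-3/2+\delta}\right),$$
where the implicit constant depends only on the sizes from~\eqref{eq:estimates-on-F}.

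Plugging this back into the representation of $u_f$ and recognizing that $\nabla \ellp{e_i}(e) = \delta_{i,j(e)} + \nabla \chi_{e_i}(e)$ gives
$$u_f(x) = \sum_{i=1}^d \partial_i \bar G(-x) \sum_{e \subseteq \mathscr{C}_\infty} F(e)\, \nabla \ellp{e_i}(e) + O\!\left(|\supp F|\cdot |x|^{-3/2+\delta}\right).$$
A summation by parts on the inner sum, combined with $\nabla \cdot F = f$, identifies $\sum_e F(e)\nabla \ellp{e_i}(e)$ with $\sum_{y \in \mathscr{C}_\infty} f(y)\,\ellp{e_i}(y)$ (up to the sign convention absorbed into the definition of $\bar G$, which is even so $\partial_i \bar G(-x) = -\partial_i \bar G(x)$). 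Choosing $\delta < 1/4$ gives the stated $|x|^{-5/4}$ bound. Finally, the minimal scale $\mathcal{M}_f$ is defined as the maximum of $\mathcal{M}_{\nabla\text{-Homog},\delta}(y)$ over $y \in \supp F$, the size of $\supp F$, and a large absolute constant chosen so that the Taylor expansion errors are absorbed. A union bound over the (stochastically bounded) cardinality $|\supp F| \leq \mathcal{O}_s(C_f)$ gives~\eqref{eq:stochastic-integrability-bound-two-scale-expansion}.

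The main obstacle is bookkeeping: keeping the two sources of error (the homogenization error from Proposition~\ref{prop:homogenizationellipticgreen} and the Taylor error from freezing $y$ at the origin) both strictly smaller than $|x|^{-5/4}$, while simultaneously ensuring the minimal scale one defines has the required stochastic integrability. The factor $|x|^{-5/4}$ is chosen loose enough that this is mostly a matter of taking $\delta$ small and invoking a union bound, but care is needed because $|\supp F|$ is itself random; the estimates~\eqref{eq:estimates-on-F} are exactly what makes this harmless.
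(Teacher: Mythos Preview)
Your approach is essentially the paper's: both proofs use Lemma~\ref{lemma:mean-zero-representation} to write $u_f(x)=\sum_e F(e)\,\nabla_y G(x,e)$, invoke Proposition~\ref{prop:homogenizationellipticgreen} (via the symmetry $G(x,y)=G(y,x)$) to replace $\nabla_y G$ by its two-scale expansion $\nabla\bar G + \sum_i \nabla\chi_{e_i}\partial_i\bar G$, freeze the slowly varying factor $\partial_i\bar G(y-x)$ at $\partial_i\bar G(-x)$ using the Hessian bound~\eqref{eq:green-hessian-estimate}, and then sum by parts using $\nabla\cdot F=f$.

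One small bookkeeping correction: once you swap the arguments of $G$ to apply Proposition~\ref{prop:homogenizationellipticgreen}, the minimal scale in that proposition is indexed by the \emph{pole}, which is now $x$, not $y\in\supp F$. So defining $\mathcal{M}_f$ as a maximum of $\mathcal{M}_{\nabla\text{-Homog},\delta}(y)$ over $y\in\supp F$ does not give what you need; you must instead control $\mathcal{M}_{\nabla\text{-Homog},\delta}(x)$ uniformly in $x$ (for instance via $\sup\{|x|:\mathcal{M}_{\nabla\text{-Homog},\delta}(x)>|x|/2\}$), combined with the random size of $\supp F$. This still has the claimed stochastic integrability by a union bound over $x\in\Z^2$, and the paper handles it by raising a combination of scales centered near the origin to a large power.
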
 
	
	\begin{remark}
		We note that in the right hand side, the exponent $5/4$ is strictly larger than $1$ and that the two terms in the left-hand side (considered individually) decay like $1/|x|$. Their difference is thus smaller than their typical size. 
	\end{remark}
	
	\begin{remark}
		Although in this section, we set $d = 2$, 
		Proposition~\ref{prop:two-scale-expansion-mean-zero} is valid for any $d \geq 3$ by replacing $2$ by $d - \frac{3}{4}$ in the right-hand side of~\eqref{eq:two-scale-expansion-mean-zero}. 
		Moreover, the exponent $\frac{3}{4}$ is not optimized but is sufficient for our purposes. 
	\end{remark}

	We first show how to complete the proof of Proposition~\ref{prop:fast-decay-stronger} (and thus of Theorem~\ref{theorem:slow-mixing-percolation}).
	
	\begin{proof}[Proof of Proposition~\ref{prop:fast-decay-stronger} assuming Proposition~\ref{prop:gradient-trichotomy} and Proposition~\ref{prop:two-scale-expansion-mean-zero}]
		By Proposition~\ref{prop:not-mean-zero-log-growth}, we may restrict to the event that $\sum_{x \in \mathscr{C}_\infty} f(x) = 0$.
		Let $p \in \R^2$ be given by Proposition~\ref{prop:gradient-trichotomy} and define the ray 
		\[
		\mathcal{R}_p := \{ R p : R \in \R \}
		\]
		so that for all $x \in \Z^2$ which lie on the ray (or are at distance smaller than $1$ from it), 
		\[
		\sum_{i = 1}^2 \left(  \sum_{y \in \mathscr{C}_\infty} f(y) \ellp{e_i}(y) \right)  \partial_i \bar G(x) = - c |x|^{-1} \left( \sum_{y \in \mathscr{C}_\infty} f(y) \ellp{p}(y) \right)  + o(|x|^{-1}),
		\]
		where $c \neq 0$ is a constant depending on $\mathfrak{p}$ (involving the density and the diffusivity of the infinite cluster, see, \eg, \cite[equation (1.9)]{BH09} or \cite[equation (1.9)]{dario-gu-green}). We thus have by Proposition~\ref{prop:two-scale-expansion-mean-zero}
		that
		\begin{equation*}
			u_f (x) =  c \left(\sum_{y \in \mathscr{C}_\infty} f(y) \ \ellp{p}(y)  \right)  |x|^{-1}  + o(|x|^{-1}), \quad \mbox{for all $x \in \Z^2$ such that $\dist( x, \mathcal{R}_p) \leq 1$ }
		\end{equation*}
		which implies the claim by Proposition~\ref{prop:gradient-trichotomy}.  
	\end{proof}
	
	\begin{proof}[Proof of Proposition~\ref{prop:two-scale-expansion-mean-zero}]
		By Lemma \ref{lemma:mean-zero-representation}, we have the identity 
		\begin{equation*}
			u_f (x) = \sum_{y \in \mathscr{C}_\infty} f(y) G(x , y)  = \sum_{e \subseteq \mathscr{C}_\infty} F(e) \nabla_y G(x, e),
		\end{equation*}
		for a compactly supported vector field $F : E \left( \mathscr{C}_\infty \right) \to \R$ satisfying the estimates~\eqref{eq:estimates-on-F}. Denote by $\bar G_x := \bar G(\cdot - x)$ and define the minimal scale $\mathcal{M}_f$ according to the formula
		\begin{equation*}
			\mathcal{M}_f := \left( \mathcal{M}_{\nabla-\mathrm{Homog}, \frac{1}{8}}(0) \vee \size(\cu_f) \vee \mathcal{M}_{\mathrm{corr}, \frac12}(0) \vee  \left\| F \right\|_{L^2 ( \mathscr{C}_\infty)}\right)^{16(2+2)}
		\end{equation*}
		where $\cu_f$ is as in Lemma \ref{lemma:mean-zero-representation}. Applying Proposition~\ref{prop:homogenizationellipticgreen} with the observation that $F$ is supported in the box $\cu_f$, we deduce that
		\begin{equation*}
			\left| u_f(x) - \sum_{i = 1}^2 \sum_{\substack{e \subseteq E(\mathscr{C}_\infty) \\ e = (y , y+e_i)}} F(e) \nabla \ellp{e_i}(e) \nabla \bar G_x(e) \right| \leq \frac{C}{|x|^{\frac 54}}.
		\end{equation*}Since the elliptic Green's function on $\R^2$ is smooth away from its pole, we have the inequality, for any $y \in \Z^2$ with $|y - x| \leq |x|/2$ and any $i \in \{ 1 , 2\}$,
		\begin{equation} \label{eq:gradient-green-taylor}
			\left| \nabla \bar G_x (\{y , y+e_i\}) - \partial_i \bar G(x)  \right| \leq \frac{C \left|y\right|}{|x|^2}.
		\end{equation}
		From the inequality~\eqref{eq:gradient-green-taylor}, we deduce that, for any $i \in \{1, 2\}$ and any $x \in \mathscr{C}_\infty$ satisfying $|x| \geq \mathcal{M}_f$,
		\begin{align*}
			\left| \sum_{\substack{e \subseteq E(\mathscr{C}_\infty) \\ e = (y , y+e_i)}} F(e) \nabla \ellp{e_i}(e) \left( \nabla \bar G_x(e) - \partial_i \bar G(x) \right) \right| & \leq \frac{C \size(\cu_f)^{2} \left\| F \right\|_{L^2 ( \mathscr{C}_\infty)} \sup_{e \in \supp F} \left| \nabla \ellp{e_i}(e) \right|}{|x|^2} \\
			& \leq \frac{C}{|x|^{\frac{5}{4}}}.
		\end{align*}
		The exponent $2 = (d/2+1)$ in $\size(\cu_f)^{2}$ in the first inequality is due to the observation that the set of edges in the support of $F$ has cardinality smaller than $C \left| \cu_f\right| = C  \size(\cu_f)^2$ and that the diameter of the support of $F$ is smaller than $C \size(\cu_f)$ (as the right-hand side of~\eqref{eq:gradient-green-taylor} depends on the parameter $|y|$), and the Cauchy-Schwarz inequality. The second inequality uses the definition of the minimal scale. 
		
		Performing a discrete integration by parts and using the identity $\nabla \cdot F = f$ completes the proof of Proposition~\ref{prop:two-scale-expansion-mean-zero}.
	\end{proof}

	\subsection{Further reduction to a statement on the level sets of harmonic functions}
	In this subsection we reduce, via a martingale sensitivity argument,  the proof of Proposition~\ref{prop:gradient-trichotomy} to the following statement.
	\begin{prop} \label{prop:level-set-of-harmonic-function}
		Almost surely, on the event that $\sum_{x \in \mathscr{C}_{\infty}} f(x) = 0$ and $|\supp u_f| = \infty$, we have that 
		\begin{equation} \label{eq:level-set-of-harmonic-function}
			\limsup_{k \to \infty}  \frac{1}{|B_{2^k}|} \left| \left\{ e \in E(B_{2^k}) : \nabla u_f(e) \neq 0 \mbox{ and }  \nabla \ellp{p}(e) \neq 0  \right\} \right|  > 0.
		\end{equation}
	\end{prop}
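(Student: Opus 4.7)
The plan is to argue by contrapositive: assume the limsup density in~\eqref{eq:level-set-of-harmonic-function} equals zero along $R_k = 2^k$, and deduce that $u_f$ must have finite support, contradicting $|\supp u_f| = \infty$.

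First I would establish an ergodic density statement for $\ellp{p}$. The map $e \mapsto \mathbf{1}_{\{\nabla \ellp{p}(e) \neq 0\}}$ is a translation-covariant edge functional (well-defined since $\ellp{p}$ is determined by the environment uniquely up to an additive constant), so by ergodicity of Bernoulli percolation the density
\[
\frac{|\{e \in E(\mathscr{C}_\infty \cap B_R) : \nabla \ellp{p}(e) \neq 0\}|}{|B_R|}
\]
converges almost surely to a deterministic constant $\rho_p > 0$; positivity follows from $\ellp{p}$ being non-constant for $p \neq 0$ (Theorem~\ref{theorem:first-order-corrector}), which forces $\nabla \ellp{p}$ to be nonzero on a fixed edge with positive probability. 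The contrapositive hypothesis then forces the edges in $E(B_{R_k})$ with $\nabla u_f \neq 0$ to be contained, up to $o(|B_{R_k}|)$ exceptions, in the complementary set $\{\nabla \ellp{p} = 0\}$, whose density is at most $1 - \rho_p < 1$.

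Next I would exploit the planarity of $\Z^2$. Partition $\mathscr{C}_\infty$ into \emph{level components}, the connected subgraphs of $(\mathscr{C}_\infty, \{e : \nabla u_f(e) = 0\})$, on each of which $u_f$ is constant. Because $u_f \to 0$ at infinity, the (unique, if it exists) unbounded level component has value $0$; nonzero values come only from bounded components. Projecting into $\mathbb{S}^2$ stereographically and invoking Lemma~\ref{lemma:jordan-curves-theorem} together with the Jordan curve theorem, the boundaries of the bounded nonzero-value level components form nested disjoint Jordan curves in the sphere, and their edges lie precisely in $\{\nabla u_f \neq 0\}$. If $|\supp u_f| = \infty$, these bounded nonzero components must accumulate toward infinity, so their collective boundary occupies a nontrivial fraction of $E(B_{R_k})$; using elliptic regularity on the cluster (Theorem~\ref{theorem:large scale-regularity}) together with the integer-Laplacian/discrete structure of $f$ to bound from below the perimeter of each contributing component, I would extract a positive-density lower bound on $|\{\nabla u_f \neq 0\} \cap E(B_{R_k})|/|B_{R_k}|$. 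Combined with the containment inside $\{\nabla \ellp{p} = 0\}$ from the previous step, this contradicts $\rho_p > 0$.

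The hard part is making the planar counting argument quantitative and ruling out pathological accumulation — in particular, configurations where very many small nonzero-value level components cluster near the origin but contribute negligibly on large scales. Ruling this out should combine planarity (Jordan curves must nest rather than tangle, so a given annulus is crossed by a controlled number of disjoint boundaries) with elliptic regularity for harmonic functions on the cluster, which quantitatively prevents $u_f$ from oscillating arbitrarily fast on small level components given that $f$ has fixed finite support with integer values. This is also the point where the restriction to $d = 2$ is essential — exactly as in Problem~\ref{problem:level-set}, whose higher-dimensional analogue is left open — because in higher dimensions disjoint level surfaces need not be planar separators.
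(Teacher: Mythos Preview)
Your contrapositive strategy has a genuine logical gap at the final step. You argue that (i) $\{\nabla \ellp{p} \neq 0\}$ has positive density $\rho_p$, and (ii) $\{\nabla u_f \neq 0\}$ has positive density; you then conclude that their intersection must have positive density, ``contradicting $\rho_p > 0$''. But (i) and (ii) together do \emph{not} force the intersection to have positive density: two sets of positive density in $\Z^2$ can be disjoint, and there is no independence between the two events here --- $u_f$ is a deterministic functional of the same environment that determines $\ellp{p}$, so the sets $\{\nabla u_f \neq 0\}$ and $\{\nabla \ellp{p} = 0\}$ could a priori be highly correlated. Your observation that the contrapositive hypothesis forces $\{\nabla u_f \neq 0\}$ to sit (up to $o(|B_{R_k}|)$) inside $\{\nabla \ellp{p} = 0\}$ only yields the upper bound $\limsup |\{\nabla u_f \neq 0\} \cap E(B_{R_k})|/|B_{R_k}| \le 1-\rho_p$, which is perfectly compatible with a strictly positive lower bound and produces no contradiction.

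The paper circumvents this by proving something much stronger and more local: it constructs, for a positive-density set of vertices $x$ (those satisfying a ``good event'' $G(x)$ which in particular forces $\nabla \ellp{p}$ to be \emph{large} across the edge at $x$, using Theorem~\ref{theorem:lipschitz}), a deterministically bounded-range map $x \mapsto e$ to a sensitive edge with $\nabla u_f(e)\neq 0$ and $\nabla \ellp{p}(e)\neq 0$. The map is built by exploring the block-cut tree of the level set $L_{u_f(x)}$ along paths on which $\ellp{p}$ is strictly increasing; the flux of $\ellp{p}$ through successive cut-vertices is bounded below, and combined with the topological bounds on the number of branches and the size of biconnected components (the content you allude to but do not use), this exploration must terminate within bounded distance at a sensitive edge. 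This local coupling is what bridges the two densities --- and it is exactly the step your sketch lacks. Your planar perimeter argument for a positive density of $\{\nabla u_f \neq 0\}$, even if it could be made rigorous, would still leave you short of the conclusion.
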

	\begin{proof}[Proof of Proposition~\ref{prop:gradient-trichotomy} assuming Proposition~\ref{prop:level-set-of-harmonic-function}]
		Fix $p \in \Rd$ and denote by $E$ the event
		\begin{equation*}
			E := \{ \sum_{x \in \mathscr{C}_\infty} f(x) = 0 \} \cap \{ \sum_{y \in \mathscr{C}_\infty} f(y) \ellp{p}(y)  = 0 \}  \cap \left\{ |\supp \, u_f| = \infty \right\};
		\end{equation*}
		we will show that $E$ has probability zero. We first prove, using an argument similar to the proof of the Efron-Stein inequality, that the occurrence of the event $E$ does not depend too much on the value of a single edge. We then show, using Proposition~\ref{prop:level-set-of-harmonic-function}, that the event $E$ is in fact sensitive. In the third step we use these two deductions to show that $E$ has probability zero.  
		
		\medskip 
		
		{\it Step 1: The event $E$ cannot not depend too much on the value of a single edge.} \\
		Order the edges of the lattice following a deterministic procedure and denote the set of ordered edges by $( e_i )_{i \in \N} $. For each $n \in \N$, introduce the sigma-algebra
		\begin{equation*}
			\mathcal{F}_n := \sigma \left( \mathbf{a}(e_i) \, : \, i \in \{ 1 , \ldots, n \} \right),
		\end{equation*}
		that is, $\mathcal{F}_n$ contains the information of the first $n$ edges (where $\mathbf{a}(e_i) = 0$ or $1$ tells us whether or not the edge is open or closed). Then define the martingale
		\begin{equation*}
			M_n := \E \left[ \indc_{E} | \mathcal{F}_n \right],
		\end{equation*}
		and note that
		\begin{equation} \label{eq:martingale-increment-converges}
			\sum_{n \geq 0} \E \left[\left( M_{n+1} - M_n \right)^2\right] = \mathrm{var} \left[ \indc_E \right] \leq 1
		\end{equation}
		by the orthogonality of martingale increments and
		\begin{equation} \label{eq:martingale-close-to-mean}
			\lim_{n \to \infty} \E \left[ \left( \indc_E -  M_n  \right)^2 \right] = 0
		\end{equation}
		by the convergence theorem for bounded martingales.
		
		The previous two properties imply that the event $E$ is not very dependent on the value of the edge $e_n$. To state this precisely, we introduce the notation $\mathbf{a} = (\mathbf{a}(e_i))_{i\in \N}$ and $\mathbf{a}^{e_n} := ((\mathbf{a}(e_i))_{i < n} , 1- \mathbf{a}(e_n), (\mathbf{a}(e_i))_{i > n})$ (\ie, the environment $\mathbf{a}^{e_n}$ is equal to the environment $\mathbf{a}$ except at the edge $e_n$ where we have flipped the value). 
		We claim that for any fixed small $\varepsilon > 0$, there exists an integer $n_0(\varepsilon)$ so that, for all $n \geq n_0$,
		\begin{equation} \label{eq:event-not-dependent}
			\E \left[ \left( \indc_{E} \left( \mathbf{a} \right) -  \indc_{E} \left( \mathbf{a}^{e_n} \right) \right)^2\right] \leq \varepsilon.
		\end{equation}
		To prove~\eqref{eq:event-not-dependent}, we first choose $n$ large enough so that, by \eqref{eq:martingale-increment-converges}, we have
		\begin{equation*}
			\sum_{k \leq n-2} \E[(M_{k+1}-M_k)^2] > \mathrm{var} \left[ \indc_{E} \right] \left(1- \frac{\min(\mathfrak{p} , 1 - \mathfrak{p})}{ 4 \max(\mathfrak{p} , 1 - \mathfrak{p})} \varepsilon \right).
		\end{equation*} 
		This implies, by \eqref{eq:martingale-close-to-mean}, that $\E[(\indc_E(\mathbf{a})-M_{n-1}(\mathbf{a}))^2] < (\varepsilon \min(\mathfrak{p} , 1 - \mathfrak{p}))/(4 \max(\mathfrak{p} , 1 - \mathfrak{p}))$. Since the value $\mathbf{a}(e_i)$ is sampled according to the Bernoulli distribution of parameter $\mathfrak{p}$ independently of the collection $(\mathbf{a}(e_j))_{j \neq i}$, we have the inequality
		\begin{equation*}
			\E[(\indc_E(\mathbf{a}^{e_n})-M_{n-1}(\mathbf{a}^{e_n}))^2] \leq \frac{\max(\mathfrak{p} , 1 - \mathfrak{p})}{ \min(\mathfrak{p} , 1 - \mathfrak{p})} \E[(\indc_E(\mathbf{a})-M_{n-1}(\mathbf{a}))^2] \leq \varepsilon/4.
		\end{equation*}
		Combining these inequalities yields \eqref{eq:event-not-dependent}.
		\medskip
		
		{\it Step 2: The event $E$ is sensitive.} \\
		We show that,
		\begin{equation}  \label{eq:flip-edge-implication}
			\indc_{E}(\mathbf{a}) = 1,~ \nabla u_f(e_n) \neq 0, ~\mbox{ and}~  \nabla \ellp{p}(e_n) \neq 0 \implies \indc_{E}(\mathbf{a}^{e_n}) = 0,
		\end{equation}
		where $\nabla \ellp{p}(e_n)$ and $\nabla u_f(e_n)$ denote the difference of the values at the two vertices of the edge $e_n$
		of the corrected plane and $u_f$ respectively (we are overloading notation here and using $\nabla u_f(e_n)$ to refer to this difference when $e_n$ is closed but both of its endpoints are in the infinite cluster). 
		When either vertex of the edge $e_n$ is not present in $\mathscr{C}_{\infty}$, we define $\nabla u_f(e_n) = \nabla \ellp{p}(e_n) = 0$.
		
		Lemma~\ref{lemma:finite-energy} guarantees that the gradient of the corrected plane in the environment~$\mathbf{a}^{e_n}$ exists almost surely. We denote this corrected plane by ${\ellp{p}}^{e_n}$ and the infinite cluster after flipping the edge $e_n$, \ie, changing $\mathbf{a}(e_n)$ from 0 to 1 or from 1 to 0, by $\mathscr{C}^{e_n}_{\infty}$. 
		
		To prove \eqref{eq:flip-edge-implication}, we first rule out the case where the set of vertices in $\mathscr{C}^{e_n}_{\infty}$ does not coincide with the set of vertices in $\mathscr{C}_{\infty}$.  If the set becomes larger after flipping the edge, then we have by definition that $\nabla u_f(e_n) = 0$ (as we have connected a finite isolated component to the infinite cluster). If the set becomes smaller, then $e_n$ must disconnect an isolated component and hence $\nabla u_f(e_n) = 0$.

		Now, suppose that the set of vertices of $\mathscr{C}^{e_n}_{\infty}$ and $\mathscr{C}_{\infty}$ are the same and denote by $\Delta_{\mathscr{C}^{e_n}_{\infty}}$ the graph Laplacian on the infinite cluster (with the edge flipped). We first note that, if $\nabla {\ellp{p}}^{e_n}(e_n) = 0$, then the map ${\ellp{p}}^{e_n}$ is harmonic on the infinite cluster $\mathscr{C}_\infty$ and thus the difference between the two functions $\ellp{p}$ and $\ellp{p}^{e_n}$ is constant (as the difference $\ellp{p} - \ellp{p}^{e_n}$ is a sublinear harmonic function on the infinite cluster). The same argument shows that if $\nabla \ellp{p}(e_n) = 0$ then $\ellp{p} - \ellp{p}^{e_n}$ is constant. These observations imply that $\nabla \ellp{p}(e_n) \neq 0$ if and only if $\nabla {\ellp{p}}^{e_n}(e_n) \neq 0$.

		Since the set of vertices of the two infinite clusters coincide, we can write
		\begin{align*}
			\sum_{y \in \mathscr{C}_\infty} f(y) (\ellp{p}(y) - {\ellp{p}}^{e_n}(y)) & = \sum_{y \in \mathscr{C}_\infty} -\Delta_{\mathscr{C}_{\infty}} u_f(y) \left( \ellp{p}(y) - {\ellp{p}}^{e_n}(y))\right) \\
			&= \sum_{y \in \mathscr{C}_\infty}  u_f(y) (\Delta_{\mathscr{C}_{\infty}} - \Delta_{\mathscr{C}^{e_n}_{\infty}}) {\ellp{p}}^{e_n}(y)  \\
			&\quad  \mbox{(integration by parts and $\Delta_{\mathscr{C}_{\infty}} \ellp{p} = \Delta_{\mathscr{C}^{e_n}_{\infty}} {\ellp{p}}^{e_n} = 0$)}  \\
			& =  (\mathbf{a} - \mathbf{a}^{e_n})(e_n) \nabla u_f(e_n)   \nabla {\ellp{p}}^{e_n}(e_n).
		\end{align*}
		We note that the integration by parts is justified because the difference $\ellp{p} - \ellp{p}^{e_n}$ is sublinear (and in fact one could prove that it converges to $0$ at infinity) and the function $u_f$ and its gradient decay sufficiently fast to zero at infinity.
		
		Since the event $E$ includes $\{ \sum_{y \in \mathscr{C}_\infty} f(y) \ellp{p}(y)  = 0 \} $, the previous display shows
		\[
		\indc_{E}(\mathbf{a}) = 1 ~\mbox{and}~ \nabla u_f(e_n) \neq 0 ~\mbox{and}~  \nabla {\ellp{p}}^{e_n}(e_n) \neq 0 \implies \indc_{E}(\mathbf{a}^{e_n}) = 0.
		\]
		This shows \eqref{eq:flip-edge-implication} as (as mentioned above) $\nabla \ellp{p}(e_n) \neq 0$ implies that $\nabla {\ellp{p}}^{e_n}(e_n) \neq 0$. 
		\medskip

		{\it Step 3: Conclusion.} \\
		In terms of indicator functions, the property \eqref{eq:flip-edge-implication} can be rewritten as 
		\begin{equation*}
			1\{E \cap \left\{ \nabla u_f(e_n) \neq 0 \right\}\cap \left\{ \nabla \ellp{p}(e_n) \neq 0 \right\}\}  \left( \mathbf{a} \right) \leq  \left( \indc_{E} \left( \mathbf{a} \right) -  \indc_{E} \left( \mathbf{a}^{e_n} \right) \right)^2.
		\end{equation*}
		By taking the expected value of the previous display and using \eqref{eq:event-not-dependent}, we obtain, for any $\varepsilon > 0$ and $n \geq n_0(\varepsilon)$,
		\begin{equation*}
			\E \left[ 1\{E \cap \left\{ \nabla u_f(e_n) \neq 0 \right\}\cap \left\{ \nabla \ellp{p} (e_n) \neq 0 \right\}\} \right] \leq  \varepsilon.
		\end{equation*}
		Summing over the edges in the ball $B_{2^k}$, we obtain, for $k$ large enough,
		\begin{equation*}
			\E \left[ \indc_{E}\left| \left\{ e \in E(B_{2^k}) \, : \, \nabla u_f(e) \neq 0 ~\mbox{and}~ \nabla \ellp{p}(e) \neq 0 \right\} \right|  \right] \leq \varepsilon  |B_{2^k} | .
		\end{equation*}
		In particular, we have
		\begin{equation} \label{eq:no-sensitive-edges}
			\lim_{k \to \infty} \frac{1}{|B_{2^k}|}  \E \left[ \indc_{E} \left| \left\{ e \in E(B_{2^k}) \, : \, \nabla u_f(e) \neq 0 ~\mbox{and}~ \nabla \ellp{p}(e) \neq 0 \right\} \right|  \right] = 0,
		\end{equation}
		which contradicts \eqref{eq:level-set-of-harmonic-function} except if $\P(E) = 0$. \end{proof}

	\subsection{Topological obstructions in the level set of the potential} \label{subsec:obstruction}
	Our goal for the next two subsections is to prove Proposition~\ref{prop:level-set-of-harmonic-function}.  We seek to show, for a fixed direction $p \in \R^2$, that the set of {\it sensitive} edges, \ie, edges $e$ for which $\nabla u_f(e) \neq 0$ and $\nabla \ellp{p}(e) \neq 0$, has a density. Our strategy is to identify a `good event' 
	which occurs with positive density in the cluster and map each good event to a sensitive edge.
	
	Specifically, in this subsection, we use Theorem \ref{theorem:lipschitz} and regularity properties of the cluster to 
	show that we may find a positive density of `good' edges $\tilde e$ in the cluster for which 
	$\nabla \ellp{p}(\tilde e)$ is large and for which the cluster near the edge is sufficiently well-behaved. 
	In the subsequent subsection we use an exploration process based on the construction of a block-cut tree to show that the level set of the potential around every good edge contains a nearby sensitive edge. 
	
	For the entirety of this subsection and the next, we restrict ourselves to the event in the statement of the proposition that  $\sum_{x \in \mathscr{C}_{\infty}} f(x) = 0$, and $|\supp u_f| = \infty$. Recall that this event implies that $u_f$ decays to zero at infinity. 
	Also note that as $|\supp u_f| = \infty$, we have that  $(\supp f ) \cap \mathscr{C}_\infty \neq \emptyset$, (as otherwise $u_f \equiv 0$).

	In Section \ref{subsubsec:graph-theory-defs} we collect some preliminary definitions on graphs, connectivity, 
	and the block-cut tree. Given a real number $a \in \R$, we denote by
	\begin{equation} \label{eq:level-set-def}
		L_a := \left\{ x \in \mathscr{C}_\infty \, : \, u_f(x) = a \right\}
	\end{equation}
	the level set of $u_f$ at the value $a$.  In Section \ref{subsubsec:unboundedlevelsets} we use planarity
	and the fact that $u_f$ is harmonic outside of a finite set to show that the connected components of $L_a$ cannot contain many disjoint infinite paths.  Then, in Section \ref{subsubsec:large-biconnected-components} 
	we show that the biconnected components of $L_a$ are not too large. Finally 
	in Section \ref{subsubsec:building-the-obstruction} we set up the topological obstructions and good event which
	we couple with sensitive edges.

	\subsubsection{Graph theory definitions and block cut tree} \label{subsubsec:graph-theory-defs}
	We consider graphs $G := (V,E)$ which are a collection of vertices $V$
	and set of unordered vertices $E$ also called edges. A subgraph induced by $V' \subset V$ is the graph $(V,' E')$ where $E'$ are the subset of edges with both ends in $V'$. 
	
	\begin{itemize}
		\item The \emph{degree} of a vertex $x \in V$ is the cardinality of the set of edges $e \in E$ satisfying $x \in e$.
		
		\item A \emph{(finite) path} in $G$ is a function $\gamma : [1 , \ldots, \mathrm{end}] \to G$ (with $\mathrm{end} \in \N$ if~$\gamma$ is finite) such that $\{ \gamma(i+1) , \gamma(i) \} \in E$.
		
		\item We say that $G$ is {\it connected} if for every two vertices 
		$x,y \in V$, there is a path  $\gamma \subset V$ connecting $x$ and $y$.
		\item The graph $G$ is {\it biconnected} if for every $x \in G$, the induced subgraph $G \setminus \{x\}$ is connected.
		\item A {\it connected component} of $G$ is an induced subgraph of $G$ which is connected
		and is maximal for the inclusion.  {\it Biconnected components} are defined analogously.
		
		\item A {\it cut-vertex} $v$ of $G$ is a vertex which lies in a connected component 
		such that the component becomes disconnected in the induced subgraph formed by removing the vertex.
		
	\end{itemize}

	We note that the collection of connected components of a graph $G$ partitions the set of vertices of $G$, but the collection of biconnected components do not (they in fact partition the set of edges of $G$), and two different biconnected components overlap at at most one vertex which is a cut-vertex of $G$. Reciprocally, any cut-vertex belongs to at least two biconnected components of $G$. Building upon these observations, it is possible to construct the block-cut tree of a graph $G$, and we first introduce a few defintions.
	
	\begin{itemize}
		\item A \emph{tree} $T = (V , E)$ is a graph in which any two vertices are connected by exactly one path. 
		\item A \emph{rooted tree} $(T, v)$ is a tree in which a special (labeled) vertex $v \in V$ has been singled out. We equip the rooted tree $(T, v)$ with a partial order by writing, for $x , y \in V$, $x \preceq y$ if the unique path going from the root to $y$ passes through $x$. We denote by $\dist(x , y)$ the length of the path connecting $x$ to $y$. 
		\item Given a rooted tree $(T, v)$ with $T = (V , E)$, \emph{a leaf} is a vertex of $V \setminus \{ v\}$ whose degree is equal to $1$. 
		\item A {\it branch} is a path from the root to a leaf. 
		\item  Given a vertex $y \in V \setminus \{ v\}$, we define \emph{the parent} of $y$ to be the only vertex $x \in V$ such that $x  \preceq y$ and $\dist(x , v) = \dist(y , v) -1$. Similarly, we define \emph{the children} and \emph{the descendants} of $y$ to be respectively the collections of vertices
		$$ \{ x \in V \, : \, y \preceq x \, \mbox{and} \, \dist(x , v) = \dist(y , v) +1 \}~\mbox{and} ~\{ y \in V \, : \, x \preceq y \}.$$
		\item The {\it block-cut tree} $G_{\mathrm{tree}}$ of a connected graph $G := (V,E)$ is the tree
		formed in the following way. The vertex set of $G_{\mathrm{tree}}$ is the collection of all the biconnected components of $G$ and all the cut-vertices of $G$.  
		There is an edge between a biconnected component $C$ and a cut-vertex $x$ if and only if $x \in C$ (see Figure~\ref{fig:graph-and-tree}). Note that with this construction, biconnected components are only connected to cut-vertices and cut-vertices are only connected to biconnected components.
	\end{itemize}

	\begin{figure}
		\centering
		\fbox{\includegraphics{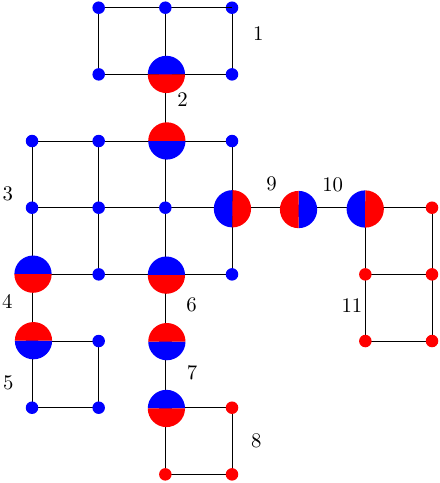}} \qquad 
		\includegraphics{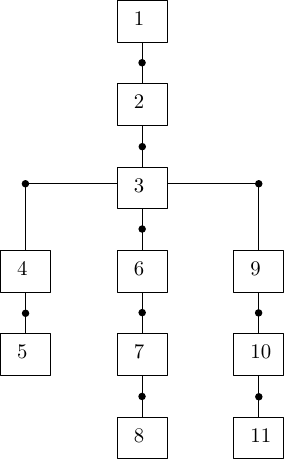}
		\caption{A subgraph of $\Z^2$ with its biconnected components drawn in alternating blue and red. Multi-colored vertices are cut vertices which belong to multiple biconnected components. On the right 
			is the block-cut tree, the biconnected components are drawn in the blocks with the same labelling as on the left. The cut points between the biconnected components are displayed as black vertices.}
		\label{fig:graph-and-tree}
	\end{figure}
	
	\subsubsection{Unbounded level sets of the potential} \label{subsubsec:unboundedlevelsets}
	In this section, we establish that the connected components of the level sets of the function $u_f$ cannot contain more than a deterministic number (depending only on the function $f$) of disjoint infinite paths.

	\begin{prop} \label{prop:infinite-disjoint-paths-limit}
		Let $\mathcal{S}$ be a connected component of $L_0$. Then there exists at most $(\left| \supp f \right|+2)$ disjoint infinite paths in $\mathcal{S}$. 
	\end{prop}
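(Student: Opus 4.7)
The plan is to proceed by contradiction using planarity, the Jordan-curves theorem on $\mathbb{S}^2$ (Lemma~\ref{lemma:jordan-curves-theorem}), and a discrete maximum principle with decay at infinity. Suppose $\mathcal{S}$ contains $K \geq |\supp f|+3$ pairwise disjoint infinite paths $\gamma_1,\ldots,\gamma_K$. Since $\mathcal{S}$ is connected, one picks a vertex $x_i$ on each $\gamma_i$ and assembles a simple loop $\alpha\subseteq\mathcal{S}$ visiting $x_1,\ldots,x_K$ in cyclic order with $\alpha\cap\gamma_i=\{x_i\}$; this is done by concatenating simple paths in $\mathcal{S}$ between consecutive $x_i,x_{i+1}$ and performing standard surgeries (truncating the initial segment of $\gamma_j$ whenever an auxiliary path happens to meet $\gamma_j$ at a vertex $y$, and relabeling $x_j:=y$).

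Setting $\Gamma:=\alpha\cup\bigcup_i\gamma_i$ and applying the stereographic projection, Lemma~\ref{lemma:jordan-curves-theorem} produces at least $K$ connected components of $\mathbb{S}^2\setminus\Gamma$ whose closures contain the north pole; these pull back to $K$ distinct unbounded open components $C_1,\ldots,C_K$ of $\R^2\setminus\Gamma$. By the pigeonhole principle, since $|\supp f|\leq K-3$, at least three of these components contain no point of $\supp f$. Fix such a pole-free $C_j$: planarity of the unit-segment embedding of $\Z^2$ ensures that every $\mathscr{C}_\infty$-neighbor of a vertex in $C_j\cap\mathscr{C}_\infty$ lies in $C_j\cup\Gamma$, and since $u_f\equiv 0$ on $\Gamma\subseteq\mathcal{S}\subseteq L_0$, the function $u_f$ restricted to $\overline{C_j\cap\mathscr{C}_\infty}$ solves a discrete Dirichlet problem (harmonic inside, zero boundary values). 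Combined with the hypothesis $u_f(x)\to 0$ as $|x|\to\infty$, the discrete maximum principle applied to finite truncations $C_j\cap B_N$ and passed to the limit $N\to\infty$ gives $u_f\equiv 0$ on $C_j\cap\mathscr{C}_\infty$.

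The hard part is to turn the vanishing of $u_f$ on three disjoint unbounded sectors into a contradiction. My planned route leverages the mean-zero condition $\sum_{x}f(x)=0$, which (as in the proof of Proposition~\ref{prop:two-scale-expansion-mean-zero}) forces $u_f$ to have dipolar asymptotics $u_f=\Theta(|x|^{-1})$ with a definite angular leading term; a nonzero two-dimensional dipole pattern has at most two unbounded sign-sectors, so the existence of three unbounded sectors on which $u_f$ vanishes forces the dipole coefficient $\sum_y f(y)\,\ellp{e_i}(y)$ to be zero for every $i$. Iterating this reasoning at higher multipole order (or, equivalently, using connectivity in $\mathscr{C}_\infty$ up to $\Gamma$ to show each pole-free $C_j\cap\mathscr{C}_\infty$ is contained in $\mathcal{S}$, and then running a flux/Gauss computation along large paths in $\mathcal{S}$) ultimately forces $u_f\equiv 0$ on all of $\mathscr{C}_\infty$, contradicting the standing hypothesis $|\supp u_f|=\infty$.
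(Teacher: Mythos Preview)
Your setup through the first two paragraphs is essentially the same as the paper's: assume too many disjoint infinite paths in $\mathcal{S}$, build a loop through them, apply Lemma~\ref{lemma:jordan-curves-theorem} on $\mathbb{S}^2$, use pigeonhole to find a pole-free unbounded component, and use the maximum principle plus decay to force $u_f\equiv 0$ on that component. (One minor point: the paper handles the ``$\alpha\cap\gamma_i=\{x_i\}$'' requirement by filling in the finite components of the complement of the connecting paths to get a simply connected region $\mathscr{S}$ with Jordan boundary $\beta$, and then truncating each $\gamma_i$ at its \emph{last} intersection with $\mathscr{S}$. Your ``standard surgeries'' are plausible but you should check that they really deliver a simple loop meeting each reparametrized $\gamma_i$ exactly once.)

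The genuine gap is the final paragraph. The paper does \emph{not} use any multipole or asymptotic argument here. Instead, from a single pole-free unbounded component $C_1$ on which $u_f\equiv 0$, one obtains a bi-infinite path $\gamma$ (formed by the two bounding infinite paths together with an arc of $\beta$) such that one side of $\R^2\setminus\gamma$ intersected with $\mathscr{C}_\infty$ lies entirely in $L_0$. This ``thick path'' configuration is ruled out by Lemma~\ref{lemma:no-thick-paths}, which is a separate, nontrivial planarity argument: one finds many disjoint paths from $\gamma$ into the nonzero region, uses harmonicity to produce positive and negative superlevel components touching each such path, and then shows by pigeonhole and Jordan-curve topology that two of them must trap a superlevel or sublevel component away from the poles, contradicting Lemma~\ref{lemma:poles-sub-super-level-sets}. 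This lemma is the missing ingredient in your argument.

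Your proposed alternative does not close the gap with the tools available. The dipole expansion (Proposition~\ref{prop:two-scale-expansion-mean-zero}) only gives the first-order term; if the dipole coefficient vanishes you learn $|u_f(x)|\leq C|x|^{-5/4}$ but not $u_f\equiv 0$, and ``iterating at higher multipole order'' requires homogenization results the paper does not establish. Worse, concluding ``decay faster than $|x|^{-1}$ implies compact support'' is precisely Theorem~\ref{theorem:fast-decay-stronger}, whose proof ultimately relies on the present proposition, so that route is circular. The ``flux/Gauss'' alternative is too vague to assess. Note also that only \emph{one} pole-free component is needed once Lemma~\ref{lemma:no-thick-paths} is in hand, so the effort to secure three is unnecessary.
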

	
	\begin{remark}
		This statement is only relevant for the level set associated with the value $0$ as all the other level sets are bounded and thus finite (since $u_f$ decays to $0$ at infinity)
	\end{remark}
	
	Before proving Proposition~\ref{prop:infinite-disjoint-paths-limit}, we collect a few results pertaining to supercritical percolation and the level sets of discrete harmonic functions. They are written in Lemma~\ref{lemma:loops-around-annuli} to Lemma~\ref{lemma:long-nonzero-paths} below. We start with a connectivity lemma on the cluster. 
	\begin{lemma} \label{lemma:loops-around-annuli}
		There exists a minimal scale $\mathcal{M}_{\mathrm{annuli}}$ which is finite almost surely such that for all $R  \geq \mathcal{M}_{\mathrm{annuli} }$
		there is a loop in $\mathscr{C}_{\infty} \cap (Q_{2 R} \setminus Q_R)$ which has nonzero winding number around the origin.
	\end{lemma}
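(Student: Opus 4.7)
The plan is to prove a single-scale estimate, $\P\bigl[\mathscr{C}_\infty\cap(Q_{3R/2}\setminus Q_R)\text{ contains a loop}\bigr]\geq 1-C\exp(-cR^s)$ for every $R\geq 1$, and then combine scales by a Borel--Cantelli argument together with an inclusion argument for non-integer radii. Both steps rely on Proposition~\ref{prop:well-connected} and the planarity of $d=2$.

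For the single-scale estimate I would work on the event $E_R$ that $Q_{2R}$ is well-connected, which has probability at least $1-C\exp(-cR^s)$. On $E_R$, every subcube of side length $M\in[R^{1/(10d)^2},R/10]$ that meets $Q_{2R}$ is crossing, and every open path of length at least $M/100$ inside such a subcube is connected, within the subcube, to its largest open cluster; chaining this guarantee across scales shows that all these subcube clusters are contained in $\mathscr{C}_*(Q_{2R})$, which in turn is contained in $\mathscr{C}_\infty$ for $R$ large. I would then cover the topological annulus $Q_{3R/2}\setminus Q_{5R/4}$ (whose width $R/4$ exceeds $R/10$) by a deterministic cyclic list of $O(1)$ subcubes of side length $R/10$, chosen so that consecutive subcubes share an overlap of comparable side length. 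Concatenating the crossings of consecutive subcubes through these overlaps and gluing them inside $\mathscr{C}_*(Q_{2R})$ produces a closed path in $\mathscr{C}_\infty\cap(Q_{3R/2}\setminus Q_R)$, and planarity ensures that this cyclically arranged path encloses $Q_R$ once, i.e., it is a loop in the sense of Section~\ref{subsec:notation}.

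With the single-scale estimate in hand, the series $\sum_{n\geq 1}C\exp(-cn^s)$ converges, so Borel--Cantelli produces an almost surely finite random $N_0\in\N$ (which one may take to be at least $3$) such that $\mathscr{C}_\infty\cap(Q_{3n/2}\setminus Q_n)$ contains a loop for every integer $n\geq N_0$. For arbitrary real $R\geq N_0$, the integer $n:=\lceil R\rceil$ satisfies $R\leq n\leq R+1\leq 4R/3$, hence $Q_R\subset Q_n$ and $Q_{3n/2}\subset Q_{2R}$, and any loop in $Q_{3n/2}\setminus Q_n$ sits inside $Q_{2R}\setminus Q_R$; setting $\mathcal{M}_{\mathrm{annuli}}:=N_0$ then finishes the proof. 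The main obstacle is the combinatorial construction of the loop at a single scale: one must pin down the positions of the covering subcubes and check that the splicing inside $\mathscr{C}_*(Q_{2R})$ does produce a single closed curve enclosing $Q_R$ rather than several disjoint pieces. With explicit positions (for instance, eight subcubes placed at the midpoints of the sides and corners of $\partial Q_{11R/8}$) this becomes routine bookkeeping using the connectivity built into well-connectedness and the Jordan curve theorem in the plane.
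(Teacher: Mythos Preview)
Your approach is correct but takes a different route from the paper. The paper's proof is a two-line appeal to Kesten's rectangle crossing estimate (Theorem~11.1 of \cite{kesten-book}): in supercritical percolation the probability that a $2R\times R$ rectangle contains an open left--right crossing is at least $1-Ce^{-cR}$; a union bound over four such rectangles arranged around the annulus $Q_{2R}\setminus Q_R$, together with planarity (any horizontal crossing of the top/bottom rectangles must intersect any vertical crossing of the left/right rectangles), immediately yields an open circuit around the origin. You instead build the circuit from the well-connectedness machinery of Proposition~\ref{prop:well-connected}, tiling a thin sub-annulus by $O(1)$ mesoscopic crossing cubes and splicing their crossings inside $\mathscr{C}_*(Q_{2R})$. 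This is a legitimate alternative and has the virtue of staying entirely within the paper's preliminary toolbox, but it is heavier: it only yields a stretched-exponential bound (exponent $s<1$) rather than the genuinely exponential one, and it requires the bookkeeping you flag (positioning the cubes, extracting a \emph{simple} loop from the concatenated closed walk, and checking $\mathscr{C}_*(Q_{2R})\subset\mathscr{C}_\infty$). Both arguments implicitly need one extra step to pass from ``open circuit'' to ``circuit in $\mathscr{C}_\infty$''; you handle this via $\mathscr{C}_*(Q_{2R})\subset\mathscr{C}_\infty$, while in the paper's version one observes that any infinite open path from a vertex of $\mathscr{C}_\infty\cap Q_R$ must meet the circuit by the Jordan curve theorem.
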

	
	\begin{remark}
		We only need to prove that the minimal scale $\mathcal{M}_{\mathrm{annuli} }$ is finite almost surely, but we remark that the proof below is quantitative and shows that its tail decays exponentially fast.
	\end{remark}
	
	\begin{proof}
		By, for example, \cite[Theorem 11.1 and (11.5)]{kesten-book}, there exists a constant $c(\mathfrak{p}) \in (0,1)$ such that 
		\[
		\P[\mbox{there exists an open left-to-right path in $[0 , 2R] \times [0 , R]$}] \geq 1- C e^{-c R}, \quad \mbox{for all $R \geq 1$}. 
		\]
		The claim (which is standard, see, \eg, \cite{grimmett-long-paths} for a similar argument) then follows by a union bound and planarity. 
	\end{proof}
	
	In the next lemma we use the following stronger version of the maximum principle which holds only when the Green's function of the graph is unbounded. 
	Note in the statement the important omission of the assumption that $f \leq M$ at infinity.  
	\begin{lemma} \label{lemma:twod-strong-max-principle}
		Suppose $f: \mathscr{C}_{\infty} \to \R$ is bounded and harmonic on an infinite subset $\Omega \subset \mathscr{C}_{\infty}$
		which is not the entire cluster, $\Omega \neq \mathscr{C}_{\infty}$. For every $M \in \R$, if $f \leq M$ on $\partial \Omega$, then $f \leq M$ on $\Omega$.
	\end{lemma}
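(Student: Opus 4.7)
My plan is to prove the lemma by a random walk argument, exploiting the fact that simple random walk on the two-dimensional supercritical percolation cluster is recurrent. This recurrence---which is precisely why the Green's function in $d=2$ must be defined by~\eqref{eq:green-function-2D} rather than by decay at infinity---follows from the invariance principle established in~\cite{berger-rw-percolation, mathieu-rw-percolation}. By contrast, in $d \geq 3$ the walk is transient and the statement genuinely fails: the Green's function $G(\cdot, y)$ is then bounded, harmonic on $\mathscr{C}_\infty \setminus \{y\}$, and vanishes at infinity, which yields counterexamples upon choosing $\Omega = \mathscr{C}_\infty \setminus \{y\}$. So the two-dimensional hypothesis enters essentially through this one probabilistic input.

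Given $x \in \Omega$, I would let $(X_n)_{n \geq 0}$ denote the simple random walk on $\mathscr{C}_\infty$ started at $x$ and set $\tau := \inf\{ n \geq 0 : X_n \notin \Omega\}$. The first step is to show $\tau < \infty$ almost surely. Since $\Omega \subsetneq \mathscr{C}_\infty$ we may pick some $y \in \mathscr{C}_\infty \setminus \Omega$, and recurrence of the walk on $\mathscr{C}_\infty$ guarantees that it reaches $y$ almost surely, so it must first leave $\Omega$ at some finite time. On $\{\tau < \infty\}$ the definition of the outer boundary forces $X_\tau \in \partial \Omega$.

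Next I would apply the optional stopping theorem. Since $f$ is harmonic on $\Omega$, the stopped process $(f(X_{n \wedge \tau}))_{n \geq 0}$ is a martingale, and since $f$ is bounded on $\mathscr{C}_\infty$ it is uniformly bounded. Dominated convergence then yields
\[
 f(x) = \lim_{n \to \infty} \E_x[ f(X_{n \wedge \tau}) ] = \E_x[ f(X_\tau) ] \leq M,
\]
where the last inequality uses $X_\tau \in \partial \Omega$ and the hypothesis $f \leq M$ on $\partial \Omega$. As $x \in \Omega$ was arbitrary, this completes the proof. The only nonstandard ingredient is recurrence of the random walk on the planar cluster; everything else is the classical optional-stopping proof of the maximum principle, so I do not anticipate any serious obstacle.
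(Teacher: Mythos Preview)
Your argument is correct. The key ingredients---recurrence of the simple random walk on the two-dimensional cluster, finiteness of the exit time $\tau$ from $\Omega$, and the bounded optional stopping theorem applied to the martingale $f(X_{n\wedge\tau})$---are all sound, and the conclusion $f(x)=\E_x[f(X_\tau)]\leq M$ follows cleanly.

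The paper takes a different, purely analytic route: it picks $a\in\partial\Omega$, sets $v:=G(\cdot,a)$, and considers the barrier $w_t:=f-M-tv$. Since in two dimensions $v(x)\to+\infty$ as $|x|\to\infty$ (this is the analytic counterpart of recurrence) and $f$ is bounded, $w_t$ is eventually negative at infinity; it is also $\leq 0$ on $\partial\Omega$, so the standard maximum principle gives $w_t\leq 0$ on $\Omega$, and sending $t\downarrow 0$ yields $f\leq M$. Both proofs rest on the same two-dimensional input---recurrence of the walk, equivalently divergence of the Green's function---but package it differently. Your probabilistic version is arguably more self-contained, since it uses recurrence directly and avoids any appeal to positivity or growth estimates for $G$; the paper's version stays within the analytic framework used elsewhere in the article and makes the role of the Green's function explicit.
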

	\begin{proof}
		Since $\Omega$ is not the entire cluster, there is a point $a \in \partial \Omega$ for which $f(a) \leq M$. 
		Let $v := G(\cdot, a)$, where $G$ is the Green's function for the cluster as defined in \eqref{eq:green-function-2D}. 
		For a parameter $t > 0$, let $w_t :=  f - M - t v$ and observe that $w_t$ is harmonic on $\Omega$ and, since $v$ is positive, 
		$w_t  \leq 0$ on $\partial \Omega$. By, for example, \cite{BH09}, for every $t > 0$, $\lim_{|x| \to \infty} t v(x) = \infty$. 
		In particular, since $f$ is bounded, for every $t > 0$, $w_t < 0$ at infinity. Hence, by the standard maximum principle, $w_t < 0$ on $\Omega$. 
		By taking $t \to 0$, this implies $f \leq M$ on $\Omega$. 
	\end{proof}

	\begin{lemma} \label{lemma:poles-sub-super-level-sets}
		For every $a \in \R$, each connected component of $\{x : u_f(x) > a \}$ must intersect a pole. 
	\end{lemma}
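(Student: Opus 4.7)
The plan is to argue by contradiction: assume there is a connected component $\mathcal{S}$ of the super-level set $\{x \in \mathscr{C}_\infty : u_f(x) > a\}$ which is disjoint from $\supp f$. Since $-\Delta_{\mathscr{C}_\infty} u_f = f$ on the whole cluster, the function $u_f$ is then harmonic on $\mathcal{S}$. By definition of a connected component of the super-level set, the boundary $\partial \mathcal{S}$ (taken inside $\mathscr{C}_\infty$) satisfies $u_f \leq a$ on $\partial \mathcal{S}$. Moreover, $u_f$ is bounded on $\mathscr{C}_\infty$ because $u_f$ decays to zero at infinity and is defined pointwise on the cluster (with any finite pole removed by the assumption $\mathcal{S} \cap \supp f = \emptyset$, so $u_f$ restricted to $\mathcal{S}$ is truly harmonic and bounded).

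First I would dispose of the trivial case $\mathcal{S} = \mathscr{C}_\infty$. Under the running assumption $|\supp u_f| = \infty$, we must have $\supp f \cap \mathscr{C}_\infty \neq \emptyset$ (otherwise $u_f \equiv 0$), and so $\mathscr{C}_\infty$ itself cannot be disjoint from $\supp f$; hence $\mathcal{S} \subsetneq \mathscr{C}_\infty$, which guarantees $\partial \mathcal{S} \neq \emptyset$ by connectivity of $\mathscr{C}_\infty$. Next I would split into two cases based on the cardinality of $\mathcal{S}$. If $\mathcal{S}$ is finite, then the standard discrete maximum principle applied to the harmonic function $u_f$ on $\mathcal{S}$ with boundary data bounded by $a$ immediately gives $u_f \leq a$ on $\mathcal{S}$, contradicting $u_f > a$ on $\mathcal{S}$.

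The delicate case is when $\mathcal{S}$ is infinite. Here the ordinary maximum principle is inadequate, since it would require control of $u_f$ ``at infinity on $\mathcal{S}$''. This is exactly where Lemma \ref{lemma:twod-strong-max-principle} comes in: because $d=2$ and the Green's function on the cluster is unbounded, any bounded function that is harmonic on an infinite proper subset $\Omega \subsetneq \mathscr{C}_\infty$ and bounded by $M$ on $\partial \Omega$ is automatically bounded by $M$ throughout $\Omega$. Applying this with $\Omega = \mathcal{S}$ and $M = a$ yields $u_f \leq a$ on $\mathcal{S}$, a contradiction.

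The main (and only) obstacle is really the infinite case, and this is resolved by the 2D-specific Lemma \ref{lemma:twod-strong-max-principle}; note that this same step would fail in $d \geq 3$ because bounded nonconstant harmonic functions on infinite proper subsets of the cluster can exist there, which is consistent with the fact that the conclusion of Theorem \ref{theorem:fast-decay} is only proved for $d = 2$ and is posed as an open problem in higher dimensions (Problem \ref{problem:all-dimensions}).
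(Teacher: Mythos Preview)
Your proof is correct and follows essentially the same strategy as the paper's: both reduce to a maximum-principle argument, invoking Lemma~\ref{lemma:twod-strong-max-principle} for the case that cannot be handled by the ordinary maximum principle. The only difference is the case split. The paper splits on the sign of $a$: for $a \geq 0$ the decay $u_f \to 0$ supplies the boundary condition at infinity, so the standard maximum principle suffices even on infinite components; for $a \leq 0$ one invokes Lemma~\ref{lemma:twod-strong-max-principle}. You instead split on whether $\mathcal{S}$ is finite or infinite, which is arguably more natural since it matches the hypotheses of Lemma~\ref{lemma:twod-strong-max-principle} directly, and you explicitly verify $\mathcal{S} \subsetneq \mathscr{C}_\infty$ (a hypothesis of that lemma which the paper leaves implicit under the running assumptions of the subsection).
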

	\begin{proof}
		If $a \geq 0$, this follows from the standard maximum principle, using the assumption that $u_f$ is zero at infinity. If $a \leq 0$, this 
		follows from Lemma \ref{lemma:twod-strong-max-principle} and the assumption that $u_f$ is bounded (as it decays to zero at infinity).  \end{proof}

	We use the infinite support assumption to construct arbitrarily long paths where the function $u_f$ is nonzero. 
	\begin{lemma} \label{lemma:long-nonzero-paths}
		Fix an integer $R > 0$ such that $\supp f \subseteq Q_R$, for every integer $R' > R$ and on the event that $\supp u_f$ is infinite, there exists a path in the infinite cluster $\mathscr{C}_\infty$ connecting $Q_R$ to $\mathscr{C}_\infty \setminus Q_{R'}$ along which $u_f \neq 0$. 
	\end{lemma}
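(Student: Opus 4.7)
The plan is to deduce this directly from Lemma~\ref{lemma:poles-sub-super-level-sets} combined with the infinite support hypothesis. The key observation is that we do not need to build the path by hand; the super-level-set structure of $u_f$ already connects every far-away point where $u_f \neq 0$ back to the support of $f$, which sits inside $Q_R$.

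First I would use the assumption $|\supp u_f| = \infty$ together with the finiteness of $Q_{R'}$ to pick a vertex $x_0 \in \mathscr{C}_\infty \setminus Q_{R'}$ with $u_f(x_0) \neq 0$. Up to replacing $u_f$ by $-u_f$ (which replaces $f$ by $-f$, leaving the pole set unchanged in $Q_R$), we may assume $u_f(x_0) > 0$. Now consider the connected component $\mathcal{C}$ of the set $\{ x \in \mathscr{C}_\infty : u_f(x) > 0 \}$ (viewed as an induced subgraph of $\mathscr{C}_\infty$) that contains $x_0$.

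By Lemma~\ref{lemma:poles-sub-super-level-sets} applied with $a = 0$, the component $\mathcal{C}$ must intersect a pole of $f$, i.e.\ there exists some $y_0 \in \mathcal{C}$ with $y_0 \in \supp f \cap \mathscr{C}_\infty \subseteq Q_R$. Since $\mathcal{C}$ is connected in $\mathscr{C}_\infty$ and contains both $y_0 \in Q_R$ and $x_0 \in \mathscr{C}_\infty \setminus Q_{R'}$, there is a path $\gamma \subseteq \mathcal{C}$ in $\mathscr{C}_\infty$ joining $y_0$ to $x_0$. By the very definition of $\mathcal{C}$, we have $u_f > 0$ on every vertex of $\gamma$, in particular $u_f \neq 0$ along $\gamma$, which is the path required by the lemma.

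There is essentially no obstacle here since the real work has been done in Lemma~\ref{lemma:poles-sub-super-level-sets} via the strong maximum principle on $\mathscr{C}_\infty$ (Lemma~\ref{lemma:twod-strong-max-principle}), which is the reason super-level-sets cannot ``float'' to infinity without touching a pole. The only minor point to check is the reduction to the sign $u_f(x_0) > 0$: flipping the sign of $u_f$ flips the sign of $f$, but $\supp f$ (and hence the inclusion $\supp f \cap \mathscr{C}_\infty \subseteq Q_R$) is unchanged, so Lemma~\ref{lemma:poles-sub-super-level-sets} applies identically to $-u_f$.
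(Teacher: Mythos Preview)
Your proof is correct and follows essentially the same route as the paper: pick a point far from the origin where $u_f$ is nonzero and use Lemma~\ref{lemma:poles-sub-super-level-sets} to obtain that the super-level-set component through that point reaches a pole inside $Q_R$. Your treatment is in fact slightly more careful than the paper's, since you explicitly handle the case $u_f(x_0)<0$ via the sign flip and you pick $x_0$ outside $Q_{R'}$ rather than merely outside $Q_R$.
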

	\begin{proof}
		As the support of $u_f$ is infinite, there is a point, $x \in \mathscr{C}_{\infty} \setminus Q_{R}$ such that $u_f(x) \neq 0$. 
		By Lemma \ref{lemma:poles-sub-super-level-sets}, the possibly infinite connected component of $\{ z : u_f(z) \geq u_f(x) \}$ must intersect a pole, \ie, the support of $f$. \end{proof}
	
	The rest of this section is devoted to the proof of Proposition~\ref{prop:infinite-disjoint-paths-limit}, and we first establish in Lemma~\ref{lemma:no-thick-paths} that the level set $L_0$ cannot contain a bi-infinite path $\gamma$ as well as the intersection of the infinite cluster with one of the two connected components of $\R^2 \setminus \gamma$. The proof relies crucially on planarity and is thus restricted to the two dimensional setting (while up to now all the techniques introduced were valid in any dimension $d \geq 2$).
	
	\begin{figure}
		\centering
		\includegraphics[width=0.5\textwidth]{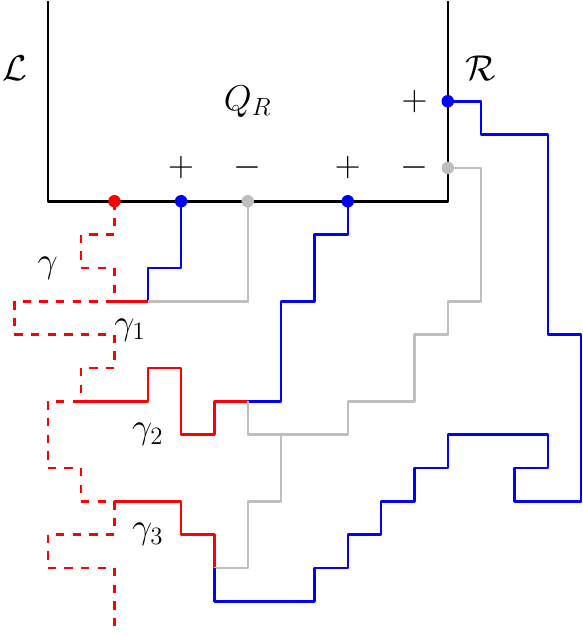}
		\caption{A description of what occurs in the proof of Lemma \ref{lemma:no-thick-paths}. The box $Q_R$ contains the support of $f$, the red lines are paths in the zero level set, the blue lines in the positive level set, and the gray lines in the negative level set.
			The dashed red line is $\gamma(1, \ldots)$ and the solid red lines are $\gamma_i$. }
		\label{fig:thick-paths}
	\end{figure}

	\begin{figure}
		\centering
		{\includegraphics[width=0.35 \textwidth]{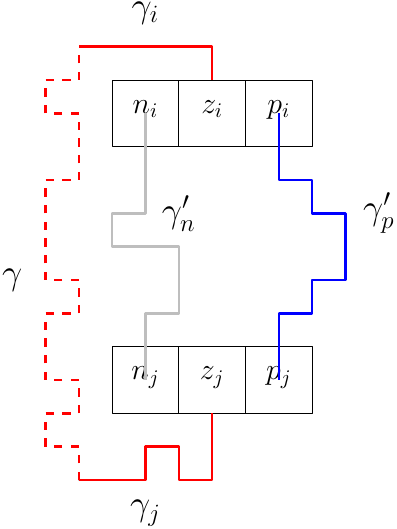}} 
		\caption{A visual description of the argument at the end of the proof of Lemma \ref{lemma:no-thick-paths}. The same color scheme as Figure \ref{fig:thick-paths} is used and only part of the bi-infinite path $\gamma$ is shown.}
		\label{fig:disjoint-loops-not-possible}
	\end{figure}

	\begin{figure}
		\centering
		{\includegraphics[scale=0.75]{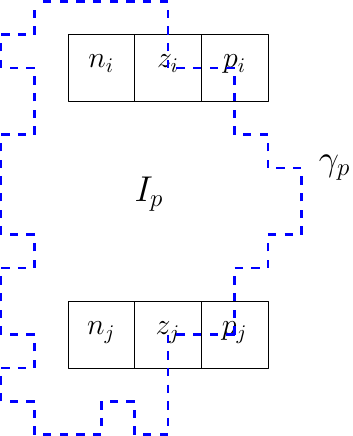}} 
		{\includegraphics[scale=0.75]{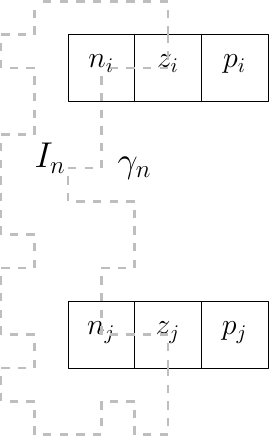}} 
		{\includegraphics[scale=0.75]{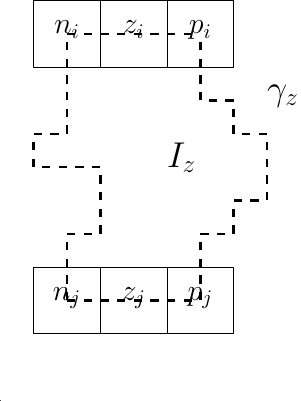}}
		\caption{These display the three loops corresponding to the paths in Figure \ref{fig:disjoint-loops-not-possible} as defined in equations \eqref{eq:p-loop} and \eqref{eq:z-loop}. }
		\label{fig:loops-labels}
	\end{figure}

	\begin{lemma} \label{lemma:no-thick-paths}
		Almost surely, the two following events do not occur simultaneously: 
		\begin{itemize}
			\item[(i)] The support of $u_f$ is infinite;
			\item[(ii)] The level set $L_0$ contains a bi-infinite path $\gamma$ and one of the connected components of $\mathscr{C}_{\infty} \cap (\R^2 \setminus \gamma)$ is included in $L_0$. 
		\end{itemize}
	\end{lemma}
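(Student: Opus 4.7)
The plan is to argue by contradiction: assume both (i) and (ii) hold. Let $\gamma$ be the bi-infinite path in $L_0$ given by (ii), and view it via the stereographic projection as a continuous loop on $\mathbb{S}^2$ through the north pole $N$. The Jordan curve theorem on the sphere (Lemma~\ref{lemma:jordan-curve-theorem-on-s2}) then yields two open connected components $D_+$ and $D_-$ of $\R^2 \setminus \gamma$; without loss of generality I take $\mathscr{C}_\infty \cap D_+$ to be the connected component contained in $L_0$, so that $u_f \equiv 0$ on $\mathscr{C}_\infty \cap D_+$ and on $\gamma$.

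The first step is a planarity observation: no cluster edge can join a vertex $x \in D_+$ to a vertex $y \in D_-$, because the open unit segment between adjacent lattice vertices contains no lattice vertex and $\gamma$ is a union of lattice edges meeting only at lattice endpoints, so $x$ and $y$ would have to lie in the same component of $\R^2 \setminus \gamma$. Hence every cluster neighbor of a vertex in $\mathscr{C}_\infty \cap D_+$ lies in $D_+ \cup \gamma \subseteq L_0$. In particular, if a pole $y \in \supp f \cap \mathscr{C}_\infty$ were to sit in $D_+$, then $u_f(y)$ and $u_f(z)$ for every cluster neighbor $z$ of $y$ would vanish, forcing $\Delta_{\mathscr{C}_\infty} u_f(y) = 0 \neq -f(y)$. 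Therefore $\supp f \cap \mathscr{C}_\infty \subseteq \gamma \cup (\mathscr{C}_\infty \cap D_-)$.

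If no cluster pole lies strictly inside $D_-$, the argument is immediate: $u_f$ is then harmonic on $\mathscr{C}_\infty \cap D_-$, its inner boundary lies in $\gamma$ (by the planarity observation) where $u_f = 0$, and $u_f \to 0$ at infinity; the maximum principle (invoking Lemma~\ref{lemma:twod-strong-max-principle} in two dimensions if needed) yields $u_f \equiv 0$ on $\mathscr{C}_\infty \cap D_-$, hence on all of $\mathscr{C}_\infty$, contradicting (i).

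The main obstacle is the remaining case, when at least one cluster pole sits strictly inside $D_-$. Here the plan is topological. Using the mean-zero condition $\sum_{y \in \mathscr{C}_\infty} f(y) = 0$, Lemma~\ref{lemma:poles-sub-super-level-sets}, and Lemma~\ref{lemma:long-nonzero-paths}, I would construct two infinite cluster paths $\gamma_+ \subseteq \{u_f > 0\}$ and $\gamma_- \subseteq \{u_f < 0\}$ inside $\mathscr{C}_\infty \cap D_-$, emanating from cluster poles of the corresponding signs and reaching infinity. Prefixing each with a short cluster arc in $D_-$ to a vertex of $\gamma$, one obtains two arcs on $\mathbb{S}^2$ starting at distinct points of the loop $\gamma$ and ending at $N$, as illustrated in Figures~\ref{fig:thick-paths}, \ref{fig:disjoint-loops-not-possible}, and \ref{fig:loops-labels}. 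Lemma~\ref{lemma:jordan-curves-theorem} then partitions $\mathbb{S}^2$ into at least two open regions sharing $N$ on their boundary, and tracking the sign of $u_f$ on the cluster vertices of these regions (together with $\mathscr{C}_\infty \cap D_+ \subseteq L_0$) produces the desired contradiction. The most delicate point will be the construction of $\gamma_+$ and $\gamma_-$ when the poles inside $D_-$ all carry one sign; in that event the mean-zero condition forces a pole of the opposite sign on $\gamma$, and the identity $\sum_{z \sim y,\, z \in D_-} u_f(z) = -f(y)$ at such a boundary pole produces a cluster neighbor inside $D_-$ with the correct sign from which the complementary path can be started.
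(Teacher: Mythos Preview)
Your opening observations --- the planarity claim that no cluster edge joins $D_+$ to $D_-$, the exclusion of poles from $D_+$, and the maximum-principle argument when no pole lies strictly inside $D_-$ --- are correct. The proposal breaks down in the main case, and the figures you cite in fact depict a different construction.

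The central gap is the existence of the infinite sign-definite paths $\gamma_\pm$. Neither Lemma~\ref{lemma:poles-sub-super-level-sets} nor Lemma~\ref{lemma:long-nonzero-paths} yields an infinite path in $\{u_f>0\}$: the former only says that each connected component of $\{u_f>a\}$ meets a pole, with no control on its diameter; the latter produces arbitrarily long paths along which $u_f\neq0$, but such a path may change sign at every step without ever touching $L_0$. There is no a priori reason for any component of $\{u_f>0\}$ to be unbounded, and the ``delicate point'' you isolate --- a boundary pole supplying a single neighbor of the correct sign --- does not help, since from that neighbor Lemma~\ref{lemma:poles-sub-super-level-sets} again only returns a finite path back to a pole. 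Even granting $\gamma_\pm$, your invocation of Lemma~\ref{lemma:jordan-curves-theorem} is off (the loop $\alpha$ there must satisfy $\alpha\cap\gamma_i=\{x_i\}$, whereas on the sphere your bi-infinite $\gamma$ passes through $N$, which also lies on each $\gamma_i$), and the closing sentence ``tracking the sign \ldots\ produces the desired contradiction'' is not yet an argument.

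The paper avoids infinite sign-definite paths altogether. It combines Lemma~\ref{lemma:long-nonzero-paths} with the annulus loops of Lemma~\ref{lemma:loops-around-annuli} to manufacture of order $|\partial Q_R|^2$ \emph{disjoint finite} paths $\gamma_i$ in $(\mathscr{C}_\infty\cap D_-)\setminus Q_R$, each starting on $\gamma$ and reaching a point where $u_f\neq0$. At the last zero vertex $z_i$ before $\gamma_i$ exits $L_0$, harmonicity gives neighbors $p_i,n_i$ with $u_f(p_i)>0>u_f(n_i)$, and Lemma~\ref{lemma:poles-sub-super-level-sets} forces the components $P_i,N_i$ of $\{u_f>0\},\{u_f<0\}$ in $\mathscr{C}_\infty\setminus Q_R$ containing them to reach $\partial Q_R$. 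Since there are only boundedly many such components, pigeonhole yields $i\neq j$ with $P_i=P_j$ and $N_i=N_j$. From these data one builds three explicit finite loops $\gamma_p,\gamma_n,\gamma_z$ (this is what Figures~\ref{fig:disjoint-loops-not-possible} and~\ref{fig:loops-labels} actually depict), shows that none of their Jordan interiors can contain $Q_R$ (using $u_f\equiv0$ on $D_+$), and obtains the contradiction from the observation that the bi-infinite $\gamma$ must avoid the interior $I_z$, forcing $I_z\subseteq\cl(I_p)$ or $I_z\subseteq\cl(I_n)$ and hence disconnecting $n_i$ or $p_i$ from its pole in $Q_R$.
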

	
	\begin{proof}
		Suppose for sake of contradiction that the support of $u_f$ is infinite and that the level set $L_0$ contains a bi-infinite path $\gamma$. Let us denote by $\mathcal{L}$ and $\mathcal{R}$ the intersection of the two connected components of $(\R^2 \setminus \gamma)$ with the infinite cluster $\mathscr{C}_\infty$, so that the bi-infinite path~$\gamma$ partitions the cluster as $\mathscr{C}_{\infty} = \gamma \cup \mathcal{L} \cup \mathcal{R}$ (note that with these definitions, we may have $\mathcal{L} = \emptyset$ or $\mathcal{R} = \emptyset$). Assume additionally for the sake of contradiction that $\mathcal{L} \subset L_0$. 
		
		Take $R$ sufficiently large so that $R \geq \mathcal{M}_{\mathrm{annuli} }$ and $Q_R$ contains the support of $f$. Suppose, without loss of generality that $\gamma(1) \in \partial Q_R$ and $\gamma(1, \ldots, \infty)$ does not intersect $Q_{R}$. 
		Apply Lemma \ref{lemma:long-nonzero-paths} with $R$ and $R'$ sufficiently large, depending on $R$,  to construct a long 
		path $\gamma'$ in $\mathscr{C}_{\infty} \setminus Q_R$ which passes through at least $C |\partial Q_R|^2$ disjoint annuli of the form, $Q_{2 m} \setminus Q_m$ for $m \geq R$. Thus, by Lemma \ref{lemma:loops-around-annuli} there are at least  $C |\partial Q_R|^2$ disjoint paths, $\gamma_i$, for which $\gamma_i(1) \in \gamma$, $u_f(\gamma_i(\mathrm{end})) \neq 0$, and $\gamma_i$ is disjoint from $\mathcal{L} \cup Q_R$.
		By the pigeonhole principle, we may assume that for each $i$,  $\gamma_i(1) \in \gamma(1, \ldots, \infty)$. 
		
		By examining the arrangement of nonzero neighbors around each $\gamma_i$, we find a connected component of either $\{x : u_f(x) > 0 \}$
		and $\{x : u_f(x) < 0 \}$ --- see Figure \ref{fig:thick-paths}.  By Lemma \ref{lemma:poles-sub-super-level-sets}, each connected component connects
		to the boundary of $Q_R$. Start by letting, for each path $\gamma_i$ the number $r_i$ to be the first index where $\gamma_i(r_i) \neq 0$.  Observe that since $u_f$ is harmonic at $\gamma_i(r_i-1)$,
		we must have at least one neighbor $p_i \sim \gamma_i(r_i-1)$ for which $u_f(p_i) > 0$ and another neighbor $n_i \sim \gamma_i(r_i-1)$ for which $u_f(n_i) < 0$. 
		Denote by $P_i$ the connected component of the set $\left\{u_f > 0 \right\} \cap (\mathscr{C}_{\infty} \setminus Q_R)$ containing the vertex $p_i$ and by $N_i$ the connected component of the set $\left\{u_f < 0 \right\} \cap (\mathscr{C}_{\infty} \setminus Q_R)$ containing the vertex $n_i$. 
		
		By our assumption on the number of such paths, Lemma \ref{lemma:poles-sub-super-level-sets}, and the pigeonhole principle, there are two paths $\gamma_i$ and $\gamma_{j}$ (with $i \neq j$) such that $P_i = P_{j}$ and $N_i = N_{j}$. Write $z_i := \gamma_i(r_i-1)$ (the first point on the path which lies in $L_0$). Let $\gamma_n'$ be a path in $N_i \cap N_{j}$ connecting $n_i$ and $n_{j}$,  $\gamma_p'$ a path in $P_i \cap P_{j}$ connecting  
		$p_i$ and $p_{j}$ and $\gamma_z'$ a path in $\{\gamma_i \cup \gamma_{j} \cup \gamma\} \setminus Q_R$ connecting $z_i$ and $z_{j}$. 
		See Figure \ref{fig:disjoint-loops-not-possible} for a visualisation of these loops. 
		
		We use these paths to form loops in $\mathcal{R} \setminus Q_R$, which we denote by $\gamma_p$, $\gamma_n$, and  $\gamma_z$ --- see Figure \ref{fig:loops-labels}: 
		\begin{equation} \label{eq:p-loop}
			\begin{aligned}
				\gamma_p(1) &= p_i \\
				\gamma_p(1, \ldots, k_p) &= \gamma_p'  \\
				\gamma_p(k_p) &= p_{j} \\
				\gamma_p(k_p+1) &= z_{j} \\
				\gamma_p(k_p+1, \ldots, \mathrm{end}) &= \gamma_z' \\
				\gamma_p(\mathrm{end}) = z_i ,
			\end{aligned}
		\end{equation}
		where $k_p = |\gamma_p'|$. We define $\gamma_n$ in the same way, replacing the occurrences of $p$ in the above equation by $n$. 
		We define the loop $\gamma_z$ by forming a rectangle with sides $\gamma_n'$ and $\gamma_p'$ and the paths connecting $n_i, z_i, p_i$ and $n_{j}, z_{j}, p_{j}$ respectively: 
		\begin{equation} \label{eq:z-loop}
			\begin{aligned}
				\gamma_z(1) &= p_i \\
				\gamma_z(1, \ldots, k_p) &= \gamma_p'  \\
				\gamma_z(k_p) &= p_{j} \\
				\gamma_z(k_p + 1) &= z_{j} \\
				\gamma_z(k_p + 2) &= n_{j} \\
				\gamma_z(k_p + 2, \ldots, k_p + 2 + k_n) &= \gamma_n' \\
				\gamma_z(k_p + 2 + k_n) &= n_i \\
				\gamma_z(k_p + 2 + k_n + 1) &= z_i   .
			\end{aligned}
		\end{equation}
		
		By the Jordan curve theorem, each such loop decomposes space into an inside (finite connected component) and outside (infinite connected component): $\gamma_{\bullet}$ into $(I_{\bullet}, O_{\bullet})$ respectively. 
		First note that the closure of each such finite connected component $I_{\bullet}$ cannot contain $Q_R$. Indeed, by construction the boundaries cannot contain $Q_R$. Also, 
		if $I_{\bullet}$ contains $Q_R$, then some $\gamma_{\bullet}$ must intersect $\mathcal{L}$. However, by definition $\gamma$ does not and $u_f$ is nonzero 
		on both $\gamma_n'$ and $\gamma_p'$, which cannot occur as $u_f \equiv 0$ on $\mathcal{L}$ by assumption.
		
		Since $\gamma$ is a bi-infinite path which partitions the space into at least one connected component on which $u_f$ is zero, $\gamma$ must be disjoint from $I_z$. 
		This implies that either $\cl(I_p)$ or $\cl(I_n)$ contain $\cl(I_z)$, which respectively, disconnects $n_i$ or $p_i$ from $Q_R$. (For example, in Figures \ref{fig:disjoint-loops-not-possible} and \ref{fig:loops-labels}, $n_i$ is disconnected from $Q_R$.)  This is a contradiction as, by, Lemma \ref{lemma:poles-sub-super-level-sets}, there must be a path from $n_i$ and $p_i$ to $\partial Q_R$.
		
		Therefore, either $P_i \neq P_{j}$ or $Q_i \neq Q_{j}$, which is a contradiction to our choice of paths, completing the proof.  \end{proof}
	
	We finally prove Proposition~\ref{prop:infinite-disjoint-paths-limit} building upon Lemma~\ref{lemma:no-thick-paths} (and once again planarity in the form of Jordan curve theorem on the sphere $\mathbb{S}^2$).
	
	\begin{proof}[Proof of Proposition~\ref{prop:infinite-disjoint-paths-limit}]

		Since we are in the event where  $\lim_{|x| \to \infty} u_f(x) = 0$, the only
		possibly infinite level set can be $L_0$. Further, since we have that assumed that the support of $u_f$ is infinite,  we have that $L_0 \neq \mathscr{C}_{\infty}$.
		
		Now suppose for sake of contradiction that, for $K := \left| \supp f \right|$, there are $(K+2)$ disjoint infinite paths in~$\mathcal{S}$. Denote these paths by $\gamma_1 , \ldots, \gamma_{K+2}$ and denote their initial points by $y_1 , \ldots, y_{K+2}$ (\ie, $\gamma_i(0) = y_i$). Since the set $\mathcal{S}$ is connected, for any $i \in \{1 , \ldots, K \}$, we may find a finite path $\tilde \gamma_i$ which connects the vertices $y_i$ and $y_{i+1}$ in $\mathcal{S}$. 
		
		We next consider the finite (discrete) connected components of $C_1 , \ldots, C_N$ of $\Z^2 \setminus \bigcup \tilde \gamma_i$ and then denote by $\mathscr{S}$ the interior of the set $(\bigcup \tilde \gamma_i \cup \bigcup C_i) + [- 1/2 , 1/2]^2 \subseteq \R^2$. This construction ensures that $\mathscr{S}$ is a bounded domain of $\R^2$ whose complement is connected, it is thus simply connected. Additionally, its boundary is simple (as it is composed of a finite union of straight lines). In particular, its boundary is homeomorphic to the circle $\S^1$, and is a Jordan curve which we denote by $\beta$.

		For the rest of the argument, we will work in the continuous space $\R^2$ instead of $\Z^2$ (in order to apply Jordan curve theorem). To this end, we extend the definitions of the paths $\gamma_i$ from the discrete to the continuum by a piece-wise linear interpolation.

		For each $i \in \{ 1 , \ldots, K+2\}$, we let $x_i$ be the last point in $\gamma_i$ which intersects $\mathscr{S}$. Specifically, these points can be defined by the identity (the supremum being finite since the paths $\gamma_i$ go to infinity and the set $\mathscr{S}$ is bounded)
		\begin{equation*}
			t_i := \sup \left\{ t \in [0 , \infty) \, : \, \gamma_i(t) \in \mathscr{S} \right\} ~~\mbox{and}~~ x_i := \gamma_i(t_i).   
		\end{equation*}
		By redefining $\gamma_i$, we may assume that $\gamma_i(0) = x_i$. Using the (inverse of the) stereographic projection, we may see the paths $\gamma_1, \ldots, \gamma_{K+2}$ as continuous functions defined in $[0 , 1]$ and valued in $\S^2$ such that $\gamma_i(0) = x_i$ and $\gamma_i(1) = N$. Similarly, we consider the set $\mathscr{S}$ as a subset of the sphere $\S^2$.
		
		We can thus apply Lemma \ref{lemma:jordan-curves-theorem} with the loop $\beta$ and the paths $\gamma_i$, and obtain that the set $\S^2 \setminus \{\beta \cup \bigcup \gamma_i\}$ has at least $(K+2)$ disjoint connected components, denoted by $C_1 , \ldots, C_{K+2}$ such that $N \in \partial C_i$. The pigeonhole principle implies that one of these components does not contain a pole of the function $u_f$. We may without loss of generality assume that it is the connected component $C_1$.

		To conclude the proof, we consider the image of the set $C_1$ by the stereographic projection (to see it as a subset of $\R^2$ instead of $\S^2$) and denote by $\mathcal{C}_1 := C_1 \cap \mathscr{C}_\infty$. The function $u_f$ is harmonic in the set $\mathcal{C}_1$, and the (discrete) outer boundary of $\mathcal{C}_1$ is included in the set $\mathcal{S}$ (since $\partial C_1 = \gamma_1 \cup \gamma_{K+1} \cup \beta_1$). The maximum principle and the observation that $u_f$ tends to $0$ at infinity imply that $u_f = 0$ on $\mathcal{C}_1$. 
		
		Combining all the observations above, we deduce that the level set $L_0$ contains a bi-infinite path $\gamma$ such that, if we denote by $C_1 , C_2$ the connected components of $\R^2 \setminus \gamma$, then the function $u_f$ is identically equal to $0$ on either $\mathscr{C}_\infty \cap C_1$ or $\mathscr{C}_\infty \cap C_2$. This behavior is ruled out by Lemma~\ref{lemma:no-thick-paths}.
	\end{proof}

	\subsubsection{Large biconnected components in the level set of the potential}
	\label{subsubsec:large-biconnected-components}
	
	In this section, we establish a second statement pertaining to the level set $u_f$. Specifically, we prove that any biconnected component included in the level set cannot be too large. The proof builds upon a similar mechanism as the one exploited in Section~\ref{subsubsec:unboundedlevelsets} (and uses a topological obstruction which crucially relies on planarity arguments and is thus restricted to the two dimensional setting).
	
	The argument is split into different steps: we first prove (similarly as in the proof of Proposition~\ref{prop:infinite-disjoint-paths-limit}) that any biconnected component cannot be connected to infinity by more than a deterministic number (depending only on $f$) of disjoint paths. We then use the notion of Kesten channels (following~\cite[Theorem 11.1]{kesten-book} and using the terminology of~\cite{isoperimetry-mathieu-remy}) to prove that the probability of a set to be connected to infinity by more than the aformentioned deterministic number of disjoint paths is (stretched) exponentially close to $1$ as the diameter of the set tends to infinity.
	
	\begin{figure}
		\centering
		\includegraphics[width=0.35\textwidth]{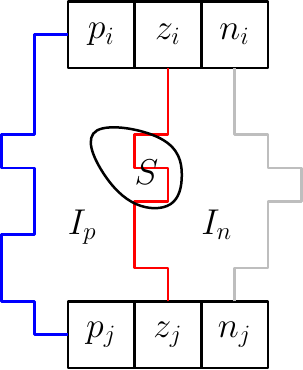} \qquad
		\includegraphics[width=0.35\textwidth]{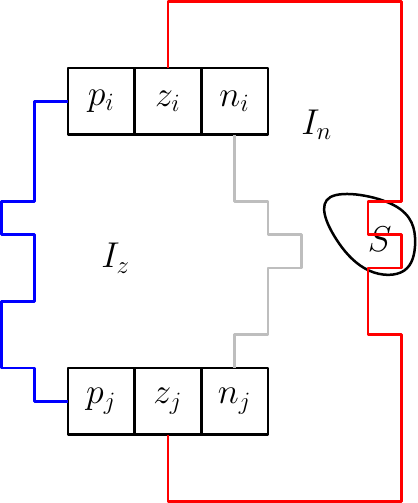}
		\caption{The two possible cases in the proof of Proposition~\ref{prop:boundary-size-of-set}.}
		\label{fig:tri-connected-components-disconnect-cases}
	\end{figure}

	\begin{prop} \label{prop:boundary-size-of-set}
		There exists an integer $C_f \in \mathbb{N}$ depending only $f$ such that the following properties hold:
		\begin{itemize}
			\item The support of the function $f$ is contained in the box $Q_{C_f/2}$.
			\item For every value $a \in \R$ and every biconnected set $S \subseteq L_a$ satisfying the property that the finite connected components of $\Z^2 \setminus S$ do not intersect the support of $f$, it holds that there are at most $C_f$ disjoint paths connecting $S$ to infinity.
		\end{itemize}
	\end{prop}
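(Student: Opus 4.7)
The plan is to argue by contradiction using planarity, a Jordan-curve decomposition of the sphere, and the two-dimensional strong maximum principle Lemma~\ref{lemma:twod-strong-max-principle}. Set $C_f$ to be a sufficiently large integer with $\supp f \subseteq Q_{C_f/2}$ and $C_f > 2(|\supp f|+2)$, and suppose for contradiction that for some $a \in \R$, some biconnected $S \subseteq L_a$ satisfying the hypothesis on finite components of $\Z^2 \setminus S$, there exist $K > C_f$ pairwise disjoint paths $\gamma_1, \ldots, \gamma_K$ in $\mathscr{C}_\infty$ from $S$ to infinity, with endpoints $x_i \in S$.

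First I would augment $S$ to a larger set $\tilde S$ by adding every finite connected component of $\Z^2 \setminus S$. Since no such component meets $\supp f$, and the boundary of each component in $\mathscr{C}_\infty$ lies in $S \subseteq L_a$, harmonicity together with the maximum principle gives $u_f \equiv a$ throughout $\tilde S \cap \mathscr{C}_\infty$. Moreover $\Z^2 \setminus \tilde S$ has only one (infinite) component, so the outer boundary of $\tilde S$ in the plane traces a simple closed curve.

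Next, using planarity of $\mathscr{C}_\infty$, I would order the $x_i$ cyclically along this outer boundary. Viewing everything on the sphere $\mathbb{S}^2$ via stereographic projection, with $N$ corresponding to infinity, I construct a topological loop $\alpha \subseteq \mathbb{S}^2$ visiting $x_1,\ldots,x_K$ in this cyclic order and satisfying $\alpha \cap \gamma_i = \{x_i\}$, drawn as a slight outer thickening of $\partial \tilde S$ together with small detours through each $x_i$. Lemma~\ref{lemma:jordan-curves-theorem} then produces $K$ connected regions $R_1,\ldots,R_K$ of $\mathbb{S}^2 \setminus (\alpha \cup \bigcup_i \gamma_i)$, each with $N$ in its closure, and $u_f$ is defined on the intersection of each region with $\mathscr{C}_\infty$.

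The decisive step is a case split matching the two geometric configurations illustrated in the figure. In the first case, $\supp f \subseteq \tilde S$, so $u_f$ is harmonic on all of $\mathscr{C}_\infty \setminus \tilde S$, equals $a$ on the inner boundary, and tends to $0$ at infinity; applying Lemma~\ref{lemma:twod-strong-max-principle} to both $u_f - a$ and $-(u_f-a)$ yields $u_f \equiv a$ on the exterior, which forces $a=0$ by decay at infinity, and hence $u_f \equiv 0$ on $\mathscr{C}_\infty$, contradicting $|\supp u_f| = \infty$. In the second case, some poles lie in the infinite component of $\Z^2 \setminus S$; since $K > 2(|\supp f|+2)$, a refined pigeonhole argument yields two non-adjacent pole-free wedges $R_j, R_{j'}$ whose configuration, together with the arc of $\alpha$ on $\tilde S$ where $u_f \equiv a$, reproduces the hypotheses of Lemma~\ref{lemma:no-thick-paths} via a bi-infinite path in the cluster with one side forced into a level set of $u_f$, giving the desired contradiction in the same spirit as the proof of Proposition~\ref{prop:infinite-disjoint-paths-limit}.

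The main obstacle I anticipate is executing the second case cleanly: unlike in Proposition~\ref{prop:infinite-disjoint-paths-limit}, the paths $\gamma_i$ are not assumed to lie in any level set of $u_f$, so a direct maximum-principle argument on a single wedge does not close. The resolution is to pigeonhole over sufficiently many regions to locate an adjacent pair of pole-free wedges sharing a path $\gamma_i$, across which $u_f$ is harmonic at every vertex (since neither the path nor the two wedges contains a pole); this lets the harmonic region be propagated across the separating paths until enough of $\mathscr{C}_\infty \setminus \tilde S$ is covered to invoke Lemma~\ref{lemma:twod-strong-max-principle} and reduce to the first case.
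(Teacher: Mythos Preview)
Your Case 2 does not close, and the gap is essential rather than cosmetic. The paths $\gamma_i$ are arbitrary paths in $\mathscr{C}_\infty$, not paths in $L_a$, so neither of your proposed mechanisms applies. Invoking Lemma~\ref{lemma:no-thick-paths} requires a bi-infinite path contained in the zero level set, but any bi-infinite path you assemble from two $\gamma_i$'s and an arc in $\tilde S$ fails this on the $\gamma_i$ portions. Your alternative, propagating the harmonic region across pole-free separating paths, also fails: merging a run of consecutive pole-free wedges and their shared paths produces a region whose boundary still contains the two outermost $\gamma_i$'s bordering pole-containing wedges, and on those paths you have no information about $u_f$. You can only absorb \emph{all} $\gamma_i$'s if every wedge is pole-free, which is precisely what Case 2 excludes. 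So the strong maximum principle gives you nothing here, and the argument stalls. Relatedly, you never use biconnectedness of $S$ beyond connectedness, and your constant $C_f > 2(|\supp f|+2)$ is far too small for any argument along the paper's lines.

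The paper's proof proceeds by a different mechanism that you are missing. Rather than work with the topology of the $\gamma_i$ directly, it follows each path until it \emph{first exits} $L_a$: the last level-set point $z_i$ has a neighbor $p_i$ with $u_f(p_i) > a$ and, by harmonicity at $z_i$, another neighbor $n_i$ with $u_f(n_i) < a$. By Lemma~\ref{lemma:poles-sub-super-level-sets}, the super- and sub-level components $P_i \ni p_i$ and $N_i \ni n_i$ each connect to $\partial Q_R$; since $|\partial Q_R| \sim R$, with $C_f \sim R^3$ paths a pigeonhole gives \emph{three} indices $i,j,k$ with $P_i = P_j = P_k$ and $N_i = N_j = N_k$. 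One then forms Jordan loops $\gamma_p, \gamma_n, \gamma_z$ from these components and analyzes the two possible nesting configurations of their interiors. In each case, the biconnectedness of $S$ is used in an essential way (two disjoint paths from $z_k$ into $S$ avoiding $z_i$ or $z_j$), forcing a loop in $L_a$ that traps a pole, contradicting the hypothesis on finite components of $\Z^2 \setminus S$. The role of the level-set exit points and the super/sub-level components is exactly what replaces the boundary control you were unable to obtain.
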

	
	\begin{proof}
		Let $R \geq 20$ be a constant (depending only on $f$), so that $f = 0$ outside of $Q_{R/2}$ and let $C_f = C R^3$. 
		Suppose for sake of contradiction that there is a biconnected subset of $L_a$ violating the statement of the lemma
		and denote it by $S$. 
		
		Note that, by Lemma \ref{prop:infinite-disjoint-paths-limit} (and reducing the number of paths by a constant) we can assume that every path $\gamma_i$ eventually exits $L_a$ and that none of the paths intersect the support of $f$. 
		Let $p_i$ be the first point on each path $\gamma_i$ for which $u_f(\gamma_i(r_i)) = u_f(p_i) \neq a$ and assume that $u_f(p_i) > a$. Relabel the paths 
		so that $\gamma_i(1) = \gamma_i(r_i-1) \in L_a$ and $\gamma_i(2) = \gamma_i(r_i)$.   Write $z_i = \gamma_i(1)$, and note that since $u_f$ is harmonic at each $z_i$ there is a neighbor $n_i \sim z_i$ for which $u_f(n_i) < a$.
		
		We show, using an argument similar to the proof of Lemma \ref{lemma:no-thick-paths} that since there are too many paths, there must be a topological contradiction. 
		Write $P_i$ and $N_i$ for the connected components of the sets $ \left\{u_f > a  \right\} \cap (\mathscr{C}_{\infty} \setminus Q_R)$ and $ \left\{u_f < a  \right\} \cap (\mathscr{C}_{\infty} \setminus Q_R)$ containing the vertices $p_i$ and $n_i$ respectively. 
		
		By the pigeonhole principle, there are three distinct indices $i, j, k$ such that $N_i = N_{j} = N_k$ and $P_i = P_{j} = P_k$ and let $\gamma_p'$, $\gamma_n'$ denote paths in $P_i$ and $N_i$ which connect $p_{i}$ to $p_{j}$ and $n_{i}$ to $n_{j}$ respectively. 
		Let $\gamma_z'$ denote a path connecting $z_{i}$ to $z_{j}$ in $L_a$ through $\gamma_i$, $\gamma_{j}$ and $S$ which does not contain $z_k$ (this can be achieved since $S$ is biconnected). Define the loops $\gamma_z, \gamma_n, \gamma_p$
		and the finite connected sets $I_z, I_n, I_p$ as in the end the proof of Lemma \ref{lemma:no-thick-paths}. 
		
		After a symmetry reduction, there are two possible cases for the topology of the sets --- see Figure \ref{fig:tri-connected-components-disconnect-cases} ---
		which are both incompatible with the fact that the sets $N_i = N_j = N_k$ and $P_i = P_j = P_k$. 
		We refer  to the case on the left in Figure \ref{fig:tri-connected-components-disconnect-cases} as Case 1, the case where $I_p \cup I_n \subset I_z$ and the case on the right, the case where $I_z \cup I_n \subset I_p$, as Case 2. By definition, we must have that $z_k$ is either contained in $I_p$, $I_n$, $I_z$ or their complement, \ie, it cannot be on the boundaries. 
		\medskip
		
		{\it Case 1.} \\
		If $z_k$ is in $I_p$, then $n_k$ is disconnected from $n_j$. Symmetrically, $z_k$ cannot be in $I_n$. Thus, $z_k$ is in the complement of $I_p \cup I_n$.
		However, since the set $S$ is biconnected, there must be a path from $z_k$ to $S \cap \gamma_z'$ which does not use $z_i$ or $z_j$. If there exists a path using neither $z_i$ nor $z_j$, then it must cross either the set $I_p$ or the set $I_n$ which is a contradiction. If there are two disjoint paths, one using $z_i$ and one using $z_j$, we may then generate a loop in the level set $L_a$ containing either $I_n$ or $I_p$ which cannot happen by the assumption that the finite connected components of $\Z^2 \setminus S$ do not intersect the support of $f$. So Case 1 cannot occur. 
		\medskip
		
		{\it Case 2.} \\
		If $z_k$ is in $I_z$, then as there are two disjoint paths connecting $z_k$ to $S$ there must be one path going through $z_i$ and one going through $z_j$, which allows to generate a loop and to conclude as in Case 1. Also, as in Case 1, $z_k$ cannot be in $I_n$. 
		However, if $n_k \in I_p^c$, then it is disconnected from $n_j$. \end{proof}
	
	Let us remark that Proposition~\ref{prop:boundary-size-of-set} relies on a deterministic statement, and its proof only relies on the harmonicity of the function $u_f$ and planarity. The next step of the proof is to combine it with tools of percolation to rule out with high probability the existence of large biconnected components on the level sets of the function $u_f$. Specifically, for each vertex $x \subseteq \Zd$, we introduce the following event
	\begin{multline} \label{eq:def.eventE}
		E_R \left( x \right) := \left\{ \mbox{For each finite connected subset } S \subseteq \mathscr{C}_\infty \mbox{ with } \diam S \geq R \mbox{ and } x \in S , \right. \\
		\left. \mbox{there exist at least } (C_f + 1) \mbox{ disjoint paths connecting } S \mbox{ to infinity} \right\}.
	\end{multline}
	We remark that, with this definition, the event $ E_R \left( x \right)$ is satisfied if $x$ does not belong to the infinite cluster $\mathscr{C}_\infty$.
	
	\begin{prop} \label{prop:disjoint-paths-to-infinty-exponential}
		There exist two constants $C(\mathfrak{p},f) < \infty$ and $c(\mathfrak{p},f) > 0$ and a universal exponent $s > 0$ such that, for any $x \in \Z^2$ and any $R \geq 1$,
		\begin{equation} \label{eq:prop:disjoint-paths-to-infinty-exponential}
			\P \left(  E_R \left( x \right) \right) \geq 1 - C \exp \left( - c R^{s} \right).
		\end{equation}
	\end{prop}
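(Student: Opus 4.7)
The plan is to use max-flow--min-cut duality together with planar duality for Bernoulli bond percolation on $\Z^2$ to reduce $E_R(x)^c$ to the existence of a long almost-open dual contour surrounding $x$, and then to invoke exponential decay of connectivities in the subcritical dual percolation (at parameter $1-\mathfrak{p} < 1/2$).

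First I would unpack $E_R(x)^c$ via Menger's theorem: it is equivalent to the existence of a finite connected set $S \subseteq \mathscr{C}_\infty$ with $x \in S$ and $\diam S \geq R$, together with an edge set $E' \subseteq E(\mathscr{C}_\infty)$ with $|E'| \leq C_f$ (up to a factor of $2d$ to pass between edge- and vertex-disjoint versions) whose removal disconnects $S$ from infinity in $\mathscr{C}_\infty$. In the modified configuration $\tilde{\mathbf{a}}$ obtained by closing every edge of $E'$, the set $S$ lies in a finite primal cluster, and by the standard planar duality on $\Z^2$ this cluster is enclosed by a closed dual contour $\gamma^*$ in the dual lattice $\Z^2 + (\tfrac12,\tfrac12)$, whose edges are duals of primal edges closed in $\tilde{\mathbf{a}}$. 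At most $|E'| \leq C_f$ edges of $\gamma^*$ come from $E'$; the remaining edges are duals of primal edges already closed in $\mathbf{a}$, and hence open bonds of the subcritical dual percolation at parameter $1-\mathfrak{p}$.

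Next I would localize $\gamma^*$ around $x$. Since $\gamma^*$ encloses a set of diameter at least $R$, we have $|\gamma^*| \geq 2R$; and since any closed curve has diameter at most half its length and $x$ lies in the interior of $\gamma^*$, a line-through-$x$ argument gives $\gamma^* \subseteq B_{|\gamma^*|/2}(x)$. Deleting the at most $C_f$ edges of $E'$ from the cycle $\gamma^*$ splits it into at most $C_f$ arcs of dual-open edges, the longest of which has length at least $|\gamma^*|/(2C_f)$; this arc is a connected open path in the subcritical dual percolation with one endpoint $y^* \in B_{|\gamma^*|/2}(x)$.

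Finally, by exponential decay of connectivities in subcritical Bernoulli percolation (Menshikov, Aizenman--Barsky), the probability that a given dual vertex $y^*$ is the endpoint of a dual-open path of length at least $\ell$ is bounded by $C e^{-c\ell}$. A union bound over starting vertices $y^* \in B_L(x)$ and over dyadic scales $|\gamma^*| \in [L, 2L]$ with $L \geq R$ yields
\begin{equation*}
\P\!\left(E_R(x)^c\right) \;\leq\; \sum_{k \geq 0} C \, (2^k R)^2 \exp\!\left(-c \cdot 2^k R/(4C_f)\right) \;\leq\; C \exp(-c' R),
\end{equation*}
giving the claim (in fact with exponent $s = 1$). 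The main point requiring care is the planar-duality step in the perturbed environment $\tilde{\mathbf{a}}$, along with the constant-factor adjustment to translate between edge- and vertex-disjoint paths if the latter is the convention used in Proposition~\ref{prop:boundary-size-of-set}; everything else is a standard application of subcritical exponential decay.
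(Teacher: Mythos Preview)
Your argument is correct and takes a genuinely different route from the paper's. The paper proves Proposition~\ref{prop:disjoint-paths-to-infinty-exponential} via a static renormalization: it declares boxes of side length $R$ to be ``very good'' if they are well-connected and contain Kesten channels (order $R^{1/2}$ disjoint crossings), shows via Liggett--Schonmann--Stacey that the resulting $1$-dependent site process on the coarse lattice dominates a highly supercritical Bernoulli percolation, and then argues that if the box containing $x$ lies in the coarse infinite cluster, the Kesten channels supply more than $C_f$ disjoint paths from any large set through $x$ to infinity. This yields only a stretched exponent $s<1$ (the paper's own remark notes this is not optimal), but it builds the renormalized-lattice infrastructure that is reused in the very next result, Proposition~\ref{prop:longpathsupercritrenormlatt}, where Kesten channels on the coarse lattice control how often long paths meet the good set. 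Your duality argument is shorter, sharper (it gives $s=1$), and entirely natural in two dimensions; the trade-off is that it does not produce the coarse-graining apparatus the paper later relies on, so adopting it would still require setting up the renormalization separately for Proposition~\ref{prop:longpathsupercritrenormlatt}.

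One small point worth tightening: to get $s=1$ from the union bound $\sum_k (2^kR)^2 \exp(-c\,2^kR/C_f)$, you need the exponent to scale with the \emph{length} of the dual-open arc, i.e.\ exponential decay of the cluster \emph{volume} in subcritical percolation (Aizenman--Newman / Kesten, e.g.\ Grimmett, Theorem~6.75). Menshikov and Aizenman--Barsky give radius decay, which via the crude bound $\mathrm{rad}\gtrsim(\text{length})^{1/2}$ would only yield $s=1/2$; this still suffices for the proposition, but your claimed $s=1$ needs the volume version. Also, the passage from ``$\le C_f$ disjoint paths to infinity'' to ``edge cut of size $\le C_f$'' uses the locally-finite/compactness form of Menger (or equivalently max-flow--min-cut with finite min cut), which is standard but worth citing explicitly.
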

	
	\begin{remark}
		An explicit value for the stochastic integrability exponent $s > 0$ could be obtained from the proof, and we believe that the argument could be modified so as to optimize this parameter. The exponent is not equal to $1$ due to the definition of well-connected boxes which for instance includes that mesoscopic boxes of size $N^{1/(400)}$ (with $400 = (10 d)^2$ in two dimensions) are crossing. Nevertheless, writing the result in the form of~\eqref{eq:prop:disjoint-paths-to-infinty-exponential} simplifies the proof and is sufficient for our purposes.
	\end{remark}

	The proof of Proposition~\ref{prop:disjoint-paths-to-infinty-exponential} relies on a auxiliary percolation process on the renormalized lattice defined as follows. 
	We first introduce a definition and say a box $(x + Q_R)$ is called {\it very good} if the following occurs:
	\begin{itemize}        
		\item The box $(x + Q_R)$ is well-connected, as in Proposition \ref{prop:well-connected}.
		\item The box $(x + Q_R)$ contains Kesten channels (following the terminology from \cite{isoperimetry-mathieu-remy}): there is a partition of $(x  + Q_R)$ into $R^{1/2}$ disjoint horizontal rectangles (channels) and disjoint vertical channels (which may intersect the horizontal channels) of short side length $R^{1/2}$ and long side length $R$ such that each channel contains an open path which connects the faces of the rectangles. (See Figure \ref{fig:horizontal-kesten-channels}.)
		\item The box $(x + Q_R)$ contains horizontal Kesten channels (following the terminology from \cite{isoperimetry-mathieu-remy}): there is a partition of $(x  + Q_R)$ into $R^{1/2}$ disjoint horizontal rectangles (channels) of short side length $R^{1/2}$ and long side length $R$ such that each channel contains an open path which connects the faces of the rectangles. (See Figure \ref{fig:horizontal-kesten-channels}.)
		\item The box $(x + Q_R)$ contains vertical Kesten channels (using the same definition as above but replacing the word horizontal by vertical)
	\end{itemize}
	By Proposition~\ref{prop:well-connected} and~\cite[Theorem 11.1]{kesten-book}, a cube is very good with stretched exponentially high probability in its side length, and one has the lower bound
	\begin{equation} \label{eq:verygoodstretchedexpclose1}
		\P \left[ (x + Q_R) \mbox{ is very good} \right] \geq 1 - C \exp \left( - c R^{s} \right).
	\end{equation}
	
	We then consider the site percolation process where the sites are the boxes of the form $(x + Q_R)$ with $x \in (2R + 1) \Z^2$, and two sites $(x + Q_R)$ and $(y + Q_R)$ are neighbors if $|x - y| = 2R + 1$. A site $(x + Q_R)$ is declared open if the box $(x + Q_{11R/10})$ is very good (note that we slightly enlarge the size of the box here, this is to ensure that any set $S$ as in~\eqref{eq:def.eventE} satisfies $\diam (S \cap (x + Q_{11R/10})) \geq c R$). We call this percolation process the \emph{renormalized percolation process of size $R$}, and collect below two of its properties:
	\begin{itemize}
		\item The renormalized percolation process is $1$-dependent: for any $x , y \in (2R + 1) \Z^2$ with $|x - y| > 2R + 1$, the events 
		\[
		\{ (x + Q_{11R/10}) \quad \mbox{is very good} \} \qquad \mbox{and} \qquad     \{ (y + Q_{11R/10}) \quad \mbox{is very good} \}
		\]
		are independent (the nearest neighbor dependency is both due to the definition of well-connectedness and the fact that the boxes $(x + Q_{11R/10})$ for $x \in (2R+1)\Z^2$ have some overlap).
		\item If we consider two very good boxes $(x + Q_{11R/10})$ and $(y + Q_{11R/10})$ with $|x - y| = 2R +1$, then the horizontal and vertical Kesten channels of these boxes are connected.
		Indeed, by well-connectedness, each Kesten channel in~$(x+Q_{11R/10})$ is connected within~$(x+Q_{11R/10})$ to~$\mathscr{C}_{*}(x+Q_{11R/10})$. Also, as each Kesten channel in~$(y + Q_{11R/10})$ with an endpoint in $(x+ Q_{11R/10})$ has length at least~$\frac{R}{10}$ in~$(x+ Q_{11R/10})$ it is also connected to~$\mathscr{C}_{*}(x+Q_{11R/10})$. 
		\item If we consider two very good boxes $(x + Q_{11R/10})$ and $(y + Q_{11R/10})$ with $|x - y| = 2R +1$, then the matching horizontal Kesten channels of these boxes are disjointly connected if the two boxes are \emph{horizontal} neighbors (by this, we mean that the horizontal channels in the highest horizontal rectangles are connected, the horizontal channels in the second highest horizontal rectangles are connected etc. and that these connections happen disjointly). This is a consequence of the following observation: for $R$ sufficiently large, the intersection of the boxes $(x + Q_{11R/10})$ and $(y + Q_{11R/10})$ (which is a vertical rectangle of short side length $R/5$ and of long side length $11R/10$) contains one of the vertical channels of the box $(x + Q_{11R/10})$. This channel is, by the definition of a very good box, crossed by an open path which, by the planarity of the two dimensional lattice, connects all the horizontal channels of the boxes $(x + Q_{11R/10})$ and $(y + Q_{11R/10})$ together (N.B. with this construction, the paths connecting two distinct pairs of matching horizontal channels can be chosen so that they do not intersect, i.e., the connections happen disjointly).
		
		Similarly, the matching vertical Kesten channels are connected if the two boxes are \emph{vertical} neighbors (see Figure~\ref{fig:kesten-channels-infinite}).
	\end{itemize}
	
	\begin{figure}
		\centering
		\includegraphics{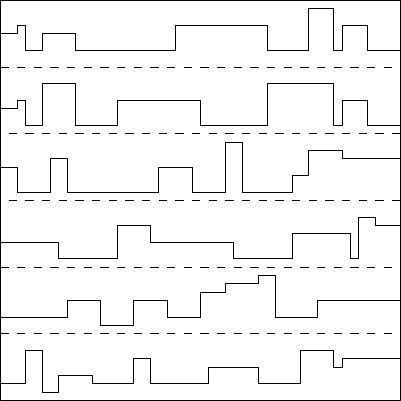}
		\caption{Horizontal Kesten channels in a very good box.}
		\label{fig:horizontal-kesten-channels}
	\end{figure}
	
	\begin{figure}
		\centering
		\includegraphics[width=0.5\textwidth]{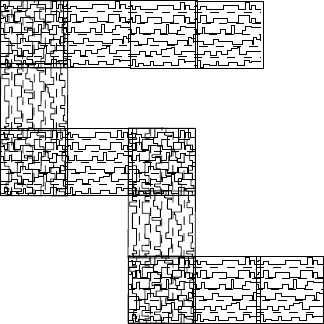}
		\caption{Part of an infinite path in the coarsened grid. We use the fact that each infinite path in the coarsened grid corresponds to a large number of disjoint paths 
			to infinity in the microscopic cluster $\mathscr{C}_{\infty}$.}
		\label{fig:kesten-channels-infinite}
	\end{figure}
	
	The lemma below, which is essentially a consequence of the main result of~\cite{domination-by-product-measures} shows that if $R$ is chosen sufficiently large, then the renormalized percolation process stochastically dominates a (very) supercritical i.i.d. Bernoulli site percolation on $\Z^2$. In particular, this implies that it contains a unique infinite cluster which we denote by $\mathcal{C}_{\infty}$ (the uniqueness of the infinite cluster is not a monotone property, and thus does not follow immediately from the stochastic domination, but the existence of circuits in annuli as in Lemma~\ref{lemma:loops-around-annuli} is monotone and, combined with planarity, guarantees the uniqueness of the infinite cluster).

	\begin{lemma} \label{lemma:renormalized-domination}
		There exist universal constants $C < \infty$ and $c > 0$ and an exponent $s > 0$ so that for any $R \geq 1$, the renormalized percolation process of size $R$ stochastically dominates a site percolation process with probability $p := 1 - C \exp \left( - c R^s \right)$ on the renormalized lattice.
	\end{lemma}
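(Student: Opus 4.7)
The plan is to apply the Liggett--Schonmann--Stacey domination theorem from~\cite{domination-by-product-measures} together with the two properties of the renormalized process listed immediately before the lemma statement: namely, that it is $1$-dependent and that each site is declared open with probability at least $1 - C \exp(-c R^s)$ by~\eqref{eq:verygoodstretchedexpclose1}.

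First, I would recall the precise form of the domination theorem: there exists a universal function $\pi : [0, 1] \to [0, 1]$, nondecreasing and continuous, with $\pi(q) \to 1$ as $q \to 1$, such that any $1$-dependent site percolation on $\Z^2$ whose marginal probability of being open is at least $q$ stochastically dominates a Bernoulli site percolation of parameter $\pi(q)$. (The existence of such a $\pi$ and the fact that $\pi(q) \to 1$ as $q \to 1$ is precisely what is established in~\cite{domination-by-product-measures}.) I would quote this as a black box.

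Next, let $q_R := 1 - C_0 \exp(-c_0 R^{s_0})$ denote the lower bound on the marginal probability provided by~\eqref{eq:verygoodstretchedexpclose1} (with $C_0, c_0, s_0$ the specific constants from that bound). Since the renormalized percolation process is $1$-dependent and has marginal probability at least $q_R$ at every site, the domination theorem gives stochastic domination of an i.i.d.\ Bernoulli site percolation of parameter $\pi(q_R)$. The final step is to verify that $\pi(q_R) \geq 1 - C \exp(-c R^s)$ for some (possibly smaller) universal constants $C$, $c$, $s$. This follows from a quantitative form of $\pi(q) \to 1$ as $q \to 1$: the argument in~\cite{domination-by-product-measures} produces an explicit bound of the form $1 - \pi(q) \leq C_1 (1 - q)^{\beta}$ for constants $C_1, \beta > 0$, which applied to $q = q_R$ yields the stated stretched-exponential tail with a new exponent $s = \beta s_0$.

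The main (mild) subtlety is that the domination theorem's explicit quantitative bound only gives polynomial dependence on $1 - q$, not exponential; but since $1 - q_R$ is itself stretched-exponentially small in $R$, any polynomial in $1 - q_R$ remains stretched-exponentially small in $R$, just with a worse exponent $s$. This is exactly the reason the statement of the lemma allows a universal exponent $s$ rather than asserting $s = s_0$. Everything else is a direct invocation of already established results and no further percolation estimates are needed.
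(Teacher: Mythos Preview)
Your proposal is correct and follows the same approach as the paper: both invoke the Liggett--Schonmann--Stacey domination theorem for $1$-dependent fields together with the marginal bound~\eqref{eq:verygoodstretchedexpclose1}. The paper's proof is a one-sentence citation of~\cite[Theorem 1.3]{domination-by-product-measures}, whereas you additionally spell out why the dominated Bernoulli parameter inherits the stretched-exponential tail (via the polynomial dependence of $1-\pi(q)$ on $1-q$), a point the paper leaves implicit.
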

	\begin{proof} 
		The lemma is a direct consequence of~\cite[Theorem 1.3]{domination-by-product-measures} (with the value $\Delta = 4$ which is the degree of the lattice $\Z^2$), the inequality~\eqref{eq:verygoodstretchedexpclose1}, and the observation that the renormalized percolation process is $1$-dependent (in particular, it satisfies the assumption~\cite[(1.0)]{domination-by-product-measures} with $p = \P \left[ Q_R \mbox{ is very good} \right]$).
	\end{proof}
	
	We have now collected all the preliminary results and ingredients necessary to establish Proposition~\ref{prop:disjoint-paths-to-infinty-exponential}.
	
	\begin{proof}[Proof of Proposition~\ref{prop:disjoint-paths-to-infinty-exponential}]
		Since the constants $C$ and $c$ can be chosen depending on $f$, it is sufficient prove the result under the assumption the $R \geq R_0 \vee (10 (C_f + 1))^2$, where $R_0$ is chosen universally so that it satisfies two conditions: first $\sqrt{R_0} \leq R_0/10$ and second, using Lemma~\ref{lemma:renormalized-domination}, for any $R \geq R_0$, the renormalized site percolation process of size $R$ stochastically dominates a supercritical i.i.d. Bernoulli site percolation on $\mathbb{Z}^2$ (with, for instance, parameter $p = 3/4$ which is supercritical for two dimensional site percolation). 
		
		We next consider the renormalized percolation process of size $R$, and denote its infinite cluster by $\mathcal{C}_\infty$. We next claim that the following inclusion of events holds
		\begin{equation} \label{eq:implication-renormalized-cluster}
			(x + Q_R) \in \mathcal{C}_\infty \implies \, \forall y \in (x + Q_R), \mbox{ the event } E \left( y \right) \mbox{ holds}.
		\end{equation}
		To prove the previous implication, we observe that by definition of Kesten channels, the faces of adjacent very good cubes are connected by at least $R^{1/2}$ disjoint paths --- see Figure~\ref{fig:kesten-channels-infinite}. Moreover, adjacent Kesten channels are connected to each other. In the case when there is a corner, we use planarity to link together horizontal and vertical channels, ordering them in such a way that no intersections of the paths occur.
		
		By the observation in the previous paragraph, each path to infinity on the renormalized lattice corresponds to at least $\sqrt{R}_0 \geq 10(C_f + 1)$ disjoint paths to infinity in $\mathscr{C}_{\infty}$. In fact, one could be more specific with the previous statement: for any integer $K \in \{ 1 , \ldots, \lfloor R^{\frac{1}{2}} \rfloor \}$, if we let $\mathcal{R}$ be an horizontal or vertical rectangle of long side length $R$ and short side length $K R^{\frac{1}{2}}$ included in the box $(x + Q_{11R/10})$, then $\mathcal{R}$ is connected to infinity by at least $(K-1)$ disjoint paths. Additionally, these paths cross the rectangle $\mathcal{R}$ in the long direction.

		To complete the proof of~\eqref{eq:implication-renormalized-cluster}, we observe that every connected set $S$ on the microscopic lattice of diameter larger than $R$ which intersects $(x + Q_{R})$ must cross a rectangle with short side length $R/10$ and long side length $R$ included in $(x + Q_{11R/10})$, and is thus connected to infinity by at at least $(C_f + 1)$ disjoint paths.
		
		To complete the proof of Proposition~\ref{prop:disjoint-paths-to-infinty-exponential}, we use the inclusion~\eqref{eq:implication-renormalized-cluster} and Lemma~\ref{lemma:renormalized-domination} to write, for any $y \in \Z^2$,
		\begin{equation*}
			\P \left(  E \left( y \right) \right) \geq \theta_{\mathrm{site}} \left( 1 - C \exp \left( - c R^s \right) \right),
		\end{equation*}
		where $\theta_{\mathrm{site}}(p) \in [0,1]$ denotes the density of the infinite cluster for an i.i.d. site percolation with probability $p \in [0 ,1 ]$ of the lattice $\Z^2$. Using that the function $p \to \theta_{\mathrm{site}}(p)$ is smooth around the value $p = 1$ (see~\cite[Theorem 8.92]{percolation-book}), we see that there exists a (universal) constant $C < \infty$ such that, for any $p \in [0,1]$
		\begin{equation*}
			\theta_{\mathrm{site}}(p) \geq 1-C( 1 - p).
		\end{equation*}
		Combining the two previous displays completes the proof of~\eqref{eq:verygoodstretchedexpclose1}.
	\end{proof}
	
	We finally complete this section by proving a refinement of Proposition~\ref{prop:disjoint-paths-to-infinty-exponential}. Specifically, we establish that since the renormalized percolation stochastically dominates a (very) supercritical site Bernoulli percolation, any path must intersect the infinite cluster at least a fraction of the length between the two endpoints of the path. The statement is contained in the following proposition, and can be compared to~\cite[Lemma 5.3]{mathieu-homogenization}.
	
	\begin{prop}\label{prop:longpathsupercritrenormlatt}
		There exists a constant $R_1(\mathfrak{p},f) < \infty$ such that for any $R \geq R_1$, the following holds. There exist constants $C( R) < \infty$, $c( R) > 0$, $c_1( R) > 0$ such that, for any $K \geq 1$,
		\begin{equation} \label{eq:longpathsupercitrenormlatt}
			\begin{aligned}
				&\P \left( \exists \gamma \subseteq \mathscr{C}_\infty \, \mbox{ with } \gamma(0) = 0,~  \left| \gamma(\mathrm{end}) \right| \geq K ~\mbox{and}~ \sum_{i = 1}^{|\gamma|} \indc_{E_R \left( \gamma(i) \right)} \leq c_1 | \gamma(\mathrm{end}) |  \right)\\
				& \leq C \exp \left( - c K \right).
			\end{aligned}
		\end{equation}
	\end{prop}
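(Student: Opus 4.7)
The plan is to build on the renormalization scheme from the proof of Proposition~\ref{prop:disjoint-paths-to-infinty-exponential}. By Lemma~\ref{lemma:renormalized-domination}, we pick $R_1$ large enough so that, for every $R \geq R_1$, the renormalized site-percolation process of size $R$ stochastically dominates an i.i.d.\ Bernoulli site percolation on $\mathbb{Z}^2$ with parameter $p_R$ as close to $1$ as desired. By the implication~\eqref{eq:implication-renormalized-cluster}, every microscopic vertex $\gamma(i)$ whose renormalized box lies in the infinite cluster $\mathcal{C}_\infty$ of the renormalized process automatically satisfies $E_R(\gamma(i))$. Hence it suffices to show that, with probability at least $1 - Ce^{-cK}$, every microscopic self-avoiding path $\gamma \subseteq \mathscr{C}_\infty$ with $\gamma(0)=0$ and $|\gamma(\mathrm{end})| \geq K$ visits at least $c_1|\gamma(\mathrm{end})|$ renormalized boxes belonging to $\mathcal{C}_\infty$, for some constant $c_1 = c_1(R) > 0$.

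The heart of the proof is then a standard renewal / chemical-distance estimate for the very supercritical Bernoulli regime. First, for $p_R$ sufficiently close to $1$, a classical argument (see e.g.\ \cite[Theorem~8.21]{percolation-book}) yields
\[
\mathbb{P}\bigl[\mathrm{diam}(\mathcal{F}_x) \geq n,\ \mathcal{F}_x \text{ finite}\bigr] \leq C e^{-c n},
\]
uniformly in $x$, with an arbitrarily large rate $c > 0$ (here $\mathcal{F}_x$ denotes the renormalized cluster of $x$). Second, any microscopic path $\gamma$ reaching distance $L = |\gamma(\mathrm{end})|$ from the origin must cross at least $\lceil L/(2R+1) \rceil$ distinct renormalized boxes; on the event that every finite renormalized cluster intersecting $Q_{2L}$ has diameter bounded by a small constant depending only on $R$, the path cannot reach distance $L$ through such bad boxes alone, and must therefore visit at least $c_1 L$ many $\mathcal{C}_\infty$-boxes. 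Making this quantitative through an argument of the type used in~\cite{antal-pisztora-chemical} and~\cite[Lemma~5.3]{mathieu-homogenization} gives the desired estimate up to an exponentially small exceptional event.

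The main technical obstacle is the uniformity over the adversarially chosen path $\gamma$: there are up to $4^{|\gamma|}$ candidates, so the exponential decay rate of the finite-cluster diameter tail must dominate the combinatorial factor $\log 4$ from the union bound over paths. This is exactly the reason for the restriction $R \geq R_1$ in the statement: we choose $R_1$ large enough that $p_R$ is close enough to $1$ for this domination to hold. Summing the resulting $Ce^{-cL}$ bound over $L \geq K$ then yields~\eqref{eq:longpathsupercitrenormlatt}.
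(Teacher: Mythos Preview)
Your reduction via~\eqref{eq:implication-renormalized-cluster} to counting renormalized boxes in $\mathcal{C}_\infty$ is exactly what the paper does. After that, however, the two arguments diverge.

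The paper does \emph{not} union-bound over paths. It instead exhibits a single good event of probability $\geq 1 - Ce^{-cK}$ on which the conclusion holds deterministically for every path simultaneously. Setting $N := |\gamma(\mathrm{end})|/(CR)$, the renormalized path $[\gamma]$ must cross one of the four rectangles $\pm\mathrm{HR}^R_N$, $\pm\mathrm{VR}^R_N$ in the short direction; by Kesten's theorem \cite[Theorem~11.1]{kesten-book}, each such rectangle contains $\geq c_2 N$ disjoint open crossings in the long direction with probability $\geq 1 - Ce^{-cN}$, and $[\gamma]$ must intersect every one of them. This yields $\geq c_2 N$ sites of $[\gamma]$ in $\mathcal{C}_\infty$ with no path-entropy factor to beat, so the constants $c,c_1$ are universal rather than tuned against $\log 4$.

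Your Peierls-type route is workable in principle (and the paper itself points to \cite[Lemma~5.3]{mathieu-homogenization} as a comparison), but the deterministic step you wrote has a gap. The event ``every finite \emph{open} renormalized cluster in $Q_{2L}$ has small diameter'' does not control the \emph{closed} renormalized boxes, which are equally bad (not in $\mathcal{C}_\infty$) and through which the renormalized path could thread at will. What you need is either control on the components of the full complement $\mathbb{Z}^2 \setminus \mathcal{C}_\infty$ (closed sites together with finite open clusters), or a direct large-deviation bound on the number of non-$\mathcal{C}_\infty$ boxes along a fixed renormalized self-avoiding path followed by a union bound over the $\lesssim 3^m$ such paths of length $m$. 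Both are doable for $p_R$ close enough to $1$, but neither follows from the open-cluster tail bound you quoted; the Kesten-channel argument sidesteps this entirely.
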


	\begin{proof}
		We first set $R_1 := R_0 \vee \left( 10 (C_f + 1) \right)^2$ where $R_0$ is the constant in the proof of Proposition~\ref{prop:disjoint-paths-to-infinty-exponential} which ensures that, for any $R \geq R_0$, the renormalized percolation process of size $R$ stochastically dominates a i.i.d. Bernoulli site percolation on $\Z^2$ with probability $p = 3/4$ (hence supercritical).
		
		We next fix $R \geq R_1$ and consider the renormalized percolation process of size $R$. We then introduce the notions of distance and rectangles in the renormalized lattice:
		\begin{itemize}
			\item Given two sites $(x + Q_R)$ and $(y + Q_R)$ with $x , y \in (2R + 1) \Z^2$, we denote by $$\dist_R ( (x + Q_R) , (y + Q_R) ) := |x - y|/(2R+1).$$
			\item For $N \in \N$, we define the box of side length $N$ in the renormalized lattice
			\begin{equation*}
				Q^R_N : = \left\{ x + Q_R \, : \, \frac{x}{2R +1} \in   [-N , N] \times [- N,  N] \right\}
			\end{equation*}
			as well as the rectangle of long side length $2N$ and short side length $N$ in the renormalized lattice according to the formula
			\begin{equation*}
				\mathrm{HR}^R_N := \left\{ x + Q_R \, : \, \frac{x}{2R +1} \in   [- 2N , 2 N] \times [0,  2N] \right\}. 
			\end{equation*}
			We similarly define vertical rectangles in the renormalized lattice
			\begin{equation*}
				\mathrm{VR}^R_N := \left\{ x + Q_R \, : \, \frac{x}{2R +1} \in   [0 ,  2N] \times [- 2N,  2N]   \right\}.
			\end{equation*}
		\end{itemize}
		We next make the two observations. First, to each path $\gamma$ in the infinite cluster, we can associate a path in the renormalized lattice obtained by listing all the sites of the form $(x + Q_R)$ with $x \in (2R+1)\Z^2$ visited by the path $\gamma$ and erasing the loops. We denote this path in the renormalized lattice by $[\gamma]$ and observe that, for some universal constant $C$,
		\begin{equation} \label{eq:firstconditiongammarenormalized}
			[\gamma](0) = Q_R \quad \mbox{and} \quad \dist_R ([\gamma](\mathrm{end}) , Q_R) \leq \frac{\left| \gamma(\mathrm{end})\right|}{CR}.
		\end{equation}
		Next, since the renormalized percolation process of size $R$ dominates a supercritical Bernoulli site percolation, it possesses Kesten channels with exponentially high probability. 
		That is, by \cite[Theorem 11.1]{kesten-book} and a union bound,  for any $N_0 \geq 1$,
		\begin{equation}
			\begin{aligned} \label{eq:kestenchannelsrenormalized}
				&\P (\mbox{for all $N \geq N_0$ there exists at least $c_2 N$} \\
				&\mbox{disjoint horizontal crossings of open sites in the rectangle $\mathrm{HR}^R_N $} \\
				&\mbox{for the renormalized percolation process of size $R$} )\\
				&\geq 1 - C \exp \left( - c N_0 \right).
			\end{aligned}
		\end{equation}
		A similar statement holds for vertical rectangles (with vertical crossings). We remark that the constants $C , c, c_2$ depend only on the probability of the stochastically dominated Bernoulli site percolation, and can thus be chosen universally (at the cost of increasing the value of $R_0$).
		
		We then combine the two previous remarks as follows. First, the conditions~\eqref{eq:firstconditiongammarenormalized} imply that we may find a  constant $C$ (depending only on $R$) such that $[\gamma](\mathrm{end})$ does not belong to the box $Q^{R}_{|\gamma(\mathrm{end})|/(CR)}$. Set $N := |\gamma(\mathrm{end})|/(CR)$. The previous observation implies that:
		\begin{itemize}
			\item Either the path $[\gamma]$ crosses in the vertical direction one of the horizontal rectangles 
			$$\mathrm{HR}^R_{N} ~~\mbox{or} ~~ - \mathrm{HR}^R_{N}.$$
			\item Or the path $[\gamma]$ crosses horizontally one of the vertical rectangles 
			$$ \mathrm{VR}^R_{N} ~~\mbox{or}~~ -\mathrm{VR}^R_{N}.$$
		\end{itemize}
		In any of each of the two cases, we may apply the inequality~\eqref{eq:kestenchannelsrenormalized} to obtain (allowing the constants $C , c, c_1$ to depend on $R$)
		\begin{align*}
			&\P \left( \exists \gamma \subseteq \mathscr{C}_\infty \, \mbox{ with } \gamma(0) = 0,~  \left| \gamma(\mathrm{end}) \right| \geq K ~\mbox{and}~ \sum_{i = 1}^{|[\gamma]|} \indc_{[\gamma](i) \in \mathcal{C}_\infty } \leq c_1 | \gamma(\mathrm{end}) |  \right) \\
			&\leq C \exp \left( - c K \right).
		\end{align*}
		Using the implication~\eqref{eq:implication-renormalized-cluster}, this implies the bound~\eqref{eq:longpathsupercitrenormlatt} and thus completes the proof of Proposition~\ref{prop:longpathsupercritrenormlatt}. 
	\end{proof}

	\subsubsection{Building the obstruction} \label{subsubsec:building-the-obstruction}
	We introduce the good event which we will use in the next subsection, together with the 
	topological results which we have just established, to prove Proposition~\ref{prop:level-set-of-harmonic-function}.
	
	Start by setting $R := R_1(f , \mathfrak{p}) < \infty$ where $R_1$ is the constant which appears in the statement of Proposition~\ref{prop:longpathsupercritrenormlatt}. For a parameter $K \geq 1$ and $x \in \Z^2$, we introduce the first good event $G_1(x , K)$ according to the formula
	\begin{equation} \label{eq:defG1}
		G_1(x , K)  := \left\{ \exists \gamma \subseteq \mathscr{C}_\infty \, \mbox{ with } \gamma(0) = x ,~  \left| \gamma(\mathrm{end}) \right| \geq K ~\mbox{and}~ \sum_{i = 1}^{|\gamma|} \indc_{E_R \left( \gamma(i) \right)} \geq c_1 | \gamma(\mathrm{end}) | \right\},  
	\end{equation}
	where $c_1$ is the constant which appears in the statement of Proposition~\ref{prop:longpathsupercritrenormlatt}. In particular, by Proposition~\ref{prop:longpathsupercritrenormlatt}, there exist constants $c > 0$ and $C < \infty$ depending only on $\mathfrak{p}$ and $f$ (as $R_1$ depends only on these parameters) such that
	\begin{equation} \label{eq:PG1goesto1}
		\P \left( G_1(x , K) \right) \geq 1 - C \exp \left( - c K \right).
	\end{equation}
	We then define a second good event $G_2(x , K)$ which controls the oscillation of the corrected plane,
	\begin{equation} \label{eq:corrector-oscillation}
		G_2(x,K) := \left\{ \forall R' \geq  K \, : \, \osc_{Q_{R'}(x) \cap \mathscr{C}_{\infty}} \ellp{e_1} \leq 2 \diam R' \right\}.
	\end{equation}
	Note that, by the oscillation estimate on the corrector (see Theorem~\ref{theorem:first-order-corrector}), we have the lower bound
	\begin{equation} \label{eq:PG2goesto1}
		\P \left( G_2(x,K) \right) \geq 1 - C \exp \left( - c K^s \right).
	\end{equation}
	We finally fix a constant $C_\chi > 0$ and define and consider a third good event $G_3 (x , C_{\chi})$ which is satisfied if the edge $(x, x+e_1)$ is open in the infinite cluster and if the corrector has
	gradient larger than $C_\chi$ across the edge $(x, x+e_1)$. It is formally defined as
	\begin{equation} \label{eq:not-lipschitz}
		G_3(x,  C_{\chi}) := \left\{  (\ellp{e_1}(x + e_1) - \ellp{e_1}(x) )\indc_{(x, x+e_1) \in E(\mathscr{C}_\infty)} \geq C_\chi  \right\}.
	\end{equation}
	Contrary to the two events $G_1(x , K)$ and $G_2(x,K)$, the probability of the event $G_3(x, C_{\chi})$ goes to $0$ as $C_{\chi}$ tends to infinity. The crucial property we require on the event $G_3(x , C_{\chi})$ is that its probability is strictly positive for any $C_{\chi} < \infty$. Theorem~\ref{theorem:lipschitz} and symmetry arguments ensure that this assumption can be made without loss of generality. Using the previous observations, we see that for any $C_\chi < \infty$, we may choose $K$ (depending on $C_\chi$) such that the event $G(x) = G_1(x, K) \cap G_2(x , K) \cap G_3(x,  C_{\chi})$ has positive probability. This property is collected in the following lemma.
	
	\begin{lemma} \label{lemma:good-event-lower-bound}
		For every $C_{\chi} \in [1 , \infty)$, there exists a constant $K_0(\mathfrak{p} , f , C_\chi) < \infty$ such that, for any $K \geq K_0$ and all $x \in \Z^2$, the event 
		\begin{equation} \label{eq:def-good-event}
			G(x, C_\chi , K) := G_1(x, K) \cap G_2(x, K) \cap G_3(x, C_{\chi})
		\end{equation}
		has probability bounded from below by a constant, $\P[G(x , C_\chi , K )] \geq c(\mathfrak{p} , C_\chi) > 0$.     
	\end{lemma}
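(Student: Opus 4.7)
The plan is to combine the three probability estimates—two going to $1$ as $K \to \infty$ and one bounded below uniformly in $K$—via a one-line union-bound argument. The only nontrivial input is the positivity of $\P(G_3(x,C_\chi))$ for every fixed $C_\chi$, which is supplied by Proposition~\ref{prop:not-lipschitz} (and hence is really a consequence of the finite-energy construction behind Theorem~\ref{theorem:lipschitz}).

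First I would note that $\P(G_3(x,C_\chi))$ does not depend on $x$: since $G_3(x,C_\chi)$ is measurable with respect to the shift of $\mathbf{a}$ by $x$ and Bernoulli percolation is translation-invariant, it suffices to bound $\P(G_3(0,C_\chi))$. Proposition~\ref{prop:not-lipschitz} gives that for each $C_\chi \geq 1$,
\begin{equation*}
c_0(C_\chi) := \P\!\left[(\ellp{e_1}(e_1)-\ellp{e_1}(0))\indc_{(0,e_1)\in E(\mathscr{C}_\infty)} \geq C_\chi\right] > 0,
\end{equation*}
and consequently $\P(G_3(x,C_\chi)) = c_0(C_\chi) > 0$ for every $x\in\Z^2$.

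Next I would recall the two tail bounds already established in the excerpt. By Proposition~\ref{prop:longpathsupercritrenormlatt} (applied at scale $R = R_1(\mathfrak{p},f)$ to define $G_1$ via~\eqref{eq:defG1}), and by the stretched exponential oscillation estimate on the corrector from Theorem~\ref{theorem:first-order-corrector} together with a union bound over the dyadic scales $R' \in [K,\infty)$ in \eqref{eq:corrector-oscillation}, there exist constants $C(\mathfrak{p},f) < \infty$, $c(\mathfrak{p},f) > 0$ and a stochastic integrability exponent $s > 0$ such that, for every $x \in \Z^2$ and every $K \geq 1$,
\begin{equation*}
\P(G_1(x,K)^c) \leq C\exp(-c K) \quad\text{and}\quad \P(G_2(x,K)^c) \leq C\exp(-c K^s).
\end{equation*}
(The bound on $\P(G_2(x,K)^c)$ comes from summing \eqref{eq:PG2goesto1} applied at scales $R' = 2^j K$ for $j \geq 0$, absorbing the polynomial sum into the stretched exponential by slightly decreasing $c$.)

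Finally, a union bound yields
\begin{equation*}
\P(G(x,C_\chi,K)) \geq \P(G_3(x,C_\chi)) - \P(G_1(x,K)^c) - \P(G_2(x,K)^c) \geq c_0(C_\chi) - 2C\exp(-c K^s).
\end{equation*}
Choosing $K_0 = K_0(\mathfrak{p},f,C_\chi)$ large enough that $2C\exp(-c K_0^s) \leq c_0(C_\chi)/2$ gives $\P(G(x,C_\chi,K)) \geq c_0(C_\chi)/2$ for every $K \geq K_0$ and every $x \in \Z^2$, which is the claim with $c(\mathfrak{p},C_\chi) := c_0(C_\chi)/2$. There is no real obstacle here; the content of the lemma is entirely loaded into Proposition~\ref{prop:not-lipschitz} (for $G_3$) and the quantitative percolation/homogenization estimates (for $G_1$ and $G_2$), and the lemma just packages these into a single positive-probability event.
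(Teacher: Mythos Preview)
Your proof is correct and follows essentially the same route as the paper: reduce to $x=0$ by stationarity, invoke Proposition~\ref{prop:not-lipschitz} for the positivity of $\P(G_3)$, use \eqref{eq:PG1goesto1} and \eqref{eq:PG2goesto1} to make $\P(G_1^c)+\P(G_2^c)$ small, and conclude with a union bound. Your write-up is a bit more explicit about the dyadic union bound for $G_2$, but the argument is the same.
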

	
	\begin{proof}
		By stationarity, it suffices to consider the case $x = 0$.  Let $C_{\chi}$ be given. 
		By~\eqref{eq:PG1goesto1} and~\eqref{eq:PG2goesto1}, we have that as $K$ goes to infinity, $\P[G_1(0 , K) \cap G_2(0 , K)]$ goes to one
		and by Proposition \ref{prop:not-lipschitz}, $\P[G_3(0 , C_\chi)] \geq c(\mathfrak{p}, C_{\chi}) > 0$. 
		
		Thus, by choosing $K$ sufficiently large depending on $\mathfrak{p}, f , C_{\chi}$,  we have that  $\P[G_1(0 , K) \cap G_2(0 , K)] \geq 1 - c(C_{\chi})/2$. 
		Hence, a union bound implies the claim. \end{proof}

	This, together with the ergodic theorem, implies the following density result. In the following lemma, we recall the definition of the triangle $T_L := \{ x = (x_1 , x_2) \in \Z^2 \, : \, |x| \leq L, x_1 \geq |x_2|\}$.
	
	\begin{lemma} \label{lemma:density-bound-good-event}
		For every $C_{\chi} \in [1 , \infty)$, there exists a constant $K_0(\mathfrak{p} , f , C_\chi) < \infty$ such that, for any $K \geq K_0$, 
		\begin{equation*}
			\liminf_{L \to \infty} \frac{1}{L^2}\sum_{y \in T_L}1\{ {G(y , C_\chi , K)}\} =  \P[G(0 , C_\chi , K )].
		\end{equation*}
	\end{lemma}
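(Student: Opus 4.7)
The argument is a direct application of the multidimensional pointwise ergodic theorem to the stationary indicator field $y \mapsto \mathbf{1}_{G(y,C_\chi,K)}$. It has three ingredients: translation covariance of the event, ergodicity of the underlying probability space, and a lattice point count for $T_L$.

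First, I would verify that the event $G(y,C_\chi,K)$ is translation covariant in the sense that, if $\tau_y : \Omega \to \Omega$ denotes the shift of the percolation configuration by $y \in \Z^2$, then $\mathbf{1}_{G(y,C_\chi,K)}(\omega) = \mathbf{1}_{G(0,C_\chi,K)}(\tau_{-y}\omega)$. For $G_1(y,K)$, this is immediate from the definition~\eqref{eq:defG1}, since the event $E_R$ and the cluster structure are manifestly translation covariant. The potentially delicate cases are $G_2$ and $G_3$, which are defined in terms of the corrected plane $\ellp{e_1}$. Although $\ellp{e_1}$ is only defined modulo an additive constant, both events depend only on differences of its values: $G_2(y,K)$ depends on $\osc_{Q_{R'}(y)\cap\mathscr{C}_\infty}\ellp{e_1}$ and $G_3(y,C_\chi)$ depends on $\ellp{e_1}(y+e_1) - \ellp{e_1}(y)$. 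These are both determined by the gradient field $\nabla\ellp{e_1}$ on $E(\mathscr{C}_\infty)$, which is a genuine stationary random field by the uniqueness statement in Theorem~\ref{theorem:first-order-corrector}, giving translation covariance of $G_2$ and $G_3$.

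Second, I would invoke ergodicity of Bernoulli percolation under the $\Z^2$ shift action (\eg, \cite[Proposition 7.3]{lyons-peres-book}) and apply the multidimensional pointwise ergodic theorem to the bounded stationary random variable $\mathbf{1}_{G(0,C_\chi,K)}$ along the sequence of triangles $(T_L)_{L \geq 1}$, which is Følner because $|T_L \triangle (T_L + v)| = O(L\,|v|) = o(|T_L|)$ for each fixed $v \in \Z^2$. This yields, almost surely,
\[
\lim_{L \to \infty} \frac{1}{|T_L|} \sum_{y \in T_L} \mathbf{1}_{G(y,C_\chi,K)} \;=\; \P[G(0,C_\chi,K)].
\]
A direct lattice point count gives $|T_L|/L^2 \to 1$ as $L \to \infty$, and dividing by $L^2$ rather than $|T_L|$ affects nothing in the limit. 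Since the full limit exists, in particular the $\liminf$ equals $\P[G(0,C_\chi,K)]$, which is the claim.

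The only step that required any thought is the translation covariance check for $G_2$ and $G_3$, which reduces to stationarity of $\nabla\ellp{e_1}$; I do not anticipate any real obstacle beyond this bookkeeping.
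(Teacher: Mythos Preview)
Your approach is essentially the same as the paper's: establish translation covariance of the indicator $\mathbf{1}_{G(y,C_\chi,K)}$ (reducing $G_2,G_3$ to statements about $\nabla\ellp{e_1}$, which is stationary), then apply the ergodic theorem over an averaging region. The paper phrases the ergodic step via a domain version (averaging over $L\cdot W$ inside $L\cdot V$ for Lipschitz $W\subset V$) rather than invoking a F{\o}lner sequence directly, but the content is identical.

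One small correction: your claim that $|T_L|/L^2\to 1$ is off by a factor of two, since $T_L=\{x\in\Z^2:|x|_1\le L,\ x_1\ge|x_2|\}$ has $|T_L|\sim L^2/2$. This only changes the limit by a fixed positive constant, which is harmless for the downstream application (only strict positivity of the liminf is used in the proof of Proposition~\ref{prop:level-set-of-harmonic-function}); the paper's own choice of $W$ is similarly loose about matching $T_L$ exactly.
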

	\begin{proof}
		By, for example, the proof of \cite[Lemma 3.2]{ergodic-theorem-lemma}, which combines the ergodic theorem together with translation invariance, 
		for every pair of Lipschitz domains $V,W \subset \R^2$ with $\cl(W) \subset V$, we have that 
		\begin{equation} \label{eq:spreads-evenly}
			\lim_{L \to \infty} \frac{1}{|L \cdot V|} \sum_{y \in (L \cdot W) \cap \Z^2} 1\{ {G(y , C_\chi , K)}\}  
			= \P( G(y , C_\chi , K)) \frac{|W|}{|V|}
			> 0
		\end{equation}
		where $L \cdot V$ denote the scaling of the domain by $L$. This implies the claim with $V = Q_1$ and $W = \{ x = (x_1 , x_2) \in Q_1 : x_1 \geq |x_2| \}$.
	\end{proof}

	\subsection{Exploring the block-cut tree}
	Building upon all the preliminary results established in the previous sections, we establish the following result, which implies Proposition~\ref{prop:level-set-of-harmonic-function}.
	\begin{lemma} \label{lemma:large-gradient-implies-not-in-level-set}
		There exist three constants $C_\chi (\mathfrak{p}, f) < \infty$, $K_0(\mathfrak{p} , f) < \infty$, and $C_0(\mathfrak{p} , f) < \infty$ and a random scale $M_{\mathrm{h}}$ which is almost surely finite such that the following results hold:
		\begin{itemize}
			\item[(i)] The event $G(0, C_\chi , K_0)$ defined in~\eqref{eq:def-good-event} has strictly positive probability.
			\item[(ii)] For any vertex $x_0 = (x_1 , x_2) \in \Z^2$ with $x_1 \geq |x_2|$ and $x_1 \geq M_{\mathrm{h}}$, if $G(x_0, C_\chi , K_0)$ holds, then there exists an edge $e \in E \left( \mathscr{C}_\infty \right)$ with $\dist(e , x) \leq C_0$ such that $ \nabla \ellp{e_1} (e) \neq 0$ and $\nabla u_f(e) \neq 0$.
		\end{itemize}
	\end{lemma}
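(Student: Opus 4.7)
The plan is to obtain part (i) directly from Lemma \ref{lemma:good-event-lower-bound}, and to prove part (ii) by contradiction: assuming no sensitive edge exists near $x_0$, I will show that the connected component of $u_f$'s level set containing $x_0$ is small enough that $\ellp{e_1}$ restricted to it is harmonic on a finite connected graph, hence constant, contradicting the large gradient supplied by $G_3$.

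So suppose $G(x_0, C_\chi, K_0)$ holds yet every edge $e \in E(\mathscr{C}_\infty)$ with $\dist(e, x_0) \leq C_0$ satisfies $\nabla \ellp{e_1}(e) = 0$ or $\nabla u_f(e) = 0$. Since $G_3$ provides $\nabla \ellp{e_1}((x_0, x_0+e_1)) \geq C_\chi > 0$, this forces $\nabla u_f((x_0, x_0+e_1)) = 0$, so writing $a := u_f(x_0)$ we have $x_0, x_0+e_1 \in \mathcal{S}$, where $\mathcal{S}$ is the connected component of $L_a$ (see \eqref{eq:level-set-def}) containing $x_0$. Every edge in $B_{C_0}(x_0)$ joining $\mathcal{S}$ to $\mathscr{C}_\infty \setminus \mathcal{S}$ therefore has $\nabla \ellp{e_1} = 0$, since $\nabla u_f$ is nonzero across it by the definition of $\mathcal{S}$.

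The key algebraic step is: \emph{if $\mathcal{S} \subseteq B_{C_0-1}(x_0)$}, then $\ellp{e_1}|_{\mathcal{S}}$ is harmonic on the induced subgraph $\mathcal{S}$, because at every $y \in \mathcal{S}$ all of its $\mathscr{C}_\infty$-neighbors lie in $B_{C_0}(x_0)$, and each external neighbor contributes zero to $\Delta_{\mathscr{C}_\infty} \ellp{e_1}(y)$ by the vanishing of $\nabla \ellp{e_1}$ on boundary edges. Since $\mathcal{S}$ is a finite connected graph containing the open edge $\{x_0, x_0+e_1\}$, such a harmonic function must be constant, contradicting $\ellp{e_1}(x_0+e_1) - \ellp{e_1}(x_0) \geq C_\chi$. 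It thus remains to choose $M_h$ and $C_0$ so that the event $G(x_0, C_\chi, K_0)$ actually forces $\mathcal{S} \subseteq B_{C_0-1}(x_0)$, and this is where the block-cut tree exploration enters. Rooting the tree at the block of $x_0$, Proposition \ref{prop:boundary-size-of-set} bounds the disjoint paths to infinity from each block by $C_f$ (its hypothesis on finite complement components is met once $M_h$ is large relative to $\supp f$), and combined with the $E_R$ events from $G_1$, any block that meets an $E_R$-vertex has diameter strictly less than $R$ (otherwise $E_R$ would produce $C_f+1$ disjoint paths). The path $\gamma$ provided by $G_1(x_0, K_0)$ has length at least $K_0$ and carries at least $c_1 K_0$ of $E_R$-vertices, while Proposition \ref{prop:infinite-disjoint-paths-limit} caps the disjoint infinite rays of $\mathcal{S}$ by $|\supp f|+2$. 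Tracking how $\gamma$ can progress block-by-block inside $\mathcal{S}$ then confines the block-cut tree of $\mathcal{S}$ to a ball of deterministic radius $C_0(\mathfrak{p}, f)$ around $x_0$.

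The main difficulty lies in this combinatorial block-cut tree exploration. Blocks of $\mathcal{S}$ that avoid the path $\gamma$ are not directly constrained in diameter by the $E_R$ events, so one must propagate control of $\mathcal{S}$ by coupling the cap on infinite rays from Proposition \ref{prop:infinite-disjoint-paths-limit} with the positive density of $E_R$-vertices along $\gamma$ that Proposition \ref{prop:longpathsupercritrenormlatt} guarantees. Calibrating the deterministic constants $C_0$, $K_0$ and the random scale $M_h$ simultaneously — with $M_h$ chosen large enough to place $x_0$ well beyond $\supp f$ so that every encountered block satisfies the hypothesis of Proposition \ref{prop:boundary-size-of-set}, and $C_0$ chosen large relative to $R$, $|\supp f|$ and the density constant $c_1$ — is the step that ultimately closes the argument, after which the harmonicity principle on finite connected graphs delivers the contradiction with $G_3$.
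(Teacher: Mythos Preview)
Your ``key algebraic step'' is correct: if the full level-set component $\mathcal{S}$ containing $x_0$ were trapped in $B_{C_0-1}(x_0)$, then under the contradiction hypothesis every edge leaving $\mathcal{S}$ would have $\nabla\ellp{e_1}=0$, so $\ellp{e_1}$ would be harmonic on the finite connected induced subgraph $\mathcal{S}$ and hence constant, contradicting $G_3$. The gap is that there is no reason whatsoever for $\mathcal{S}\subseteq B_{C_0-1}(x_0)$, and your block-cut-tree sketch does not establish it. A simple obstruction: $\mathcal{S}$ may be (or contain) a long one-dimensional filament in $L_a$ starting at $x_0$; then every biconnected component is a single edge, the diameter bound from Proposition~\ref{prop:boundary-size-of-set} together with the $E_R$-events is vacuously satisfied, and Proposition~\ref{prop:infinite-disjoint-paths-limit} only caps the number of \emph{infinite} rays, not the length of finite ones. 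Nothing in $G_1$ or Propositions~\ref{prop:infinite-disjoint-paths-limit}--\ref{prop:disjoint-paths-to-infinty-exponential} prevents $\mathcal{S}$ from being arbitrarily large, and the contradiction hypothesis ``no sensitive edge within $C_0$'' is perfectly compatible with $\mathcal{S}$ leaving $B_{C_0}(x_0)$.

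What you are missing is the role of $G_2$ and the flux mechanism. The paper does not attempt to bound $\mathcal{S}$; instead it passes to the subgraph $\mathcal{G}\subseteq L_a$ of vertices reachable from $x_0+e_1$ by paths in $L_a$ along which $\ellp{e_1}$ is \emph{strictly increasing}. This restriction is what makes the leaves of the block-cut tree of $\mathcal{G}$ carry sensitive edges and what allows the branch count to be bounded (Lemma~\ref{lemma:finite-branches}). One then tracks the \emph{flux} of $\ellp{e_1}$ through cut-vertices: the inequalities~\eqref{eq:fluxincreasesthroughbiconnectedcomponents}--\eqref{eq:fluxincreasesthroughbiconnectedcomponents2} show that along a well-chosen branch the incoming flux stays bounded below by $C_\chi e^{-C_{\mathrm{branches}}}$, forcing $\ellp{e_1}$ to grow by at least this amount at each cut-vertex crossed. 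The $E_R$-events (via $G_1$) guarantee that a path from $x_0$ to any point at distance $\geq C_0$ crosses linearly many cut-vertices, so $\ellp{e_1}$ would grow by more than $2|y-x_0|$ along it---and \emph{this} is what $G_2$ forbids. In short, the contradiction is obtained from the oscillation bound on $\ellp{e_1}$, not from a size bound on the level set; your outline never invokes $G_2$, which is the essential input.
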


	\begin{proof}[Proof of Proposition~\ref{prop:level-set-of-harmonic-function} assuming Lemma \ref{lemma:large-gradient-implies-not-in-level-set}]
		By Lemma~\ref{lemma:large-gradient-implies-not-in-level-set}, for almost every realization of the percolation configuration, there exists a mapping
		\begin{equation*}
			\Theta : \left\{ x \in \mathscr{C}_\infty \, : \, G(x) \, \mathrm{ holds}  \right\} \mapsto \left\{ e \in E \left( \mathscr{C}_\infty \right) \, : \, \nabla \ellp{p}(e) \neq 0 \mbox{ and } \nabla u_f(e) \neq 0 \right\}
		\end{equation*}
		satisfying the property that $\dist(\Theta (x) , x) \leq C_0$. This property implies that, for any edge $e \in E \left( \mathscr{C}_\infty \right)$ with $ \nabla \ellp{p}(e) \neq 0$ and $\nabla u_f(e) \neq 0 $, the cardinality of the collection of vertices $\Theta^{-1}(e)$ is bounded by $(2C_0)^2$. By Lemma~\ref{lemma:density-bound-good-event}, we see that, for all $L$ sufficiently large, there are at least $\frac{c |T_L|}{(2C_0)^2}$ edges $e$ in the triangle $T_L$ such that such that $ \nabla \ellp{e_1} (e) \neq 0$ and $\nabla u_f(e) \neq 0$, which implies the claim.
	\end{proof}

	\begin{proof}[Proof of Lemma \ref{lemma:large-gradient-implies-not-in-level-set}]
		We split the proof into several steps. In the first step, we collect several preliminary objects 
		and construct a particular subgraph of the level set of the potential $u_f$ around a good edge, $\mathcal{G}$. This subgraph
		has the important property that every site in $\mathcal{G}$ is connected to the good edge along a path
		upon which the corrected plane $\ellp{p}$
		is strictly increasing. We use this ordering to construct, in Step 2, 
		a rooted block-cut tree of $\mathcal{G}$. 
		
		We seek to use this tree and regularity properties of the corrected plane to find a nearby sensitive edge. 
		Our strategy is to show that if the nearest sensitive edge is far away, then either some topological obstruction
		is violated or the corrected plane grows too quickly. 
		
		To that end, in Step 3, we define the flux of the corrected plane through cut-vertices.
		There we establish a basic identity on the flux through cut-vertices of the tree. 
		In Step 4, we show that the flux bounds the growth of the corrected plane through branches
		of the block-cut tree. 
		
		We use this lower bound in Step 5 to construct an exploration of the block-cut tree which 
		maximizes the growth of the corrected-plane. Finally in Step 6, we show that this exploration 
		must terminate at a nearby sensitive edge.
		\medskip
		
		{\it Step 1: Construction of the set $\mathcal{G}$.} \\
		We fix an integer $r$ such that $\supp f \subseteq Q_r$ (the integer $r$ may be chosen depending only on $f$). We first define the negative line $L_- := \left\{ (-x , 0) \, : \, x \in \N  \right\}$ and define the minimal scale $M_h$ as follows:
		\begin{equation} \label{eq:def-Mh}
			M_h := \inf \left\{ R' > 0 \, : \, \forall x \in \mathscr{C}_\infty \, \mbox{with} \, x_1 \geq |x_2| ~\mbox{and}~ x_1 \geq R', \,  \ellp{e_1}(x) > \sup_{y \in \mathscr{C}_\infty \cap ( Q_r \cup L_-)} \ellp{e_1} (y)  \right\}.
		\end{equation}
		The random variable $M_h $ is almost surely finite due to the facts that the maps $x \mapsto e_1 \cdot x$ converges to $+ \infty$ (linearly) as $|x| \to \infty$ with $x \in T_L$, converges to $-\infty$ (linearly) as $|x| \to \infty$ with $x \in L_-$ and Theorem~\ref{theorem:first-order-corrector}.

		We select the value of the constant $C_\chi$ as follows. Recall the definition of the (sufficiently large) constant $R$ selected at the beginning of Section~\ref{subsubsec:building-the-obstruction}, the constant $c_1$ which appears in Proposition~\ref{prop:longpathsupercritrenormlatt}, and let $C_{\mathrm{branches}} := 2R^3$ (we introduce a specific notation for this constant as it bounds the number of branches of the block-cut tree $\mathcal{G}_{\mathrm{tree}}$ defined below, see Lemma~\ref{lemma:finite-branches}). We then set
		\begin{equation*}
			C_\chi := \frac{100 R^2 \exp \left( C_{\mathrm{branches}} \right)}{c_1},
		\end{equation*}
		and note that the constant $C_\chi$ depends only on $\mathfrak{p}$ and $f$.
		We then select the constant $K_0 \geq 1$ (depending only on $\mathfrak{p}$, $f$, and $C_\chi$, hence only on $\mathfrak{p}$ and $f$) according to Lemma~\ref{lemma:good-event-lower-bound}. We finally define $C_0 := \frac{2 R^2}{c_1} \vee K_0$.
		
		We next select a vertex $x_0 \in \mathscr{C}_\infty$ satisfying the assumptions of (ii) in the statement of Lemma~\ref{lemma:large-gradient-implies-not-in-level-set} (in particular the event $G(x_0, C_\chi,K_0)$ holds) and define $e = (x_0 , x_0 + e_1) \in E\left( \mathscr{C}_\infty \right).$ We will prove that there exists an edge $e' \in B_{C_0}(x_0)$ such that 
		\begin{equation} \label{eq:sensitivityprop}
			\nabla \ellp{e_1} (e') \neq 0 ~~\mbox{and}~~ \nabla u_f(e') \neq 0.
		\end{equation}
		To this end, we first know that, by the definition of the event $G(x_0, C_\chi,K_0)$, we have the identity $\ellp{e_1}(x_0 + e_1) - \ellp{e_1}(x_0) \geq C_\chi > 0$. We thus first check if $u_f(x_0+e_1) - u_f(x_0) \neq 0$ or if $u_f(x_0+e_1) - u_f(x_0) = 0$. In the first case, the edge $e = (x_0 ,x_0 +  e_1)$ satisfies the conclusion of the lemma and we are done. In the second case, we denote by $a := u_f(x_0+e_1) = u_f(x_0)$ the common value. We then consider the subset $\mathcal{G}$ of the level set $L_a$ which is composed of vertices $y \in L_a$ which are connected by a path $\gamma \subseteq L_a$ to $x_0$ along which the corrected plane is strictly increasing. This set of vertices of the graph is formally defined as follows
		\begin{multline*}
			\mathcal{G} := \{ y \in L_a \, : \, \exists n \geq 1, \, \exists \gamma: [ 0 , \ldots, n ] \to L_a, \, \gamma(0) = x_0, \gamma(1) = x_0+e_1, \gamma(N) = y \\ ~\mbox{and}~ \forall i \in [ 1 , \ldots, n-1], \, \ellp{e_1} (\gamma(i+1)) - \ellp{e_1} (\gamma(i)) > 0 \}.
		\end{multline*}
		Note that with this definition, the vertex $x_0$ is not included in $\mathcal{G}.$ The edge set of the graph is defined to be the set of edges in the infinite cluster along which the corrected plane is not constant. It is formally defined as follows
		\begin{equation*}
			E \left( \mathcal{G} \right) := \left\{ (x , y) \in E \left( \mathscr{C}_\infty \right) \, : x, y\in \mathcal{G}, \, \ellp{e_1}(x) \neq \ellp{e_1}(y) \right\}.
		\end{equation*}
		We first collect a few properties of the graph $\mathcal{G}$:
		\begin{itemize}
			\item The graph $\mathcal{G}$ is connected and may be unbounded (in that case, we must have $a = 0$);
			\item For any vertex $z \in \mathscr{C}_\infty \setminus \mathcal{G}$ which is a neighbor of a vertex in $y \in \mathcal{G}$, we have the dichotomy: either $z \notin L_a$ or $\ellp{e_1}(z) \leq \ellp{e_1} (y)$;
			\item The set $Q_r$ does not intersect the set $\mathcal{G}$ nor any bounded connected component of $\Z^2 \setminus \mathcal{G}$. Indeed, if it were the case, then the set $\mathcal{G}$ would have to intersect either the set $Q_r$ or the line $L_-$. This behavior is ruled out by the definition of the minimal scale $M_h$, which implies that 
			$$\ellp{e_1}(x_0) > \sup_{y \in \mathscr{C}_\infty \cap ( Q_r \cup L_-)} \ellp{e_1} (y),$$
			together with the observation that, for any $x \in \mathcal{G}$, $\ellp{e_1}(x) \geq \ellp{e_1}(x_0)$ (this last point is a direct consequence of the definition of the graph $\mathcal{G}$);
			\item Any finite connected component of $\mathscr{C}_\infty \setminus \mathcal{G}$ is included in the level set $L_a$ (this is obtained by observing that the function $u_f$ is harmonic on these connnected components, and that their boundary is included in the set $\mathcal{G}$ hence in the level set $L_a$);
			\item Any biconnected component of the graph $\mathcal{G}$ is finite (this result is a consequence of Proposition~\ref{prop:boundary-size-of-set} and the point above);
		\end{itemize}
		\medskip
		
		{\it Step 2: Construction of the block-cut tree of $\mathcal{G}$.} \\ 
		Denote the block-cut tree of the graph $\mathcal{G}$ by $\mathcal{G}_{\mathrm{tree}, 0}$. From the properties mentioned above, all the vertices of the block-cut tree $\mathcal{G}_{\mathrm{tree},0}$ have a finite degree (since all the biconnected components of $\mathcal{G}$ are finite). We will generically denote by $[x]$ the biconnected components of $\mathcal{G}$ (which are thus vertices of $\mathcal{G}_{\mathrm{tree},0}$) and by $x_{\mathrm{cut}}$ the cut-vertices of~$\mathcal{G}$ (which are also vertices of $\mathcal{G}_{\mathrm{tree},0}$). The biconnected component of $\mathcal{G}$ containing the initial vertex $x_0+e_1$ is defined to be the root of the tree $\mathcal{G}_{\mathrm{tree},0}$ (and we choose arbitrarily if $x_0 + e_1$ is a cut-vertex and belongs more than one biconnected component). 
		
		We next claim that, any leaf of this tree is a biconnected component of the graph $\mathcal{G}$, and that on the boundary of this biconnected component, there is an edge $e$ satisfying the property~\eqref{eq:sensitivityprop}. Indeed if we denote this biconnected component by $[x]$, we may consider the maximum of the corrected plane on $[x]$ and denote this maximum by $y \in [x]$. The maximum exists (but is not necessarily unique) since the biconnected component is finite, it is also distinct from the cut-vertex which is the parent of $[x]$ in the block-cut tree $\mathcal{G}_{\mathrm{tree},0}$ (since the definition of $\mathcal{G}$ implies that the corrected plane attains its minimum over the biconnected component $[x]$ exactly at this cutvertex). Using the definition of the graph $\mathcal{G}$, the fact that the corrected plane is harmonic on the percolation cluster and the maximum principle, we see that there exists a vertex $z \in \mathscr{C}_\infty$ such that $z \sim y$ with $\ellp{e_1} (z) > \ellp{e_1} (y)$. Then either $z \notin L_a$ (in that case, the edge $e = (y , z)$ satisfies~\eqref{eq:sensitivityprop}), or $z \in L_a$, in that case, the definition of the graph $\mathcal{G}$ implies that $z \in \mathcal{G}$. Since $z \notin [x]$ and since $[x]$ is a leaf, this implies that there is a loop in the tree $\mathcal{G}_{\mathrm{tree},0}$ which is a contradiction.
		
		We next modify the block-cut tree $\mathcal{G}_{\mathrm{tree},0}$ by removing some vertices according to the following procedure: for any biconnected component $[x] \in \mathcal{G}_{\mathrm{tree},0}$, if there exists a pair of vertices $z \in \mathscr{C}_\infty \setminus L_a$ and $y \in [x]$ such that $z \sim y$ and $\ellp{e_1}(z) \neq \ellp{e_1}(y)$, then we erase all the descendants of $[x]$ in the tree $\mathcal{G}_{\mathrm{tree},0}$. We denote the graph obtained through this procedure by $\mathcal{G}_{\mathrm{tree}}$. We note that the tree $\mathcal{G}_{\mathrm{tree}}$ satisfies the same property as the tree $\mathcal{G}_{\mathrm{tree},0}$: any leaf of $\mathcal{G}_{\mathrm{tree}}$ is a biconnected component and on the boundary of this biconnected component, there is an edge $e$ satisfying the sensitivity property~\eqref{eq:sensitivityprop}.
		\medskip
		
		{\it Step 3: Flux through cut-vertices.} \\ 
		For each cut-vertex $x_{\mathrm{cut}} \in \mathcal{G}_{\mathrm{tree}}$, we define the incoming and outgoing fluxes of the corrected plane as follows. Given a cut-vertex $x_{\mathrm{cut}}$, we denote by $[x_0] \in \mathcal{G}_{\mathrm{tree}}$ the biconnected component which is the parent of $x_{\mathrm{cut}} \in \mathcal{G}_{\mathrm{tree}}$ in the biconnected tree $\mathcal{G}_{\mathrm{tree}}$. We define the incoming flux of the cut-vertex $x_{\mathrm{cut}}$ as follows
		\begin{equation*}
			i_{\mathrm{in}}(x_{\mathrm{cut}}) := \sum_{\substack{y \in [x_0]  \\ y \sim x_{\mathrm{cut}}}} (\ellp{e_1}(x_{\mathrm{cut}}) - \ellp{e_1}(y)). 
		\end{equation*}
		We similarly define the outgoing flux of the corrected plane at the cut-vertex $x_{\mathrm{cut}}$: if we let $[x_1], \ldots, [x_n]$ be the biconnected components which are the children of  $x_{\mathrm{cut}}$ in $\mathcal{G}_{\mathrm{tree}}$, then, for any $i \in \{ 1 , \ldots , n\}$, we define
		\begin{equation*}
			i_{\mathrm{out}}(x_{\mathrm{cut}}, [x_i]) := \sum_{\substack{ y \in [x_i] \\ y \sim x_{\mathrm{cut}}}} (\ellp{e_1}(y) - \ellp{e_1}(x_{\mathrm{cut}})).
		\end{equation*}
		In this step we prove the following result: if we consider a biconnected component $[x] \in \mathcal{G}_{\mathrm{tree}}$ (which is not the root nor a leaf), denote by $x_{\mathrm{cut},0}$ the cut-vertex which is its parent, and by $x_{\mathrm{cut},1} , \ldots, x_{\mathrm{cut},n}$ the cut-vertices which are its children, then we have the inequalities
		\begin{equation} \label{eq:fluxincreasesthroughbiconnectedcomponents}
			\sum_{i = 1}^n i_{\mathrm{in}}(x_{\mathrm{cut} , i}) \geq i_{\mathrm{out}}(x_{\mathrm{cut} , 0}, [x]) 
		\end{equation}
		and if we denote by $[x], [x_1], \ldots, [x_m]$ the children of $x_{\mathrm{cut},0}$ (since we are in two dimensions, we have that $m \leq 3$), then we have
		\begin{equation}\label{eq:fluxincreasesthroughbiconnectedcomponents2}
			i_{\mathrm{out}}(x_{\mathrm{cut} , 0}, [x]) + \sum_{i=1}^m i_{\mathrm{out}}(x_{\mathrm{cut} , 0}, [x_i]) \geq i_{\mathrm{in}}(x_{\mathrm{cut} , 0}).
		\end{equation}
		To prove~\eqref{eq:fluxincreasesthroughbiconnectedcomponents}, we denote by $X := [x] \setminus \{ x_{\mathrm{cut},n}, x_{\mathrm{cut},1} , \ldots, x_{\mathrm{cut},n} \}$ the biconnected component after removing the cut-vertices. We then use the harmonicity of the corrected plane on the infinite cluster, perform a discrete integration by parts and obtain
		\begin{equation} \label{eq:flux0}
			0 = \sum_{y \in X} \Delta_{\mathscr{C}_\infty} \ellp{e_1}(y) = \sum_{\substack{z \in \mathscr{C}_\infty \setminus  X, y \in [x] \\ z \sim y}} \left( \ellp{e_1}(z) - \ellp{e_1}(y) \right).
		\end{equation}
		The sum on the right-hand side can be decomposed into two terms, depending on whether the vertex $z$ belongs to the graph $\mathcal{G}$ or not. In the case where the vertex $z$ belongs to the graph $\mathcal{G}$, we use the definitions of the incoming flux of a cut-vertex and obtain the identity
		\begin{equation} \label{eq:flux1}
			\sum_{\substack{z \in \mathcal{G} \setminus X, y \in X \\ z \sim y}} \left( \ellp{e_1}(z) - \ellp{e_1}(y) \right) = \sum_{i = 1}^n i_{\mathrm{in}}(x_{\mathrm{cut} , i}) - i_{\mathrm{out}}(x_{\mathrm{cut} , 0}, [x]) .
		\end{equation}
		For the pair of vertices $z$ and $y$ satisfying $z \sim y$, $y \in [x]$, and $z \notin \mathcal{G}$, we observe that, since $[x]$ is not a leaf of the tree $\mathcal{G}_{\mathrm{tree}}$, we must that $\ellp{e_1} (z) \leq \ellp{e_1}(y)$. Indeed, if we had $\ellp{e_1} (z) > \ellp{e_1}(y)$ then either $z \in L_a$ and the definition of the graph $\mathcal{G}$ would imply $z \in \mathcal{G}$ which is a contradiction, or $z \notin L_a$ and in that case the edge $(y,z)$ would satisfy~\eqref{eq:sensitivityprop}, and $[x]$ would thus be a leaf of $\mathcal{G}_{\mathrm{tree}}$ (by construction), which is also a contradiction. Collecting the previous observations, we see that
		\begin{equation} \label{eq:flux2}
			\sup_{\substack{z \in \mathscr{C}_\infty \setminus \mathcal{G} , y \in [x] \\ z \sim y}} \left( \ellp{e_1}(z) - \ellp{e_1}(y) \right) \leq 0.
		\end{equation}
		Combining~\eqref{eq:flux0},~\eqref{eq:flux1} and~\eqref{eq:flux2} completes the proof of~\eqref{eq:fluxincreasesthroughbiconnectedcomponents}. 
		Similarly, we use the definitions of the incoming and outgoing flux and the inequality~\eqref{eq:flux2} to obtain 
		\begin{align*}
			0 = \Delta_{\mathscr{C}_\infty} \ellp{e_1}(x_{\mathrm{cut} , 0}) & =  i_{\mathrm{out}}(x_{\mathrm{cut} , 0} , [x] ) - i_{\mathrm{in}}(x_{\mathrm{cut} , 0}) + \sum_{\substack{z \in \mathscr{C}_\infty \setminus \mathcal{G} \\ 
					z \sim x_{\mathrm{cut} , 0}}} \left( \ellp{e_1}(z) - \ellp{e_1}(x_{\mathrm{cut} , 0}) \right) \\
			& \leq i_{\mathrm{out}}(x_{\mathrm{cut} , 0}, [x]) - i_{\mathrm{in}}(x_{\mathrm{cut} , 0}),
		\end{align*}
		to prove \eqref{eq:fluxincreasesthroughbiconnectedcomponents2}, completing this step. 
		\medskip
		
		{\it Step 4: Regularity of the block-cut tree $\mathcal{G}_{\mathrm{tree}}$.} \\
		In this step, we prove the following properties on the block-cut tree $\mathcal{G}_{\mathrm{tree}}$:
		\begin{enumerate}
			\item[(i)] The number of (potentially infinite) branches of the tree $\mathcal{G}_{\mathrm{tree}}$ is bounded by a constant $C_{\mathrm{branches}}$ depending only on the support of $f$.
			\item[(ii)] If we let $[x]$ be a biconnected component of $\mathcal{G}_{\mathrm{tree}}$ (which is neither the root nor one of the leaves), denote by $x_{\mathrm{cut} , 0}$ its parent in $\mathcal{G}_{\mathrm{tree}}$ and $x_{\mathrm{cut} , 1}$ one of its children in $\mathcal{G}_{\mathrm{tree}}$ 
			then we have the inequality
			\begin{equation*}
				\ellp{e_1} (x_{\mathrm{cut} , 1}) \geq \ellp{e_1} (x_{\mathrm{cut} , 0}) + \frac{i_{\mathrm{in}}(x_{\mathrm{cut} , 1}) }{4}.
			\end{equation*}
		\end{enumerate}
		We defer the proof of (i) to Lemma \ref{lemma:finite-branches} below. To prove (ii), fix a biconnected component  $[x] \in \mathcal{G}_{\mathrm{tree}}$ (which is neither the root nor a leaf), let $x_{\mathrm{cut} , 0}$ be its parent in $\mathcal{G}_{\mathrm{tree}}$ and $x_{\mathrm{cut} , 1}$ one of its children. Using the definition of the incoming flux $i_{\mathrm{in}}(x_{\mathrm{cut} , 1})$ and the observation that any vertex has at most $4$ neighbors (in $2$ dimensions), we deduce that there exists $y \in [x]$ with $y \sim x_{\mathrm{cut} , 1}$ such that
		\begin{equation*}
			\ellp{e_1}(x_{\mathrm{cut} , 1}) - \ellp{e_1} (y) \geq \frac{i_{\mathrm{in}}(x_{\mathrm{cut} , 1}) }{4}.
		\end{equation*}
		Next using the definition of the graph $\mathcal{G}$, we know that there exists a path $\gamma$ going from the initial vertex $0$ to the vertex $y$ along which the corrected plane increases. Since $x_{\mathrm{cut}, 0}$ is a cut-vertex of the graph $\mathcal{G}$, the path $\gamma$ must pass through $x_{\mathrm{cut}, 0}$ in order to arrive to $y$. Since the corrected plane is increasing along $\gamma$, we deduce that
		\begin{equation} \label{eq:increasevaluefluxcorr}
			\ellp{e_1}(x_{\mathrm{cut} , 1}) \geq \ellp{e_1} (y) + \frac{i_{\mathrm{in}}(x_{\mathrm{cut} , 1}) }{4} \geq \ellp{e_1} (x_{\mathrm{cut}, 0}) + \frac{i_{\mathrm{in}}(x_{\mathrm{cut} , 1}) }{4},
		\end{equation}
		which is the desired inequality.

		{\it Step 5: Exploring a path of large flux in the block-cut tree} \\
		We have now established all the preliminary ingredients to construct a short path in the block-cut tree which ends at a sensitive edge. This path is constructed
		via an exploration of the block-cut tree, which we show (in the next step) has to end quickly. 
		
		Denote by $[x_{\mathrm{root}}]$ the root of $\mathcal{G}_{\mathrm{tree}}$ (\ie, the biconnected component of $\mathcal{G}_{\mathrm{tree}}$ containing the edge $x +  e_1$). We then let $\mathcal{L}$ be the collections of all the leafs of the tree $\mathcal{G}_{\mathrm{tree}}$ together with the root $[x_{\mathrm{root}}]$. By point (i) in Step 4, we have the following identity
		\begin{equation} \label{eq:bound.generaltree}
			(\deg ([x_{\mathrm{root}}]) - 1) + \sum_{v \in \mathcal{G}_{\mathrm{tree}} \setminus \mathcal{L}} (\deg (v) - 2) \leq C_{\mathrm{branches}}.
		\end{equation}
		Let us denote by $x_{\mathrm{root}, 1}, \ldots, x_{\mathrm{root}, \deg ([x_{\mathrm{root}}])}$ the cut-vertices which are the children of the root $[x_{\mathrm{root}}]$ in the tree $\mathcal{G}_{\mathrm{tree}}$. Using the assumption $\ellp{e_1} (e_1) - \ellp{e_1} (0) \geq C_\chi$, we have that
		\begin{equation*}
			\sum_{i = 1}^{\deg ([x_{\mathrm{root}}])} i_{\mathrm{in}}(x_{\mathrm{root},i}) \geq C_{\chi}.
		\end{equation*}
		The previous inequality implies that there exists a cutpoint, which we may choose without loss of generality to be $x_{\mathrm{root},1}$ such that
		\begin{equation*}
			i_{\mathrm{in}}(x_{\mathrm{root},1}) \geq \frac{C_{\chi}}{\deg ([x_{\mathrm{root}}])}.
		\end{equation*}
		We then construct a path $[\gamma] \subseteq \mathcal{G}_{\mathrm{tree}}$ in the tree $\mathcal{G}_{\mathrm{tree}}$ so as to maximize the flux of the corrected plane along the path. It is defined according to the following iterative procedure:
		\begin{itemize}
			\item The path $[\gamma]$ starts from the root $[x_{\mathrm{root}}]$ and first visit the cut-vertex $x_{\mathrm{root},1}$;
			\item If $[\gamma]$ visits a cut-vertex $x_{\mathrm{cut}} \in \mathcal{G}_{\mathrm{tree}}$, we then select the biconnected component $[x]$ which maximises the outgoing flux $i_{\mathrm{out}}(x_{\mathrm{cut}} , [x])$ among all the biconnected components which are the children of $x_{\mathrm{cut}}$ (and use an arbitrary criterion to break ties). We then extend the path $[\gamma]$ so that it visits this biconnected component.
			\item If $[\gamma]$ visits a biconnected component  $[x] \in \mathcal{G}_{\mathrm{tree}}$, we select the cut-vertex $x_{\mathrm{cut}}$ which maximises the incoming flux $i_{\mathrm{in}}(x_{\mathrm{cut}})$ among all the cut-vertices which are the children of $[x]$ (and break ties using an arbitrary criterion). We then extend the path $[\gamma]$ so that it visits the cut-vertex $x_{\mathrm{cut}}$. If there is no such cutpoint, then the path $[\gamma]$ has reached a leaf of the tree $\mathcal{G}_{\mathrm{tree}}$ and we stop the iterative construction.
		\end{itemize}
		The path $[\gamma]$ is constructed so as to satisfy the following inequality: for any biconnected component $[x] \in \mathcal{G}_{\mathrm{tree}}$ which is neither the root nor a leaf, if we denote by $x_{\mathrm{cut},0}$ the parent of $[x]$ in $\mathcal{G}_{\mathrm{tree}}$ and $x_{\mathrm{cut},1}$ the child of $[x]$ which belongs to the path $[\gamma]$, then
		\begin{equation*}
			i_{\mathrm{in}} (x_{\mathrm{cut}, 1}) \geq \frac{i_{\mathrm{out}} (x_{\mathrm{cut}, 0}, [x])}{\deg ([x]) - 1}.
		\end{equation*}
		Similarly, for any cut-vertex $x_{\mathrm{cut}} \in \mathcal{G}_{\mathrm{tree}}$, if we denote by $[x]$ the child of $x_{\mathrm{cut}}$ which belongs to the path $[\gamma]$, then we have
		\begin{equation*}
			i_{\mathrm{out}} (x_{\mathrm{cut}} , [x])  \geq \frac{i_{\mathrm{in}}(x_{\mathrm{cut}})}{\deg (x_{\mathrm{cut}}) - 1}.
		\end{equation*}
		Combining the two previous inequalities, we obtain the following (crude) lower bound: for any cut-vertex $x_{\mathrm{cut}} \in [\gamma]$,
		\begin{equation*}
			i_{\mathrm{in}}(x_{\mathrm{cut}}) \geq \frac{C_{\chi}}{\deg([x_{\mathrm{root}}]) \prod_{v \in \mathcal{G}_{\mathrm{tree}} \setminus \mathcal{L}} (\deg(v) - 1)}.
		\end{equation*}
		Using the upper bound $e^{x - 1} \geq x$ valid for any $x \geq 1$ together with the lower bound~\eqref{eq:bound.generaltree}, we deduce that
		\begin{align} \label{eq:lowerboundiin1410}
			i_{\mathrm{in}}(x_{\mathrm{cut}}) & \geq C_{\chi} \exp \left( - ( \deg([x_{\mathrm{root}}]) - 1) - \sum_{v \in \mathcal{G}_{\mathrm{tree}} \setminus \mathcal{L}} (\deg(v) - 2) \right) \\
			& \geq C_{\chi} \exp \left( -C_{\mathrm{branches}} \right). \notag
		\end{align}

		{\it Step 6: The exploration cannot be too long.} \\
		We now conclude by showing the path $[\gamma]$ constructed in the previous step must be short. We distinguish between two cases depending on whether the path $[\gamma]$ is finite or infinite. 
		
		\medskip
		
		\textbf{Case 1: the path $[\gamma]$ is finite.} We let $N \in \N$ be the number of cut-vertices of $[\gamma]$, and denote these cut-vertices by $x_{\mathrm{cut} , 1}, \ldots , x_{\mathrm{cut} , N}$. We then denote by $[x_{\mathrm{root}}], [x_{1}], \ldots, [x_{N}]$ the biconnected components of $[\gamma]$ (since the cut-vertices and the biconnected components alternate along $[\gamma]$, there must be $N+1$ biconnected components). In particular, the path $[\gamma]$ can be written as follows $[\gamma] := ([x_{\mathrm{root}}], x_{\mathrm{cut}, 1}, [x_{1}], \ldots, x_{\mathrm{cut} , N}, [x_{N}])$.

		We then observe that the endpoint $ [x_{N}]$ of $[\gamma]$ must be a leaf of $\mathcal{G}_{\mathrm{tree}}$ (otherwise the iterative construction would continue). As mentioned above and by construction of the tree $\mathcal{G}_{\mathrm{tree}}$, there exists an edge $e = (y , z) \in E \left( \mathscr{C}_\infty\right)$ (with $y \in [x_N]$ and $z \notin [x_N]$) on the boundary of the biconnected component $[x_{N}]$ such that the sensitivity condition~\eqref{eq:sensitivityprop} is satisfied. We then consider a path $\gamma \subseteq \mathcal{G}$ connecting the edge $(x_0, x_0+e_1)$ to the edge $e$ along which the corrected plane is increasing (the existence of this path is guaranteed by the definition of the graph $\mathcal{G}$). Additionally, the path $\gamma$ must pass through the cut-vertices $x_{\mathrm{cut}, 1}, \ldots, x_{\mathrm{cut} , N}$. We may thus decompose the difference of the corrected plane between the vertices $x_0$ and $y$ as follows
		\begin{align*}
			\ellp{e_1}(y) - \ellp{e_1}(x_0) 
			& \geq (\ellp{e_1}(x_{\mathrm{cut}, 1}) - \ellp{e_1}(x_0)) + (\ellp{e_1}(y) - \ellp{e_1}(x_{\mathrm{cut}, N})) \\
			& \qquad +  \sum_{i = 1}^{N-1} \ellp{e_1}(x_{\mathrm{cut}, i+1}) - \ellp{e_1}(x_{\mathrm{cut}, i}) \\
			&\geq \sum_{i = 1}^{N-1} \ellp{e_1}(x_{\mathrm{cut}, i+1}) - \ellp{e_1}(x_{\mathrm{cut}, i}),
		\end{align*}
		where in the second inequality, we used that the first two terms $(\ellp{e_1}(x_{\mathrm{cut}, 1}) - \ellp{e_1}(x_0))$ and $(\ellp{e_1}(y) - \ellp{e_1}(x_{\mathrm{cut}, N}))$ are nonnegative since the corrected plane is increasing along the path~$\gamma$. We next deduce that
		\begin{align*}
			\ellp{e_1}(y) - \ellp{e_1}(x_0) & \geq \frac{1}{4}\sum_{i=1}^{N-1} i_{\mathrm{in}} (x_{\mathrm{cut}, i})  \quad \mbox{(by ~\eqref{eq:increasevaluefluxcorr})} \\
			& \geq \frac{(N-1)}{4} C_{\chi} \exp \left( -C_{\mathrm{branches}} \right) \quad \mbox{(by ~\eqref{eq:lowerboundiin1410})} .
		\end{align*}
		We next recall the formula for the constant $C_\chi$
		\begin{equation} \label{eq:choiceCchi}
			C_{\chi} := \frac{100 R^2 \exp \left( C_{\mathrm{branches}} \right)}{c_1},
		\end{equation}
		the definition of the constant $K_0$ and the one of the event $G(x_0 , C_\chi, K_0)$. We will show the following implication
		\begin{equation} \label{eq:comparingNandepsionl}  \mbox{the good event } G(x_0, C_\chi, K_0) \mbox{ holds and } |y - x_0| \geq C_0 ~ \implies ~  (N-1) \geq \frac{c_1 |y-x_0|}{8 R^2},
		\end{equation}
		which, together with the previous inequality allows us to conclude. Indeed, by the previous inequality and the definition of the good event $G(x_0 , C_\chi, K_0)$, we obtain that, if $|y-x_0| \geq C_0$, then
		\begin{equation*}
			2 |y-x_0| \geq  \ellp{e_1}(y) - \ellp{e_1}(x_0) \geq \frac{(N-1)}{4} C_{\chi} \exp \left( -C_{\mathrm{branches}} \right) \geq \frac{c_1 |y-x_0|}{32 R^2} C_{\chi} \exp \left( -C_{\mathrm{branches}} \right).
		\end{equation*}
		Using the definition of $C_\chi$, this implies
		\begin{equation*}
			2 |y-x_0|  \geq 3 |y-x_0|,
		\end{equation*}
		which is a contradiction (as $|y-x_0| \geq 1$). We have thus obtained that $|y-x_0| \leq C_0$ which is the desired conclusion.
		
		It only remains to prove the lower bound~\eqref{eq:comparingNandepsionl}. We start with the following observation: if $\left| \gamma \right| \geq 4R^2$ and if there exists an index $i \in \{ 1 , \ldots, \left|  \gamma \right| \}$ such that the event $E(\gamma(i))$ holds, then there exists an index $j \in \{ 1 , \ldots, \left|  \gamma \right| \}$ with $|i - j| \leq 4 R^2$ such that $\gamma(i)$ is a cut-vertex of $\mathcal{G}_{\mathrm{tree}}$. To justify this observation, let us assume that the event $E(\gamma(i))$ holds and that $\gamma (i)$ is not a cut-vertex (otherwise the claim is proved). We then denote by $[\gamma(i)]$ the biconnected component containing $\gamma(i)$ (this component is unique since $\gamma(i)$ is not a cut-vertex). If for any index $j \in \{ 1 , \ldots, \left|  \gamma \right| \}$ with $|i - j| \leq 4 R^2$, the vertex $\gamma(j)$ belongs to the biconnected component $[\gamma(i)]$, then this biconnected component would have a diameter larger than $R$ (as in two dimensions, the diameter of any connected set of cardinality larger than $4R^2$ is larger than $R$). Since the event $E(\gamma(i))$ holds, this implies that there are more than $C_f+1$ distinct paths going from the biconnected component $[\gamma(i)]$ to infinity, which yields a contradiction by Proposition~\ref{prop:boundary-size-of-set}.

		Using that the good event $G(x_0, C_\chi, K_0)$ holds, we can apply the definition of the event $G_1(x_0 , K_0)$ stated in~\eqref{eq:defG1} to the path $\gamma$ and obtain that
		\begin{equation*}
			\sum_{i = 1}^{|\gamma|} \indc_{E(\gamma(i))} \geq c_1 |y-x_0|.
		\end{equation*}
		Using the definition of the constant $C_0$, we see that $c_1 |y-x_0| \geq c_1 C_0 \geq 8 R^2$. This implies that $\left| \gamma \right| \geq 8R^2 \geq 4R^2$. We are thus able to use the previous observation to deduce that
		\begin{equation*}
			N \geq \frac{c_1 |y-x_0|}{4 R^2} \geq 2.
		\end{equation*}
		In particular, the previous inequality implies $N - 1 \geq N/2$ (as $N \geq 2$). The proof of the inequality~\eqref{eq:comparingNandepsionl} is complete.
		
		\medskip
		\textbf{Case 2: the path $[\gamma]$ is infinite.}
		In this case, we may find a point $y$ which belongs to the path $\gamma$ such that $|y| \geq C_0$ and $ \ellp{e_1}(y) - \ellp{e_1}(x_0) \geq \frac{(N-1)}{4} C_{\chi} \exp \left( -C_{\mathrm{branches}} \right)$ and then conclude in the exact same way as the finite case. \end{proof}

	\subsubsection{Bounding the number of branches in the block-cut tree}
	
	\begin{figure}
		\centering
		\includegraphics[width=\textwidth]{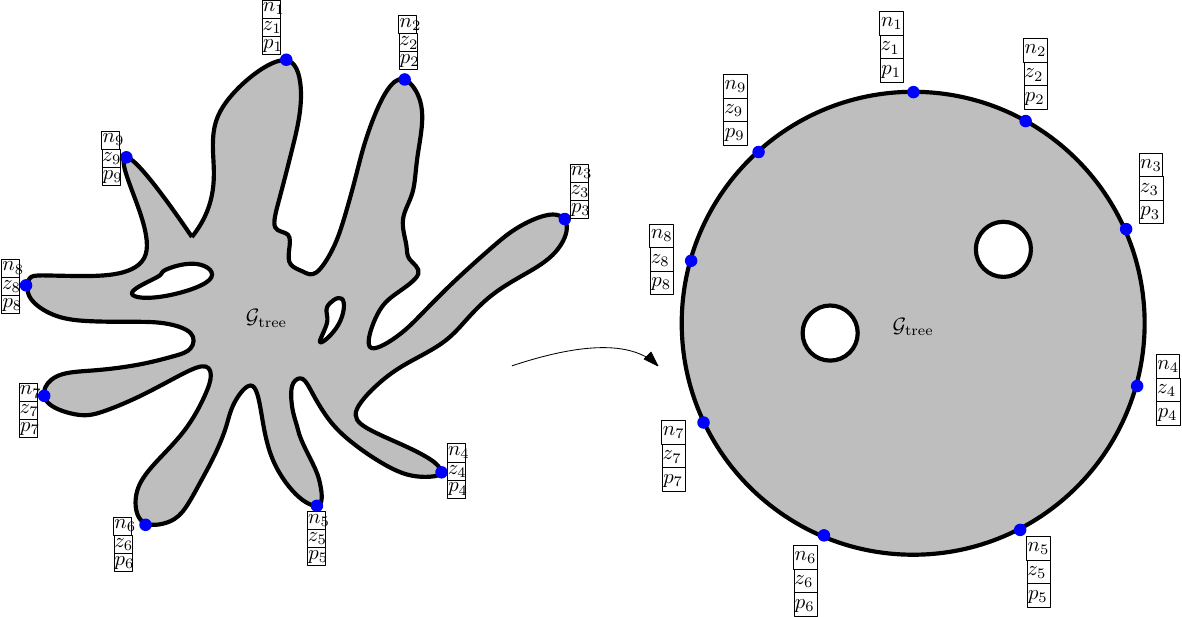} 
		\caption{Transformation of a finite block-cut tree along with points on its leaves.} \label{fig:block-cut-homeomorphism}
	\end{figure}
	
	\begin{figure}
		\centering
		\includegraphics[width=0.45\textwidth]{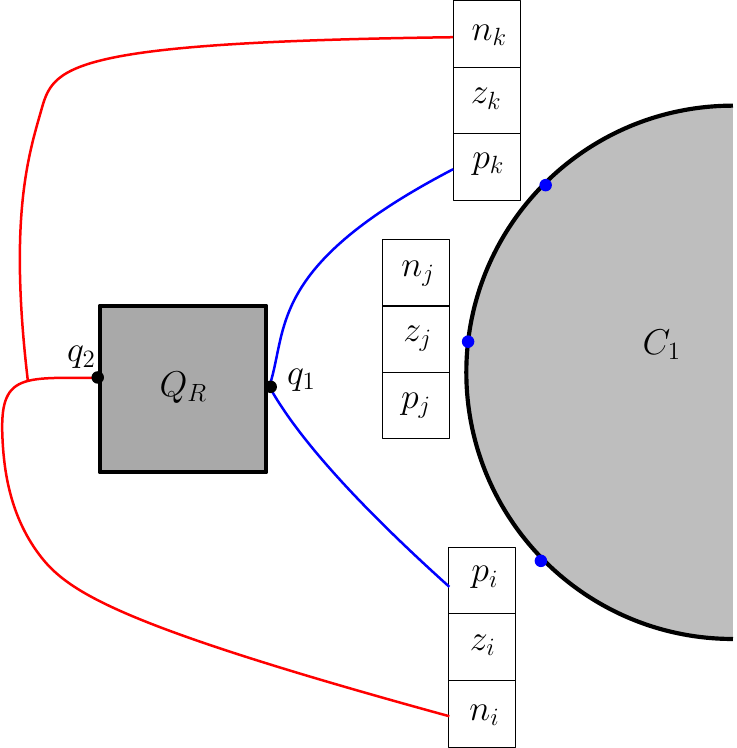} 
		\caption{If there are too many branches, a leaf in the block-cut tree will get trapped.}
		\label{fig:trap-block-cut-tree}
	\end{figure}

	\begin{figure}
		\centering
		\includegraphics[width=0.45\textwidth]{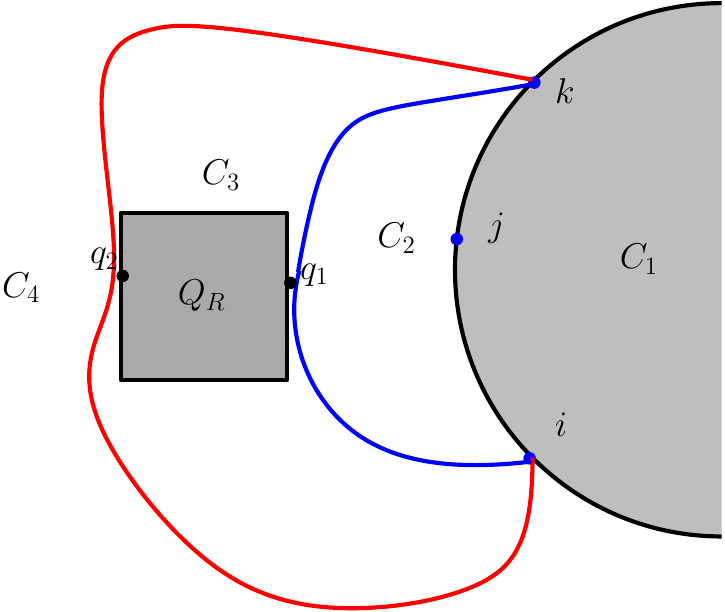} 
		\caption{The support of $f$ must lie in $C_3$, disconnecting it from $j$. }
		\label{fig:trap-block-cut-tree-loops}
	\end{figure}

	We use the objects and notation of the above proof and recall that we set $C_{\mathrm{branches}} := 2 R^3$ and that the constant $R$ is selected sufficiently large (and allowed to depend on $f$) so that $f = 0$ outside the box $Q_{R/2}$.
	
	\begin{lemma} \label{lemma:finite-branches}
		The tree $\mathcal{G}_{\mathrm{tree}}$ has at most $C_{\mathrm{branches}}$ branches.
	\end{lemma}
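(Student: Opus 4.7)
The plan is to argue by contradiction via a topological argument relying on planarity. Suppose $\mathcal{G}_{\mathrm{tree}}$ has more than $2R^3$ branches. Each branch, read from the root $[x_{\mathrm{root}}]$ to its endpoint, either (i) terminates at a leaf $[x]$ which, by the very construction of $\mathcal{G}_{\mathrm{tree}}$, admits a neighbor $z \in \mathscr{C}_\infty \setminus L_a$ with $\ellp{e_1}(z) \neq \ellp{e_1}(y)$ for some $y \in [x]$ (a ``sensitive'' boundary edge), or (ii) extends to an infinite ray in the tree which unfolds to an infinite path in $\mathcal{G}$ along which $\ellp{e_1}$ is strictly increasing.

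First, I would promote each branch to a concrete path in $\Z^2$: concatenate the biconnected components and cut-vertices of the branch to produce a connected subset of $\mathcal{G}$, then extend it either by a sensitive boundary edge (finite case) or by the associated infinite $\mathcal{G}$-path (infinite case). Because distinct branches of a rooted block-cut tree diverge at common cut-vertices and because $\mathcal{G} \subseteq \Z^2$ is planar, these paths can be arranged to be internally disjoint away from their shared initial segments and inherit a natural cyclic planar order around the root; see Figure~\ref{fig:block-cut-homeomorphism}. Pairing consecutive branches in this cyclic order together with auxiliary arcs through the tree, and working on $\mathbb{S}^2$ via the stereographic projection and Lemma~\ref{lemma:jordan-curves-theorem}, produces a family of at least $N$ pairwise disjoint open regions of $\mathbb{S}^2$, one per pair of consecutive branches.

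Next, applying the pigeonhole principle with the crude bound $|\supp f| \leq |Q_{R/2}| \leq R^2$, with more than $2R^3$ branches at least one of these regions contains no pole of $f$. The branches bounding this region then form a ``trap'' (Figures~\ref{fig:trap-block-cut-tree} and~\ref{fig:trap-block-cut-tree-loops}). For an infinite trapped branch, the connected component of $L_0$ containing it would have to host enough disjoint infinite paths to realize both the trapping boundary and the trapped ray, exceeding the bound $|\supp f|+2$ of Proposition~\ref{prop:infinite-disjoint-paths-limit} once $R$ is large. For a finite trapped leaf, the biconnected set $S$ obtained by joining the two bounding branches through the root has its finite complementary components disjoint from $\supp f$, yet the trapped branch itself together with the bounding branches supplies more than $C_f$ disjoint paths from $S$ to infinity (or to a vertex outside $L_a$ carried by the sensitive edge, which can then be extended to infinity), contradicting Proposition~\ref{prop:boundary-size-of-set}.

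The main obstacle will be formalizing the planar embedding of $\mathcal{G}_{\mathrm{tree}}$ and the associated bookkeeping: one must carefully handle cut-vertices shared between several biconnected components, verify that auxiliary arcs closing the Jordan curves avoid $\supp f$ in a controlled way, and ensure the sensitive edges at leaves (or the infinite rays at infinite ends) genuinely give rise to the additional disjoint paths required to invoke Proposition~\ref{prop:boundary-size-of-set} or Proposition~\ref{prop:infinite-disjoint-paths-limit}. Once these planarity issues are dealt with, the counts $2R^3$ and $C_f$ match up to absolute constants, yielding $C_{\mathrm{branches}}=2R^3$ as claimed.
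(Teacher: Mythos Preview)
Your treatment of the infinite branches is essentially what the paper does: infinite branches of $\mathcal{G}_{\mathrm{tree}}$ unfold to disjoint infinite paths in $\mathcal{G}\subseteq L_0$, and Proposition~\ref{prop:infinite-disjoint-paths-limit} caps their number by $|\supp f|+2\le R^3$. So far so good.

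The gap is in the finite-branch case. You try to reduce to Proposition~\ref{prop:boundary-size-of-set}, but that proposition requires a \emph{biconnected} subset $S\subseteq L_a$ together with many disjoint paths in $\mathscr{C}_\infty$ from $S$ to \emph{infinity}. Neither ingredient is available from your construction. The object ``two bounding branches joined through the root'' is tree-shaped, not biconnected. More importantly, a finite branch ends at a leaf $[x]$ equipped with a single sensitive edge $(y,z)$ with $z\notin L_a$; this gives one step out of the level set, not a path to infinity, and your clause ``which can then be extended to infinity'' is exactly the hard part --- any such extension lives in $\mathscr{C}_\infty$ and has no reason to stay disjoint from the other branches, from $S$, or from $\supp f$. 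So the hypothesis of Proposition~\ref{prop:boundary-size-of-set} is never verified, and the contradiction does not close.

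The paper does not invoke Proposition~\ref{prop:boundary-size-of-set} here at all. Instead it repeats, at the level of leaves, the positive/negative neighbor mechanism already used in the proofs of Lemma~\ref{lemma:no-thick-paths} and Proposition~\ref{prop:boundary-size-of-set}: at each leaf one finds $z_i\in L_a$ with neighbors $p_i$ in $\{u_f>a\}$ and $n_i$ in $\{u_f<a\}$, and by Lemma~\ref{lemma:poles-sub-super-level-sets} each of $p_i,n_i$ is connected inside its super/sub-level set to $\partial Q_R$. One then prunes the infinite branches, maps the resulting finite planar object to a disk (so that the leaves acquire a cyclic order), and with more than $R^3$ leaves pigeonholes three indices $i<k<j$ whose $p$-components and $n$-components hit the \emph{same} points $q_1,q_2\in\partial Q_R$. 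The Jordan-curve bookkeeping (your Figures~\ref{fig:trap-block-cut-tree}--\ref{fig:trap-block-cut-tree-loops}) then traps either $p_k$ or $n_k$ away from $Q_R$, contradicting Lemma~\ref{lemma:poles-sub-super-level-sets}. The missing idea in your sketch is precisely this use of the \emph{pair} $(p_i,n_i)$ at each leaf together with Lemma~\ref{lemma:poles-sub-super-level-sets} to manufacture paths that genuinely reach $\partial Q_R$; once you have that, the planar trapping argument you outline goes through.
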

	\begin{proof}
		By Proposition~\ref{prop:infinite-disjoint-paths-limit}, the tree $\mathcal{G}_{\mathrm{tree}}$ cannot have more than $(\left| \supp f \right| + 2) \leq R^3$ infinite branches. It is thus sufficient to prove that the tree $\mathcal{G}_{\mathrm{tree}}$ cannot have more than $R^3$ finite branches (or equivalently, more than $R^3$ leafs).
		
		Let $Q_{R/2}$ be a box containing the support of $f$. Suppose for sake of contradiction there were more than
		$R^3$ finite branches in $\mathcal{G}_{\mathrm{tree}}$. As observed above, at the end of each finite branch 
		is an edge $(z_i, p_i)$ where $\nabla u_f((z_i, p_i)) \neq 0$. We may assume that $u_f(z_i) = a$ and $u_f(p_i) > a$. 
		Since $z_i$ is not in the support of $f$, $u_f$ is harmonic at $z_i$ and therefore there is a neighbor $n_i \sim z_i$
		for which $u_f(n_i) < a$.
		
		Now, take the tree and prune the infinite branches so that what is left is a block-cut tree 
		with only $R^3$ finite branches, we refer to this tree as $\mathcal{G}_{\mathrm{tree}, f}$. Perform this pruning so that the leaves at the end of the finite branches are still leaves of $\mathcal{G}_{\mathrm{tree}, f}$. We then denote by $\mathcal{G}_{f} \subseteq \Z^2$ the union of all the vertices contained in a biconnected component of $\mathcal{G}_{\mathrm{tree}, f}$.

		We next observe that the vertices $n_i$ and $p_i$ belong to the infinite complementary connected component of $\mathcal{G}_{f}$. 
		Indeed, it was observed above that any finite connected component of $\mathscr{C}_{\infty} \setminus \mathcal{G}$ is included in the level set.
		
		We then move to the continuum in order to apply Jordan curve theorem and let $C_1 \subseteq \R^2$ denote the interior of the complement of the infinite complementary connected component of the set $(\mathcal{G}_{f} + [-1/2 , 1/2]^2) \subseteq \R^2$. By the Jordan curve theorem, Lemma \ref{lemma:jordan-curve-theorem-on-s2}, there is a homeomorphism from $C_1$ to the unit disk -- see Figure \ref{fig:block-cut-homeomorphism}. We use this to order the points on the boundary of $C_1$ according to their angle on the unit disk.  Also, we have that $Q_R$ is disjoint from $C_1$.

		By the pigeonhole principle and Lemma \ref{lemma:poles-sub-super-level-sets}, there are three sites $(n_i, p_i, z_i)$, $(n_k, p_k, z_k)$ and $(n_j, p_j, z_j)$ so that both triples $(p_i, p_k, p_j)$ and $(n_i, n_k, n_j)$ are connected to the points $q_1$ and $q_2$ respectively on $\partial Q_R$. Assume that $i < k < j$.  This leads to either $p_k$ or $n_k$ being disconnected from $Q_R$, which is a contradiction --- see Figure \ref{fig:trap-block-cut-tree}.
		
		In particular, referring to Figure \ref{fig:trap-block-cut-tree-loops}, this construction leads to two loops, 
		one containing $(k, i, j, q_1)$ and another containing $(k, q_2, i)$. By the Jordan curve theorem, these loops 
		correspond to three connected components, $C_2, C_3, C_4$, which together with $C_1$ partition $\R^2$. By considering cases for the location of $Q_R$, as in the end of the proof of Lemma \ref{lemma:no-thick-paths}, we must have $Q_R$ be fully contained in $C_3$; however, this disconnects it from $j$. 
	\end{proof}

	\section{Abelian sandpile} \label{sec:abelian-sandpile}
	A sandpile is a function from the set of vertices of an undirected graph to the nonnegative integers. 
	The value of the function is to be thought of as the number of chips or grains at 
	that vertex. A vertex is {\it unstable} if it has at least as many chips as it has edges, 
	in which case it {\it topples}, giving one chip to each of the vertices it shares an edge with. 
	With appropriate boundary conditions, sandpiles can be {\it stabilized}, that is, unstable vertices topple until every vertex is stable
	and the order in which unstable vertices topple does not affect the final, stable configuration. 
	
	In this section, we consider two aspects of the Abelian sandpile model. First, we briefly exposit, in Section \ref{subsec:sandpile-limit-shape}, the large scale behavior of sandpiles on infinite graphs and explain the connection to Theorem \ref{theorem:integer-valued-linear-growth}. The rest of this section concerns the {\it Abelian sandpile Markov chain}.
	The chain is introduced in Section \ref{subsec:sandpile-chain}. 
	We then discuss the connection to Theorem \ref{theorem:fast-decay-stronger} 
	and also give an argument which shows, despite this result, that the Abelian sandpile Markov chain mixes (up to constants) as slowly on the supercritical percolation cluster as it does on the full lattice. 
	
	This is mostly an expository section and several open questions are presented. The only new material is Theorem \ref{theorem:slow-mixing-percolation} which is a mixing result for the sandpile on the percolation cluster.

	\subsection{Sandpile growth}  \label{subsec:sandpile-limit-shape}
	Condition the origin to be contained in the infinite cluster $\mathscr{C}_{\infty}$ of $\Z^d$. Start with $n$ chips at the origin in the cluster and topple
	unstable vertices until every vertex is stable. Denote the final collection of chips by $s_n: \mathscr{C}_{\infty} \to \{0, \ldots, \deg_{\mathscr{C}_{\infty}} -1 \}$.
	Simulations show that when $n$ is large the support of $s_n$ approximates a Euclidean ball intersected with the cluster --- this has been documented by Sadhu and Dhar \cite{sadhu2011effect}. As we explain further below, Theorem \ref{theorem:integer-valued-linear-growth}
	and Proposition \ref{prop:rational-matrix} provides some evidence that this indeed occurs. 
	\begin{conj} \label{conj:sandpile-limit-shape}
		Suppose $\mathfrak{p} \in (\mathfrak{p}_c(d), 1)$. 
		Almost surely, as $n \to \infty$, the support of the sequence of rescaled sandpiles
		$\overline{s}_n(x) := s_n([n^{1/d} x])$ converges in $\R^d$ to a ball.  
		Moreover, $\overline{s}_n$ converges weakly-* to a function which takes on two values; $\bar{s}: \R^d \to \{0, s_{\mathfrak{p}}\}$
		where $s_{\mathfrak{p}} > 0$ is a constant depending only on $\mathfrak{p}$ and $d$. 
	\end{conj}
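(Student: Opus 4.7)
The plan is to follow the Pegden--Smart framework for the sandpile on $\Z^d$, as developed in \cite{pegden-smart-sandpile-2013, bourabee-sandpile-2021}, but replace its lattice-specific obstacle-problem analysis by a homogenized version that exploits the rigidity results of this paper. Let $u_n : \mathscr{C}_\infty \to \N$ be the odometer function of $s_n$, that is, the number of times each vertex topples during stabilization. Then $u_n$ is the least nonnegative function on $\mathscr{C}_\infty$ satisfying the obstacle inequality
\begin{equation*}
\Delta_{\mathscr{C}_\infty} u_n(x) \geq -n \delta_0(x) - (\deg_{\mathscr{C}_\infty}(x) - 1),
\end{equation*}
subject to $s_n = n\delta_0 + \Delta_{\mathscr{C}_\infty} u_n + (\deg_{\mathscr{C}_\infty} - 1)$ being at most $\deg_{\mathscr{C}_\infty} - 1$. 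The rescaling $\bar u_n(x) := n^{-2/d} u_n(\lfloor n^{1/d} x \rfloor)$ is the natural candidate for a continuum limit.

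First I would show tightness and pass to the limit: using the quantitative homogenization results (Theorem~\ref{theorem:large scale-regularity} and Proposition~\ref{prop:homogenizationellipticgreen}) together with the Green's-function representation $u_n \leq n G(0, \cdot)$, one obtains deterministic upper and lower bounds on $\bar u_n$ on scales above the minimal scales. Along subsequences, $\bar u_n$ converges locally uniformly, almost surely, to some $\bar u : \R^d \to [0, \infty)$. The convergence is strong enough, via Caccioppoli, that the normalized gradient $\nabla \bar u_n$ converges in $L^2$ after corrector correction, which identifies the limit PDE for $\bar u$.

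The heart of the argument is the characterization of $\bar u$ as the solution of the \emph{classical} obstacle problem
\begin{equation*}
\bar u = \sup \{ v : v \leq \tfrac{1}{2d} \bar{\mathbf{a}} |x|^2 - \bar G(x) \cdot c_n, \; -\bar \Delta v \leq s_{\mathfrak{p}} \},
\end{equation*}
where $\bar\Delta$ is the homogenized Laplacian and $s_{\mathfrak{p}}$ is the density of topplings. On $\Z^d$, the analogous characterization \emph{fails}: the true envelope is the Pegden--Smart $\Gamma$ function, which is a fractal object whose structure is exactly governed by integer-valued harmonic polynomials of at most quadratic growth. To rule out this obstruction on the cluster, I would leverage Theorem~\ref{theorem:integer-valued-linear-growth} together with Proposition~\ref{prop:rational-matrix}: any mesoscopic ``patch'' in the limit of $s_n$ corresponds, via a blowup, to an integer-Laplacian function on $\mathscr{C}_\infty$ of polynomial growth, which by these rigidity results must be essentially trivial (constant up to the unique harmonic polynomials, none of which are integer-valued). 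The crux is to upgrade this pointwise statement into a statement about admissible shapes of the coincidence set $\{\bar u > 0\}$. Given the rotational invariance of $\bar\Delta$ inherited from Bernoulli percolation's symmetries, the solution of the classical obstacle problem with point source is then exactly a ball, yielding the conjecture.

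The main obstacle will be the polynomial-growth step: Proposition~\ref{prop:rational-matrix} rules out nonconstant quadratic integer-valued functions for each deterministic harmonic polynomial $\bar p$, but the sandpile argument requires a \emph{uniform} statement that allows the slope $\bar p$ to depend on location and to be taken from an uncountable family at mesoscopic scales. Solving Problem~\ref{problem:all-growth} (or at least its quadratic case uniformly) would close the gap and imply Conjecture~\ref{conj:sandpile-limit-shape}; without it, one would need a separate argument that the obstruction set has, almost surely, vanishing density in the limit. I would expect this to be the main technical hurdle, whereas the homogenization-to-obstacle-problem step should be a fairly direct adaptation of \cite{armstrong-dario-2018, dario-gu-green} combined with the theory of viscosity solutions to the obstacle problem.
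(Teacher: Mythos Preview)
The statement you are attempting to prove is a \emph{conjecture}, not a theorem: the paper does not prove it. Section~\ref{subsec:sandpile-limit-shape} is expository; it presents Conjecture~\ref{conj:sandpile-limit-shape} as motivated by simulations and by the rigidity results of the paper, but explicitly says that ``it is not known that the scaling limit of the sandpile on the percolation cluster exists'' and that even the weaker Conjecture~\ref{conj:sandpile-limit-shape-exists} ``will require new ideas'' precisely because Theorem~\ref{theorem:integer-valued-linear-growth} destroys the affine-invariance that the Pegden--Smart program relies on. So there is no paper proof to compare your proposal against.

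That said, your outline is a reasonable sketch of how one might attack the problem, and you have correctly located the genuine obstruction. The tightness and homogenization-to-obstacle-problem steps are plausible adaptations of existing machinery. The real gap is exactly the one you flag at the end: Proposition~\ref{prop:rational-matrix} only rules out integer-valued harmonic functions asymptotic to a single \emph{deterministic} quadratic polynomial, whereas the Pegden--Smart envelope argument requires control over an uncountable family of quadratics simultaneously, at random mesoscopic locations. This is essentially Problem~\ref{problem:all-growth} (or a uniform quadratic version of it), which the paper leaves open. Moreover, the paper points out a second, more structural difficulty you pass over: the programs in \cite{pegden-smart-sandpile-2013, bourabee-sandpile-2021} \emph{presuppose} affine-invariance of the limiting PDE, and on $\Z^d$ this comes for free from integer-valued linear harmonic functions. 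Theorem~\ref{theorem:integer-valued-linear-growth} shows these do not exist on the cluster, so the very framework you invoke in your first paragraph does not transfer without modification---you cannot simply ``replace the lattice-specific obstacle-problem analysis by a homogenized version,'' because the reduction to an obstacle problem of Pegden--Smart type is itself what breaks. Your proposal is therefore not a proof but a program, and one whose main steps coincide with open problems the paper explicitly poses.
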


	The significance of the previous conjecture is in its contrast with the 
	sandpile on $\Z^d$ (and other periodic lattices) \cite{pegden-smart-sandpile-2013, apollonian-matrix, apollonian-sandpile, bou2021integer, bou2021shape} --- see Figure \ref{fig:sandpile-pics}. In this case, one starts with $n$ chips at the origin on, say, $\Z^d$ on a (possibly random) initial {\it background}
	$\eta: \Z^d \to \Z$,  which can be thought of as an initial arrangement of chips,
	and runs the same toppling dynamics as before. 
	
	It is known in this case (under some mild assumptions on $\eta$, \eg, i.i.d\ and bounded from above by $d$) that the sandpile has a deterministic scaling limit described by the Laplacian of the solution to a fully nonlinear elliptic PDE \cite{bourabee-sandpile-2021}, the {\it sandpile PDE}. 
	The (canonical) example $\eta \equiv 0$ was first considered rigorously in \cite{pegden-smart-sandpile-2013}.
	\begin{figure}
		\begin{center}
			\includegraphics[width=0.3\textwidth]{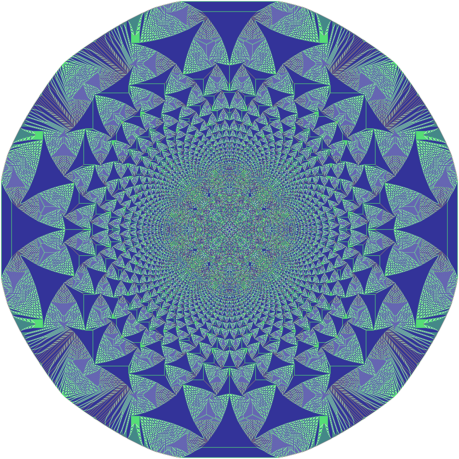}
			\includegraphics[width=0.3\textwidth]{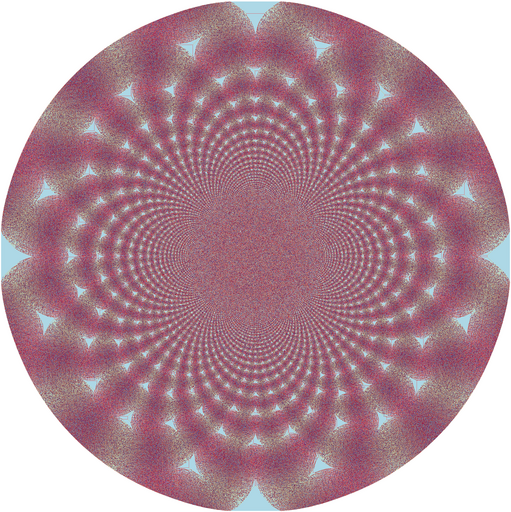}
			\includegraphics[width=0.3\textwidth]{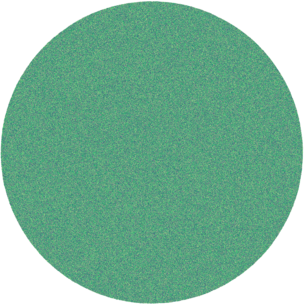}
		\end{center}
		\caption{A comparison of the scaling limit described in Conjecture \ref{conj:sandpile-limit-shape}
			to the sandpile on $\Z^2$. The color of each pixel 
			represents the number of chips in the stable sandpile $s_n$. 
			{\bf Left:} $\Z^2$ with initial background $\eta \equiv 0$.
			{\bf Middle:} $\Z^2$ with i.i.d.\ random initial background $\eta \sim \mbox{Bernoulli}(1/2)$.
			{\bf Right:} Percolation cluster with $\eta \equiv 0$  and $\mathfrak{p} = 0.9$.		
		} \label{fig:sandpile-pics}
	\end{figure}

	The sandpile PDE has a delicate dependence on both the lattice and the distribution of the random initial background, $\eta$. In the case when $\eta \equiv 0$ and the underlying 
	lattice, $\mathbb{L}$, is embedded into $\R^d$, it is characterized
	by a set of {\it allowed Hessians}, the set of $d \times d$ symmetric matrices which have an integer-valued superharmonic representative 
	with that growth at infinity:
	\begin{equation} \label{eq:allowed-hessians}
		\Gamma_{\mathbb{L}} := \{ M \in \symm^d : \exists u: \mathbb{L} \to \Z \mbox{ such that }  \mbox{$u(x) \geq \frac{1}{2} x^T M x + o(|x|^2)$ and $\Delta_{\mathbb{L}} u \leq 0$}\}.
	\end{equation}
	These sets are conjectured to have a rich, fractal description for all periodic graphs and this is known rigorously in two cases,  $\Z^2$ \cite{apollonian-matrix} and the $F$-lattice, \cite{bou2021integer}.
	The set $\Gamma$ provides an explanation for the kaleidoscopic patterns which appear
	in large sandpiles \cite{pegden-smart-stability, apollonian-sandpile}.
	
	Both proofs \cite{apollonian-matrix,bou2021integer} proceed recursively
	and the `base cases' require the existence of integer-valued harmonic functions of quadratic growth, \eg, on $\Z^2$ the function $x \to (x_1^2 - x_2^2)$. 
	The lack of such `base cases' on the cluster, Proposition \ref{prop:rational-matrix}, suggest that $\Gamma_{\mathscr{C}_{\infty}}$ is a trivial set. 
	\begin{conj} \label{conj:sandpile-percolation}
		Suppose $\mathfrak{p} \in (\mathfrak{p}_c(d), 1)$.
		There exists a deterministic constant $s_p(\mathfrak{p}, d)$ such that, almost surely
		\[
		\Gamma_{\mathscr{C}_{\infty}} = \{ M \in \symm^d : \Tr(M) \leq s_p \}. 
		\]
	\end{conj}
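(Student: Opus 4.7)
The plan is to establish the characterization in two complementary inclusions. First, define
\[
s_p(\mathfrak{p},d) := \sup\{\Tr M : M \in \Gamma_{\mathscr{C}_\infty}\},
\]
which is translation-invariant and therefore almost surely deterministic by ergodicity of Bernoulli percolation. The inclusion $\Gamma_{\mathscr{C}_\infty} \subseteq \{M \in \symm^d : \Tr M \leq s_p\}$ is then immediate from the definition; finiteness of $s_p$ follows from homogenization, since the homogenized Laplacian applied to $\tfrac{1}{2}x^T M x$ is proportional to $\Tr M$ (by isotropy of the effective diffusivity $\bar a$ for $\mathfrak{p}$-Bernoulli percolation), so the condition $\Delta_{\mathscr{C}_\infty} u \leq 0$ combined with ergodic averaging over large boxes forces $\Tr M$ to be bounded above by a multiple of the average degree of the cluster times $(\bar a \theta_{\mathfrak{p}})^{-1}$.

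For the reverse inclusion $\{M : \Tr M \leq s_p\} \subseteq \Gamma_{\mathscr{C}_\infty}$, the strategy splits into an existence construction at the isotropic extremal matrix $M = (s_p/d) I$ and a reduction of all other admissible $M$ to this isotropic case. For existence, I would construct integer-valued superharmonic functions with isotropic quadratic growth via truncated Green's-function potentials: take $u_R(x) = c \sum_{y \in B_R \cap \mathscr{C}_\infty} G(x,y)$ with $c$ a suitable positive integer, which by the Green's function homogenization of Proposition~\ref{prop:homogenizationellipticgreen} has quadratic growth of order $\tfrac{c}{2\bar a \theta_{\mathfrak{p}}} |x|^2$ up to a harmonic correction, and satisfies $\Delta_{\mathscr{C}_\infty} u_R = -c$ on $B_R \cap \mathscr{C}_\infty$ and $=0$ elsewhere; a diagonal extraction as $R \to \infty$, together with the quantitative sublinearity estimates of Theorem~\ref{theorem:first-order-corrector} adapted to quadratic growth, should yield an integer-valued superharmonic function realizing the extremal trace. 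The reduction of arbitrary $M$ with $\Tr M \leq s_p$ to this extremal case is where the rigidity of Proposition~\ref{prop:rational-matrix} enters: if some $M \in \Gamma_{\mathscr{C}_\infty}$ had a nontrivial traceless part $M_0$, then subtracting the extremal isotropic superharmonic majorant and invoking a maximum-principle comparison would extract an integer-valued function approximating $\tfrac{1}{2} x^T M_0 x$, which is a nonzero harmonic polynomial of degree two on $\R^d$, contradicting Proposition~\ref{prop:rational-matrix}.

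The hard part will be making this last reduction rigorous, since Proposition~\ref{prop:rational-matrix} rules out \emph{harmonic} integer-valued approximations while $\Gamma_{\mathscr{C}_\infty}$ concerns only superharmonic majorants, and the quadratic growth is only a lower bound, not an asymptotic equality. Bridging this gap requires showing that an extremal superharmonic $u$ realizing an anisotropic $M$ must, after the appropriate subtraction, be harmonic at a positive density of vertices — likely via a combination of the large-scale regularity of Theorem~\ref{theorem:large scale-regularity} and a sensitivity analysis in the spirit of the Efron--Stein argument used in the proof of Proposition~\ref{prop:level-set-of-harmonic-function}, converting the near-harmonicity into a genuine integer-valued harmonic approximation through a controlled rounding procedure. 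A secondary difficulty is verifying that the supremum defining $s_p$ is attained (so that an extremal $u$ exists to subtract from), which will require a compactness argument for sequences of integer-valued superharmonic functions of quadratic growth, most likely building on Theorem~\ref{theorem:first-order-corrector} together with the $\mathcal{O}_s$ stochastic integrability framework. Finally, the analogue of Proposition~\ref{prop:rational-matrix} may need to be extended to the case where $M_0$ is not deterministic but rather measurable with respect to the cluster (arising as a limiting Hessian of a random $u$), which should follow by a separability argument reducing to a countable dense set of deterministic harmonic polynomials.
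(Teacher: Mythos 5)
This statement is Conjecture \ref{conj:sandpile-percolation}: the paper does not prove it, and offers Theorem \ref{theorem:integer-valued-linear-growth} and Proposition \ref{prop:rational-matrix} only as heuristic evidence. Your proposal does not close the gap, and its central reduction is logically inverted. The conjectured identity asserts that $\Gamma_{\mathscr{C}_\infty}$ \emph{equals} the half-space $\{M \in \symm^d : \Tr M \leq s_p\}$, which contains arbitrarily anisotropic matrices (e.g.\ $\mathrm{diag}(K,-K,0,\ldots,0)$ for every $K$, since $s_p \geq 0$ because $u \equiv 0$ witnesses $0 \in \Gamma_{\mathscr{C}_\infty}$). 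Your ``reduction of arbitrary $M$ to the isotropic case'' argues that any $M \in \Gamma_{\mathscr{C}_\infty}$ with nonzero traceless part would contradict Proposition \ref{prop:rational-matrix}. If that argument worked, it would show that $\Gamma_{\mathscr{C}_\infty}$ contains only (near-)isotropic matrices, i.e.\ it would \emph{disprove} the inclusion $\{M : \Tr M \leq s_p\} \subseteq \Gamma_{\mathscr{C}_\infty}$ that you are trying to establish. The correct role of Proposition \ref{prop:rational-matrix} is the opposite: on $\Z^2$ the non-half-space ``bumps'' of $\Gamma_{\Z^2}$ are generated by integer-valued harmonic quadratics, and their absence on the cluster suggests there are no bumps; but the hard content of the conjecture is the \emph{construction}, for every anisotropic $M$ in the half-space, of an integer-valued superharmonic majorant of $\tfrac12 x^T M x$, and your proposal contains no mechanism for this.

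The existence step also fails at its first line: $u_R(x) = c\sum_{y \in B_R \cap \mathscr{C}_\infty} G(x,y)$ has integer-valued Laplacian but is not itself integer-valued (the Green's function is irrational-valued even on the full lattice), whereas the definition \eqref{eq:allowed-hessians} requires $u: \mathbb{L} \to \Z$. Rounding destroys superharmonicity in an uncontrolled way, and controlling the error of such a rounding is precisely the recursive difficulty that occupies the known constructions on $\Z^2$ and the $F$-lattice cited in the paper. The secondary issues you flag --- attainment of the supremum defining $s_p$, extending Proposition \ref{prop:rational-matrix} from deterministic to cluster-measurable quadratics, and upgrading superharmonic near-extremality to harmonicity on a positive-density set --- are real, but they are moot until the two structural problems above are resolved.
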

	Unlike the lattice, on the percolation cluster we do not yet have a rigorous relationship between $\Gamma_{\mathscr{C}_{\infty}}$ and the large scale properties of sandpiles.
	In fact, it is not known that the scaling limit of the sandpile on the percolation cluster exists. This is because the programs   
	in \cite{pegden-smart-sandpile-2013} and \cite{bourabee-sandpile-2021} presuppose {\it affine-invariance} of the sandpile PDE --- that is, that the sandpile PDE is purely second order. This affine-invariance is an easy consequence of the existence of linear growth integer-valued harmonic functions and hence such functions are an important feature of the proofs in \cite{pegden-smart-sandpile-2013} and \cite{bourabee-sandpile-2021}.
	Consequently, due to Theorem~\ref{theorem:integer-valued-linear-growth}, establishing the following will require new ideas. 
	\begin{conj} \label{conj:sandpile-limit-shape-exists}
		Suppose $\mathfrak{p} \in (\mathfrak{p}_c(d), 1)$. There exists a deterministic $\bar{s}: \R^d \to \R$ such that almost surely, as $n \to \infty$, $\overline{s}_n$ converges weakly-* to $\bar{s}$.
	\end{conj}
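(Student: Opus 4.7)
The plan is to combine the variational (least-action) representation of the sandpile odometer with the stochastic homogenization framework developed earlier in this paper. First I would work with the odometer function $v_n : \mathscr{C}_{\infty} \to \Z_{\geq 0}$ recording the number of topplings at each site during stabilization of $n \delta_0$; classical sandpile theory characterizes $v_n$ as the least nonnegative integer function satisfying $-\Delta_{\mathscr{C}_{\infty}} v_n \geq n \delta_0 - (\deg_{\mathscr{C}_{\infty}} - 1)$. After the rescaling $\bar v_n(x) := n^{-2/d} v_n(\lfloor n^{1/d} x \rfloor)$, the functions $\bar v_n$ are uniformly Lipschitz on compact subsets of $\R^d$ (since $s_n$ is uniformly bounded and the cluster has bounded degree), so Arzel\`a-Ascoli produces subsequential limits almost surely. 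The conjecture then reduces to showing that every such limit is deterministic and independent of the subsequence.

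Second, I would identify the candidate limiting PDE. If Conjecture \ref{conj:sandpile-percolation} holds, then the large-scale Hessian of $v_n$ must satisfy $\mathrm{Tr}(\nabla^2 v_n) \leq s_{\mathfrak{p}}$ in an appropriately averaged sense, and the nonlinear sandpile PDE degenerates to the linear Poisson obstacle problem $-\bar\sigma \Delta \bar v = \delta_0 - s_{\mathfrak{p}} \mathbf{1}_{\{\bar v > 0\}}$, with the complementarity condition $\bar v \geq 0$, where $\bar\sigma$ is the homogenized diffusivity of the random walk on the cluster. This obstacle problem admits a unique radially symmetric solution, which would give both existence of $\bar s$ and, as a byproduct, the ball shape of Conjecture \ref{conj:sandpile-limit-shape}. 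The collapse of the admissible-Hessian set to a halfspace is precisely what allows the affine-invariance obstruction (the absence of linear integer-valued harmonic functions, Theorem \ref{theorem:integer-valued-linear-growth}) to be bypassed: the effective operator in the limit is linear, not genuinely nonlinear, so no affine symmetry of the problem is required.

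Third, the convergence itself would be proven via a viscosity-solution stability argument. Using the corrected planes $\ellp{p}$ together with the homogenization estimates of Theorem \ref{theorem:large scale-regularity} and Proposition \ref{prop:homogenizationellipticgreen}, one constructs discrete sub- and supersolutions of the pre-limit odometer problem from smooth test functions on $\R^d$; the large-scale regularity then transfers comparison between $v_n$ and these tests to the limit, matching the viscosity sub/supersolution conditions for the Poisson obstacle problem on $\R^d$. Ergodicity of Bernoulli percolation together with Lemma \ref{lemma:finite-energy} ensures the homogenized parameters $\bar\sigma$ and $s_{\mathfrak{p}}$ are deterministic.

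The main obstacle will be proving Conjecture \ref{conj:sandpile-percolation}, which requires a converse to the rigidity theorems of this paper: a probabilistic construction of integer-valued superharmonic functions realizing every Hessian with $\mathrm{Tr}(M) \leq s_{\mathfrak{p}}$. Since Theorem \ref{theorem:integer-valued-linear-growth} and Proposition \ref{prop:rational-matrix} preclude any construction with deterministic asymptotic structure, one must exploit the randomness of the cluster, building local integer corrections in percolation-typical neighborhoods (in the spirit of the finite-energy modifications of Proposition \ref{prop:not-lipschitz}) to fit integer-valued profiles to quadratic growth. Absent such a construction, one would have to carry higher-order corrector information into the limit, which on the cluster is incompatible with affine invariance and thus constitutes a genuinely new difficulty not present on the full lattice.
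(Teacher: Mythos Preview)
The paper does not prove this statement: it is presented as an open conjecture in the expository Section~\ref{sec:abelian-sandpile}, and the paragraph immediately preceding it says explicitly that ``establishing the following will require new ideas.'' There is therefore no proof in the paper to compare your proposal against.

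Your proposal is a coherent research program, not a proof, and you correctly identify its load-bearing gap: everything hinges on Conjecture~\ref{conj:sandpile-percolation}, for which you offer only a heuristic. Two further points deserve emphasis. First, your claim that linearity of the limiting operator bypasses the affine-invariance obstruction is more hopeful than established. In the Pegden--Smart and Bou-Rabee frameworks, affine invariance is used at the \emph{discrete} level to compare the odometer against test functions shifted by integer-valued harmonic polynomials; it is not merely a property of the limit PDE. On the cluster, Theorem~\ref{theorem:integer-valued-linear-growth} removes exactly those discrete test functions, so even if the continuum problem is the linear Poisson obstacle problem, you still need a new mechanism to transfer discrete comparison to the limit. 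Your third step gestures at using corrected planes $\ellp{p}$ as substitutes, but these are not integer-valued, and it is unclear how to reconcile them with the integer-valued least-action principle that characterizes $v_n$. Second, the compactness step is slightly overstated: $\bar v_n$ is not uniformly Lipschitz near the origin (the odometer has a singularity there), so any Arzel\`a--Ascoli argument must be localized away from zero, and the behavior at the pole must be handled separately.

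In short, your outline reduces Conjecture~\ref{conj:sandpile-limit-shape-exists} to Conjecture~\ref{conj:sandpile-percolation} plus a nonstandard homogenization argument, both of which remain open; this is consistent with the paper's own assessment that the problem requires genuinely new ideas.
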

	In the remainder of this section we shift our attention from the patterns appearing in sandpiles on infinite graphs to the statistical properties 
	of sandpiles on finite graphs. 
	
	\subsection{Sandpile Markov chain and toppling invariants} \label{subsec:sandpile-chain}
	In order to exposit the cases of both the full lattice and the percolation cluster, take  $\mathfrak{p} \in (\mathfrak{p}_c(d), 1]$.
	We consider the discrete time sandpile Markov chain on $\mathscr{C}_{*}(Q_n)$ and restrict to the event that $\mathscr{C}_{*}(Q_n) \subset \mathscr{C}_{\infty} \cap Q_n$ and $\partial Q_n \cap \mathscr{C}_{*}(Q_n) \neq \emptyset$.  
	Start with a fully saturated sandpile, $s_0 \equiv (\deg -1 )$ on  $\mathscr{C}_{*}(Q_n)$.  Each discrete time step, $s_t \to s_{t+1}$ is as follows: 
	\begin{enumerate}
		\item Pick a site uniformly at random in $\mathscr{C}_{*}(Q_n)$  and add a chip.
		\item Stabilize with dissipating boundary conditions; that is, each time there is a site with at least $\deg_{\mathscr{C}_{\infty}}$ chips on it, the site topples, 
		losing $\deg_{\mathscr{C}_{\infty}}$ chips and giving one chip to each of its neighbors. If the site is on the boundary, \ie, has a neighbor not in $Q_n$, chips are lost across that edge and thus eventually this process stops.
	\end{enumerate}
	The stationary distribution of this Markov chain is uniform over a subset of sandpiles called {\it recurrent sandpiles}.
	Uniform recurrent sandpiles are of interest in the physics literature \cite{dhar-algebraic-aspects}
	and thus it is important to determine how many steps are needed to run the above Markov chain before the resulting sandpile is close to uniform recurrent. 
	
	Techniques from the theory of random walks on finite groups were used in \cite{jerison-levine-pike-2019} to 
	study this problem and it is precisely through this theory that functions with integer-valued Laplacian become relevant. We briefly recall
	what was proved by Jerison, Levine, and Pike and refer the interested reader to \cite{jerison-levine-pike-2019} for more details. 
	
	The set of of recurrent sandpiles has the structure of an Abelian group where the operation is pointwise addition followed by stabilization and the sandpile Markov chain is a random walk on that group. Its eigenvectors are given by a dual group which can be expressed as the additive group of functions
	$\xi: \overline{\mathscr{C}_{*}(Q_n)} \to \R / \Z$ such that $\xi(\partial \mathscr{C}_{*}(Q_n)) \equiv 0$ and $\Delta_{\mathscr{C}_{\infty}} \xi \in \Z$.

	Functions in the dual group represent quantities which are invariant under toppling and were first considered in \cite{dhar-algebraic-aspects}.
	Specifically, if $\Delta_{\mathscr{C}_{\infty}} u \in \Z$, $s_0$ is the initial (possibly unstable) sandpile
	and $s_{\infty}$ is the final, stable sandpile, then an integration by parts shows that
	\begin{equation} \label{eq:toppling-invariant}
		\sum s_0 u = \sum s_{\infty} u \mod \Z.
	\end{equation}
	Along with the application to the Markov chain we discuss in the next section, {\it toppling invariants} were also used in \cite[Theorem 1]{sandpiles-square-lattice} to demonstrate a certain non-universality
	of random sandpiles on $\Z^2$.  We expect Theorem \ref{theorem:fast-decay} to also be useful in this regard.

	\subsection{Slow mixing} \label{subsec:slow-mixing-sandpile}
	The mixing time of the sandpile Markov chain is controlled by the frequencies with eigenvalues
	\begin{equation} \label{eq:eigenvalue-frequency}
		\lambda_{\xi} = \frac{1}{|\mathscr{C}_{*}(Q_n)|} \sum_{x \in \mathscr{C}_{*}(Q_n)} e^{2 \pi i \xi(x)} 
	\end{equation}
	which are close to one. This leads us to {\it multiplicative harmonic} functions, functions  $h: \overline{\mathscr{C}_{*}(Q_n)} \to \C$ such that $h(\partial \mathscr{C}_{*}(Q_n)) \equiv 1$ and 
	\[
	h(v)^{\deg(v)} = \prod_{u \sim v} h(u), \quad \forall v \in \mathscr{C}_{*}(Q_n)
	\]
	and, we see that, by definition, 
	\begin{equation} \label{eq:harmonic-mod-harmonic}
		\mbox{$\Delta_{\mathscr{C}_{\infty}} u \in \Z$ $\iff$ $e^{2 \pi i u}$ is multiplicative harmonic}.
	\end{equation} 
	By associating a frequency with a multiplicative harmonic function as in \eqref{eq:harmonic-mod-harmonic}, 
	each eigenvalue may be indexed by a multiplicative harmonic function $h$,
	\[
	\lambda_{h} = \frac{1}{|\mathscr{C}_{*}(Q_n)|} \sum_{x \in \mathscr{C}_{*}(Q_n)}  h(x).
	\]
	By discrete Fourier analysis, (see, \eg, \cite[Lemma 2.9]{jerison-levine-pike-2019}), after $t$ discrete time steps the $L^2$ distances from the uniform recurrent distribution $U$ are 
	\begin{equation} \label{eq:multiplicative-harmonic-eigenvalue-diff}
		\| P_t - U\|^2_2 = \sum_{h \in \mathcal{H} \setminus \{1\}} | \lambda_h|^{2 t}
	\end{equation}
	where $\mathcal{H}$ is the set of multiplicative harmonic functions on $\mathscr{C}_*(Q_n)$,  $P^t$ is the transition matrix for $t$ steps of the chain, and for two measures $\mu$ and $\nu$ on the sandpile group, $\mathcal{G}$,
	\[
	\| \mu - \nu \|_2 := \left( |\mathcal{G}| \sum_{g \in \mathcal{G}} |\mu(g) - \nu(g)|^2 \right)^{1/2}.
	\]
	Mixing times of sandpiles on general graphs were first studied rigorously in \cite{jerison-levine-pike-2019}
	and a cut-off theorem was proved for sandpiles on the two-dimensional torus in \cite{sandpiles-square-lattice}
	which was later extended to general periodic tiling graphs in \cite{hough-cutoff, hough-spectrum}.  
	In particular, it was shown, via an analysis of multiplicative harmonic functions on growing subsets of $\Z^d$  that the sandpile Markov chain we consider, when $\mathfrak{p} = 1$, has a mixing time of order $n^d \log n$ \cite{hough-cutoff, hough-spectrum}. The extra log factor 
	leads to `slow mixing'. 
	
	\begin{figure}
		\begin{center}
			\includegraphics[width=0.25\textwidth]{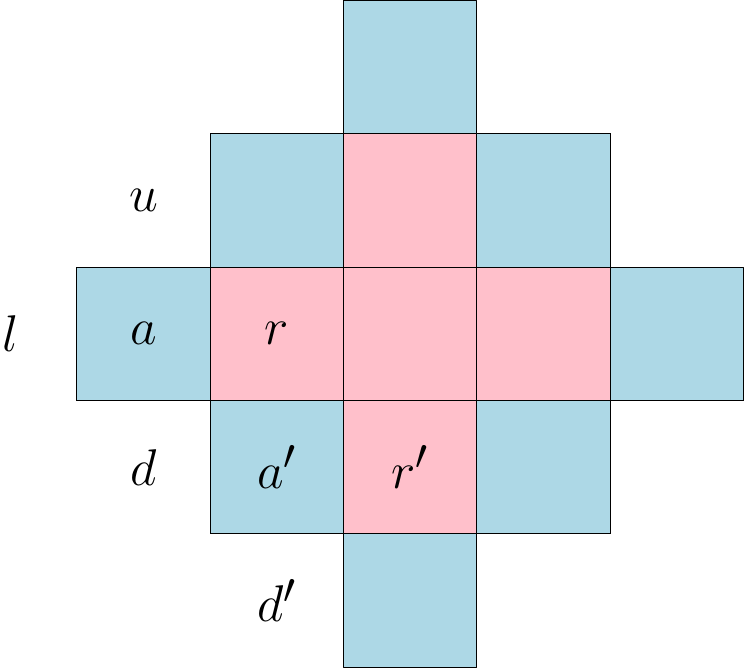}
		\end{center}
		\caption{Visual aid to the proof of Proposition \ref{prop:integer-valued-compact-support}. The larger diamond $D_m$ is in light blue and smaller diamond $D_{m+1}$ is in pink.}\label{fig:recurse-diamond}
	\end{figure}
	
	The sandpile Markov chain on the percolation cluster, \ie, for $\mathfrak{p} \in (\mathfrak{p}_c(d), 1)$ also mixes slowly, but for different reasons which stem from the following.  
	\begin{prop} \label{prop:integer-valued-compact-support}
		If $u: \Z^d \to \R$ has an integer-valued Laplacian and is compactly supported, then $u$ is integer-valued. 
	\end{prop}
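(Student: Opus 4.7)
The plan is to propagate integrality inward from infinity using the integer-Laplacian relation, sweeping one coordinate slice at a time. Because $u$ has finite support, I can choose $N \in \Z$ so that $u \equiv 0$ (and thus $u \in \Z$) on the half-space $H_N := \{x \in \Z^d : x_1 \geq N\}$. I will then show by downward induction on $k \leq N$ that $u$ is integer-valued on $H_k := \{x : x_1 \geq k\}$; the conclusion follows after finitely many steps since $u$ has bounded support.

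The inductive step carries the main content. Assuming $u \in \Z$ on $H_k$, I fix $y \in \Z^d$ with $y_1 = k-1$ and apply the hypothesis $\Delta u(y + e_1) \in \Z$, which unfolds to
\[
    \Delta u(y + e_1) = u(y) + u(y + 2 e_1) + \sum_{i=2}^d \bigl( u(y + e_1 + e_i) + u(y + e_1 - e_i) \bigr) - 2d\, u(y + e_1).
\]
Every site appearing on the right-hand side other than $y$ itself has first coordinate equal to $k$ or $k+1$, so it lies in $H_k$ and carries an integer value by the inductive hypothesis. Solving the displayed identity for $u(y)$ expresses it as an integer combination of integers, which gives $u(y) \in \Z$ and extends integrality to all of $H_{k-1}$.

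I do not anticipate any genuine obstacle here: the whole argument rests on the observation that the discrete Laplacian at the single site $y + e_1$ just ahead of the new slice $\{x_1 = k-1\}$ involves exactly one unknown value, namely $u(y)$. The nested $\ell^1$-diamonds $D_m \supset D_{m+1}$ illustrated in Figure~\ref{fig:recurse-diamond} give an alternative visualization of the same inward sweep, in which integrality is propagated across $\ell^1$-spheres rather than coordinate hyperplanes; the inductive identity and its resolution are essentially the same in either presentation.
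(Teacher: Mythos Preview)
Your proof is correct and rests on the same mechanism as the paper's: peel inward from outside the support, using that the Laplacian relation at each frontier point determines exactly one new value of $u$. The paper organizes the sweep via nested $\ell^1$-diamonds (written out explicitly in dimension two), whereas your coordinate-hyperplane sweep is a bit cleaner and works uniformly in every dimension $d$ without modification.
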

	\begin{proof}
		We give the proof in two dimensions. 
		The idea is to induct along diamonds of decreasing size, arguing that at each step, the boundary of the diamond must consist entirely of points where $u$ is integer-valued. 
		Specifically, let $D_1$ be the smallest diamond containing the support of $u$, \ie, some translation of 
		\[
		D(k) : = \{ x \in \Z^2: |x| \leq k \}. 
		\]
		and let $D_1 \supset D_2 \supset \cdots$ be a sequence of diamonds each a translation of $D(k-1), D(k-2), \ldots, \{0\}$
		which exhaust the support of $u$.

		We claim, by induction on $m \geq 1$ that the function $u$ on the outer boundary of each $D_m$ is integer-valued
		\begin{equation} \label{eq:inductive-hypothesis}
			u(\partial D_m) \in \Z .
		\end{equation}
		As $u$ is compactly supported, by definition of $D_1$, $u(\partial D_1) = 0$ and hence the base case is satisfied. 
		Suppose now, by induction, that \eqref{eq:inductive-hypothesis} holds for $m$ and we seek to show it for $m+1$. 
		
		See Figure \ref{fig:recurse-diamond}. The site $a$, at the corner of $D_m$ has three neighbors in $D_{m+1}^c$, $l, u, d$. 
		By evaluating the graph Laplacian at $a$, we see that 
		\[
		-4 a + l + u + d + r \equiv 0 \mod 1
		\]
		and since $a, l,u,d$ are integer-valued, we have 
		\[
		r \equiv 0 \mod 1.
		\]
		We now continue to the site $a'$, we see that 
		\[
		-4 a' + d + r + r' + d' \equiv 0 \mod 1
		\]
		and since $a', d',d ', r$ are integer-valued, we have 
		\[
		r' \equiv 0 \mod 1.
		\]
		We may iterate to see that every site on the boundary of $D_{m+1}$ is integer-valued. 
	\end{proof}
	The previous proposition shows that on $\Z^d$, the only frequencies which can achieve a large non-trivial eigenvalue \eqref{eq:eigenvalue-frequency} are those corresponding to functions which are {\it not} compactly supported.
	However, this fact is false on the percolation cluster and the Markov chain mixes slowly on $\mathscr{C}_{\infty}$ for 
	this reason. See Figure \ref{fig:slow-mixing-gadget}. 
	\begin{figure}
		\begin{center}
			\fbox{\includegraphics[width=0.25\textwidth]{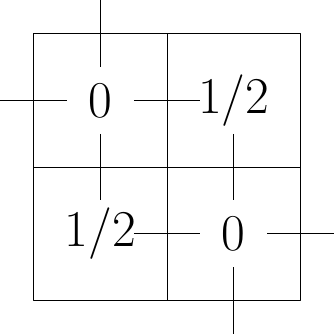}} \qquad
			\fbox{\includegraphics[width=0.25\textwidth]{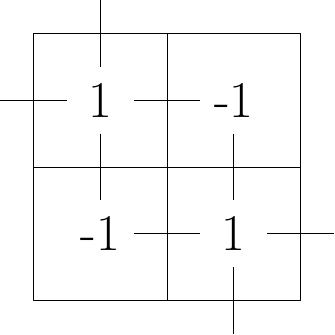}}
		\end{center}
		\caption{On the left, a piece of a non-integer-valued compactly supported function with integer-valued Laplacian on a percolation cluster. 
			The function is identically zero outside of the indicated square.  On the right is the corresponding multiplicative harmonic function as in \eqref{eq:harmonic-mod-harmonic}, which is identically one outside of the square.} \label{fig:slow-mixing-gadget}
	\end{figure}

	\begin{theorem} \label{theorem:slow-mixing-percolation}
		There exists positive constants $c(\mathfrak{p}, d), C(\mathfrak{p},d)$ such that the following holds almost surely.
		For any $\varepsilon > 0$, for all $n$ sufficiently large, 
		\begin{equation} \label{eq:l2-mixing-time-upper-bound}
			\| P^t - U\|_2^2 \leq \varepsilon \quad \mbox{for all $t \geq C n^d \log n$}
		\end{equation}
		and
		\begin{equation} \label{eq:l2-mixing-time-lower-bound}
			\| P^t - U\|_2^2 \geq \varepsilon \quad \mbox{for all $t \leq c n^d \log n$},
		\end{equation}
		where $P^t$ and $U$ are as in \eqref{eq:multiplicative-harmonic-eigenvalue-diff}. 
	\end{theorem}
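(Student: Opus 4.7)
The plan is to handle the two bounds \eqref{eq:l2-mixing-time-lower-bound} and \eqref{eq:l2-mixing-time-upper-bound} by rather different methods. The lower bound will come from exhibiting many slow-mixing characters via the gadget construction of Figure \ref{fig:slow-mixing-gadget}, while the upper bound will require a uniform spectral analysis of all multiplicative harmonic functions, adapting the techniques of \cite{hough-cutoff, hough-spectrum} to the random cluster via the homogenization results of Proposition \ref{prop:homogenizationellipticgreen} and Proposition \ref{prop:green-mixed-derivative}.

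For the lower bound, I would start by fixing a deterministic finite gadget $H_0 \subseteq \Z^d$ together with a non-integer function $u_0 : H_0 \to \R$ vanishing on $\partial H_0$ with $\Delta u_0 \in \Z$; the existence of such a configuration is ensured by the explicit construction in Figure \ref{fig:slow-mixing-gadget} (and would be impossible on $\Z^d$ by Proposition \ref{prop:integer-valued-compact-support}). By ergodicity of Bernoulli percolation and a greedy packing argument combined with Proposition \ref{prop:well-connected}, almost surely for $n$ large there are at least $c n^d$ disjoint translates $\{H_0 + x_j\}$ contained in $\mathscr{C}_*(Q_n)$ at which the environment matches $H_0$ exactly. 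Each such translate yields a distinct nontrivial multiplicative harmonic function $h_{x_j}$ on $\mathscr{C}_*(Q_n)$ whose deviation from $1$ is supported on $H_0 + x_j$, so $|\lambda_{h_{x_j}}| \geq 1 - C/|\mathscr{C}_*(Q_n)| \geq 1 - C/n^d$, and the eigenvalue expansion \eqref{eq:multiplicative-harmonic-eigenvalue-diff} yields
\begin{equation*}
\|P^t - U\|_2^2 \geq \sum_j |\lambda_{h_{x_j}}|^{2t} \geq c n^d \exp\bigl(-C t / n^d\bigr),
\end{equation*}
which exceeds $\varepsilon$ whenever $t \leq c' n^d \log n$ for a sufficiently small constant $c' > 0$.

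For the upper bound, the strategy is to parametrize every nontrivial character by its charge $f_h := -\Delta u$ (where $h = e^{2\pi i u}$ and $u$ vanishes on $\partial \mathscr{C}_*(Q_n)$) and to bound $1 - |\lambda_h|^2$ from below in terms of an $L^2$-type complexity of $f_h$. Two ingredients are needed. First, a spectral gap estimate $1 - \max_{h \neq 1} |\lambda_h|^2 \geq c/n^d$, which follows from a Poincar\'e inequality on $\mathscr{C}_*(Q_n)$ with constant $O(n^2)$. Second, a concentration bound: for every $A > 0$, the number of characters with $1 - |\lambda_h|^2 \leq A/n^d$ should grow only polynomially in $A$. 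Here homogenization enters essentially: using Proposition \ref{prop:homogenizationellipticgreen} and Proposition \ref{prop:green-mixed-derivative} one approximates $u = G_n f_h$ by a continuum potential on a homogenized copy of $Q_n$ and thereby transfers the Fourier-analytic counting of \cite{hough-cutoff, hough-spectrum} to the cluster. Combining the two ingredients and summing geometrically over dyadic scales of $1 - |\lambda_h|^2$ yields $\|P^t - U\|_2^2 \leq \varepsilon$ for $t \geq C n^d \log n$.

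The main obstacle is the concentration step in the upper bound: transporting the discrete Fourier arguments of \cite{hough-cutoff, hough-spectrum} to the random cluster requires parametrizing the relevant $\Theta(n^d)$ low-frequency characters by approximate plane-wave eigencharacters built from the corrected planes $\ellp{p}$ of Theorem \ref{theorem:first-order-corrector}, and the error terms in the Fourier decomposition must be controlled using the quantitative homogenization rates of Proposition \ref{prop:homogenizationellipticgreen}. Once this transference is made precise the factor $\log n$ in the mixing time arises as the standard entropy cost: there are $\Theta(n^d)$ relevant slow modes each with $|\lambda_h|^2 \approx 1 - 1/n^d$, so reducing each contribution to $\varepsilon/n^d$ and summing demands time of order $n^d \log n$.
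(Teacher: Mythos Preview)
Your lower bound is essentially the paper's argument: exhibit the gadget of Figure~\ref{fig:slow-mixing-gadget}, observe that ergodicity yields order $n^d$ disjoint copies in $\mathscr{C}_*(Q_n)$, and plug the resulting eigenvalues $1 - O(1/m)$ with $m = |\mathscr{C}_*(Q_n)|$ into \eqref{eq:multiplicative-harmonic-eigenvalue-diff}. This part is fine.

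For the upper bound you are working much too hard. The paper obtains \eqref{eq:l2-mixing-time-upper-bound} in one line by citing \cite[Theorem 4.3]{jerison-levine-pike-2019}, which gives, for the sandpile chain on \emph{any} finite connected graph with $m$ vertices, the bound $\|P^t - U\|_2^2 \leq \varepsilon$ whenever $t \geq (5/4) m \log\bigl(2(m-1)(1+1/\varepsilon)\bigr)$. Combined with the density fact $m \sim \theta(\mathfrak{p}) n^d$ this is already $C n^d \log n$. No homogenization, no transference of the Fourier-analytic counting from \cite{hough-cutoff, hough-spectrum}, and no parametrization of characters by their charge is needed; the ``main obstacle'' you describe simply does not arise. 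Your proposed route via Propositions~\ref{prop:homogenizationellipticgreen} and~\ref{prop:green-mixed-derivative} might in principle be pushed through, but it would be aiming at a much finer statement (a cutoff theorem in the spirit of \cite{sandpiles-square-lattice, hough-cutoff}) than what Theorem~\ref{theorem:slow-mixing-percolation} actually asserts, and the concentration step you flag would be genuinely delicate on the random cluster.
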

	\begin{proof}
		First recall that there exists a constant $\theta(\mathfrak{p}) \in (0,1)$ such that, almost surely, 
		\begin{equation} \label{eq:perc-cluster-has-density}
			\lim_{n \to \infty} \frac{|\mathscr{C}_{*}(Q_n)| }{n^d} = 	\lim_{n \to \infty} \frac{|\mathscr{C}_{\infty} \cap Q_n|}{n^d} = \theta(\mathfrak{p}).
		\end{equation}
		By \cite[Theorem 4.3]{jerison-levine-pike-2019}, writing $m := |\mathscr{C}_{*}(Q_n)|$ we have 
		\[
		\| P^t - U\|_2^2 \leq \varepsilon \quad \mbox{for all $t \geq  (5/4) m \log \left( 2 (m-1)(1 + 1/\varepsilon) \right)$}
		\]
		and thus the upper bound \eqref{eq:l2-mixing-time-upper-bound} follows from the previous two displays. 
		
		The lower bound follows by explicitly constructing eigenvectors which obstruct mixing. Specifically, 
		consider a square isomorphic to $[0, 1] \times [0,1] \times \{0\}^{d-2}$ with all edges removed
		except those shown in Figure \ref{fig:slow-mixing-gadget}.  The multiplicative harmonic function, $h: \mathscr{C}_*(Q_n) \to \R$, as illustrated in the right of Figure \ref{fig:slow-mixing-gadget}, has eigenvalue 
		\[
		\frac{1}{m} \sum_{x \in \mathscr{C}_{*}(Q_n)} h(x) =  \frac{1}{m} \left( m - 2 \right) = (1 - \frac{2}{m}). 
		\]
		This prescription of edges appears in the percolation cluster with positive probability 
		and thus, by the ergodic theorem, has positive density in  $\mathscr{C}_{\infty}$. Consequently, the eigenvalue $(1 - \frac{2}{m})$ has multiplicity of order $n^d$. Therefore, by \eqref{eq:multiplicative-harmonic-eigenvalue-diff}
		\[
		\| P_t - U\|^2_2 = \sum_{h \in \mathcal{H} \setminus \{1\}} | \lambda_h|^{2 t}
		\geq c m (1- \frac{2}{m})^{2 t}
		\]
		which implies, by 	\eqref{eq:perc-cluster-has-density}, the desired lower bound \eqref{eq:l2-mixing-time-lower-bound} with a smaller choice of $c$. 
	\end{proof}
	\begin{figure}
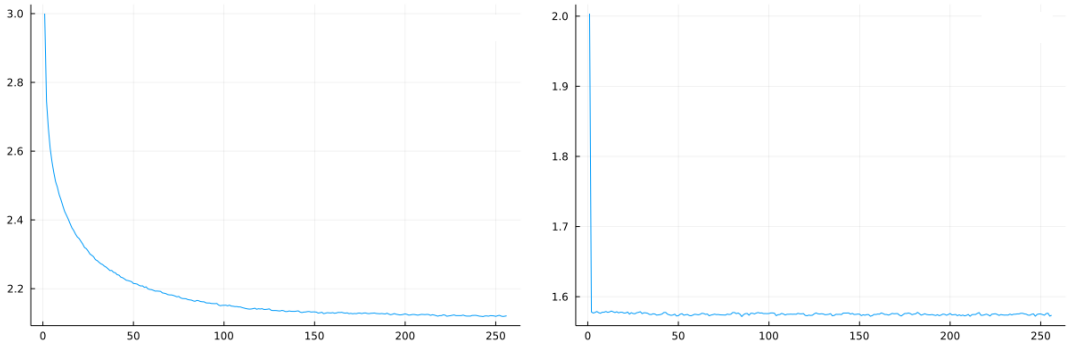

		\begin{center}
			\includegraphics[width=0.43\textwidth]{Figures/p_1.png}
			\includegraphics[width=0.43\textwidth]{Figures/p_75.png}
		\end{center}
		\caption{Density of the sandpile while running the Markov chain on $\Z^2$ (left) and the percolation cluster with $\mathfrak{p} = 3/4$ (right) 
			on $\mathscr{C}_*(Q_n)$ with $n = 10^3$. Each unit on the x-axis denotes an order of $n$ steps of the sandpile Markov chain. The $y$-axis is the average value of the sandpile, $(s_n)_{(\mathscr{C}_*(Q_n))}$. } \label{fig:sandpile-density-figure}
	\end{figure}

	Using \cite[Lemma 27]{sandpiles-square-lattice} and the Cauchy-Schwarz inequality, the $L^2$ bounds in Theorem \ref{theorem:slow-mixing-percolation}
	may be improved to bounds in the total variation metric. This, however, still leaves open the possibility of certain statistics of the sandpile Markov chain on the percolation cluster mixing faster. On $\Z^2$, this phenomena was observed experimentally in \cite{sokolov2015memory}. 
	The simple structure of the toppling invariants as guranteed by Theorem \ref{theorem:fast-decay-stronger} may be useful in making this rigorous on the percolation cluster. In particular, we have shown, at least in two dimensions, that the only eigenvectors which 
	obstruct mixing correspond to finitely supported functions on the cluster. 
	\begin{problem} 
		Show that the density of the sandpile Markov chain on the cluster mixes in time $o(N^d \log N)$, see Figure \ref{fig:sandpile-density-figure}.
	\end{problem} 
	We also leave open the possibility of proving cutoff, as in \cite{sandpiles-square-lattice, hough-cutoff, hough-spectrum}, to future work.

	\bibliographystyle{alpha}
	\bibliography{percolation-harmonic.bib}

\end{document}